\documentclass[11pt]{amsart}

\usepackage[margin=1.5in]{geometry} 
\usepackage{tcolorbox}
\usepackage[utf8]{inputenc}
\usepackage[T1]{fontenc}
\usepackage[dvipsnames]{xcolor}
\usepackage{tcolorbox}
\usepackage{amsmath,amsthm,amssymb}
\usepackage{thmtools}
\usepackage{hyperref}
\hypersetup{linkcolor=blue}
\usepackage{cleveref}
\usepackage{stmaryrd}
\usepackage{dsfont}
\usepackage{tkz-euclide}
\usepackage{fancyhdr}
\usepackage{calligra,mathrsfs}
\usepackage[shortlabels]{enumitem}
\usepackage{tikz}
\usepackage{tikz-cd}
\usepackage{tikz-3dplot}
\usepackage{array}
\usepackage[backend=bibtex, sortcites=true, style=alphabetic]{biblatex}
\AtEveryBibitem{\clearfield{url}}

\newcommand{\af}{\mathfrak a}
\newcommand{\bfr}{\mathfrak b}

\newcommand{\ef}{\text{ess}}
\newcommand{\cf}{\mathfrak c}
\newcommand{\pf}{\mathfrak p}
\newcommand{\qf}{\mathfrak q}

\newcommand{\mf}{\mathfrak m}

\newcommand{\nf}{\mathfrak n}

\newcommand{\Df}{\mathfrak D}

\newcommand{\ub}{\mathbf{u}}
\newcommand{\ab}{\mathbf{a}}
\newcommand{\bb}{\mathbf{b}}
\newcommand{\cb}{\mathbf{c}}
\newcommand{\db}{\mathbf{d}}

\newcommand{\A}{\mathbb A}
\newcommand{\C}{\mathbb C}

\newcommand{\F}{\mathbb F}

\newcommand{\Q}{\mathbb Q}
\newcommand{\R}{\mathbb R}
\newcommand{\Z}{\mathbb Z}

\newcommand{\Lc}{\mathcal L}

\newcommand{\Oc}{\mathcal O}

\newcommand{\x}{\mathbf{x}}
\newcommand{\y}{\mathbf{y}}

\renewcommand{\a}{\alpha}
\renewcommand{\b}{\beta}

\renewcommand{\l}{\lambda}

\renewcommand{\i}{\iota}

\renewcommand{\emptyset}{\varnothing}

\DeclareMathOperator{\fpt}{fpt}

\DeclareMathOperator{\lct}{lct}
\DeclareMathOperator{\Spec}{Spec}

\DeclareMathOperator{\Mon}{Mon}
\DeclareMathOperator{\Hom}{Hom}
\DeclareMathOperator{\ini}{in}
\DeclareMathOperator{\gin}{gin}
\DeclareMathOperator{\GL}{GL}

\DeclareMathOperator{\vol}{vol}

\DeclareMathOperator{\conv}{conv}
\DeclareMathOperator{\id}{id}

\newcommand{\del}{\partial}

\DeclareMathOperator{\supp}{supp}
\DeclareMathOperator{\codim}{ht}
\DeclareMathOperator{\ord}{ord}

\newcommand{\ceil}[1]{\lceil #1\rceil}

\newcommand{\lr}[1]{\langle #1 \rangle}

\newcommand{\floor}[1]{\left\lfloor#1\right\rfloor}

\renewcommand{\char}{\textup{char }}

\renewcommand{\bar}{\overline}

\theoremstyle{plain}
\newtheorem{Theoremx}{Theorem}
 
\newtheorem{theorem}{Theorem}[section]
\newtheorem{prop}[theorem]{Proposition}
\newtheorem{lemma}[theorem]{Lemma}
\newtheorem{cor}[theorem]{Corollary}
\newtheorem{conj}[theorem]{Conjecture}

\newtheorem{notation}[theorem]{Notation}
\newtheorem{assumption}[theorem]{Assumption}

\newtheorem*{theorem*}{Theorem}
\newtheorem*{prop*}{Proposition}

\theoremstyle{definition}
\newtheorem{defn}[theorem]{Definition}
\newtheorem{example}[theorem]{Example}
\newtheorem{remark}[theorem]{Remark}

\newtheorem{defn-prop}[theorem]{Definition-Proposition}

\crefname{defn-prop}{definition-proposition}{definition-propositions}
\Crefname{defn-prop}{Definition-Proposition}{Definition-Propositions}

\crefname{prop}{Proposition}{Propositions}
\Crefname{prop}{Proposition}{Propositions}
\crefname{defn}{Definition}{Definitions}
\Crefname{defn}{Definition}{Definitions}
\Crefname{conj}{Conjecture}{Conjectures}

\begin{document}

\title{Homogeneous Ideals with Minimal Singularity Thresholds}
\begin{abstract}
Let $(\Oc_n, \mf)$ denote the ring of germs of holomorphic functions $\mathbb{C}^n\to \mathbb{C}$, and let $I\subseteq \mathcal{O}_n$ be an $\mf$-primary ideal. Demailly and Pham showed that $\lct(I) \geq \frac{1}{e_1(I)} + \dots + \frac{e_{n-1}(I)}{e_n(I)}$, where $e_j(I)$  is the mixed multiplicity $e(I,\dots, I, \mf,\dots, \mf)$, with $I$ repeated $j$ times and $\mf$ repeated $n-j$ times. 

	We generalize the lower bound to the case of an arbitrary ideal of an excellent regular local (or standard-graded) ring of equal characteristic, with $\lct(I)$ replaced by the $F$-threshold $c^{\mf}(I)$ in positive characteristic.  Our main result is a classification of homogeneous ideals in polynomial rings for which the lower bound is attained, resolving a conjecture of Bivi\`a-Ausina in the graded case. 
\end{abstract} 
\author{Benjamin Baily}
\thanks{The author was supported by NSF grants DMS-2101075 and DMS-1840234 and a Simons Dissertation Fellowship.}
\maketitle
\section{Introduction}
Let $X$ be a smooth variety, $Y\subseteq X$ a proper closed subscheme, and $y\in Y$ a point. We study numerical invariants measuring the singularities of the pair $(X, Y)$ at the point $y$: in characteristic zero, we consider the log canonical threshold; in positive characteristic, we consider the $F$-pure threshold.

The log canonical threshold (lct) has attracted considerable attention in algebraic geometry due to its connections with the Minimal Model Program and singularity theory. The $F$-pure threshold (fpt) is an analog of the lct in positive characteristic, related via reduction to characteristic $p \gg 0$ \cite{hara_on_a_2004}. The fpt has provided insights into positive characteristic geometry \cite{bhatt_f_2015,kadyrsizova_lower_2022,page_maximal_2024} and reduction modulo $p$ has been leveraged to better understand singularities in characteristic zero \cite{takagi_f-singularities_2004,takagi_formulas_2006,benito_cluster_2015}.

In recent years, many authors have found lower bounds on the lct (and fpt) in terms of more classical invariants of the embedding \cite{mustata_multiplicities_2002,de_fernex_multiplicities_2004,de_fernex_bounds_2003,takagi_f-pure_2004,kim_teissier_2021,elduque_conjecture_2021,kadyrsizova_lower_2022}. The first of these bounds is due to Skoda \cite{skoda_psh_1972}, who showed that
\begin{equation}\label{eqn:skoda-lb}
\frac{1}{\text{mult}_y(Y)} \leq \lct_y(X, Y).
\end{equation}
The case of equality in \Cref{eqn:skoda-lb} has long been known to experts: $\frac{1}{\text{mult}_y(Y)} = \lct_y(X, Y)$ if and only if $Y$ is a smooth divisor at $y$. Skoda's original formulation of \Cref{eqn:skoda-lb} concerned germs of plurisubharmonic functions, and the equality case in the analytic formulation was settled by Guan and Zhou \cite{guan_lelong_2015}. We consider the equality case in a generalization of Skoda's lower bound.

Let $I$ denote the defining ideal of $Y$ in $\Oc_{X,y}$ and $n = \dim \Oc_{X,y}$. In the case that $\Oc_{X,y}/I$ is zero-dimensional, Demailly and Pham \cite{demailly_sharp_2014} strengthened \Cref{eqn:skoda-lb} by incorporating higher-codimension information about the embedding, namely the mixed multiplicities $e_j(I):= e(I,\dots, I, \mf_y, \dots, \mf_y)$; see \Cref{defn:mixed-mult}. We have 
\begin{equation}\label{eqn:demailly-pham-lb}
	\frac{1}{e_1(I)} + \frac{e_1(I)}{e_2(I)} + \dots + \frac{e_{n-1}(I)}{e_n(I)}\leq \lct_y(X, Y).
\end{equation}

Moving beyond the case considered by Demailly and Pham, we relax the assumption that $\Oc_{X,y}/I$ is zero-dimensional. We recall Rashkovskii's definition of the invariant $E_l(I)$ for any $l\leq \codim(I)$. As in \cite[Theorem 1.4]{rashkovskii_extremal_2015}, we may let $L = V(h_{l+1},\dots, h_n)$ denote the vanishing locus of $n-l$ general linear forms in $\Oc_{X,y}$ and let $I|_L$ denote the expansion of $I$ to the ring $\Oc_{L,y}$. As $\Oc_{L,y}/I|_L$ is zero-dimensional, we apply \Cref{eqn:demailly-pham-lb} and \cite[Example 9.5.4]{lazarsfeld_positivity_2004} to $I|_L$. This yields 
\begin{equation}\label{eqn:rashkovskii_lb}
    \frac{1}{e_1(I|_L)} + \dots + \frac{e_{l-1}(I|_L)}{e_l(I|_L)} \leq \lct(I|_L)\leq \lct(I).
\end{equation}
We then define 
\[E_l(I):= \frac{1}{e_1(I|_L)} + \dots + \frac{e_{l-1}(I|_L)}{e_l(I|_L)},\] which is well-defined for sufficiently general $L$ by \Cref{prop:sigma-properties}. The content of \Cref{eqn:rashkovskii_lb} is that $E_l(I)\leq \lct(I)$. 

More generally, if $(R, \mf)$ is an arbitrary regular local ring and $I\subseteq R$ with $\codim(I) \geq l$, we define $E_l(I)$ similarly --- in particular, when $R/\mf$ is infinite, we let $L$ denote the vanishing set of $n-l$ elements of $\mf$ whose images in $\mf/\mf^2$ are sufficiently general and set $E_l(I):= E_l(I|_L)$. See \Cref{defn:dp-invariant} for a precise definition.

Our first contribution is a generalization of \Cref{eqn:rashkovskii_lb} to arbitrary characteristic. 
\begin{Theoremx}[\Cref{thm:bound}]
	Let $(R, \mf)$ be a regular local ring or a polynomial ring over a perfect field considered with the standard grading.  When $\char R = 0$, assume that $R$ is excellent. Let $I\subseteq R$ be an ideal of height at least $l$, which is assumed to be either homogeneous or $\mf$-primary when $R$ is a polynomial ring. Let $c(I)$ denote the lct (resp. fpt) of $I$. Then $E_l(I) \leq c(I)$.
\end{Theoremx}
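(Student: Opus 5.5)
We prove $E_l(I)\le c(I)$ by reducing to the $\mf$-primary, monomial case, where both sides can be computed from Newton polyhedra, and then comparing the resulting quantities combinatorially.

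\textbf{Reduction to the $\mf$-primary case.} First enlarge the residue field if necessary: pass to $R[t]_{\mf R[t]}$, or complete and adjoin an infinite field. This changes neither the mixed multiplicities nor $c(I)$.

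Next choose elements $h_{l+1},\dots,h_n\in\mf$ that are general modulo $\mf^2$, and set $L=V(h_{l+1},\dots,h_n)$. By \Cref{prop:sigma-properties}, $E_l(I)=E_l(I|_L)$, and $I|_L$ is primary to the maximal ideal of the regular ring $R/(h_{l+1},\dots,h_n)$.

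It remains to check that restriction to a regular quotient by part of a regular system of parameters cannot increase the threshold, i.e.\ $c(I|_L)\le c(I)$.
\begin{itemize}
\item For the fpt this follows from the test-element/Fedder description: $I^{[\cdot]}$-membership of $\mf^{[p^e]}$ passes to quotients, so $\nu_I(p^e)\ge \nu_{I|_L}(p^e)$.
\item For the lct it is inversion of adjunction; in the excellent equicharacteristic-zero setting it is the restriction theorem for multiplier ideals.
\end{itemize}
So we may assume $l=n$ and $I$ is $\mf$-primary.

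\textbf{Reduction to monomial ideals.} Fix a monomial order with general coordinates and degenerate $I$ to its generic initial ideal $J=\gin(I)$. The threshold can only drop under flat degeneration: $c(J)\le c(I)$ by lower semicontinuity of the lct in families, and in characteristic $p$ by the analogous semicontinuity of $F$-thresholds, i.e.\ $\ini(\mf^{[q]}:I^{a})$ arguments.

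The mixed multiplicities satisfy $e_j(J)\ge e_j(I)$. However, $E_n$ is not monotone in the $e_j$, so this inequality alone is insufficient. Instead we use that for a generic linear section the mixed multiplicities of $\gin(I)$ agree with those of $I$; this holds because $\gin$ commutes with generic hyperplane sections in revlex order (Bayer--Stillman), and $e_j(I)$ is the multiplicity of $I$ restricted to a generic $j$-dimensional linear section. Hence $E_n(J)=E_n(I)$, and it suffices to treat monomial $\mf$-primary $J$, even Borel-fixed ones.

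\textbf{Monomial case.} For monomial $J$ with Newton polyhedron $P$, Howald's formula gives $c(J)=1/t$, where $(t,\dots,t)\in\partial P$. This holds in every characteristic once the fpt formula for monomial ideals is used. The mixed multiplicities are $e_j(J)=j!\,$ times the mixed covolumes of the complement of $P$ against the standard simplex. The desired inequality then becomes Rashkovskii's/Demailly--Pham's combinatorial inequality for convex co-bodies. We prove it by induction on $n$:
\begin{itemize}
\item intersect with the coordinate hyperplane $x_n=0$ to obtain the $(n-1)$-dimensional section, which realizes $e_j$ for $j<n$ by genericity in the Borel-fixed case;
\item then show $e_{n-1}/e_n\le 1/t-\lct(J|_{x_n=0})$, via a cone-volume comparison: $P^c$ contains the pyramid over its section with apex $(0,\dots,0,\alpha)$.
\end{itemize}

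\textbf{Main obstacle.} The main obstacle is this last comparison, together with the equality of mixed multiplicities under $\gin$. If the direct volume estimate fails, the fallback is to cite the Demailly--Pham inequality in its convex-geometric form and verify that only Newton-polyhedral data enter it.
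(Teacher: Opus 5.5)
\textbf{There is a genuine gap.} It sits at the step ``hence $E_n(J)=E_n(I)$'' for $J=\gin(I)$, and that claim is false. Bayer--Stillman does let you read off $\gin$ of a generic section from $\gin(I)$, and $\gin$ preserves the Hilbert function of $R/I$. But $e_j$ is a Hilbert--Samuel invariant of the \emph{ideal}: it is built from all powers $I^m$, and in general $\gin(I)^m\subsetneq\gin(I^m)$. A concrete failure: for $I=(x^2,y^2)\subseteq k[x,y]$ in characteristic zero, $\gin(I)=(x^2,xy,y^3)$. Then $e_2(I)=4$ but $e_2(\gin(I))=5$, so $E_2(\gin(I))=\tfrac12+\tfrac25=\tfrac9{10}<1=E_2(I)$.

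So even a complete monomial argument would only give $E_n(J)\le c(J)\le c(I)$, which says nothing about $E_n(I)$. The missing idea is to degenerate the whole family of powers. Take $\af_m=\ini_>(\gamma^{-1}I^m)$ for general $\gamma$. Then $\lim_m e_j(\af_m)/m^j=e_j(I)$ by \Cref{lemma:mixed-mult-limit}, which rests on Musta\c{t}\u{a}'s asymptotic formula and a double semicontinuity argument, not on Bayer--Stillman alone. Also $m\,c(\af_m)\le m\,c(I^m)=c(I)$. The monomial inequality for each $\af_m$ then gives $mE_n(\af_m)\le c(I)$, and $mE_n(\af_m)\to E_n(I)$. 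The paper does the same thing via the limiting polyhedron $\Gamma(\af_\bullet)$. You also need to explain how the local case reaches a polynomial ring before any initial ideal makes sense. The paper completes and uses that an $\mf$-primary ideal of $k[[x_1,\dots,x_n]]$ is extended from $k[x_1,\dots,x_n]$.

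\textbf{The monomial step also fails as proposed.} First, the pyramid containment is reversed. $\Gamma(J)$ is convex and contains both $(0,\dots,0,\alpha)$ and its section by $x_n=0$, so it is $\Gamma(J)$ that contains their convex hull. Hence $\R^n_{\ge0}\setminus\Gamma(J)$ lies \emph{inside} the co-pyramid, which gives $e_n(J)\le\alpha\,e_{n-1}(J)$, the wrong direction.

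Second, the inductive inequality $c(J)-c(J|_{x_n=0})\ge e_{n-1}(J)/e_n(J)$ is false even for strongly stable (hence Borel-fixed in characteristic zero) ideals. It is the restriction inequality discussed in \Cref{ex:pinched-polytope}. Take $J=(x^2,xy,xz)+(y,z)^3\subseteq k[x,y,z]$:
\begin{itemize}
\item $\Gamma(J)=\{x+y+z\ge2,\ 2x+y+z\ge3\}\cap\R^3_{\ge0}$, so $c(J)=\tfrac43$.
\item $J|_{z=0}=(x^2,xy,y^3)$, which is also the image of $J$ under a general linear section. It has $c=1$ and multiplicity $5=e_2(J)$.
\item $e_3(J)=3!\cdot\tfrac73=14$.
\end{itemize}
Hence $c(J)-c(J|_{z=0})=\tfrac13<\tfrac5{14}$. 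The induction discards the slack $c(J|_{z=0})-E_2(J|_{z=0})=\tfrac1{10}$, which is exactly what is needed here.

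Your fallback, citing Demailly--Pham for monomial ideals, would repair this step, since both sides are Newton-polyhedral and hence characteristic-free. The paper argues directly instead. It takes one supporting half-space $H^+\supseteq\Gamma(\af_\bullet)$ at the diagonal point $\mu\mathbf 1$. The simplex system $\af'_\bullet$ cut out by $H^+$ contains $\af_\bullet$, has the same threshold, and satisfies $E_n(\af'_\bullet)=\sum_i 1/a_i=c(\af'_\bullet)$. It dominates $E_n(\af_\bullet)$ because $E_n$ is monotone under inclusion (\Cref{prop:dp-rees}). No induction on $n$ is needed.
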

In contrast to Skoda's bound, little is known about the case of equality in \Cref{eqn:demailly-pham-lb}. The best result in this direction is due to Bivi\`a-Ausina \cite{bivia-ausina_log_2016}, who classified cases of equality under the additional assumption that $\lct(I) = \lct(I^0)$, where $I^0$ is the smallest monomial ideal containing $I$. Our main result is coordinate-agnostic and resolves the conjecture of \cite[Remark 3.7]{bivia-ausina_Lojasiewicz} in the graded case.
\begin{Theoremx}[\Cref{thm:min-char}]
    Let $k$ be a perfect field of characteristic zero (resp $p > 0$). Let $R = k[x_1,\dots, x_n]$ and let $I\subseteq R$ be a homogeneous ideal of height at least $l$. If $E_l(I) = \lct(I)$ (resp. $E_l(I) = \fpt(I)$), then there exist integers $d_1,\dots, d_l$  such that, in suitable coordinates, we have
    \[
    \overline{I} = \overline{\left(x_1^{d_1},\dots, x_l^{d_l}\right)}.
    \]
\end{Theoremx}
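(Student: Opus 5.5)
Here is the plan. First reduce to the case $l = n$ with $I$ an $\mf$-primary homogeneous ideal. Replace $I$ by $I|_L$, where $L$ is cut out by $n-l$ general linear forms. By \Cref{eqn:rashkovskii_lb} and its positive-characteristic analogue (\Cref{thm:bound}) we have
\[
E_l(I) = E_l(I|_L) \leq c(I|_L) \leq c(I),
\]
so equality forces $E_l(I|_L) = c(I|_L)$. If we can show $\overline{I|_L} = \overline{(y_1^{d_1},\dots,y_l^{d_l})}$ on $L$, we then need to lift back. To lift, observe that $c(I) = c(I|_L)$ together with $\codim I \geq l$ should force $I$ to be extended from $l$ of the variables. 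Concretely, I would show that the integral closure of $I$ is generated in the variables $x_1,\dots,x_l$ by comparing Rees valuations: a monomial valuation computing $c(I)$ must be pulled back from $L$. This lifting step is delicate; I would handle it after the primary case, possibly by induction on $n-l$ using one general hyperplane at a time.

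For the $\mf$-primary homogeneous case, pass to a generic initial ideal (or a Gröbner degeneration) $J = \gin(I)$, which is monomial. Mixed multiplicities are upper semicontinuous in the right direction, and $c$ is lower semicontinuous under flat degeneration: $c(J) \leq c(I)$. For monomial ideals the mixed multiplicities are computed from the Newton polyhedron, and $e_j(J) \geq e_j(I)$ in the relevant sense. The aim is the chain
\[
E_n(I) \leq E_n(J) \leq c(J) \leq c(I),
\]
which is forced to be a chain of equalities. Check carefully that each $e_{j-1}/e_j$ term moves the right way; this may require using the fact that $e_j(I) = e_j(J)$ for generic coordinates and homogeneous $I$, since the multiplicities $e_j$ are determined by Hilbert functions of general linear sections, and gin preserves these.

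For monomial $J$, the condition $E_n(J) = c(J)$ is Bivià-Ausina's setting, where $c(J)$ is given by Howald's formula via the Newton polyhedron. I would reprove or invoke the classification: equality happens exactly when $\overline{J}$ is a diagonal ideal $\overline{(x_1^{d_1},\dots,x_n^{d_n})}$, after ordering $d_1\le\dots\le d_n$. In this case $e_j = d_1\cdots d_j$ and $E_n = \sum 1/d_i$. The argument goes through the AM–GM-type inequalities used in the Demailly–Pham proof, where equality is forced at each step.

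The main obstacle is getting from $J$ back to $I$: the equality $\overline{J} = \overline{(x^{d})}$ and $c(I) = c(J)$ must imply that $\overline{I}$ is a diagonal ideal in some coordinates. My approach would be to use the graded structure. The degrees $d_1 \le \dots \le d_n$ are visible as the degrees of a minimal reduction. I would then pick homogeneous elements $f_i \in I$ of degree $d_i$ that form a reduction. Equality of $c$ should force the leading forms to be coordinates, i.e.\ the $f_i$ are powers of independent linear forms up to integral closure. I would argue this inductively on $d_1$: $I$ must contain $\ell^{d_1}$ up to integral closure for a linear form $\ell$, otherwise restricting to the hyperplane $\ell = 0$ strictly increases the threshold contribution. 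I expect this rigidity step to be the hardest part.
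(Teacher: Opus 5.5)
Your plan has two genuine gaps, plus two supporting steps you leave open.

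\textbf{The degeneration step fails as written.} For a single generic initial ideal, $e_j(\gin(I))\neq e_j(I)$ in general. $\gin$ preserves the Hilbert function of $R/I$, but the $e_j(I)$ are governed by the lengths of $R/I^r\mf^s$, and $\gin(I)^m\subseteq \gin(I^m)$ is usually strict. For example, in characteristic $0$ the ideal $I=(x^2,y^2)\subseteq k[x,y]$ has $\gin(I)=(x^2,xy,y^3)$ and $e(\gin(I))=5\neq 4=e(I)$. Hence $E_2(\gin(I))=\tfrac12+\tfrac25<1=E_2(I)=c(I)=c(\gin(I))$. So the inequality $E_n(I)\le E_n(J)$ in your chain is false, and equality does not pass to $J$.

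What does work is the asymptotic version used in the paper. For the graded system $\af_m=\gin(I^m)$, Bivi\`a-Ausina's limit theorem (\Cref{lemma:mixed-mult-limit}) gives $E_n(\af_\bullet)=E_n(I)$. One then gets (\Cref{cor:asymptotic-monomial}, \Cref{lemma:p_j-computes-d_j}) that the closure of $\R^n_{\geq 0}\setminus\Gamma(\af_\bullet)$ is the simplex $\conv(\mathbf{0},d_1(I)\bb_1,\dots,d_n(I)\bb_n)$.

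Even this is only a necessary condition and cannot be ``transferred back.'' By \cite{mayes_limiting_2014}, every homogeneous complete intersection has such a limiting body. For instance, $(xy,x^3+y^3)$ has the same one as $(x^2,y^3)$, yet $\overline{(xy,x^3+y^3)}=\overline{(xy,x^3,y^3)}$ gives $c=1>\tfrac56=E_2$. The degeneration recovers the degrees $d_j$ and nothing more.

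\textbf{The rigidity step you defer is the entire content of the theorem.} Your suggested mechanism does not supply it. The available hyperplane-restriction bounds, such as \Cref{lemma:lojasiewicz-bound}, are non-strict and say nothing about whether $\ell^{d_1}\in\overline{I}$.

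The paper reduces, using general $f_j\in[I]_{d_j}$ and \Cref{prop:dp-rees}, to a homogeneous complete intersection. It then proves that case by induction on the number $r$ of distinct degrees.
\begin{itemize}
\item The key input missing from your plan is the base case $r=2$. The exact formula $c(I)=\frac{n}{d_2}+c(I_1)\frac{d_2-d_1}{d_2}$ (\Cref{lemma:two-degree-formula}) reduces it to the characterization of equigenerated ideals with $c(I_1)=\codim(I_1)/d_1$. This is \cite[Theorem 3.5]{de_fernex_bounds_2003} in characteristic $0$ and \cite[Theorem 3.17]{baily_equigenerated_2025} in characteristic $p$.
\item The inductive step uses slicing, Thom--Sebastiani and the inductive hypothesis to bring $I$ into the form $(\dagger)$.
\item It then uses a tailored weight degeneration plus a Frobenius-power computation to show that any surviving cross terms in $(\x_1)$ force $c>E_n$.
\end{itemize}

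\textbf{Two further steps are left open.}
\begin{itemize}
\item \emph{Lifting from $I|_L$.} Your valuation argument presupposes that $c(I)$ is computed by a valuation monomial in suitable coordinates, which is essentially the conclusion itself (compare \Cref{conj:valuation-theoretic}). The paper instead applies the $\mf$-primary case to $I_t=I+(\ell_{l+1}^t,\dots,\ell_n^t)$, which satisfies $c(I_t)=E_n(I_t)$. It then lets $t\to\infty$, using stabilization in bounded degree.
\item \emph{Descent from $\bar{k}$ to $k$.} This is exactly where perfectness is needed (\Cref{ex:must-be-perfect}). The paper handles it by noting that the radicals of the ideals generated by the $[I]_{\le e}$ are generated by $k$-rational linear forms when $\bar{k}/k$ is separable.
\end{itemize}
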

We briefly outline the proof of \Cref{thm:min-char}, which reduces to the case that $k = \bar{k}$ and $l = n$. Assume $E_n(I) = c(I)$. Write $I = I_1 + \dots + I_r$, where $I_j$ is generated by $d_j$-forms and $d_1 < \dots < d_r$.
\begin{enumerate}[(1)]
    \item Using techniques from \cite{bivia-ausina_log_2016,mayes_limiting_2014}, we control the generic initial ideals $\{\gin(I^n)\}_{n\geq 1}$. 
    \item Using (1), we obtain a formula for the numbers $e_j(I)$ in terms of the $d_j$ and the heights of the ideals  $I_1+\dots+I_j$. 
    \item Our formula for the $e_j(I)$ implies that $E_n(I) = B(I)$, where $B(I)$ is defined in terms of mixed \L{}ojasiewicz exponents as in \cite{bivia-ausina_Lojasiewicz}. This allows us to reduce to the case of a homogeneous complete intersection.
    \item We prove the complete intersection case (\Cref{prop:CI-min-char}) by induction on the number of distinct degrees $d_1,\dots, d_r$. 
\end{enumerate}
In the case $r = 1$, any $\mf$-primary ideal $I$ generated by $d$-forms automatically satisfies $\overline{I} = \mf^d$, so there is no way to use $r = 1$ as a useful base case for our induction. Instead, we use $r = 2$. In this case, we show (\Cref{lemma:two-degree-formula}) that $c(I) = E_n(I)$ if and only if $c(I_1) = \codim(I_1)/d_1$. As $\overline{I} = \overline{\overline{I_1} + \mf^{d_2}}$, Theorem B is equivalent in this case to the claim that $\overline{I_1} = (x_1,\dots, x_{\codim(I_1)})^{d_1}$ in suitable coordinates. In characteristic zero, this follows from \cite[Theorem 3.5]{de_fernex_bounds_2003}. In positive characteristic, this fact is \cite[Theorem 3.17]{baily_equigenerated_2025}.

By \Cref{ex:must-be-perfect}, the assumption that $k$ is perfect cannot be weakened. The other main assumption in \Cref{thm:min-char} is that $R$ is graded and $I$ is homogeneous, the relaxation of which is a subtler question. In characteristic zero, we conjecture that \Cref{thm:min-char} holds 
	in the local case up to an \textit{analytic} change of coordinates; see \Cref{conj:ideal-theoretic} for the precise statement and \Cref{conj:valuation-theoretic} for an even stronger statement in terms of valuations. We verify both conjectures in dimension 2. In contrast, in characteristic $p > 0$, \Cref{ex:pos-char-failure} shows that analytic changes of coordinates do not suffice to characterize when $c(I) = E_l(I)$. 
	
	We also remark on two (a priori) possible improvements to Theorems A and B. The first is regarding \cite[Theorem 1.2]{hiep_psh_2025}, which defines for each $1\leq k\leq n$ an invariant $H_k(I)$ satisfying $E_k(I) \leq H_k(I) \leq \lct(I)$ for all ideals $I\subseteq \Oc_n$ of height at least $k$. It would be interesting to see whether the bound $H_k(I)\leq c(I)$ remains true in positive characteristic, and moreover whether there exist ideals for which $E_k(I) < H_k(I) = c(I)$.  
	
	The second remark concerns \cite[Question 1.3]{kim_teissier_2021}, which conveys a conjecture of Hoang Hiep Pham that (implies) $c(I) - c(I|_H) \geq \frac{e_{n-1}(I)}{e_n(I)}$ for every height-$n$ ideal $I\subseteq \Oc_n$ and every hyperplane $H$ through $0\in \C^n$. Tommaso de Fernex shared a counterexample to this conjecture with the author, which we record in \Cref{ex:pinched-polytope}. 

\subsection*{Acknowledgments}
The author is grateful to his advisor, Karen Smith, for her support and guidance through many iterations of this project. The author is additionally grateful to Havi Ellers and Mattias Jonsson for feedback on an earlier draft, and to Tommaso de Fernex for helpful discussions.
\section{Preliminaries}
\begin{assumption}
	Throughout this article, all rings are commutative, Noetherian, and unital. All polynomial rings are considered with the standard grading.
\end{assumption}
\subsection{\texorpdfstring{$F$}{F}-Pure and Log Canonical Thresholds}
We begin with a formal definition of the lct. For a detailed introduction, see \cite{pragacz_impanga_2012}. 
\begin{defn}[Log Resolution]
    Let $X$ be a smooth variety and $Y\subseteq X$ a proper closed subvariety with defining ideal $\af$. Let $W$ be a smooth variety. A projective morphism $\pi: W\to X$ is a \textbf{log resolution} of $(X, Y)$ if $\pi$ is an isomorphism over $X\setminus Y$ and the inverse image $\af\cdot \Oc_W$ is the ideal of a Cartier divisor $D$ such that $D + K_{W/X}$ has simple normal crossings.
\end{defn}
\begin{defn-prop}[Log canonical threshold, \cite{pragacz_impanga_2012} Theorem 1.1]\label{defn:lct}
    Let $X$ be a smooth variety over a characteristic zero field and $Y\subseteq X$ a closed subvariety with defining ideal $\af$. By Hironaka's theorem on resolution of singularities, there exists a log resolution $\pi: W\to X$ of the pair $(X,Y)$. In particular, there exists a simple normal crossings divisor $E_1 + \dots + E_N$ such that
    \[
    D = \sum_{i=1}^N a_iE_i\qquad \text{and} \qquad K_{W/X} = \sum_{i=1}^N k_iE_i.
    \]
    For $y\in Y$, the quantity $\min_{i: y\in \pi(E_i)} \frac{k_i + 1}{a_i}$ does not depend on the log resolution $\pi$ and is denoted $\lct_y(X,Y)$, the \textbf{log canonical threshold} of $(X,Y)$ at $y$. If a point $y$ is not specified, we define
    \[
    \lct(X,Y) = \min_{y\in Y}\lct_y(X,Y) = \min_i \frac{k_i+1}{a_i}.
    \]
    We make the convention that $\lct_y(X,Y) = \infty$ when $y\notin Y$. Additionally, when $k$ is a characteristic zero field, $R = k[x_1,\dots, x_n],$ and $I\subseteq R$ is an ideal, we set $\lct(R, I):= \lct(\A^n, V(I))$ and $\lct_0(R, I):= \lct_0(\A^n, V(I))$. 
    
    In fact, one can define the log canonical threshold of $(X, Y)$ whenever $X$ is an integral \textit{excellent} scheme; see \cite[\S 2]{de_Fernex_limits_2009}. 
\end{defn-prop}
For background on the $F$-pure threshold, we direct the reader to \cite{takagi_f-pure_2004}. In this subsection, we summarize several key definitions and results.
\begin{defn}
    Let $R$ be a ring of characteristic $p>0$. We let $F_*R$ denote the $R$-module structure on $R$ given by restriction of scalars along the Frobenius map $F:R\to R$. We say $R$ is $F$-finite if $F_*R$ is module-finite over $R$. 
\end{defn}
\begin{defn}[\cite{takagi_f-pure_2004}]
    Let $R$ be an $F$-finite ring, $I\subseteq R$ an ideal, and $t\in\R_{\geq 0}$. The pair $(R, I^t)$ is \textit{$F$-split} if for $e\gg 0$, the map
    \[
    I^{\floor{t(p^e-1)}}\cdot \Hom(F^e_*R, R)\to R
    \]
    is surjective. The \textit{$F$-pure threshold} of the pair $(R,I)$ is the supremum of all $t$ such that $(R, I^t)$ is $F$-split. We denote this quantity by $\fpt(R, I)$, or $\fpt(I)$ when the ambient ring is clear. 

Let $k$ be an $F$-finite field of characteristic $p>0, R = k[x_1,\dots, x_n]$, and $I\subseteq R$. Let $\mf:= (x_1,\dots, x_n)$ denote the homogeneous maximal ideal of $R$. We then let $\fpt_0(I)$ denote the quantity $\fpt(R_{\mf},IR_{\mf}).$
\end{defn}
In practice, we do not use the above definitions. Instead, we use the characterization of the $F$-pure threshold as the $F$-threshold at the maximal ideal and the lct as a limit of fpt. 
\begin{defn}
	Let $R$ be a ring of characteristic $p > 0$. Let $\af, J$ be ideals of $R$ such that $\af\subseteq \sqrt{J}$. For each nonnegative integer $e$, we define
	\[
	\nu_{\af}^J(p^e):= \max\{t \in \Z^+: \af^t\not\subseteq J^{[p^e]}\}.
	\]
	By \cite{de_stefani_existence_2018}, the sequence $\dfrac{\nu_{\af}^J(p^e)}{p^e}$ has a limit as $e\to\infty$; we denote this limit by $c^J(\af)$ and refer to it as the \textit{$F$-threshold of $\af$ at $J$}.
\end{defn}
\begin{prop}
    Let $(R,\mf)$ be an $F$-finite regular local ring. Then the $F$-pure threshold of the pair $(R, I)$ is equal to $c^{\mf}(I)$. If instead $R$ is a polynomial ring over an $F$-finite field and $I\subseteq R$ is a homogeneous ideal, then the same holds when we let $\mf$ denote the homogeneous maximal ideal of $R$.
    \end{prop}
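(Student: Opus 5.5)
The plan is to pass through the test-ideal characterization of the $F$-pure threshold and then identify the test-ideal condition with the Frobenius-power containment defining $\nu_I^{\mf}(p^e)$.

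\textbf{Reduction to a containment statement.} Since $R$ is $F$-finite and regular, $F^e_*R$ is a finite free $R$-module. By Fedder-type arguments, $\Hom_R(F^e_*R,R)$ is generated as an $F^e_*R$-module by a single map $\Phi_e$, the trace of the Frobenius. Moreover, for an ideal $\af\subseteq R$, the image $\af\cdot\Hom(F^e_*R,R)$ equals $\Phi_e(F^e_*\af)=:\af^{[1/p^e]}$. This is the smallest ideal $J$ with $\af\subseteq J^{[p^e]}$, by the standard characterization of $\Phi_e$ via a free basis of monomials. Hence the map $I^{\floor{t(p^e-1)}}\cdot\Hom(F^e_*R,R)\to R$ is surjective if and only if $I^{\floor{t(p^e-1)}}\not\subseteq \mf^{[p^e]}$, because $R$ is local and $\mf$ is the unique maximal ideal. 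So $(R,I^t)$ is $F$-split exactly when $\floor{t(p^e-1)}\le \nu_I^{\mf}(p^e)$ for all $e\gg0$.

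\textbf{Comparing thresholds.} First suppose $t<c^{\mf}(I)$. Choose $t<t'<c^{\mf}(I)$. Since $\nu_I^{\mf}(p^e)/p^e\to c^{\mf}(I)$ (using \cite{de_stefani_existence_2018}), for $e\gg0$ we get $\nu_I^{\mf}(p^e)\ge t'p^e> t(p^e-1)$. So $(R,I^t)$ is $F$-split, and therefore $\fpt\ge c^{\mf}(I)$.

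Conversely, suppose $(R,I^t)$ is $F$-split. Then $\nu_I^{\mf}(p^e)\ge \floor{t(p^e-1)}\ge t(p^e-1)-1$ for $e\gg0$. Dividing by $p^e$ and letting $e\to\infty$ gives $c^{\mf}(I)\ge t$. Taking the supremum over such $t$ yields $\fpt\le c^{\mf}(I)$.

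There is one subtlety in the definition as stated: $F$-splitness is required for $e\gg0$, so we need stability in $e$. It is harmless here, since both directions above only use the conditions for $e$ large.

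\textbf{Graded case.} Now let $R=k[x_1,\dots,x_n]$ with $I$ homogeneous and $\mf$ the homogeneous maximal ideal. Here $\fpt_0(I)$ is by definition computed in $R_\mf$. The localization $R_\mf$ is an $F$-finite regular local ring, so the local case gives $\fpt_0(I)=c^{\mf R_\mf}(IR_\mf)$. It then remains to check that $\nu$ is unchanged under localization: $I^t\subseteq \mf^{[p^e]}$ in $R$ if and only if $I^tR_\mf\subseteq \mf^{[p^e]}R_\mf$. This holds because $\mf^{[p^e]}$ is $\mf$-primary, hence contracted from $R_\mf$. (Homogeneity is used only so that "the fpt of $I$" means the fpt at the origin.)

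\textbf{Main obstacle.} The key step is the identification $\af\cdot\Hom(F^e_*R,R)=\af^{[1/p^e]}$ together with its minimality property. For a polynomial ring or a complete regular local ring this follows from an explicit basis of $F^e_*R$. For a general $F$-finite regular local ring I would use Kunz's theorem, which gives that $F^e_*R$ is free. I would then prove minimality directly: $\af\subseteq J^{[p^e]}$ if and only if $\phi(F^e_*\af)\subseteq J$ for all $\phi$, which uses $\Hom(F^e_*R,R)\otimes R/J\cong\Hom(F^e_*(R/J^{[p^e]}),R/J)$ via flatness of Frobenius. This is essentially \cite{takagi_f-pure_2004}, and I would cite it there.
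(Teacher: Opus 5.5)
Your local argument is correct and complete in outline: the Fedder-type fact that $\af\cdot\Hom_R(F^e_*R,R)\subseteq\mf$ if and only if $\af\subseteq\mf^{[p^e]}$ turns $F$-splitness of $(R,I^t)$ into $\floor{t(p^e-1)}\le\nu_I^{\mf}(p^e)$ for $e\gg0$, and both of your threshold comparisons are fine. The paper gives no argument of its own here. It cites \cite[Remark 1.5]{mustata_f-thresholds_2004} for the local case and \cite[Proposition 3.10]{de-stefani_graded_2018} for the graded case, and your local argument is essentially the standard proof behind the first citation.

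The graded case has a gap. With the paper's definition, $\fpt(R,I)$ for $R=k[x_1,\dots,x_n]$ is the \emph{global} threshold. It asks that the image of $I^{\floor{t(p^e-1)}}\cdot\Hom_R(F^e_*R,R)$ be all of $R$. By localizing, you prove only $\fpt(R_\mf,IR_\mf)=c^{\mf}(I)$, i.e.\ the statement for $\fpt_0(I)$. That version holds for every ideal $I\subseteq\mf$ and never uses homogeneity. Your parenthetical correctly identifies where homogeneity matters, but the equality $\fpt(R,I)=\fpt_0(I)$ is the real content of the graded statement and it needs proof. It genuinely fails without homogeneity: for $I=(x(x-1)^2)\subseteq k[x]$ one has $c^{\mf}(I)=\fpt_0(I)=1$, but $\fpt(R,I)=1/2$ because of the point $x=1$.

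To close the gap, show that for a homogeneous ideal $\af$ the image ideal $J_e=\af\cdot\Hom_R(F^e_*R,R)$ is homogeneous.
\begin{enumerate}
\item Since $k$ is $F$-finite, $F^e_*R$ is free over $R$ with basis $F^e_*(\lambda_j\x^{\ab})$, where the $\lambda_j$ form a basis of $k$ over $k^{p^e}$ and $0\le a_i<p^e$.
\item If $f$ is homogeneous, every coefficient $r_{j,\ab}$ in $f=\sum r_{j,\ab}^{p^e}\lambda_j\x^{\ab}$ is homogeneous, by uniqueness of this expansion and a degree count.
\item The dual-basis projections generate $\Hom_R(F^e_*R,R)$, they are additive, and $\af$ is spanned by its homogeneous elements. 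Hence $J_e$ is generated by homogeneous elements.
\end{enumerate}
It follows that $J_e=R$ if and only if $J_e\not\subseteq\mf$. Your Fedder argument uses only freeness of $F^e_*R$, which holds globally here, so it gives $J_e\subseteq\mf$ if and only if $\af\subseteq\mf^{[p^e]}$. Thus for each $e$ the global splitting condition is exactly $\floor{t(p^e-1)}\le\nu_I^{\mf}(p^e)$. Your threshold comparison then applies verbatim, and no localization is needed.
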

\begin{proof}
    The local case is \cite[Remark 1.5]{mustata_f-thresholds_2004} and the graded case is \cite[Proposition 3.10]{de-stefani_graded_2018}.
\end{proof}
\begin{prop}[\cite{hara_generalization_2003}, Theorem 6.8]\label{prop:spread_out}
    Let $A$ be a finite-type $\Z$-algebra and $\af\subseteq A[x_1,\dots, x_n]$ an ideal. Set $k= \text{Frac}(A)$. Then we have
    \[
    \lct(\af\otimes_A k) = \lim_{\substack\mu\in\max\Spec A, |A/\mu|\to\infty}\fpt(A/\mu[x_1,\dots, x_n], \af\otimes_A A/\mu).
    \]
\end{prop}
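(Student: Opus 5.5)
The plan is to pass through the comparison between multiplier ideals and test ideals, following the strategy of \cite{hara_generalization_2003}. First, fix a log resolution $\pi\colon W\to \A^n_k$ of $(\A^n_k, V(\af\otimes_A k))$, with numerical data $D=\sum a_iE_i$ and $K_{W/\A^n}=\sum k_iE_i$ as in \Cref{defn:lct}. Since $\pi$, the divisors $E_i$, and the isomorphism $\af\cdot\Oc_W=\Oc_W(-D)$ are all defined by finitely many equations, I would spread them out over $A$. After replacing $A$ by a localization $A[1/f]$, this gives a model $\pi_A\colon W_A\to \A^n_A$ whose fibre over every closed point $\mu$ is again a log resolution of $\af\otimes_A A/\mu$. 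Moreover, the fibres have the same coefficients $a_i,k_i$, because the divisors $E_i$ stay smooth and in simple normal crossing, and their relative dimensions are preserved. So the candidate value $\min_i (k_i+1)/a_i=\lct(\af\otimes_A k)$ is visible on every fibre. I would need to check that this localization does not affect the limit over $|A/\mu|\to\infty$; the proposition should be read, as in \cite{hara_generalization_2003}, over a suitable dense open subset of $\max\Spec A$.

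\emph{Upper bound.} Let $\mu$ lie in this open set, and write $\af_\mu$ for the reduction. I would show $\fpt(\af_\mu)\le \lct(\af\otimes_A k)$ for every such $\mu$. The route is the general containment $\tau(\af_\mu^t)\subseteq \pi_*\Oc_{W_\mu}(K_{W_\mu/\A^n}-\lfloor tD\rfloor)$ of the test ideal in the ``multiplier ideal'' computed on the reduced resolution. Take $t>\min_i (k_i+1)/a_i$. Then for the index $i$ realizing the minimum, the divisor $K-\lfloor tD\rfloor$ has coefficient at most $-1$ along $E_i$, so the right-hand side is a proper ideal. Hence the pair is not $F$-split, and $\fpt(\af_\mu)\le t$. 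The containment itself is proved by showing that a Frobenius splitting of $(R,\af^t)$ induces one on $W_\mu$ compatible with the divisor, using the trace of Frobenius along the birational map $\pi_\mu$.

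\emph{Lower bound and main obstacle.} The harder step is the reverse: for every $t<\lct$, one needs $\fpt(\af_\mu)\ge t$ once $|A/\mu|$ (equivalently, the residue characteristic on the good open set) is large. Since $\mathcal J(\af^t)=R$ for such $t$, it suffices to show that the reduction of the multiplier ideal is contained in $\tau(\af_\mu^t)$ for $p\gg0$, with the bound on $p$ depending on $t$ only through the finitely many values $\lfloor ta_i\rfloor$. I would attack this by Hara's method. One needs vanishing of $H^j(W_\mu,\Omega^{\bullet}_{W_\mu}(\log E)\otimes\Oc(-\lfloor tD\rfloor)\cdots)$, obtained from Deligne--Illusie-type liftability to $W_2(A/\mu)$, which holds for $p$ larger than $\dim W$. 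Combined with the Cartier operator, this shows that the trace map $F_*\Oc_{W_\mu}(\lceil K-tp D\rceil)\to \Oc_{W_\mu}(\lceil K-tD\rceil)$ is surjective on global sections. That surjectivity gives the splitting. Since only finitely many distinct divisors $\lfloor tD\rfloor$ occur as $t$ ranges over $[0,\lct)$, a single threshold for $p$ works for each of them. Combining this with the upper bound and letting $t\uparrow\lct$ yields the stated limit. The vanishing step is where I expect the real difficulty, since it requires the characteristic-$p$ Akizuki--Nakano type input.
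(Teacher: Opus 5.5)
The paper gives no argument for this proposition; it quotes \cite[Theorem 6.8]{hara_generalization_2003}, and your outline reconstructs that source by comparing test ideals with reductions of multiplier ideals on a spread-out log resolution. The upper bound is fine in substance. The genuine gap is in the lower bound.

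You claim that the bound on $p$ depends on $t$ only through the integers $\lfloor ta_i\rfloor$, so that a single threshold serves every $t\in[0,\lct)$. This is false. It would give $\fpt(\af_\mu)\ge\lct$, hence $\fpt(\af_\mu)=\lct$, for every $\mu$ of large residue characteristic. But for $f=x^2+y^3$ one has $\lct(f)=5/6$, while $\fpt(f_p)=5/6-1/(6p)$ for every prime $p\equiv 5\pmod 6$.

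Your own trace map $F_*\Oc_{W_\mu}(\lceil K-tpD\rceil)\to\Oc_{W_\mu}(\lceil K-tD\rceil)$ shows where the reasoning breaks. The integer $\lfloor tpa_i\rfloor$, and after iterating $\lfloor tp^ea_i\rfloor$, is not determined by $\lfloor ta_i\rfloor$; it depends on the fractional parts of the $ta_i$. So the divisors for which you need vanishing do not form a finite set independent of $p$ and $t$. In Hara--Yoshida the threshold $p_0(t)$ genuinely depends on $t$, and the cusp shows it must tend to infinity as $t\to\lct$.

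The repair is to argue pointwise in $t$. For each fixed $t<\lct$ there is $p_0(t)$ such that $\tau(\af_\mu^t)=R$, hence $\fpt(\af_\mu)\ge t$, whenever $\mathrm{char}(A/\mu)\ge p_0(t)$. This gives $\liminf\fpt(\af_\mu)\ge t$, and only then do you let $t\uparrow\lct$.

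A second problem is your parenthetical ``$|A/\mu|$ large (equivalently, the residue characteristic).'' This equivalence fails whenever $A$ has positive relative dimension over $\Z$, which is the typical case (e.g.\ $A=\Z[s]$). For all but finitely many primes $p$, any dense open subset of $\Spec A$ contains closed points over $p$ whose residue fields $\F_{p^m}$ have $m$ unbounded. At such points the $F$-pure threshold is the one for the fixed characteristic $p$. For the cusp with $p\equiv 5\pmod 6$ it stays $5/6-1/(6p)$ as $m\to\infty$, so the limit over $|A/\mu|\to\infty$ need not exist.

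The proposition must therefore be read with $\mathrm{char}(A/\mu)\to\infty$. This is the form the paper actually uses, for instance in the proof of \Cref{lemma:lojasiewicz-bound}. Your argument, once repaired as above, proves exactly that version.

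A minor point in the upper bound: a proper ideal $\tau(\af_\mu^t)$ does not by itself rule out $F$-splitting of $(R,\af_\mu^t)$ at the same $t$ (take $\af=(x)$, $t=1$). Instead, use that $F$-splitting at $t$ forces $\tau(\af_\mu^{t'})=R$ for all $t'<t$, and apply your containment at some $t'\in(\lct,t)$.
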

Many of our results make sense for both fpt and lct, so we introduce the following notation to avoid stating the same results once each for characteristic zero and positive characteristic.
\begin{notation}\label{notation:singularity_threshold}
    Let $k$ be a field, $R = k[x_1,\dots, x_n], \mf = (x_1,\dots, x_n)$, and $I\subseteq R$ an ideal. We define the quantity $c(R, I)$ as follows:
    \[
    c(R, I) = \begin{cases}
        c^{\mf}(I) & \char R = p > 0\\
        \lct_0(R, I) & \char R = 0.
    \end{cases}
    \]
    Similarly, let $(R, \mf)$ be a regular local ring and $I\subseteq R$ an ideal. We define
    \[
    c(R, I) = \begin{cases}
    	c^{\mf}(I) & \char R = p > 0\\
    	\lct(R, I) & \char R = 0 \text{ and $R$ is excellent}.
    \end{cases}
    \]
If the ambient ring is clear, we will use $c(I)$ as shorthand. 
\end{notation}
\begin{prop}[Properties of the singularity threshold]\label{prop:basic-properties-of-c}
    Assume either setting of \Cref{notation:singularity_threshold}, and let $I,J$ be nonzero ideals of $R$. The following properties hold for $c(-)$:
    \begin{enumerate}[(i)]
        \item If $I\subseteq J$, then $c(I)\leq c(J)$.
        \item For all $m > 0$, we have $c(I^m) = m^{-1}c(I)$.
        \item We have $c(I) = c(\bar{I})$, where $\bar{I}$ denotes the integral closure of $I$. 
        \item If $J\subseteq R$ is an ideal such that $R/J$ is regular, then 
        \[c(R, I)\geq c(R/J, (I+J)/J).\]
        \item We have $c(I+J)\leq c(I) + c(J)$.
        \item Suppose $R\subseteq S$ is a ring extension. If $\char R = 0$, assume $S$ is excellent. In both of the following cases, $c(I) = c(IS)$.       \begin{itemize}
        	\item $(R, \mf)$ and $(S, \nf)$ are local and $\mf S = \nf$;
        	\item $R = k[x_1,\dots, x_n]$, $L/k$ is a field extension, and $S = L[x_1,\dots, x_n]$. 
        \end{itemize} 
        
    \end{enumerate}
\end{prop}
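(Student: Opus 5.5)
Most of these properties are standard and I would prove them in positive characteristic directly from the definition of $\nu_I^{\mf}(p^e)$ as $\max\{t : I^t\not\subseteq \mf^{[p^e]}\}$. In characteristic zero I would either quote the classical valuative description $\lct(I)=\inf_E \frac{k_E+1}{\ord_E(I)}$ or reduce to positive characteristic via \Cref{prop:spread_out}.

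Properties (i)--(iii) come straight from this description.
\begin{enumerate}[(i)]
\item If $I\subseteq J$, then $I^t\not\subseteq \mf^{[p^e]}$ forces $J^t\not\subseteq\mf^{[p^e]}$. Hence $\nu_I\le \nu_J$, and passing to the limit gives $c(I)\le c(J)$. For the lct, $\ord_E(I)\ge \ord_E(J)$ for every divisor $E$.
\item We have $\nu_{I^m}(p^e)=\lfloor \nu_I(p^e)/m\rfloor$ up to an error of at most one. Dividing by $p^e$ gives $c(I^m)=c(I)/m$. For the lct, $\ord_E(I^m)=m\,\ord_E(I)$.
\item Since $I\subseteq\bar I$, part (i) gives $c(I)\le c(\bar I)$. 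For the reverse inequality, choose $k$ with $\bar I^{t+k}\subseteq I^t$ for all $t$ (Briançon--Skoda type reduction, or use a reduction of $\bar I$). Then $\nu_{\bar I}(p^e)\le \nu_I(p^e)+k$, and this constant disappears in the limit. For the lct, $\ord_E(\bar I)=\ord_E(I)$.
\end{enumerate}

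Property (iv) is inversion of adjunction, which is where I expect the real difficulty. In characteristic zero it is \cite[Example 9.5.4]{lazarsfeld_positivity_2004}, as already used in \Cref{eqn:rashkovskii_lb}. In characteristic $p$, I would reduce by induction to $J=(h)$ with $h$ part of a regular system of parameters, and choose coordinates with $h=x_n$. Suppose $I^t\not\subseteq \bar\mf^{[p^e]}$ in $R/(x_n)$. Pick $f\in I^t$ whose image lies outside $(x_1^{p^e},\dots,x_{n-1}^{p^e})$. Then $f\notin (x_1^{p^e},\dots,x_{n-1}^{p^e},x_n)\supseteq \mf^{[p^e]}$, so $I^t\not\subseteq\mf^{[p^e]}$. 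This gives $\nu_I\ge\bar\nu$ and hence the inequality. Completing $R$ makes the choice of coordinates legitimate, and completion does not change $c$ by (vi).

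For (v), the inclusion $(I+J)^{a+b+1}\subseteq I^{a+1}+J^{b+1}$ is the key step.
\begin{itemize}
\item In characteristic $p$, take $a=\nu_I(p^e)$ and $b=\nu_J(p^e)$. Then $I^{a+1},J^{b+1}\subseteq\mf^{[p^e]}$, so $\nu_{I+J}(p^e)\le a+b$, and dividing by $p^e$ and taking limits gives $c(I+J)\le c(I)+c(J)$.
\item In characteristic zero, either apply the same argument after reduction mod $p$ using \Cref{prop:spread_out}, or use the subadditivity of multiplier ideals and the summation formula $\mathcal J((I+J)^{c})\subseteq\sum_{\lambda+\mu=c}\mathcal J(I^\lambda)\mathcal J(J^\mu)$, taking $c\ge c(I)+c(J)$.
\end{itemize}

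For (vi) in characteristic $p$ with $\mf S=\nf$, the statement reduces to showing $I^t\subseteq\mf^{[p^e]}$ iff $I^tS\subseteq\nf^{[p^e]}=\mf^{[p^e]}S$.
\begin{itemize}
\item In the local case this requires faithful flatness of $R\to S$. The hypotheses give this when both rings are regular local: $S$ is flat over $R$ by miracle flatness, since $\mf S=\nf$ and $\dim S=\dim R$.
\item For a field extension $L/k$, the map $k[x]\to L[x]$ is free, and $\mf^{[p^e]}$ is a monomial ideal. Hence containment of $I^t$ in it can be checked after the base change.
\end{itemize}
In characteristic zero, a log resolution of $I$ base-changes to a log resolution of $IS$ (using excellence, or flatness over a field). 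The discrepancies and orders are preserved, so the lct is unchanged.
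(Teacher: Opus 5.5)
Your proposal is correct, and it is essentially the paper's proof with the citations unpacked. The paper only quotes the standard sources for (i)--(v), and for (vi) it observes, as you do, that both extensions are regular and faithfully flat; your $\nu(p^e)$-containment arguments and divisorial/multiplier-ideal arguments are the standard proofs behind those citations. Your tacit hypothesis $\dim S=\dim R$ in the local case of (vi) is genuinely needed and is what the paper intends. For example, $k[x,y]_{(x,y)}\hookrightarrow k\llbracket t\rrbracket$ with $x\mapsto t$ and $y\mapsto g(t)$, where $g\in t\,k\llbracket t\rrbracket$ is transcendental over $k(t)$, has $\mf S=\nf$ but $c(\mf)=2\neq 1=c(\mf S)$. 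In characteristic zero, ``using excellence'' amounts to Andr\'e's theorem: such a flat local map out of an excellent ring is a regular morphism, so the base-changed resolution is again a log resolution.
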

\begin{proof}
First, we verify (i)-(v). For characteristic zero, see \cite[Properties 1.12, 1.13, 1.15, 1.17, 1.20]{pragacz_impanga_2012}. For characteristic $p>0$, see \cite[Proposition 2.2 (1), (2), (6), Proposition 4.4]{takagi_f-pure_2004}.

For (vi), note that both cases are regular and faithfully flat extensions. In characteristic zero, see \cite[Proposition 1.9]{jonsson_valuations_2012}.
In characteristic $p > 0$, see \cite[Proposition 2.2 (v)]{huneke_closure_2008}.
\end{proof}
\subsection{Monomial Ideals}
\begin{notation}
    For a vector of ring elements $\mathbf{f} = f_1,\dots, f_r$ and a vector of nonnegative integers $\mathbf{a} = a_1,\dots, a_r$, we let $\mathbf{f}^{\ab}$ denote the element $f_1^{a_1}\dots f_r^{a_r}$. A boldface, lowercase letter always refers to a vector of integers or ring elements.
    \end{notation}
\begin{defn}
	Let $k$ be a field and let $R = k[x_1,\dots, x_n]$. For a tuple $\x = x_{i_1},\dots, x_{i_r}$ with $1\leq i_1<\dots<i_r\leq n$, we let $\Mon(\x)$ denote the monoid of monomials in the variables $\x$. In particular, $\Mon(x_1,\dots, x_n)$ denotes the set of all monomials in $R$. 
\end{defn}
When working with monomial ideals, one often identifies a monomial $x_1^{a_1}\cdots x_n^{a_n}$ with the point $(a_1,\dots, a_n)\in \Z_{\geq 0}^{n}$. For future reference, it will help to give a name to this identification. 
\begin{defn}
    Let $k$ be a field. We define the map 
    \[\log: \Mon(x_{i_1},\dots, x_{i_r})\to \Z_{\geq 0}^{r},\qquad\log(x_{i_1}^{a_1}\cdots x_{i_r}^{a_r}) = (a_1,\dots, a_r).\]
\end{defn}
\begin{defn}
	Let $\af\subseteq k[x_1,\dots, x_n]$ be a monomial ideal. Then the \textit{Newton Polytope} of $\af$, denoted $\Gamma(\af)$, is the convex hull in $\R^{n}_{\geq 0}$ of $\log(\af)$. We let $\conv(-)$ denote the convex hull of a set.
\end{defn}
\begin{remark}
    We record several properties of $\Gamma(\af)$.
    \begin{enumerate}[(i)]
        \item $\Gamma(\af)$ is a closed, convex, unbounded subset of the first orthant of $\R^n$.
        \item When $\af$ is an $\mf$-primary ideal, the closure of the complement of $\Gamma(\af)$ inside the first orthant is an open, bounded polyhedron.
        \item For two ideals $\af, \bfr$, the Minkowski sum of $\Gamma(\af)$ and $\Gamma(\bfr)$ is equal to $\Gamma(\af\bfr)$. In particular, $\Gamma(\af^n) = n\Gamma(\af)$.
    \end{enumerate}
\end{remark}
The following proposition shows that Newton polytope of a monomial ideal determines the singularity threshold.
\begin{prop}\label{prop:monomial-threshold}
     Let $\af\subseteq  k[x_1,\dots, x_n]$ be a monomial ideal. Then 
     \[
     c(\af) = \frac{1}{\mu}\text{, where }\mu = \inf\{t: t\mathbf{1}\in \Gamma(\af)\}.
     \]
\end{prop}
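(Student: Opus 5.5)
In characteristic zero, the formula is Howald's theorem. It can be derived from the multiplier ideal of a monomial ideal: $\mathcal J(\af^c)$ is generated by the monomials $x^{\mathbf v}$ with $\mathbf v+\mathbf 1\in \operatorname{int}(c\,\Gamma(\af))$. I would simply cite this. Then $\lct_0(\af)$ is the supremum of those $c$ for which $\mathbf 1\notin \operatorname{int}(c\Gamma(\af))$. Since $\Gamma(\af)$ is closed, convex and stable under adding $\R^n_{\geq 0}$, the set $\{t: t\mathbf 1\in\Gamma(\af)\}$ is a ray $[\mu,\infty)$. It follows that $\mathbf 1\in c\Gamma(\af)$ if and only if $1/c\geq \mu$, giving $c(\af)=1/\mu$. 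The cases $\mu=\infty$ (when $\af$ does not meet every coordinate direction appropriately) and $\af=0$ follow from the usual convention.

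In characteristic $p>0$ I would compute $c^{\mf}(\af)$ directly from the definition, using that $\mf^{[p^e]}=(x_1^{p^e},\dots,x_n^{p^e})$ is a monomial ideal. A monomial ideal $\af^t$ fails to lie in $\mf^{[p^e]}$ exactly when some monomial of $\af^t$ has all exponents $\leq p^e-1$. The argument then has two bounds.

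\textbf{Upper bound.} Suppose $x^{\mathbf v}\in \af^t$ with $\mathbf v\leq (p^e-1)\mathbf 1$. Then $\mathbf v\in t\Gamma(\af)$. Because $\Gamma$ is upward closed, $(p^e-1)\mathbf 1\in t\Gamma(\af)$, so $t\mu\leq p^e-1$. Hence $\nu^{\mf}_{\af}(p^e)\leq (p^e-1)/\mu$, and therefore $c^{\mf}(\af)\leq 1/\mu$.

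\textbf{Lower bound.} This is the main step. Write $\mu\mathbf 1=\sum_i \lambda_i \mathbf a_i$, a convex combination of finitely many generator exponents $\mathbf a_i=\log(g_i)$ plus a vector in $\R^n_{\geq 0}$. This is possible because $\Gamma(\af)$ is a rational polyhedron, so the infimum is attained and the $\lambda_i$ can be chosen rational. Given $e$, set $t=\lfloor (p^e-1)/\mu\rfloor - C$ for a constant $C$ depending only on the generators. Take the monomial $\prod g_i^{\lfloor t\lambda_i\rfloor}$ and multiply by $g_1^{m}$ with $m\leq |\{i\}|$ to bring the total degree up to exactly $t$. This monomial lies in $\af^t$. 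Its exponent vector is at most $t\mu\mathbf 1+C'\mathbf 1\leq (p^e-1)\mathbf 1$ once $C$ is chosen large relative to $\max|\mathbf a_i|$. So $\nu(p^e)\geq (p^e-1)/\mu - C''$, and dividing by $p^e$ gives $c^{\mf}(\af)\geq 1/\mu$.

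The only delicate point is this rounding bookkeeping. It works because the error terms are bounded independently of $e$. For the graded-versus-local distinction, $\af$ is homogeneous, so the graded statement of the earlier proposition covers it.
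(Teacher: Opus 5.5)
Your proof is correct, and in characteristic zero it is the same as the paper's: both simply cite Howald. In characteristic $p$ the paper also only cites a result (Hern\'andez, Proposition 36). You instead compute $\nu^{\mf}_{\af}(p^e)$ directly from the definition.

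Both of your bounds hold as written. The upper bound uses that $\af^t\not\subseteq\mf^{[p^e]}$ forces some monomial of $\af^t$ to have all exponents at most $p^e-1$. Combined with $\Gamma(\af^t)=t\Gamma(\af)$ and upward-closedness, this gives $\nu(p^e)\le (p^e-1)/\mu$. The lower bound correctly multiplies $\lfloor t\lambda_i\rfloor$ copies of the generators $g_i$ and pads with at most $N$ extra factors. The error is bounded independently of $e$ once you take $C\ge C'/\mu$.

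What your route buys is a self-contained argument for $c^{\mf}(\af)$, which is exactly how the paper defines $c$ in positive characteristic for a polynomial ring. Your closing remark about the graded versus local statement is therefore unnecessary. Your route also gives the uniform estimate
\[
\frac{p^e-1}{\mu}-C''\;\le\;\nu^{\mf}_{\af}(p^e)\;\le\;\frac{p^e-1}{\mu},
\]
which proves existence of the limit without appealing to the general existence theorem. The citation route costs nothing in length and treats the two characteristics in parallel.

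Three small slips to fix:
\begin{enumerate}[(1)]
\item In characteristic zero, $\lct_0(\af)$ is the supremum of those $c$ with $\mathbf 1\in\operatorname{int}(c\Gamma(\af))$, that is, with $\mathcal J(\af^c)$ trivial. It is not the supremum of those $c$ with $\mathbf 1\notin\operatorname{int}(c\Gamma(\af))$; as you wrote it, that supremum is $+\infty$. Then use that $t\mathbf 1\in\operatorname{int}\Gamma(\af)$ if and only if $t>\mu$, which follows from upward-closedness.
\item Your parenthetical about when $\mu=\infty$ is wrong. For any nonzero monomial ideal, any $\mathbf a\in\log(\af)$ gives $(\max_i a_i)\mathbf 1\in\Gamma(\af)$, so $\mu<\infty$; for example $\af=(x_1)$ has $\mu=1$. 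The only degenerate cases are $\af=0$ and $\af=R$.
\item Rationality of the $\lambda_i$ is not needed; the floors work for real $\lambda_i$.
\end{enumerate}
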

\begin{proof}
    See \cite{howald_multiplier_2001}, Example 5 for characteristic zero and \cite{hernandez_f-purity_2016}, Proposition 36 for positive characteristic.
\end{proof}
\begin{example}\label{ex:monomial-threshold}
	Let $R = k[x,y]$ and set $\af = (x^6, x^5y, x^3y^2, x^2y^3, xy^4, y^6)$. In \Cref{fig:mu_a} we compute $\mu = 2.5$, so $c(\af) = 0.4$. 
	\end{example}
	\begin{figure}
		\centering
\tikzset{every picture/.style={line width=0.75pt}} 

\begin{tikzpicture}[x=0.75pt,y=0.75pt,yscale=-1,xscale=1]

\draw  [draw opacity=0][fill={rgb, 255:red, 0; green, 0; blue, 255}  ,fill opacity=0.25 ] (201,90) -- (440,90) -- (440,310) -- (341,310) -- (261,270) -- (241,250) -- (221,210) -- (201,150) -- cycle ;
\draw    (201,92) -- (201,310) ;
\draw [shift={(201,90)}, rotate = 90] [color={rgb, 255:red, 0; green, 0; blue, 0 }  ][line width=0.75]    (10.93,-3.29) .. controls (6.95,-1.4) and (3.31,-0.3) .. (0,0) .. controls (3.31,0.3) and (6.95,1.4) .. (10.93,3.29)   ;
\draw    (439,310) -- (201,310) ;
\draw [shift={(441,310)}, rotate = 180] [color={rgb, 255:red, 0; green, 0; blue, 0 }  ][line width=0.75]    (10.93,-3.29) .. controls (6.95,-1.4) and (3.31,-0.3) .. (0,0) .. controls (3.31,0.3) and (6.95,1.4) .. (10.93,3.29)   ;
\draw [color={rgb, 255:red, 208; green, 2; blue, 27 }  ,draw opacity=1 ]   (201,310) -- (251,260) ;
\draw    (261,270) -- (241,250) ;
\draw    (241,250) -- (221,210) ;
\draw    (221,210) -- (201,150) ;
\draw    (341,310) -- (261,270) ;
\draw  [fill={rgb, 255:red, 0; green, 0; blue, 0 }  ,fill opacity=1 ] (239,250) .. controls (239,248.9) and (239.9,248) .. (241,248) .. controls (242.1,248) and (243,248.9) .. (243,250) .. controls (243,251.1) and (242.1,252) .. (241,252) .. controls (239.9,252) and (239,251.1) .. (239,250) -- cycle ;
\draw  [fill={rgb, 255:red, 0; green, 0; blue, 0 }  ,fill opacity=1 ] (219,210) .. controls (219,208.9) and (219.9,208) .. (221,208) .. controls (222.1,208) and (223,208.9) .. (223,210) .. controls (223,211.1) and (222.1,212) .. (221,212) .. controls (219.9,212) and (219,211.1) .. (219,210) -- cycle ;
\draw  [fill={rgb, 255:red, 0; green, 0; blue, 0 }  ,fill opacity=1 ] (199,150) .. controls (199,148.9) and (199.9,148) .. (201,148) .. controls (202.1,148) and (203,148.9) .. (203,150) .. controls (203,151.1) and (202.1,152) .. (201,152) .. controls (199.9,152) and (199,151.1) .. (199,150) -- cycle ;
\draw  [fill={rgb, 255:red, 0; green, 0; blue, 0 }  ,fill opacity=1 ] (259,270) .. controls (259,268.9) and (259.9,268) .. (261,268) .. controls (262.1,268) and (263,268.9) .. (263,270) .. controls (263,271.1) and (262.1,272) .. (261,272) .. controls (259.9,272) and (259,271.1) .. (259,270) -- cycle ;
\draw  [fill={rgb, 255:red, 0; green, 0; blue, 0 }  ,fill opacity=1 ] (339,310) .. controls (339,308.9) and (339.9,308) .. (341,308) .. controls (342.1,308) and (343,308.9) .. (343,310) .. controls (343,311.1) and (342.1,312) .. (341,312) .. controls (339.9,312) and (339,311.1) .. (339,310) -- cycle ;
\draw  [fill={rgb, 255:red, 0; green, 0; blue, 0 }  ,fill opacity=1 ] (319,290) .. controls (319,288.9) and (319.9,288) .. (321,288) .. controls (322.1,288) and (323,288.9) .. (323,290) .. controls (323,291.1) and (322.1,292) .. (321,292) .. controls (319.9,292) and (319,291.1) .. (319,290) -- cycle ;
\draw    (250,305) -- (250,315) ;
\draw    (196,260) -- (206,260) ;

\draw (223,182.4) node [anchor=north west][inner sep=0.75pt]    {$xy^{4}$};
\draw (211,132.4) node [anchor=north west][inner sep=0.75pt]    {$y^{6}$};
\draw (241,238.6) node [anchor=south west] [inner sep=0.75pt]    {$x^{2} y^{3}$};
\draw (271,267.6) node [anchor=south west] [inner sep=0.75pt]    {$x^{3} y^{2}$};
\draw (323,278.6) node [anchor=south west] [inner sep=0.75pt]    {$x^{5} y$};
\draw (352,307.6) node [anchor=south west] [inner sep=0.75pt]    {$x^{6}$};
\draw (326,154.4) node [anchor=north west][inner sep=0.75pt]  [font=\Large]  {$\Gamma (\mathfrak{a})$};
\draw (211,253.4) node [anchor=north west][inner sep=0.75pt]    {$\mu \mathbf{1}$};
\draw (240,322.4) node [anchor=north west][inner sep=0.75pt]    {$2.5$};
\draw (167,252.4) node [anchor=north west][inner sep=0.75pt]    {$2.5$};
\end{tikzpicture}
\caption{Computation of $\mu$ for \Cref{ex:monomial-threshold}}
\label{fig:mu_a}
	\end{figure}

Following the proof of \cite{de_fernex_multiplicities_2004}, Theorem 1.4 and the terminology of \cite{mayes_limiting_2014}, we also define the \textbf{limiting polytope} of a graded system of monomial ideals.
\begin{defn}\label{defn:limiting-polytope}
    Let $\af_\bullet$ be a graded system of monomial ideals in $k[x_1,\dots, x_n]$ --- that is, suppose $\af_r\af_s\subseteq \af_{r+s}$ for all $r,s\in \Z^+$. We define $\Gamma(\af_\bullet)$ as the closure in $\R^n$ of the ascending union $\{\frac{1}{2^m}\Gamma(\af_{2^m})\}_{m>0}$. 
\end{defn}
\begin{remark}
	The set $\overline{\bigcup_m \frac{1}{2^m}\Gamma(\af_{2^m})}$ in \Cref{defn:limiting-polytope} coincides with $\overline{\bigcup_m \frac{1}{m} \Gamma(\af_m)}$ in the settings we consider (where $\af_m\supseteq \af_{m+1}$), but we find it easier to work with an ascending union than a filtered union.
\end{remark}

We fix our conventions for working with monomial orders and initial terms in polynomial rings. For further background, see \cite[Chapter 15]{eisenbud_commutative_1995}.
\begin{defn}\label{defn:monomial_order_and_support}
Let $k$ be a field and let $R = k[x_1,\dots, x_n]$. A \textit{monomial order} $>$ on $R$ is a partial order on $\Mon(x_1,\dots, x_n)$ compatible with the divisibility relations: that is, for monomials $\x^\ab, \x^\bb, \x^\cb$, if $\x^\ab > \x^\bb$ then $\x^{\ab+\cb}> \x^{\bb+\cb}$. 

	For an element $f\in R$, write $f = \sum_{\ab\in \Z_{\geq 0}^n} \beta_{\ab}\x^\ab$ where $\beta_{\ab}\in k$. The \textbf{support} of $f$, denoted $\supp(f)$, is the set $\{\x^\ab: \beta_\ab\neq 0\}$. The \textit{initial term} of $f$ with respect to $>$, denoted $\ini_>(f)$, is given by
	\[\sum_{\substack{\ab: \x^\ab \in \supp(f)\\\x^{\ab} \text{ maximal with respect to }>}} \beta_{\ab}\x^\ab.\]
	For an ideal $I\subseteq R$, the \textit{initial ideal} $\ini_>(I)$ is the ideal generated by the elements $\{\ini_>(f): f\in I\}$.
\end{defn}
Valuations provide an important source of monomial orders.
\begin{defn}
	Assume the setting of \Cref{defn:monomial_order_and_support}. A \textit{monomial valuation} is a map of monoids $v:(\Mon(x_1,\dots, x_n),\cdot)\to (\R, +)$. To describe a monomial valuation $v$, it suffices to give the values $v(x_1),\dots, v(x_n)$. We can then describe a monomial order $>_v$ on $R$ by
	\[
	\x^\ab >_v \x^\bb \text{ if and only if }v(\x^\ab) > v(\x^\bb)
	\]
	When $v(x_i)\geq 0$ for all $i$, we can extend $v$ to an $\R$-valuation on $R$: we define $v: R\to [0, \infty]$ by 
	\[v(f) = \min_{\x^\ab\in \supp(f)} v(\x^\ab).\]
\end{defn}
A crucial property of initial ideals is semicontinuity with respect to the singularity threshold.
\begin{prop}\label{prop:semicontinuity}
    Let $R = k[x_1,\dots, x_n], \mf = (x_1,\dots, x_n)$ and let $>$ be a monomial order. If $I\subseteq R$ is an ideal, then $c(\ini_>(I))\leq c(I).$
\end{prop}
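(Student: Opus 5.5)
The plan is to realize $\ini_>(I)$ as the special fiber of a flat one-parameter degeneration of $I$, and then use lower semicontinuity of $c(-)$ in families. The details differ by characteristic, so I treat the two cases separately.

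First I reduce to a weight order. Let $>$ be any monomial order and let $g_1,\dots,g_s$ be a Gröbner basis of $I$. Then there is an integral weight vector $w\in\Z_{>0}^n$ with $\ini_w(I)=\ini_>(I)$ (standard; see Eisenbud Chapter 15). Choose $w$ so that, for each $g_i$, the monomials of $\ini_>(g_i)$ have strictly larger $w$-value than every other monomial in $\supp(g_i)$. Since $\ini_>(I)$ is generated by the $\ini_>(g_i)$, replacing $>$ by $>_w$ does not change the ideal. If $>$ is only a partial order, I use this argument for a total refinement and note that $\ini$ for the refinement is contained in $\ini_>(I)$ up to the relevant inclusion. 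Then (i) of \Cref{prop:basic-properties-of-c} handles the comparison, though I may need care here.

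Next I set up the Gröbner degeneration. For $f\in R$ put $\tilde f = t^{-w(f)}f(t^{w_1}x_1,\dots,t^{w_n}x_n)$, where $w(f)$ is the maximal weight in $\supp(f)$. Let $\tilde I\subseteq k[t][x]$ be generated by the $\tilde f$. Then $\tilde I$ is flat over $k[t]$, its fiber at $t=0$ is $\ini_w(I)$, and its fiber at $t=a\neq 0$ is $\phi_a(I)$. Here $\phi_a$ is the automorphism $x_i\mapsto a^{w_i}x_i$, which fixes $\mf$, so $c(\phi_a(I))=c(I)$.

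In positive characteristic I argue directly with $F$-thresholds, avoiding families. It suffices to show $\nu^{\mf}_{\ini I}(p^e)\le \nu^{\mf}_I(p^e)$ for every $e$. Suppose $I^N\subseteq \mf^{[p^e]}$. Since $\mf^{[p^e]}$ is a monomial ideal, an element lies in it exactly when every monomial of its support does. Moreover $\ini(I)^N\subseteq \ini(I^N)$, and every element of $\ini(I^N)$ is a combination of initial terms of elements of $I^N$. Those initial terms have supports inside supports of elements of $I^N\subseteq\mf^{[p^e]}$, so each lies in $\mf^{[p^e]}$. Hence $\ini(I)^N\subseteq\mf^{[p^e]}$. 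Dividing by $p^e$ and letting $e\to\infty$ gives $c(\ini I)\le c(I)$.

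In characteristic zero, the same containment argument works for multiplier ideals via monomial valuations, but a cleaner route is this. The lct at $0$ is computed by divisorial valuations $v$, by the formula $\lct_0(I)=\inf_v A(v)/v(I)$. Given $v$ centered at $0$ with $A(v)/v(I)$ near $\lct_0(I)$, I would like $v(\ini I)\ge v(I)$, but that fails in general. So instead I invoke lower semicontinuity of the lct in flat families, i.e.\ the log canonical threshold of the fiber at the origin of the section $x=0$. This gives $\lct_0(\tilde I|_{t=0})\le \lct_0(\tilde I|_{t=a})$ for general $a$. The alternative is to spread out to a finitely generated $\Z$-algebra $A$ and use \Cref{prop:spread_out}. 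Gröbner bases commute with reduction mod $\mu$ for all but finitely many $\mu$, so $\ini(I)\otimes A/\mu=\ini(I\otimes A/\mu)$. Applying the positive characteristic inequality and passing to the limit then gives the result. I prefer this second route, since it uses only results stated earlier. The main obstacle is checking carefully that Gröbner bases and initial ideals are compatible with reduction modulo $\mu$ for general $\mu$. That compatibility follows by inverting the finitely many leading coefficients and the denominators appearing in the S-pair reductions.
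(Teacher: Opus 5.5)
Your proposal is correct, but in characteristic zero it follows a different route from the paper. The paper gives no argument of its own: it cites \cite{takagi_f-pure_2004} in characteristic $p$, and in characteristic zero it cites the Demailly--Koll\'ar semicontinuity theorem \cite{demailly_semi-continuity_2001}. That citation amounts to your first, abandoned route: a Gr\"obner degeneration together with lower semicontinuity of the threshold in a flat family whose general fibers are $\phi_a(I)$. Your characteristic $p$ argument, $I^N\subseteq\mf^{[p^e]}\Rightarrow \ini(I)^N\subseteq\ini(I^N)\subseteq\mf^{[p^e]}$, is a complete and elementary proof that $\nu_{\ini I}(p^e)\le\nu_I(p^e)$. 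You then deduce characteristic zero from it via \Cref{prop:spread_out}. This avoids analytic input and works over any field of characteristic zero without reducing to $\C$. The cost is reliance on the Hara--Yoshida comparison in its local form at the origin, which the paper also uses (e.g.\ in \Cref{lemma:lojasiewicz-bound}), and on checking that initial ideals specialize. Two adjustments to that check. First, say ``for $\mu$ in a dense open subset of $\Spec A$'' (invert one $g\in A$ and apply \Cref{prop:spread_out} over $A_g$) rather than ``all but finitely many $\mu$''. Second, besides the leading coefficients and S-pair denominators, also invert the denominators needed to write the Gr\"obner basis in terms of the original generators, so that its reduction generates exactly $I_\mu$.

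Drop your remark on partial orders. It is false as stated: for $I=(x+y)$ and $w=(1,1)$, a lex refinement gives $\ini_{>'}(I)=(x)\not\subseteq(x+y)=\ini_w(I)$. Moreover, the total-order case only yields $c(\ini_{>'}I)\le c(\ini_w I)$, which is the wrong direction. It is also unnecessary. Your characteristic $p$ argument uses only $\supp(\ini f)\subseteq\supp f$ and $\ini(fg)=\ini(f)\ini(g)$, and both hold for weight partial orders. Those are exactly the orders to which the paper applies the proposition, in \Cref{lemma:change-of-vars} and \Cref{lem:application-of-weight-order}. The latter allows negative weights; this is harmless there because the ideals are homogeneous, so you may add a multiple of $(1,\dots,1)$ to the weight. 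For the spreading-out step with a weight $w$, use the fact that the $w$-initial forms of a Gr\"obner basis for a total refinement of $>_w$ generate $\ini_w(I)$.
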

\begin{proof}
    For characteristic zero, see \cite{demailly_semi-continuity_2001} for the semicontinuity of the lct. For positive characteristic, see the claim preceding \cite[Remark 4.6]{takagi_f-pure_2004}.
\end{proof}
When $I$ is homogeneous, $k$ is infinite, and $>$ is a total order compatible with the partial order by degree, there is a well-defined notion of the initial ideal in ``generic coordinates.''
\begin{defn-prop}[\cite{eisenbud_commutative_1995}, Theorem 15.18]
Let $k$ be an infinite field, $R = k[x_1,\dots, x_n]$, and $I\subseteq R$ a homogeneous ideal. Suppose that $>$ is a monomial total order on $R$ such that $x_1>\dots >x_n$ and $\x^\ab > \x^\bb$ whenever $\deg(\x^\ab) > \deg(\x^\bb)$. Then there exists a nonempty open subset $U\subseteq \GL_n(k)$ and a monomial ideal $J$ such that:
	\begin{itemize}
		\item For all $\gamma\in U$, we have $\ini_>(\gamma I) = J$;
		\item $U$ is \textit{Borel-fixed}: if $B\subseteq \GL_n(k)$ denotes the subgroup of lower triangular matrices, then $BU = U$.
	\end{itemize}
	The ideal $J$ is called the \textit{generic initial ideal} of $I$ with respect to $>$, and is denoted $\gin_>(I)$.
\end{defn-prop}
\subsection{Integral Closure of Ideals}
\begin{defn}
    Let $I$ be an ideal in a ring $R$. An element $r\in R$ is integral over $I$ if there exists an integer $n$ and elements $a_1,\dots, a_n, a_i\in I^i$ such that
    \[
    r^n + a_1r^{n-1} + \dots + a_n = 0.
    \]
    We then define the integral closure $\bar{I}$ of $I$ as the set of elements $r\in R$ which are integral over $I$.
\end{defn}
\begin{defn-prop}[\cite{huneke_integral}, Corollary 1.2.5]
Let $J\subseteq I$ be ideals in ring $R$. We say $J$ is a \textbf{reduction} of $I$ if either of the following equivalent conditions holds:
\begin{enumerate}[(i)]
	\item There exists a positive integer $n$ such that $I^nJ= I^{n+1}$.
	\item We have $I\subseteq \overline{J}$;
	\item We have $\overline{I} = \overline{J}$.
\end{enumerate}
\end{defn-prop}
Those hoping for an exhaustive discussion of the integral closure of ideals should consult \cite{huneke_integral}. For now, we list some basic properties of $\bar{I}$.
\begin{prop}[Properties of the Integral Closure, \cite{huneke_integral} Chapter 1]\label{prop:int-closure}
    Let $R$ be a ring and $I\subseteq R$ an ideal. Let $\varphi: R\to S$. Then we have
    \begin{enumerate}[(i):]
        \item $\bar{I}$ is an ideal.
        \item $\overline{(\bar{I})} = \bar{I}$.
        \item $\bar{I}S\subseteq \overline{IS}$.
        \item If $J\subseteq S$ is an ideal, then $\varphi^{-1}(\bar{J}) = \overline{\varphi^{-1}(J)}$.
        \item For any multiplicatively closed subset $W\subseteq R$, we have $W^{-1}\bar{I} = \overline{W^{-1}I}$.
        \item The integral closure of a monomial ideal $\af$ in a polynomial ring $k[x_1,\dots, x_n]$ is generated by the set $\x^\ab: \ab\in \Gamma(\af)$.
        \item If $\varphi$ is faithfully flat or an integral extension, then $\bar{I}S\cap R = \bar{I}$.
    \end{enumerate}
\end{prop}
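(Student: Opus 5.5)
The plan is to reduce every item to the basic equational definition, using two auxiliary tools: the Rees algebra $R[It]\subseteq R[t]$ and, for (vii), the valuative criterion for integral closure. The key translation is standard: $r\in\bar{I}$ if and only if $rt\in R[t]$ is integral over the graded subring $R[It]$. To see this, expand $rt$'s monic equation and take its degree-$n$ component in $t$; this forces coefficients $a_i\in I^i$. Since everything here is classical (Huneke–Swanson, Ch.~1), the proof is a matter of checking each item in turn.

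\textbf{Items (i)--(iii).} For (i), note that the integral closure of $R[It]$ in $R[t]$ is a subring, and it is graded because $R[t]$ is $\Z$-graded and $R[It]$ is a graded subring. Its degree-one piece is exactly $\bar{I}t$, so $\bar{I}$ is closed under sums and under multiplication by $R$. For (ii), apply transitivity of integral dependence: $R[\bar{I}t]$ is integral over $R[It]$, so anything integral over $R[\bar{I}t]$ is integral over $R[It]$. For (iii), apply $\varphi$ to an equation of integral dependence; the coefficients $a_i\in I^i$ map into $(IS)^i$.

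\textbf{Item (iv).} Here I would first prove the inclusion $\overline{\varphi^{-1}(J)}\subseteq\varphi^{-1}(\bar{J})$, which follows directly from (iii). This shows that $\varphi^{-1}(\bar J)$ is integrally closed once we know it is an ideal containing $\varphi^{-1}(J)$. The literal equality needs care, however. For the quotient map $R=k[x]\to k[x]/(x^2)$ and $J=0$, the preimage of $\bar{J}=\sqrt{0}$ is $(x)$, whereas $\overline{(x^2)}=(x^2)$. So I would state and prove (iv) in the form $\varphi^{-1}(\bar{J})$ is integrally closed, i.e.\ $\overline{\varphi^{-1}(\bar J)}=\varphi^{-1}(\bar J)$, which is Huneke--Swanson 1.1.3 and is what the later arguments actually use. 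This discrepancy is the main point at which I expect friction.

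\textbf{Items (v) and (vi).} For (v), an equation over $I$ localizes directly, giving $W^{-1}\bar{I}\subseteq\overline{W^{-1}I}$. Conversely, if $r/w$ satisfies an equation with coefficients in $(W^{-1}I)^i$, clear denominators by multiplying by $u^n$ for a suitable $u\in W$. This yields an equation for $ur$ over $I$ in $R$, after possibly multiplying by one more element of $W$ to kill the torsion in the equation. For (vi), the torus $(k^*)^n$ (or, for finite $k$, the $\Z^n$-grading on the Rees algebra) shows that $\bar{\af}$ is a monomial ideal, since the integral closure of a $\Z^{n+1}$-graded subring is graded. A monomial $\x^\ab$ is integral exactly when some homogeneous equation $\x^{m\ab}\in\af^m$ holds, i.e.\ $m\ab\in\log(\af^m)$. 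By Carathéodory and rationality of the vertices of $\Gamma(\af)$, this is equivalent to $\ab\in\Gamma(\af)$.

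\textbf{Item (vii).} The inclusion $\bar{I}\subseteq\bar{I}S\cap R$ is trivial, so the content is the reverse inclusion. In the faithfully flat case I would use the Rees-algebra characterization: $\bar I S=\overline{IS}$ for flat $S$ (integral closure commutes with flat base change of the Rees algebra up to the usual argument), together with $JS\cap R=J$ for faithfully flat extensions. In the integral-extension case I would use the valuative criterion. If $r\in \overline{IS}\cap R$, then for every valuation $v$ of $R/\pf$ with $\pf$ a minimal prime, extend $v$ to a valuation of $S/\qf$ for some $\qf$ lying over $\pf$; this extension exists by lying-over and going-up. This gives $v(r)\ge v(I)$, and hence $r\in\bar I$. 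Extending valuations along integral extensions is the only substantive input beyond bookkeeping.
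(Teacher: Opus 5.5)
The paper gives no argument here, only a citation to Huneke--Swanson, Chapter~1. Your sketches for (i)--(iii), (v) and (vi) are the standard arguments from that source and are fine. In (i) you do not need gradedness of the integral closure of $R[It]$. The set of $r$ with $rt$ integral over $R[It]$ is exactly $\bar I$, and it is closed under sums and $R$-multiples because integral elements form a ring containing $R$.

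Your counterexample to (iv) is correct. For $k[x]\to k[x]/(x^2)$ and $J=0$ one gets $\varphi^{-1}(\bar J)=(x)$ but $\overline{\varphi^{-1}(J)}=(x^2)$. So (iv) as stated is false, and only $\overline{\varphi^{-1}(J)}\subseteq\varphi^{-1}(\bar J)$ survives, equivalently the integral closedness of $\varphi^{-1}(\bar J)$. The misstatement is harmless, since the paper never invokes (iv).

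The step that is actually wrong is in (vii). It is not true that $\bar I S=\overline{IS}$ for flat $S$. Integral closure commutes with base change of the Rees algebra only for normal (e.g.\ smooth) homomorphisms, not arbitrary flat ones. For instance, take $R$ reduced, $S=R[y]/(y^2)$ (free, hence faithfully flat) and $I=0$. Then $\bar IS=0$, while $y\in\overline{IS}$.

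For the literal statement $\bar IS\cap R=\bar I$ you do not need this claim. Faithful flatness gives $JS\cap R=J$ for every ideal $J$, in particular $J=\bar I$. However, the paper uses the stronger equality $\overline{IS}\cap R=\bar I$. It does so in \Cref{lemma:maxpower-subset}, and in the proof of \Cref{thm:Briancon-Skoda} for the faithfully flat but non-integral map $R\to R(X)$. Your route to that stronger form is broken.

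The correct argument uses the criterion that $r\in\bar I$ if and only if $(I+rR)^n=I(I+rR)^{n-1}$ for some $n$. This is just a rewriting of an equation of integral dependence and holds in any ring. If $r\in\overline{IS}\cap R$, this equality of ideals holds after extending to $S$, and faithful flatness descends it to $R$.

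In the integral case your valuative argument works. It needs only lying-over at a minimal prime and extension of valuations along the algebraic extension of fraction fields; going-up plays no role. There is also a shorter route. Since $S[ISt]$ is integral over $R[It]$, the element $rt$ is integral over $R[It]$. The $t^n$-component of its equation is then an equation of integral dependence for $r$ over $I$ in $R$.
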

Integral closure is an operation which respects many numerical invariants we are interested in this paper. 
\begin{theorem}[\cite{huneke_integral}, Proposition 11.2.1, Theorem 11.3.1]\label{thm:rees}
    Let $(R,\mf)$ be a formally equidimensional local ring and $I\subseteq J$ two $\mf$-primary ideals. Then $e(I) = e(J)$ if and only if $\bar{I} = \bar{J}$.
\end{theorem}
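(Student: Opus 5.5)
This is Rees' theorem, cited from Huneke–Swanson, so the plan is to reconstruct the standard argument. The easy direction ($\bar I=\bar J \Rightarrow e(I)=e(J)$) does not need formal equidimensionality.

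\emph{Easy direction.} Assume $\bar I=\bar J$. Then $I$ is a reduction of $\bar I$, so there is $r$ with $\bar I^{\,n+r}=I^n\bar I^{\,r}\subseteq I^n$ for all $n$. Since also $I^n\subseteq\bar I^{\,n}$, the Hilbert–Samuel functions of $I$ and $\bar I$ differ by at most a shift of $r$. Their leading coefficients therefore agree, so $e(I)=e(\bar I)$. The same argument gives $e(J)=e(\bar J)$, and hence $e(I)=e(J)$. Passing first to $R/\mathrm{Nil}$ is unnecessary here, since nothing in this argument uses it.

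\emph{Hard direction.} Assume $I\subseteq J$ and $e(I)=e(J)$; we must show $J\subseteq\bar I$.

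\textbf{Step 1: reductions.} Replace $R$ by $R[x]_{\mf R[x]}$, so that the residue field is infinite; multiplicities and integral closure are preserved by (vii) of \Cref{prop:int-closure}. Then pass to the completion $\hat R$, which is faithfully flat. By formal equidimensionality, $\hat R$ is equidimensional, and integral closure descends by \Cref{prop:int-closure}(vii). Finally, use the valuative criterion: $x\in\bar I$ if and only if $x\in IV$ for every DVR (or valuation ring) $V$ dominating $R/P$, for $P$ ranging over the minimal primes. Multiplicity is additive over minimal primes of maximal dimension, via the associativity formula $e(I)=\sum_P \lambda(R_P)\,e(I(R/P))$, and equidimensionality guarantees that every minimal prime appears in this sum. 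Since $e(I(R/P))\ge e(J(R/P))$ termwise, equality of the totals forces equality for each $P$. This reduces us to a complete local domain.

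\textbf{Step 2: minimal reduction.} Since the residue field is infinite, choose a minimal reduction $(a_1,\dots,a_d)$ of $I$. It suffices to treat the case where $I$ is generated by a system of parameters.

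\textbf{Step 3: the core argument.} Suppose $x\in J\setminus\bar I$. Consider the Rees algebra, or alternatively choose a Rees valuation $v$ of $I$ with $v(x)<v(I)$. From this I would show that $e(I+(x))<e(I)$, contradicting
\[
e(I)\ge e(I+(x))\ge e(J)=e(I).
\]
The standard route is to compare $\lambda(R/I^n)$ with $\lambda(R/(I,x)^n)$. Because $x$ is not integral over $I$, the associated graded module $\mathrm{gr}_I(R)$ has a dimension-$d$ component not killed by the image of $x$. I would show that the image of $x$ in the normalized Rees algebra $\overline{R[It]}/\overline{R[It]}_+$ is nonzero in a component of full dimension $d$. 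Then $\lambda\big((I,x)^n/I^n\big)$ grows like $cn^{d}$ with $c>0$, so
\[
\lambda\big(R/(I,x)^n\big)\le \lambda(R/I^n)-cn^d,
\]
which forces $e(I,x)<e(I)$.

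\textbf{Main obstacle.} Step 3 is where I expect the difficulty. Showing that the length difference is of order $n^d$ uses precisely that $R$ is a domain, or at least equidimensional and analytically unramified after completion. Without this, non-integral elements could be supported on lower-dimensional components.

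I would first try the Rees valuation approach. Localize the normalized blowup at the center of $v$, which is a height-one prime $Q$ of the extended Rees algebra. Then $x$ yields a nonzero element of $\mathrm{gr}$ supported on a component of dimension $d$ of $\mathrm{Proj}$ of the fibre, and Hilbert–Samuel counting on that component gives the desired $n^d$ growth.
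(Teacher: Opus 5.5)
Your easy direction is correct, and Step~1 is the standard reduction to a complete local domain via the associativity formula and the equidimensionality of $\widehat R$. Two small points there: what you need is $\overline{IS}\cap R=\overline I$ for faithfully flat $R\to S$, not item (vii) as stated; and Step~2 is unnecessary.

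\textbf{The gap is Step~3, which is the whole content of Rees's theorem.} You assert that $\lambda\bigl((I+(x))^n/I^n\bigr)$ grows like $n^d$ when $x\notin\overline I$, but neither mechanism you sketch produces this.
\begin{itemize}
\item The ring $\overline{R[It]}/\overline{R[It]}_+$ is just $R$ (or $\overline R$) in degree $0$. So ``the image of $x$ is nonzero on a component of dimension $d$'' says nothing about integrality.
\item Since $x\notin I$, the image of $x$ in $\mathrm{gr}_I(R)$ is its class in $R/I$. This class is nilpotent because some $x^N\in I$, so it lies in every minimal prime of $\mathrm{gr}_I(R)$.
\item In your Rees-valuation version, $\mathrm{Proj}$ of the fibre (the exceptional prime divisor $E_v$) has dimension $d-1$, not $d$. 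Counting sections on it in a single degree therefore gives only order $n^{d-1}$, which is not enough.
\end{itemize}

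\textbf{The missing idea is to sum over $n$ distinct valuation levels.}
\begin{itemize}
\item Choose a Rees valuation $v$ of $I$ with $v(I)=1>v(x)=1-\epsilon$. This is possible because $\overline I$ is cut out by the Rees valuations.
\item Put $I_v(\alpha)=\{r\in R: v(r)\ge\alpha\}$ and $G=\bigoplus_\alpha I_v(\alpha)/I_v(\alpha^+)$. Then $G$ is a graded domain whose pieces are $R/\mf$-vector spaces.
\item Since $I^n\subseteq I_v(n)$, the module $(I+(x))^n/I^n$ surjects onto $\bigl((I+(x))^n+I_v(n)\bigr)/I_v(n)$. The length of the latter is the sum, over levels $\alpha<n$, of the dimensions of the images of $(I+(x))^n\cap I_v(\alpha)$ in $G_\alpha$.
\item For $1\le i\le n$, the products $x^iI^{n-i}$ contribute at the pairwise distinct levels $n-i\epsilon<n$. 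At level $n-i\epsilon$ they give a subspace containing $\mathrm{in}_v(x)^iA_{n-i}$, where $A\subseteq G$ is the subalgebra generated by the degree-one initial forms of elements of $I$.
\item Multiplication by $\mathrm{in}_v(x)^i$ is injective because $G$ is a domain.
\item Because $R$ is a complete domain, the dimension formula holds. Hence the residue field of $v$ has transcendence degree $d-1$ over $R/\mf$, and it is algebraic over the degree-zero fractions of $A$. So $\dim A=d$, and $\dim A_m\ge c\,m^{d-1}$ for $m\gg0$ with $c>0$.
\item Summing over $i$ gives $\lambda\bigl((I+(x))^n/I^n\bigr)\ge c'n^d$, hence $e(I+(x))<e(I)$.
\end{itemize}
Without this level-by-level count, or an equivalent argument such as Rees's degree functions, the hard direction is not proved.
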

The same result holds in the case that $(R,\mf)$ is instead standard-graded.
\begin{prop}
    Let $I\subseteq k[x_1,\dots, x_n]$ be an ideal. Then $c(I) = c(\bar{I})$. 
\end{prop}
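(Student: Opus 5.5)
The claim $c(I) = c(\bar{I})$ follows from \Cref{prop:basic-properties-of-c}(iii), which was stated there for the singularity threshold in both settings of \Cref{notation:singularity_threshold}. Still, I would give a direct argument adapted to the polynomial-ring setting, which tests both inequalities on the same object. One inequality is immediate from monotonicity: since $I\subseteq \bar{I}$, \Cref{prop:basic-properties-of-c}(i) gives $c(I)\leq c(\bar{I})$.

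For the reverse inequality I would use reductions. Since $I$ is a reduction of $\bar{I}$, there is $n$ with $\bar{I}^{\,n+1} = I\bar{I}^{\,n}$, and iterating gives $\bar{I}^{\,n+m} = I^m\bar{I}^{\,n}\subseteq I^m$ for all $m\geq 1$.

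\begin{itemize}
\item \textbf{Positive characteristic.} Suppose $I^t\subseteq \mf^{[p^e]}$. Then $\bar{I}^{\,t+n}\subseteq \mf^{[p^e]}$, so $\nu_{\bar{I}}^{\mf}(p^e)\leq \nu_{I}^{\mf}(p^e)+n$. Dividing by $p^e$ and letting $e\to\infty$ gives $c^{\mf}(\bar{I})\leq c^{\mf}(I)$, since the bounded shift $n$ disappears in the limit.
\item \textbf{Characteristic zero.} Take a log resolution $\pi$ that principalizes both $I$ and $\bar{I}$. The containments $\bar{I}^{\,n+m}\subseteq I^m$ and $I\subseteq \bar{I}$ force $\operatorname{ord}_{E}(I)=\operatorname{ord}_E(\bar{I})$ for every divisor $E$ (divide by $m$ and let $m\to\infty$). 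The discrepancy formula in \Cref{defn:lct} then yields equal thresholds at the origin. Alternatively, one can cite the valuative criterion of integral closure, $\bar{I}=\{f: v(f)\geq v(I)\ \forall v\}$.
\end{itemize}

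The main point needing care is the local/graded distinction. For homogeneous $I$, the integral closure $\bar{I}$ is again homogeneous, and its formation commutes with localization at $\mf$ by \Cref{prop:int-closure}(v). So the thresholds $c$, which are computed at $\mf$, agree with those of $IR_{\mf}$ and $\bar{I}R_{\mf}$, and the local statement applies. I do not expect a serious obstacle beyond this check; the proof is essentially a citation of \Cref{prop:basic-properties-of-c}(iii).
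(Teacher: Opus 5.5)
Your proposal is correct and takes essentially the paper's approach: the paper proves this by citing the literature, using the same references that support \Cref{prop:basic-properties-of-c}(iii). Your reduction-number bound $\nu^{\mf}_{\overline{I}}(p^e)\leq \nu^{\mf}_{I}(p^e)+n$, and your equality of divisorial orders of $I$ and $\overline{I}$ on a common log resolution, are the standard proofs of those cited facts. One small point: your closing paragraph on homogeneity and localization is unnecessary, since both arguments run directly in $k[x_1,\dots,x_n]$, where $c$ is defined, and the statement concerns arbitrary rather than homogeneous $I$.
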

\begin{proof}
    For characteristic zero, see \cite{pragacz_impanga_2012}, Property 1.15. For positive characteristic, see \cite{takagi_f-pure_2004}, Proposition 2.2 (6).
\end{proof}
To conclude this subsection, we recall a version of the Brian\c{c}on-Skoda theorem,  due to Lipman and Sathaye in the level of generality we need.
\begin{theorem}\label{thm:Briancon-Skoda}
	Let $(R, \mf)$ be a regular local ring of dimension $d$ and $I\subseteq R$ an ideal. Then for all integers $t>0$, we have
	\[
	\overline{I}^{t+d}\subseteq \overline{I^{t+d}}\subseteq I^{t+1}.
	\]
\end{theorem}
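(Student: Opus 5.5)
The first inclusion is formal, and the second is the Lipman--Sathaye form of the Brian\c{c}on--Skoda theorem. So the plan is to prove the first inclusion directly and reduce the second to the Lipman--Sathaye statement for ideals with few generators. That statement I will cite rather than reprove.

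For $\overline{I}^{t+d}\subseteq \overline{I^{t+d}}$, it suffices to show that $\overline{I}\,\overline{J}\subseteq \overline{IJ}$ for any ideals $I,J$, and then induct on the exponent. I would use the Rees algebra. The integral closure of $R[It]$ in $R[t]$ is graded, and its degree-$m$ piece is $\overline{I^m}t^m$. If $r\in\overline{I}$ and $s\in\overline{I^{m}}$, then $rt$ and $st^{m}$ are integral over $R[It]$, hence so is their product $rst^{m+1}$. Therefore $rs\in\overline{I^{m+1}}$. Since $\overline{I^{m+1}}$ is an ideal by \Cref{prop:int-closure}(i), products of generators suffice, and induction on $m$ gives $\overline{I}^{t+d}\subseteq\overline{I^{t+d}}$.

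For $\overline{I^{t+d}}\subseteq I^{t+1}$, I would first reduce to the case of an infinite residue field. Pass to $S=R[x]_{\mf R[x]}$, which is regular local of dimension $d$, with $\mf S$ maximal and $R\to S$ faithfully flat. We have $\overline{I^{t+d}}\subseteq\overline{I^{t+d}S}\cap R$. If the inclusion holds in $S$, then $\overline{I^{t+d}S}\cap R\subseteq I^{t+1}S\cap R=I^{t+1}$ by faithful flatness. With the residue field infinite, choose a minimal reduction $J\subseteq I$ generated by $\ell$ elements, where $\ell\le d$ is the analytic spread of $I$. Then $\overline{I^{m}}=\overline{J^{m}}$ for every $m$, using the reduction criterion in the Definition-Proposition on reductions (i.e.\ $I^nJ=I^{n+1}$ implies $I^{m}\subseteq\overline{J^{m}}$). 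Since $\ell\le d$, the claim reduces to $\overline{J^{n+\ell-1}}\subseteq J^{n}$ for ideals $J$ generated by $\ell$ elements in a regular local ring, taken with $n=t+1$. We also use $J^{t+1}\subseteq I^{t+1}$ and $\overline{J^{t+d}}\subseteq\overline{J^{t+\ell}}$.

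The main obstacle is this last statement, which is the Lipman--Sathaye theorem. Its proof goes through Jacobian ideals of the extension from $R$ to the integral closure of the Rees algebra, and it is exactly where regularity of $R$ is used. I would not reprove it. I would cite Lipman--Sathaye, in the form recorded in \cite{huneke_integral}, Chapter 13, for arbitrary regular local rings. The only bookkeeping left is to check that the index shift matches: the cited result gives $\overline{J^{n+\ell-1}}\subseteq J^n$, and with $n=t+1$ this is $\overline{J^{t+\ell}}\subseteq J^{t+1}$, as required.
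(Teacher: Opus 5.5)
Your proposal is correct and follows essentially the same route as the paper. Like the paper, you reduce to an infinite residue field via $R[x]_{\mf R[x]}$ and faithful flatness, replace $I$ by a reduction $J$ with at most $d$ generators so that $\overline{I^{m}}=\overline{J^{m}}$, and apply Lipman--Sathaye to get $\overline{J^{t+d}}\subseteq J^{t+1}\subseteq I^{t+1}$; the only differences are that you prove the first inclusion directly via the integral closure of the Rees algebra, whereas the paper cites it from Huneke--Swanson, and that you track the index shift from $\ell$ to $d$ explicitly.
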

\begin{proof}
	The first containment is \cite[Proposition 5.3.1]{huneke_integral}. For the second, we reduce to the case of an infinite residue field; we let $R(X):= R[X]_{\mf R[X]}$ as in op. cit. By \Cref{prop:int-closure} we have $\overline{I} = \overline{IR(X)}\cap R$, so it suffices to prove the claim for $R = R(X)$. 
	
	 By \cite[Proposition 5.1.6 and Theorem 8.6.6]{huneke_integral} there exists an ideal $J\subseteq I$ with $\overline{J^t} = \overline{I^t}$ for all $t>0$ and such that $J$ can be generated by at most $d$ elements. By \cite[Theorem 1'']{lipman_briancon_1981} we deduce
	\[
	\overline{I^{t+d}} = \overline{J^{t+d}} \subseteq J^{t+1}\subseteq I^{t+1}.
	\]
\end{proof}
\subsection{Mixed Multiplicities and the Demailly-Pham Invariant}\label{sec:dp-invariant}
We recall the definition of the mixed multiplicity symbol $e(I_1,\dots, I_d; M)$. 
\begin{defn}
    Let $M$ be a finite-length $R$-module. We let $\l_R(M)$ denote the length of $M$ as an $R$-module.
\end{defn}
\begin{theorem}[\cite{huneke_integral}, Theorem 17.4.2]\label{thm:mixed-mult-exists}
    Let $(R, \mf)$ be a Noetherian local ring, $I_1,\dots, I_k$ ideals of $R$ primary to $\mf$, and $M$ a finitely-generated $R$-module. Then there exists a polynomial $P(n_1,\dots, n_k)$ with rational coefficients and total degree at most $\dim R$ such that for all $n_1,\dots, n_k\gg 0$, we have
    \begin{equation}\label{eqn:multi-hilbert-function}
    P(n_1,\dots, n_k) = \l_R\left(\frac{M}{I_1^{n_1}\dots I_k^{n_k}M}\right).
    \end{equation}
\end{theorem}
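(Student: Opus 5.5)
The plan is the standard multigraded Hilbert--Samuel argument: reduce to the associated multigraded ring, where Hilbert's theorem on multigraded modules supplies a polynomial. First I would reduce to the case where the residue field $k=R/\mf$ is infinite by passing to $R(X)=R[X]_{\mf R[X]}$. This extension is faithfully flat with $\mf R(X)$ maximal, so lengths are preserved: $\l_{R(X)}(N\otimes_R R(X))=\l_R(N)$ for finite-length $N$. Hence the function in \Cref{eqn:multi-hilbert-function} does not change, and the reduction is actually optional.

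Next I would consider the multi-Rees ring
\[
\mathcal{R}=R[I_1t_1,\dots,I_kt_k]=\bigoplus_{\mathbf{n}\in\Z_{\geq0}^k} I_1^{n_1}\cdots I_k^{n_k}\, t^{\mathbf n}
\]
and the finitely generated $\mathcal R$-module $\mathcal M=\bigoplus_{\mathbf n} I^{\mathbf n}M$, where $I^{\mathbf n}=I_1^{n_1}\cdots I_k^{n_k}$. Put $J=I_1\cdots I_k$, which is $\mf$-primary. The associated multigraded object is
\[
G=\bigoplus_{\mathbf n} I^{\mathbf n}M/ J I^{\mathbf n}M
\]
(alternatively one can use $\mf$ in place of $J$, which I would not do). This is a finitely generated multigraded module over $\mathcal R/J\mathcal R$. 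The degree-zero part of that ring is $R/J$, which is Artinian, and the ring is generated in degrees $e_i$ by the finitely many generators of $I_i$. Each graded piece has finite length. By the multigraded Hilbert theorem, proved by induction on the number of generators via the exact sequence for multiplication by a generator of degree $e_i$, the function $\mathbf n\mapsto \l(G_{\mathbf n})$ agrees for $\mathbf n\gg0$ with a polynomial. Its degree in the variables $n_i$ is bounded by one less than the number of degree-$e_i$ generators.

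I would then sum. Writing $\l(M/I^{\mathbf n}M)$ as a telescoping sum of such lengths is awkward in several variables, so I would instead do one variable at a time. For fixed $n_2,\dots,n_k$, the difference $\l(M/I^{\mathbf n+e_1}M)-\l(M/I^{\mathbf n}M)=\l(I^{\mathbf n}M/I_1I^{\mathbf n}M)$ is the Hilbert function of the bigraded-by-$n_1$ module $\bigoplus I^{\mathbf n}M/I_1I^{\mathbf n}M$ over $\mathcal R/I_1\mathcal R$. By the multigraded Hilbert theorem this is polynomial for $\mathbf n\gg0$, and the same holds for each variable. A function on $\Z^k_{\geq0}$ all of whose first differences $\Delta_i$ are eventually polynomial is itself eventually polynomial: integrate in $n_1$, then adjust the constant of integration, which is a function of $n_2,\dots,n_k$ whose differences are again polynomial, and induct on $k$. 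This gives the polynomial $P$ with rational coefficients.

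The main obstacle is the total degree bound $\dim R$, since the argument above only bounds the degree in each variable. For this I would compare with a single ideal. Setting $N=n_1+\cdots+n_k$, we have $J^{N}\subseteq I^{\mathbf n}$, so
\[
\l(M/I^{\mathbf n}M)\le \l(M/J^{N}M).
\]
The right side is the ordinary Hilbert--Samuel function of $J$ on $M$, which is a polynomial of degree $\dim M\le\dim R$ in $N$. A polynomial $P$ that is nonnegative and bounded by $C(n_1+\cdots+n_k)^{d}$ on a translated orthant has total degree at most $d$. To see this, restrict $P$ to rays $\mathbf n=\mathbf a+s\mathbf v$ with $\mathbf v$ in the open positive cone. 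The top homogeneous part $P_{\text{top}}$ must then satisfy $P_{\text{top}}(\mathbf v)=0$ on an open set whenever $\deg P>d$, so it vanishes identically, a contradiction. This yields total degree at most $\dim R$.
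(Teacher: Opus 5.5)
The paper gives no proof of this statement; it only cites \cite[Theorem 17.4.2]{huneke_integral}. Your proposal is a correct, self-contained version of the standard argument. The multigraded Hilbert functions of $\mathcal M/I_i\mathcal M$ over $\mathcal R/I_i\mathcal R$ make each difference $\Delta_i\lambda(M/I^{\mathbf n}M)=\lambda(I^{\mathbf n}M/I_iI^{\mathbf n}M)$ eventually polynomial. Integrating one coordinate at a time then yields $P$. Finally, the containment $J^{n_1+\cdots+n_k}\subseteq I^{\mathbf n}$ bounds the total degree by $\dim M\le\dim R$. The reverse containment $I^{\mathbf n}\subseteq\mf^{n_1+\cdots+n_k}$ would even show the total degree is exactly $\dim M$ when $M\neq 0$.

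Two minor remarks. First, the module $G$ built from $J$ plays no role. Its Hilbert function only controls the diagonal difference, which is not enough to recover $\lambda(M/I^{\mathbf n}M)$, so you were right to switch to the $I_i$. Second, $P$ is only known to agree with the length function at lattice points, so run the ray argument with $\mathbf v\in\Z_{>0}^k$ and $s\in\Z_{\geq 0}$. This gives that $P_{\mathrm{top}}$ vanishes on $\Z_{>0}^k$, hence is the zero polynomial. That replaces the direct appeal to vanishing on an open set.
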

\begin{defn}[Mixed Multiplicity]\label{defn:mixed-mult}
    Let $(R, \mf)$ be a Noetherian local ring of dimension $d$. Let $I_1,\dots, I_k$ be $\mf$-primary ideals of $R$ and let $P(n_1,\dots, n_k)$ be as in \Cref{eqn:multi-hilbert-function}. Write $Q(n_1,\dots, n_k)$ for the degree-$d$ part of $P(n_1,\dots, n_k)$. The coefficients of $Q$ define the \textbf{mixed multiplicities} $e(I_1^{\lr{d_1}},\dots, I_k^{\lr{d_k}}; M)$:
    \begin{equation}\label{eqn:mixed-mult-def}
        Q(n_1,\dots, n_k) = \sum_{d_1 + \dots + d_k = d}\binom{d}{d_1,\dots, d_k}^{-1}e(I_1^{\lr{d_1}},\dots, I_k^{\lr{d_k}}; M)
    \end{equation}
    The expression $e(I_1^{\lr{d_1}},\dots, I_k^{\lr{d_k}}; M)$ is shorthand for the expression \[e(I_1,\dots, I_1,\dots, I_k,\dots, I_d; M),\] where $I_j$ is repeated $d_j$ times. 
\end{defn}

\begin{remark}
    Other authors \cite{huneke_integral} have used the notation $e(I_1^{[d_1]},\dots, I_k^{[d_k]};M)$. To avoid confusion with the Frobenius powers $I_j^{[p^e]}$ of the ideals $I_j$, we use angle brackets in the exponent.
\end{remark}
We now define the mixed multiplicities $e_j(I)$.
\begin{defn}\label{defn:teissier-mm}
    Let $(R, \mf)$ be a Noetherian local ring of dimension $d$ and let $I$ denote an $\mf$-primary ideal. We define
    \[
    e_j(I) = e(I^{\lr{j}}, \mf^{\lr{d-j}}; R).
    \]
\end{defn}
We record a few basic properties of the numbers $e_j(I)$.
\begin{prop}\label{prop:mixed-mult-properties}
	Let $(R, \mf)$ be a regular local ring of dimension $n$ with infinite residue field. Let $I\subseteq R$ be an $\mf$-primary ideal.
    \begin{enumerate}[(i)]
        \item We have $e_0(I) = 1, e_1(I) = \text{ord}_\mf(I),$ and $e_n(I) = e(I)$.
        \item If $h_1,\dots, h_n$ are elements of $\mf$ whose images in $\mf/\mf^2$ are sufficiently general, then for all $0\leq j\leq n$ we have 
        \[
        e_j(I)  = e\left(\frac{I+ (h_1,\dots, h_{n-j})}{(h_1,\dots, h_{n-j})}\right)= e\left(I+ (h_1,\dots, h_{n-j})\right),
        \] where $e(-)$ denotes the usual Hilbert-Samuel multiplicity.
        \item Minkowski inequality: for  $j\geq 1$, we have $e_j(I)^2\leq e_{j-1}(I)e_{j+1}(I)$.
    \end{enumerate}
\end{prop}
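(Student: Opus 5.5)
The plan is to prove (ii) first, in both of its forms, and then derive (i) and (iii) from it. Throughout, let $\mathbf h = h_1,\dots,h_{n-j}$ be general elements of $\mf$ and set $\bar R = R/(\mathbf h)$. Because the images of the $h_i$ in $\mf/\mf^2$ are linearly independent, $\mathbf h$ is part of a regular system of parameters. Hence $\bar R$ is a regular local ring of dimension $j$, and $\mathbf h$ is a regular sequence in $R$.

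\emph{Step 1: $e_j(I)$ as the length of a parameter quotient.} Since the residue field is infinite, the theory of (joint) superficial sequences for mixed multiplicities applies (Risler--Teissier; see \cite[Theorem 17.4.9]{huneke_integral}). It gives the following: for general $g_1,\dots,g_j\in I$ and general $h_1,\dots,h_{n-j}\in\mf$, we have $e_j(I)=e(I^{\lr{j}},\mf^{\lr{n-j}};R)=e\big((g_1,\dots,g_j,h_1,\dots,h_{n-j})\big)$. The ideal $(\mathbf g,\mathbf h)$ is generated by a system of parameters of the Cohen--Macaulay ring $R$. Its multiplicity is therefore the length $\l_R\big(R/(\mathbf g,\mathbf h)\big)$.

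\emph{Step 2: both quotient and ambient forms.} The images $\bar g_i$ of general $g_i\in I$ are general elements of $I\bar R$. Since $\bar R$ has infinite residue field and dimension $j$, the ideal $(\bar{\mathbf g})$ is a minimal reduction of $I\bar R$, so
\[
e\big(I\bar R\big)=e\big((\bar{\mathbf g})\big)=\l\big(\bar R/(\bar{\mathbf g})\big)=\l_R\big(R/(\mathbf g,\mathbf h)\big).
\]
This is the first equality of (ii). For the second equality, which concerns multiplicity taken in $R$ itself, I claim that $(\mathbf g,\mathbf h)$ is a reduction of $I+(\mathbf h)$ in $R$. Let $\varphi:R\to\bar R$ be the quotient map. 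Since $I\bar R\subseteq\overline{(\bar{\mathbf g})}$, \Cref{prop:int-closure}(iv) gives
\[
I\subseteq\varphi^{-1}\big(\overline{(\bar{\mathbf g})}\big)=\overline{\varphi^{-1}((\bar{\mathbf g}))}=\overline{(\mathbf g,\mathbf h)},
\]
so $I+(\mathbf h)\subseteq\overline{(\mathbf g,\mathbf h)}$. Reductions have the same multiplicity, and $(\mathbf g,\mathbf h)$ is a parameter ideal in a Cohen--Macaulay ring. Hence $e\big(I+(\mathbf h)\big)=\l_R\big(R/(\mathbf g,\mathbf h)\big)$, which equals $e_j(I)$ by Step 1. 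The genericity conditions are finitely many nonempty Zariski-open conditions on $\mf/\mf^2$ and on $I/\mf I$, so they can be imposed simultaneously.

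\emph{Step 3: (i) and (iii).} In \Cref{defn:mixed-mult}, $e_n(I)$ is the coefficient of $n_1^n/n!$ in the Hilbert polynomial of $I^{n_1}\mf^{n_2}$. Setting $n_2=0$ is harmless for the top-degree part, so $e_n(I)=e(I)$; alternatively, take $j=n$ in (ii). Similarly $e_0(I)=e(\mf)=1$ because $R$ is regular. For $e_1$, apply (ii) with $j=1$: $\bar R$ is a DVR whose uniformizer is the image of a general linear form, and $e(I\bar R)$ is the order of $I\bar R$. Pick $f\in I$ with $\ord_\mf f=d=\ord_\mf(I)$. Its initial form is a nonzero degree-$d$ form on $\mf/\mf^2$, and it stays nonzero when restricted to a general line. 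Thus $\ord(I\bar R)=d$; the reverse bound $\ord(I\bar R)\ge d$ is clear.

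For (iii), apply (ii) with the first $n-j-1$ general elements to obtain a $2$-dimensional regular local ring $S=R/(h_1,\dots,h_{n-j-1})$. A further general cut shows that $e_{j-1}(I),e_j(I),e_{j+1}(I)$ become $e(\mf S^{\lr 2}),\,e(IS,\mf S),\,e(IS^{\lr 2})$. The inequality then reduces to the two-dimensional Teissier--Rees--Sharp inequality $e(IS,\mf S)^2\le e(\mf S)\,e(IS)$ \cite[Theorem 17.7.2]{huneke_integral}.

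The main obstacle is Step 1, namely making precise that one simultaneously general choice of $\mathbf g\subseteq I$ and $\mathbf h\subseteq\mf$ computes the mixed multiplicity. I would cite the superficial-sequence theorem for mixed multiplicities instead of reproving it. The care needed is in checking that the same $\mathbf h$ also serves in the reduction argument of Step 2 and in the cut-down argument for (iii).
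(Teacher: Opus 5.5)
Your route is a reasonable alternative to the paper's bare citations of \cite[Corollary 17.4.7, Proposition 11.1.19, Theorem 17.7.2]{huneke_integral}. You write $e_j(I)$ as the colength of a parameter ideal via Risler--Teissier, use minimal reductions of $I\overline{R}$ for the first equality in (ii), and deduce (i) from (ii). However, your reduction of (iii) to dimension two is wrong. The ring $S=R/(h_1,\dots,h_{n-j-1})$ has dimension $j+1$, not $2$. In $S$ the three numbers are $e((IS)^{\lr{j-1}},(\mf S)^{\lr{2}})$, $e((IS)^{\lr{j}},\mf S)$ and $e(IS)$. Further cuts by elements of $\mf$ only lower the $\mf$-exponent, so they can never turn $e_{j-1}(I)$ into $e(\mf S)$. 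Indeed, if $S$ were two-dimensional and regular you would get $e_{j-1}(I)=1$, which fails for $j\ge 2$: for $I=\mf^2$ one has $e_{j-1}(I)=2^{j-1}$. You must also cut by $j-1$ general joint superficial elements of $I$, which lowers the $I$-exponent. That is the cutting statement of \cite[Corollary 17.4.7]{huneke_integral} applied to elements of $I$, and (ii), which only cuts by elements of $\mf$, does not give it. You then land in a two-dimensional Cohen--Macaulay but \emph{non-regular} ring $T$ with $e(\mf T)=e_{j-1}(I)$, $e(IT,\mf T)=e_j(I)$ and $e(IT)=e_{j+1}(I)$. The Rees--Sharp inequality, valid for arbitrary two-dimensional Noetherian local rings, then gives (iii).

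The second equality in (ii) also has a gap. You use \Cref{prop:int-closure}(iv) in the direction $\varphi^{-1}(\overline{J})\subseteq\overline{\varphi^{-1}(J)}$, but for an arbitrary ring map only the reverse inclusion holds. The direction you need fails for surjections even onto a domain. Take $\varphi\colon k[x,y]\to k[t^2,t^3]$ with $x\mapsto t^2$, $y\mapsto t^3$, and $J=(t^3)$. Then $t^4\in\overline{J}$ because $(t^4)^3=t^3(t^3)^3\in J^3$, so $x^2\in\varphi^{-1}(\overline{J})$. Yet $x^2\notin\overline{(y,x^3)}=\overline{\varphi^{-1}(J)}$. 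The claim that $(\mathbf{g},\mathbf{h})$ is a reduction of $I+(\mathbf{h})$ is equivalent to the equality $e(I+(\mathbf{h}))=e_j(I)$ you are proving, so this step carries all the content. In equal characteristic you can repair it by completing. We have $\widehat{R}\cong\widehat{\overline{R}}\llbracket h_1,\dots,h_{n-j}\rrbracket$, so $\widehat{R}\to\widehat{\overline{R}}$ has a section $s$ with $s(\bar g_i)\in(\mathbf{g},\mathbf{h})\widehat{R}$. Hence $s$ maps $\overline{(\bar{\mathbf{g}})\widehat{\overline{R}}}$, which contains the image of $I$, into $\overline{(\mathbf{g},\mathbf{h})\widehat{R}}$. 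For $r\in I$ this gives $r\in s(\varphi(r))+(\mathbf{h})\widehat{R}\subseteq\overline{(\mathbf{g},\mathbf{h})\widehat{R}}$. By faithful flatness, $r\in\overline{(\mathbf{g},\mathbf{h})\widehat{R}}\cap R=\overline{(\mathbf{g},\mathbf{h})}$. Outside equal characteristic, cite \cite[Proposition 11.1.19]{huneke_integral} as the paper does.
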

\begin{proof}\mbox{}
    \begin{enumerate}[(i)]
        \item Follows from (ii).
        \item The first equality is \cite[Corollary 17.4.7]{huneke_integral}; the second is \cite[Proposition 11.1.19]{huneke_integral}.

        \item Follows from \cite[Theorem 17.7.2]{huneke_integral}.
\end{enumerate}
\end{proof}
To state \Cref{thm:bound}, we must extend \Cref{defn:mixed-mult} to the case of an ideal $I\subseteq R$ which is not necessarily $\mf$-primary. 
\begin{defn}[\cite{bivia-ausina_joint_2008}, Definition 2.4]\label{defn:sigma}
	Let $(R, \mf)$ be an $n$-dimensional local ring and $I_1,\dots, I_n$ ideals of $R$. We define
	\begin{equation}\label{eqn:sup-sigma}
	\sigma(I_1,\dots, I_n) = \sup_{t>0}\ e(I_1+\mf^t, \dots, I_n + \mf^t).
	\end{equation}
	As a special case, for $1\leq j\leq n$ and $I\subseteq R$, we define $\sigma_j(I):= \sigma(I^{\lr{j}}, \mf^{\lr{n-j}})$ as in \Cref{defn:teissier-mm}.
\end{defn}
The quantity (\ref{eqn:sup-sigma}) may be infinite. The following proposition summarizes basic properties of $ \sigma(I_1,\dots, I_n)$, among which is a characterization of when the quantity (\ref{eqn:sup-sigma}) is finite.
\begin{prop}[\cite{bivia-ausina_joint_2008}]\label{prop:sigma-properties}
	Let $(R, \mf)$ be an $n$-dimensional regular local ring and $I_1,\dots, I_n$ ideals of $R$. 
	\begin{enumerate}[(i)]
		\item If $I_1,\dots, I_n$ have height $n$, then $\sigma(I_1,\dots, I_n) = e(I_1,\dots, I_n)$.
		\item We have $\sigma(I_1,\dots, I_n) < \infty$ if and only if there exist $g_1\in I_1,\dots, g_n\in I_n$ such that $(g_1,\dots, g_n)$ is $\mf$-primary. In this case,  $\sigma(I_1,\dots, I_n) = e(g_1,\dots, g_n)$ for elements $g_i\in I_i$ whose images in $I_i/\mf I_i$ are sufficiently general. 
		\item In particular, for an ideal $I\subseteq R$, we have $\sigma_j(I) < \infty$ if and only if $\codim(I) \geq j$. In this case, if $h_1,\dots, h_{n-j}\in \mf$ are elements whose images in $\mf/\mf^2$ are sufficiently general, we have
		\begin{equation}\label{eqn:sigma-via-general-hyperplanes}
			\sigma_j(I) = e\left(\frac{I+ (h_1,\dots, h_{n-j})}{(h_1,\dots, h_{n-j})}\right)= e\left(I+ (h_1,\dots, h_{n-j})\right).
			\end{equation}
			\item The Minkowski inequalities hold: if $\sigma_{j-1}(I),\sigma_j(I)$ are finite then $\sigma_j(I)^2\leq \sigma_{j-1}(I)\sigma_{j+1}(I)$.
	\end{enumerate}
\end{prop}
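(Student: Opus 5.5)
The plan is to derive all four items from the definition of $\sigma$ as a supremum, combined with the classical theory of mixed multiplicities and general elements (Rees, Teissier). We may assume $R/\mf$ is infinite, passing to $R(X)=R[X]_{\mf R[X]}$ as in the proof of \Cref{thm:Briancon-Skoda}. This extension is faithfully flat with $\mf R(X)$ maximal, so multiplicities and heights are unchanged.

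\emph{Item (i).} If every $I_i$ is $\mf$-primary, then $I_i\subseteq I_i+\mf^t$. Hence $t\mapsto e(I_1+\mf^t,\dots,I_n+\mf^t)$ is nondecreasing in $t$ and bounded above by $e(I_1,\dots,I_n)$, using monotonicity of mixed multiplicities under inclusion. For $t\gg0$ we have $\mf^t\subseteq I_i$, so $I_i+\mf^t=I_i$ and the supremum is attained.

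\emph{Item (ii).} This is the main step. By Rees' theorem on joint reductions, for $\mf$-primary ideals $J_i$ and general $g_i\in J_i$, the ideal $(g_1,\dots,g_n)$ is $\mf$-primary and $e(J_1,\dots,J_n)=e(g_1,\dots,g_n)$.
\begin{itemize}
\item Suppose first that some $g_i\in I_i$ generate an $\mf$-primary ideal $\mathfrak q=(g_1,\dots,g_n)$. Choose $s$ with $\mf^s\subseteq\mathfrak q$. For any $t$, take general elements $g_i+u_i\in I_i+\mf^t$ with $u_i\in\mf^t$. We want $e(I_1+\mf^t,\dots)\le e(\mathfrak q)$. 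Since $\mf^t\subseteq I_i+\mf^t$, monotonicity gives $e(I_1+\mf^t,\dots)\le e(g_1+\mf^t,\dots,g_n+\mf^t)$. For $t>s$, each $(g_i)+\mf^t$ has $(g_i)$ as a joint reduction component, because $\mf^t\subseteq\mf^{t-s}\mathfrak q$; by Nakayama-type arguments the mixed multiplicity equals $e(g_1,\dots,g_n)$. So $\sigma<\infty$. Taking the $g_i$ general in $I_i/\mf I_i$ shows that the sequence stabilizes at $e(g_1,\dots,g_n)$.
\item Conversely, suppose no such $g_i$ exist. For general $g_i\in I_i$ the ideal $(g_1,\dots,g_n)$ then has height at most $n-1$, so some curve germ $C$ lies in $V(g_1,\dots,g_n)$. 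The general element $g_i+u_i$ of $I_i+\mf^t$ restricted to $C$ has order at least $\min(\ord_C(g_i),t)$, and this grows in $t$. Hence $e(g_1+u_1,\dots)\ge\ell(R/(\dots))\to\infty$.
\end{itemize}

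\emph{Item (iii).} Apply (ii) with $I_1=\dots=I_j=I$ and $I_{j+1}=\dots=I_n=\mf$. General $g_1,\dots,g_j\in I$ together with general linear forms $h$ generate an $\mf$-primary ideal iff $\codim I\ge j$; this is a prime avoidance argument on $V(I)$ cut by general hyperplanes. The displayed formula follows by applying \Cref{prop:mixed-mult-properties}(ii) to $(I+(h))/(h)$. That ideal is primary to the maximal ideal of the regular ring $R/(h)$, and for general $t$ its multiplicity agrees with that of $(I+\mf^t+(h))/(h)$.

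\emph{Item (iv).} Choose $t$ large enough that all the relevant suprema are attained. The inequality then follows from \Cref{prop:mixed-mult-properties}(iii) applied to the $\mf$-primary ideal $I+\mf^t$.

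We expect the obstacle to be the infinitude direction of (ii), which needs a careful lower bound on multiplicities via orders along a curve.
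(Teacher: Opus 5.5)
Your treatment of (i), (iv), and the finiteness criterion in (iii) matches the paper. The paper only cites Bivi\`a-Ausina for (ii), and the two main steps of your proof of it work. For $t\ge s$ one has $\prod_i((g_i)+\mf^t)=\sum_i g_i\prod_{k\neq i}((g_k)+\mf^t)$, so Rees's joint-reduction theorem gives $\sigma\le e(g_1,\dots,g_n)$ whenever $(g_1,\dots,g_n)$ is $\mf$-primary. Your curve argument gives $\sigma=\infty$ otherwise.

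The gap is in your derivation of the displayed formula in (iii), and it is an order-of-quantifiers problem. \Cref{prop:mixed-mult-properties}(ii), applied to the $\mf$-primary ideal $I+\mf^t$, gives $e_j(I+\mf^t)=e((I+\mf^t+(h))/(h))$ only for $h$ in an open set that depends on $t$. The identification $I+\mf^t+(h)=I+(h)$ needs $\mf^t\subseteq I+(h)$, i.e.\ $t$ large \emph{depending on $h$}. You give no bound on the least $s(h)$ with $\mf^{s(h)}\subseteq I+(h)$ that is uniform over general $h$. So your argument only proves the inequality $e((I+(h))/(h))\ge e((I+\mf^t+(h))/(h))=\sigma_j(I)$, which comes from $I+(h)\subseteq I+\mf^t+(h)$.

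The paper supplies the missing upper bound by using the general-element half of (ii) a second time. Choose $g_1,\dots,g_j\in I$ and $h_1,\dots,h_{n-j}$ general so that also $\sigma_j(I)=e(g_1,\dots,g_j,h_1,\dots,h_{n-j})$. Since $(g,h)\subseteq I+(h)$, you get the sandwich $\sigma_j(I)=e(I+\mf^t+(h))\le e(I+(h))\le e(g,h)=\sigma_j(I)$. Equivalently, $\ell(R/(I+(h)))\le\ell(R/(g,h))=\sigma_j(I)$ gives $\mf^{\sigma_j(I)}\subseteq I+(h)$ for all general $h$, which is exactly the uniform bound your truncation needs.

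However, that general-element statement is the part of (ii) you only assert ("taking the $g_i$ general \dots\ stabilizes"). It has the same defect, because your threshold $s$ depends on $g$. It can be repaired by the same device:
\begin{enumerate}
\item Fix $t>\sigma$ with $e(I_1+\mf^t,\dots,I_n+\mf^t)=\sigma$, and take general $a_i=g_i+u_i\in I_i+\mf^t$ (with $g_i\in I_i$, $u_i\in\mf^t$) such that $e(a)=\sigma$.
\item Since $R$ is Cohen--Macaulay, $\ell(R/(a))=\sigma$, so $\mf^{\sigma}\subseteq(a)$. Hence $u_i\in\mf(a)$, and Nakayama gives $(g_1,\dots,g_n)=(a_1,\dots,a_n)$.
\item Project the open set of good residues along the surjection $\prod_i(I_i/\mf I_i\oplus\mf^t/\mf^{t+1})\to\prod_i(I_i+\mf^t)/\mf(I_i+\mf^t)$. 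This shows every lift $g$ of a general element of $\prod_i I_i/\mf I_i$ arises as such an $I$-component, so $e(g)=\sigma$ for general $g$.
\end{enumerate}
With this in hand, (iii) follows as in the paper.
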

\begin{proof}
	\mbox{}
	\begin{enumerate}[(i)]
		\item For $t\gg 0$, we have $\mf^t\subseteq I_i$ for all $1\leq i\leq n$. See the remark after Definition 2.4 in op. cit.
		\item This is Proposition 2.4 in op. cit.
		\item When $j\leq \codim I$, use prime avoidance to choose $g_1,\dots, g_j\in I$ such that $\codim((g_1,\dots, g_j))=j$ and $g_{j+1},\dots, g_n\in \mf$ such that $\codim((g_1,\dots, g_n))=n$. When $j \geq \codim(I) + 1$, for any $g_1,\dots, g_j\in I, g_{j+1},\dots, g_n\in \mf$ we have 
		 \[
		 \codim((g_1,\dots, g_n))\leq \codim(I + (g_{j+1},\dots, g_n)) \leq \codim(I) + n-j \leq n-1.
		 \]
		 To see that the first equality in \Cref{eqn:sigma-via-general-hyperplanes} holds, pick $t\gg 0$ such that $\sigma_j(I) = e_j(I + \mf^t)$. Use (ii) to pick $g_1,\dots, g_j\in I, h_1,\dots, h_{n-j}\in \mf$ generally such that $\sigma_j(I) = e((g_1,\dots, g_j,h_1,\dots, h_{n-j}))$. Use \Cref{prop:mixed-mult-properties}(ii) to pick the $h_1,\dots, h_{n-j}$ even more generally so that $e_j(I+\mf^t) = e(I + \mf^t + (h_1,\dots, h_{n-j}))$. Using monotonicity of Hilbert-Samuel multiplicity, we compute
		 \begin{align*}
		 \sigma_j(I) &= e_j(I + \mf^t) = e(I + \mf^t + (h_1,\dots, h_{n-j})) \leq e(I + (h_1,\dots, h_{n-j}))
		 \\&\leq e(g_1,\dots, g_j, h_1,\dots, h_{n-j}) = \sigma_j(I).
		 \end{align*}
		\item Follows immediately from \Cref{prop:mixed-mult-properties} (iii).
	\end{enumerate}
\end{proof}
We will now define the Demailly-Pham invariant \cite{demailly_sharp_2014}.
\begin{defn}\label{defn:dp-invariant}
    Let $(R, \mf)$ be a regular local ring, $l\in \Z^+$, and $I$ an ideal with $\codim(I)\geq l$. Then we set
    \[
    E_l(I):= \frac{1}{\sigma_1(I)} + \dots + \frac{\sigma_{l-1}(I)}{\sigma_l(I)}.
    \]

\end{defn}

	Suppose instead $R = k[x_1,\dots, x_n]$ and $\mf = (x_1,\dots, x_n)$. Let $I\subseteq R$ be an $\mf$-primary ideal. For all $r,s>0$, we have 
	\[
	\lambda_R\left(\frac{R}{I^r\mf^s}\right) = \lambda_{R_\mf}\left(\frac{R_\mf}{I^r\mf^sR_\mf}\right),
	\]
	so \Cref{thm:mixed-mult-exists} holds for $\lambda_R\left(\frac{R}{I^r\mf^s}\right)$ without assuming that $R$ is local. In particular, for $0\leq j\leq n$, the numbers $e(I^{\lr{j}}, \mf^{\lr{n-j}})$ can be defined intrinsically from the polynomial $P(r,s)$ and agree with the quantities $e_j(IR_\mf)$. Additionally, in this setting we have an analog of \Cref{thm:rees}.
\begin{prop}\label{prop:dp-rees}
    Let $R$ be a regular ring and $\mf\subseteq R$ a maximal ideal such that $\dim R_\mf = n$. Let $I_1\subseteq I_2\subseteq R$ be $\mf$-primary ideals. Then $E_n(I_1 R_\mf)\leq E_n(I_2 R_\mf)$ with equality if and only if $\bar{I_1} = \bar{I_2}$.
\end{prop}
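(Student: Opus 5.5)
We reduce to the local ring $(R_\mf,\mf R_\mf)$. This ring is regular of dimension $n$, and $I_1R_\mf\subseteq I_2R_\mf$ are $\mf R_\mf$-primary. Integral closure commutes with localization (\Cref{prop:int-closure}(v)). Moreover, an $\mf$-primary ideal is determined by its localization at $\mf$: if $J$ is $\mf$-primary then $J = JR_\mf\cap R$. Hence $\bar{I_1} = \bar{I_2}$ if and only if $\overline{I_1R_\mf} = \overline{I_2R_\mf}$. We may also pass to $R_\mf(X)$ to assume an infinite residue field. This extension is faithfully flat with $\mf R(X)$ maximal, so it preserves lengths (hence all mixed multiplicities) and integral closures by \Cref{prop:int-closure}(vii). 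We therefore assume $R$ is local with infinite residue field, and write $e_j := e_j(I_1)$ and $f_j := e_j(I_2)$.

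\textbf{Inequality.} For each $j$, choose $h_1,\dots,h_{n-j}$ general for both ideals simultaneously, which is possible since each genericity condition is a nonempty open one. By \Cref{prop:mixed-mult-properties}(ii), $e_j = e(I_1+(h))\geq e(I_2+(h)) = f_j$, using monotonicity of Hilbert–Samuel multiplicity. It is more convenient to write $E_n$ via the ratios $r_j = e_j/e_{j-1}$ (with $e_0 = 1$). Then
\[
E_n = \sum_{j=1}^n \frac{1}{r_j},
\]
and the Minkowski inequality \Cref{prop:mixed-mult-properties}(iii) says $r_1\leq r_2\leq\dots\leq r_n$. The monotonicity $e_j\geq f_j$ does not directly compare the reciprocals of ratios, so instead we use the following elementary fact. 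For nondecreasing positive $r_j$ with partial products $P_k = r_1\cdots r_k$, the function $\sum 1/r_j$ is a decreasing function in the partial products. One can see this by an Abel-summation or majorization argument: the vector $(\log r_j)$ is ``sorted increasing,'' and $e_k\geq f_k$ for all $k$ means $\sum_{j\leq k}\log r_j\geq \sum_{j\leq k}\log s_j$. By Karamata's inequality applied to the convex decreasing function $x\mapsto e^{-x}$, the weak submajorization gives $\sum e^{-\log r_j}\leq \sum e^{-\log s_j}$. This requires the sequences to be ordered compatibly, which Minkowski guarantees. This yields $E_n(I_1)\leq E_n(I_2)$.

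\textbf{Equality.} If $\bar{I_1}=\bar{I_2}$, then all mixed multiplicities agree, since $e_j$ depends only on the integral closure (reductions preserve the multi-graded Hilbert polynomial's top part). Hence equality holds. Conversely, suppose $E_n(I_1)=E_n(I_2)$. We must show that equality in the Karamata step forces $e_j = f_j$ for all $j$, in particular $e_n(I_1) = e(I_1) = e(I_2)$, and then \Cref{thm:rees} gives $\bar{I_1}=\bar{I_2}$. Because $e^{-x}$ is strictly convex and strictly decreasing, the inequality is strict unless the two ordered vectors coincide. To make this rigorous, one can write
\[
\sum_j (e^{-\log s_j} - e^{-\log r_j})
\]
as an Abel sum of the nonnegative partial-sum differences $D_k = \log(e_k/f_k)$ multiplied by divided differences of $e^{-x}$. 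These divided differences are strictly positive since the function is strictly decreasing. The last term carries weight coming from the derivative at the top index, which is strictly positive as well. Vanishing of the total then forces all $D_k=0$, in particular $D_n=0$.

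The main obstacle is this equality analysis: carefully handling the Abel summation when some $r_j = s_j$ or when ties occur in the orderings, so that each coefficient multiplying $D_k$ is genuinely strictly positive. I would first try the explicit telescoping with the slopes $(e^{-a}-e^{-b})/(b-a)$, which are positive, and verify that the monotonicity of both sequences makes these slopes nonincreasing in $k$, as the Karamata argument needs.
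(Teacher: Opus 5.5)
Your proposal is correct, and it proves more than the paper does. The paper handles only the passage from $R$ to $R_\mf$ and quotes the local statement from Bivi\`a-Ausina [Corollary 11]. For the converse direction, it turns $\overline{I_1R_\mf}=\overline{I_2R_\mf}$ into a reduction equation $I_1I_2^tR_\mf=I_2^{t+1}R_\mf$ and contracts that equation to $R$, using that both sides are $\mf$-primary. Your localization step contracts the integral closures themselves instead, using that $\overline{I_i}$ is $\mf$-primary and that integral closure commutes with localization; this is equally short.

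The real difference is that you prove the local case. Your inputs are:
\begin{itemize}
\item monotonicity $e_j(I_1)\geq e_j(I_2)$, via general sections;
\item the Minkowski inequalities, which make both ratio sequences $r_j$, $s_j$ nondecreasing;
\item a majorization argument for $t\mapsto e^{-t}$;
\item Rees's theorem.
\end{itemize}
This is essentially the content behind Bivi\`a-Ausina's Proposition 10 and Corollary 11. Your version is self-contained and shows exactly what is used; the paper's version buys brevity. A small simplification: after passing to $R_\mf(X)$ you need not descend integral closures at all. Lengths are preserved, so equality gives $e(I_1R_\mf)=e(I_2R_\mf)$, and Rees's theorem applies directly in $R_\mf$.

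One correction to your equality analysis. Put $x_j=\log r_j$, $y_j=\log s_j$ and $D_k=\log(e_k/f_k)\geq 0$. Let $c_j=(e^{-y_j}-e^{-x_j})/(x_j-y_j)$, with $c_j=1/r_j$ if $x_j=y_j$; each $c_j$ is strictly positive. Abel summation gives
\[
E_n(I_2)-E_n(I_1)=c_nD_n+\sum_{k=1}^{n-1}(c_k-c_{k+1})D_k .
\]
Here $c_k\geq c_{k+1}$, because the divided difference of a convex function is nondecreasing in each endpoint, and $x_k\leq x_{k+1}$, $y_k\leq y_{k+1}$.

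This one identity already gives the inequality, so you need not cite Karamata. The relevant form would be the Tomi\'c--Weyl theorem for weak \emph{super}majorization, not submajorization.

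However, the interior coefficients $c_k-c_{k+1}$ are only nonnegative. They vanish exactly when $r_k=r_{k+1}$ and $s_k=s_{k+1}$. So you cannot make them ``genuinely strictly positive,'' and you cannot read off $D_k=0$ for all $k$ directly from the identity.

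You do not need to. Since $c_n>0$, equality forces $D_n=0$, i.e.\ $e(I_1)=e(I_2)$, which is all Rees's theorem requires. The ``main obstacle'' you flag is therefore not an obstacle.
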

\begin{proof}
    The case that $(R, \mf)$ is local is \cite[Corollary 11]{bivia-ausina_log_2016}. In the non-local case, we have $\overline{I_1R_\mf} = \overline{I_2R_\mf},$ so $I_1R_\mf$ is a reduction of $I_2R_\mf$ by \cite[Corollary 1.2.5]{huneke_integral}. By definition of reduction, there exists $t > 0$ such that $I_1I_2^tR_\mf = I_2^{t+1}R_\mf$. The ideals $I_1I_2^t, I_2^{t+1}$ are $\mf$-primary, so we have
    \[
    I_1I_2^{t} = I_1I_2^tR_\mf \cap R = I_2^{t+1}R_\mf\cap R = I_2^{t+1},
    \]
    so $I_1$ is a reduction of $I_2$. By \cite[Corollary 1.2.5]{huneke_integral} again, we deduce that $\overline{I_1} = \overline{I_2}$.
\end{proof}
We extend \Cref{defn:dp-invariant} to the ring $k[x_1,\dots, x_n]$.
\begin{defn}\label{defn:mm-dp-poly-ring}
	Let $R = k[x_1,\dots, x_n], \mf = (x_1,\dots, x_n)$ and let $I\subseteq \mf$ be an ideal. For $1\leq j\leq \codim(IR_\mf)$ we define $\sigma_j(I):= \sigma_j(IR_\mf)$ and $E_j(I):= E_j(IR_\mf)$. 
\end{defn}
\begin{remark}
	Even though \Cref{defn:mm-dp-poly-ring} defines a local invariant, we suppress the reference to the maximal ideal $\mf$, as we will only ever consider singularities at the origin.
\end{remark}
\begin{prop}\label{prop:sigma-poly-ring}
	Let $R = k[x_1,\dots, x_n], \mf = (x_1,\dots, x_n)$, and let $I\subseteq R$ be an ideal with $\codim(I) \geq l$. 
		\begin{enumerate}[(a)]
					\item If $I$ is $\mf$-primary, then \Cref{prop:mixed-mult-properties} holds for $I$.
					\item If $I$ is homogeneous, then \Cref{prop:sigma-properties} (i), (iii), (iv) hold, provided we assume the elements $h_1,\dots, h_{n-j}$ in (iii) are \textit{homogeneous}.
		\end{enumerate}
\end{prop}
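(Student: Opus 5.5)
The plan is to reduce everything to the local ring $R_\mf$, where \Cref{prop:mixed-mult-properties} and \Cref{prop:sigma-properties} are already available, and then to check that the genericity conditions used there can be met by homogeneous choices.

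For (a), let $I$ be $\mf$-primary. By the remark before \Cref{prop:dp-rees}, $\lambda_R(R/I^r\mf^s) = \lambda_{R_\mf}(R_\mf/I^r\mf^s R_\mf)$, so $e_j(I) = e_j(IR_\mf)$ for every $j$. Parts (i) and (iii) of \Cref{prop:mixed-mult-properties} then transfer verbatim. For (ii), the localization $R_\mf$ has residue field $k$; if $k$ is infinite, elements $h_i\in\mf$ with general images in $\mf/\mf^2$ can be taken to be linear forms. Since $I+(h_1,\dots,h_{n-j})$ is $\mf$-primary, its multiplicity is unchanged by localizing at $\mf$. The same holds for the quotient ring $R/(h_1,\dots,h_{n-j})$, which is a polynomial ring whose unique relevant maximal ideal is the image of $\mf$. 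So (ii) follows from the local statement. If $k$ is finite, I would first pass to $k(X)$ as in the proof of \Cref{thm:Briancon-Skoda}, using that lengths, and hence multiplicities, are preserved under this faithfully flat extension with $\mf S$ maximal.

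For (b), let $I$ be homogeneous. Item (i) is immediate: if $\codim I = n$ then $I$ is $\mf$-primary (a homogeneous ideal of height $n$ has radical $\mf$), and then $\mf^t\subseteq I$ for $t\gg0$. Item (iv) is formal once (iii) is known, or directly from (a): apply \Cref{prop:mixed-mult-properties}(iii) to $I+\mf^t$ for $t\gg 0$, which realizes the suprema $\sigma_{j-1},\sigma_j,\sigma_{j+1}$ simultaneously. The finiteness criterion in (iii) concerns only $IR_\mf$, and $\codim(IR_\mf)=\codim I$ because every minimal prime of a homogeneous ideal is homogeneous and hence contained in $\mf$. The remaining content is the displayed formula with \emph{homogeneous} $h_i$.

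I expect this last point to be the main obstacle. The local statement guarantees that there is a nonempty Zariski-open set $U$ of tuples in $(\mf/\mf^2)^{n-j}$ for which the formula holds. Its proof, however, chose general $g_i\in I$ and general $h_i$ simultaneously, and it used $e_j(I+\mf^t)=e(I+\mf^t+(h))$. My approach is to rerun that chain of inequalities with linear forms. First, fix $t$ with $\sigma_j(I)=e_j(I+\mf^t)$. By (a)(ii), a nonempty open set of linear tuples $h$ gives $e_j(I+\mf^t) = e(I+\mf^t+(h))$. Next, choose homogeneous $g_1,\dots,g_j\in I$ of a common degree $d$ (after replacing generators by multiples by powers of $\mf$, which does not change height conditions or integral closure data relevant here), together with linear $h$, so that $(g,h)$ is $\mf$-primary with $e(g,h)=\sigma_j(I)$. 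Here I would cite the graded version of \cite{bivia-ausina_joint_2008}, Proposition 2.4: general elements of $I_d/\mf I_d$ paired with general linear forms realize the mixed multiplicity.

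The two open conditions on $h$ intersect nontrivially because linear forms are Zariski-dense among the images in $\mf/\mf^2$. Monotonicity of multiplicity then gives
\[\sigma_j(I)\le e(I+(h))\le e(g,h)=\sigma_j(I),\]
which is the required formula. The second equality in \eqref{eqn:sigma-via-general-hyperplanes}, comparing with the quotient ring, follows as in (a), since $R/(h)$ is again a polynomial ring with $\mf$ homogeneous.
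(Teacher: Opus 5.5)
Your overall route is the paper's. You push everything to $R_\mf$, and for (b)(iii) you rerun the chain of inequalities from the proof of \Cref{prop:sigma-properties}(iii) with linear $h_i$, using that $I+\mf^t$ and $I+(h_1,\dots,h_{n-j})$ are $\mf$-primary so that part (a)(ii) applies. Parts (a), (b)(i) and (b)(iv) are fine.

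The gap is in your third step for (b)(iii). You require the $g_i\in I$ to be homogeneous of a common degree $d$, obtained by replacing generators by their multiples by powers of $\mf$. You then invoke a ``graded version'' of \cite[Proposition 2.4]{bivia-ausina_joint_2008} to get $e(g,h)=\sigma_j(I)$. That is false unless $I$ is generated in a single degree: passing from $I$ to $I\cap\mf^d$ keeps the radical but changes $\sigma_j$.

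For example, let $R=k[x,y,z]$, $I=(x,y^2)$ and $j=2$. For general linear $h$ we have $\sigma_2(I)=e\bigl((x,y^2,h)\bigr)=2$. Now take homogeneous $g_1,g_2\in I$ of a common degree $d$. If $d=1$, both lie in $kx$, so $(g_1,g_2,h)$ is not $\mf$-primary. If $d\geq 2$, they form with $h$ a complete intersection of type $(d,d,1)$, so $e\bigl((g_1,g_2,h)\bigr)=d^2\geq 4$. With such $g$ your chain only gives $\sigma_j(I)\leq e(I+(h))\leq e(g,h)$ with a strict gap at the end, so the equality you need is lost.

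The repair is to drop the homogeneity requirement on the $g_i$; nothing in the chain uses it. Apply \Cref{prop:sigma-properties}(ii) in $R_\mf$ as stated. The genericity there is a nonempty open condition on the images of $(g,h)$ in $(IR_\mf/\mf IR_\mf)^j\times(\mf/\mf^2)^{n-j}$, and every class in $\mf/\mf^2$ is represented by a unique linear form. So you may take $h$ linear and also inside the open set furnished by (a)(ii) for $I+\mf^t$. Then choose $g_i\in I$, possibly inhomogeneous, with $(g,h)$ in the good open set. This gives
\[
\sigma_j(I)=e\bigl(I+\mf^t+(h)\bigr)\leq e\bigl((I+(h))R_\mf\bigr)\leq e\bigl((g,h)R_\mf\bigr)=\sigma_j(I).
\]
Finally, $e\bigl((I+(h))R_\mf\bigr)=e(I+(h))$. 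Indeed, $I+(h)$ is homogeneous, so its minimal primes are homogeneous and lie in $\mf$. Its localization is $\mf R_\mf$-primary, so $I+(h)$ is $\mf$-primary. With that change your argument coincides with the paper's.
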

\begin{proof}
	\mbox{}
	\begin{enumerate}[(a)]
		\item As in \Cref{prop:mixed-mult-properties}, (i) follows from (ii). For (ii), for any $h_1,\dots, h_{n-j}\in \mf$, the ideal $(I + (h_1,\dots, h_{n-j}))$ is $\mf$-primary as well. As $\mf/\mf^2 \cong \mf R_\mf/\mf^2 R_\mf$, choosing $h_1,\dots, h_{n-j}\in \mf$ whose images in $\mf R_\mf/\mf^2 R_\mf$ are sufficiently general, we have
		\[
		e_j(I):= e_j(IR_\mf) = e\left(I + (h_1,\dots, h_{n-j})R_\mf\right) = e\left(I + (h_1,\dots, h_{n-j})\right).
		\]
		The other equality in (ii) is similar, and (iii) is immediate: \[e_{j-1}(I)e_{j+1}(I):= e_{j-1}(IR_\mf)e_{j+1}(IR_\mf)\leq e_j(IR_\mf)^2 =: e_j(I)^2.\]
		\item For \Cref{prop:mixed-mult-properties} (i), we note that a homogeneous ideal of height $n$ is automatically primary to $\mf$ and appeal to part (a). For (iii), we note that when $h_1,\dots, h_{n-j}$ are homogeneous and sufficiently general, $\codim(I + (h_1,\dots, h_{n-j}))$ is homogeneous of height $n$ and thus $\mf$-primary. Additionally, we note that $I + \mf^t$ is $\mf$-primary for all $t > 0$, after which we appeal to part (a) (ii) of this proposition. Similarly to part (a), \Cref{prop:sigma-properties} (iv) follows from the local case.	
\end{enumerate}
\end{proof}
\begin{lemma}\label{lemma:sigma_extension}
	Suppose we are in one of the following situations:
	\begin{enumerate}
		\item $L/k$ is a field extension, $R = k[x_1,\dots, x_n], \mf = (x_1,\dots, x_n), S = L[x_1,\dots, x_n]$, and $I\subseteq R$ is $\mf$-primary or homogeneous. 
		\item $(R, \mf)\to (S, \nf)$ is an extension of $n$-dimensional regular local rings, $\mf S = \nf$, and $I\subseteq R$ is an ideal.
	\end{enumerate}
	Then for all $1\leq j\leq n$ such that $\sigma_j(I)$ is defined, we have $\sigma_j(I) = \sigma_j(IS)$.
\end{lemma}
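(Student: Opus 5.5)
We first reduce to the $\mf$-primary case. By \Cref{defn:sigma}, $\sigma_j(I) = \sup_{t>0} e(I+\mf^t,\dots)$ in $R$ (or in $R_\mf$ in situation (1)), and $(I+\mf^t)S = IS + \nf^t$, where $\nf = \mf S$ is $\mf S$ or $\mf S_{\mf S}$ as appropriate. It therefore suffices to show that for every $\mf$-primary ideal $J$ of $R$ and all $j$ we have $e_j(J) = e_j(JS)$. Taking the supremum over $t$ then gives the claim, including the equivalence of finiteness. In the homogeneous case of situation (1) we work in $R_\mf$ and $S_{\mf S}$ via \Cref{defn:mm-dp-poly-ring}. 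This is harmless because $I+\mf^t$ is $\mf$-primary, and $R_\mf \to S_{\mf S}$ is a flat local map with $\mf S_{\mf S}$ the maximal ideal, since $S/\mf S = L$ is a field.

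For $\mf$-primary $J$, we compare the multigraded Hilbert functions directly. For all $r,s\ge 0$, the module $R/J^r\mf^s$ has finite length. Flatness of $R\to S$ (polynomial extension, resp. a local extension of regular local rings with $\mf S=\nf$, which is flat by the local criterion, or by miracle flatness since $S$ is Cohen--Macaulay over regular $R$ with zero-dimensional closed fiber) gives $R/J^r\mf^s\otimes_R S = S/(JS)^r\nf^s$. A composition series of $R/J^r\mf^s$ with factors $R/\mf$ tensors to a filtration of $S/(JS)^r\nf^s$ with factors $S/\mf S$. In case (2), $S/\mf S = S/\nf$ has length $1$. In case (1), $S/\mf S = L$, which has length $1$ over $S_{\mf S}$. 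Hence
\[
\lambda_S\!\left(S/(JS)^r\nf^s\right) = \lambda_R\!\left(R/J^r\mf^s\right)
\]
for all $r,s$. The polynomials $P(r,s)$ of \Cref{thm:mixed-mult-exists} therefore coincide. Since $\dim S = \dim R = n$ (in case (1), $\dim S_{\mf S}=n$), the degree-$n$ parts coincide as well, and by \Cref{defn:mixed-mult} the mixed multiplicities agree: $e_j(J)=e_j(JS)$.

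The main point requiring care is the flatness and length-one statements, together with checking that the sup in \Cref{defn:sigma} is computed in the right ring. For the homogeneous case we must also confirm that $\sigma_j(I)$ as defined via $IR_\mf$ matches $\sigma_j(IS_{\mf S})$. This follows because $IR_\mf + \mf^tR_\mf$ extends to $IS_{\mf S} + \mf^t S_{\mf S}$ and we are in the local setting of case (2)-type, except that the residue field extends to $L$. That difference is exactly handled by the length computation above.
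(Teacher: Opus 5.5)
Your proposal is correct and is essentially the paper's proof. The paper observes that $\lambda_R(J)=\lambda_S(JS)$ for every $\mf$-primary ideal $J\subseteq R$, applies this to the ideals $(I+\mf^t)^a\mf^b$, and reads off equality of the mixed multiplicities and of their suprema over $t$. Your write-up supplies the flatness of $R\to S$ and the fact that $S/\mf S$ has length one, both of which the paper leaves implicit.
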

\begin{proof}
	The assumptions imply that $\lambda_R(J) = \lambda_S(JS)$ for every $\mf$-primary ideal $J\subseteq R$. In particular, this applies to the ideals $(I + \mf^t)^a\mf^b$ for all $a,b,t>0$; the result follows from \Cref{defn:mixed-mult,defn:sigma}.
\end{proof}

\section{Proof of Theorem A}
\subsection{\texorpdfstring{$F$}{F}-Pure Thresholds and the Demailly-Pham Invariant}
In this subsection, we require an asymptotic version of \cite[Theorem 13]{bivia-ausina_log_2016} in arbitrary characteristic. Our result follows from \cite{bivia-ausina_log_2016,demailly_sharp_2014} with the necessary changes being made.
\begin{defn}
	Let $k$ be a field and $R = k[x_1,\dots, x_n]$.  The \textbf{reverse lexicographic order} on $R$, denoted by $>_{\text{rlex}}$, is the monomial order such that $\x^\ab > \x^\bb$ if and only if there exists an index $0\leq i\leq n-1$ such that $(a_n,\dots, a_{n-i+1}) = (b_n,\dots, b_{n-i+1})$ and $a_{n-i} < b_{n-i}$. The \textbf{graded reverse lexicographic order}, denoted $>_{\text{grlex}}$, is the monomial order given by $\x^\ab >_{\text{grlex}} \x^\bb$ if and only if $\deg(\x^\ab) > \deg(\x^\bb)$ or else $\deg(\x^\ab) = \deg(\x^\bb)$ and $\x^\ab>_{\text{rlex}} \x^\bb$. 
\end{defn}
\begin{lemma}\label{lem:revlex-proj}
    Let $k$ be a field, $R = k[x_1,\dots, x_n]$, and $J\subseteq R$ an ideal. For $1\leq j\leq n$, we define $\pi_j: R\to R/(x_{j+1},\dots, x_n)\cong k[x_1,\dots, x_j]$. Then 
    \[
    \ini_{>_{\text{rlex}}}\pi_j(J) = \pi_j(\ini_{>_{\text{rlex}}}(J)).
    \]
\end{lemma}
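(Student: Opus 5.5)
The plan is to reduce to the case $j = n-1$ and then compare initial terms element by element. Since $\pi_j = \pi_j' \circ \pi_{n-1}$, where $\pi_j'\colon k[x_1,\dots,x_{n-1}]\to k[x_1,\dots,x_j]$ kills $x_{j+1},\dots,x_{n-1}$, I would first check that $>_{\text{rlex}}$ on $k[x_1,\dots,x_{n-1}]$ is the restriction of $>_{\text{rlex}}$ on $R$. This is clear from the definition, because two monomials not involving $x_n$ agree in the $x_n$-exponent and are then compared on the remaining exponents. A downward induction on $j$ then reduces the lemma to the single statement $\ini(\pi(J)) = \pi(\ini(J))$ with $\pi := \pi_{n-1}$, where $\ini$ denotes $\ini_{>_{\text{rlex}}}$.

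The key observation I would prove first concerns a single element. For $f\in R$ we have
\[
\pi(\ini(f)) = \begin{cases} \ini(\pi(f)) & \text{if } \pi(f)\neq 0,\\ 0 & \text{if } \pi(f) = 0.\end{cases}
\]
Indeed, $>_{\text{rlex}}$ is a total order on monomials, and a monomial with smaller $x_n$-exponent is larger. Suppose $\pi(f)\neq 0$. Then $f$ has terms not divisible by $x_n$, so the maximal terms of $f$ all have $x_n$-exponent $0$. They are therefore the maximal terms among the terms of $\pi(f)$, with $\pi$ acting on them as the identity. If instead $\pi(f)=0$, then every term of $f$ is divisible by $x_n$, so $\ini(f)$ is too, and hence $\pi(\ini(f))=0$.

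With this in hand, both inclusions are formal. Since $\pi$ is surjective, $\pi(\ini(J))$ is the ideal generated by the elements $\pi(\ini(f))$ for $f\in J$. By the observation, each such element is either $0$ or equal to $\ini(\pi(f))$, which lies in $\ini(\pi(J))$. This gives $\pi(\ini(J))\subseteq\ini(\pi(J))$.

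For the reverse inclusion, $\ini(\pi(J))$ is generated by the elements $\ini(g)$ with $0\neq g\in\pi(J)$. Write $g=\pi(f)$ with $f\in J$. Since $\pi(f) = g\neq 0$, the observation gives $\ini(g)=\pi(\ini(f))\in\pi(\ini(J))$.

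The only step needing care is the single-element observation. There one must use that revlex gives priority to the \emph{smallest} power of $x_n$, so that the maximal terms of $f$ survive the projection exactly when anything does. This is precisely why the lemma is stated for $>_{\text{rlex}}$ and would fail for, e.g., lex. Nothing requires $J$ or $\ini(J)$ to be monomial or the order to be a well-order, since every polynomial has finite support.
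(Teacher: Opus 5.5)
Your proof is correct and matches the paper's: the paper also proves the elementwise identity $\pi_j(\ini(f)) = \ini(\pi_j(f))$, with both sides zero when $\pi_j(f)=0$, using the fact that under $>_{\text{rlex}}$ every monomial of $k[x_1,\dots,x_j]$ exceeds every monomial in $(x_{j+1},\dots,x_n)$, and it leaves implicit the passage from elements to ideals that you write out. Your reduction to $j=n-1$ is valid but unnecessary, since that same comparison handles every $j$ directly.
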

\begin{proof}
    Let $f\in J$. Write $f = g + h$, where $h\in (x_{j+1},\dots, x_n)$ and $g\in k[x_1,\dots, x_j]$. If $g = 0$, then $\pi_j(f) = 0$. If $g\neq 0$, then $\ini_>(f) = \ini_>(g)$. In both cases, we have $\pi_j(\ini_>(f)) = \ini_>(\pi_j(f)).$
\end{proof}
The following is a general lemma which will be used repeatedly throughout the rest of this article.
\begin{lemma}\label{lemma:maxpower-subset}
    Let $L$ be a field, $S = L[x_1,\dots, x_n]$, and $J\subseteq S$ an $\mf$-primary homogeneous ideal generated by forms of degree $\leq d$. Then $\mf^d\subseteq \bar J$.
\end{lemma}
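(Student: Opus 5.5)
The plan is to reduce to an infinite field and then produce a homogeneous system of parameters of degree exactly $d$ inside $J$; note that $\mf$ here means $(x_1,\dots,x_n)\subseteq S$. Passing from $L$ to an infinite extension $L'$ is harmless. Indeed, $S\to L'[x_1,\dots,x_n]$ is faithfully flat, so by \Cref{prop:int-closure}(vii) we have $\overline{JS'}\cap S\supseteq \bar J S'\cap S = \bar J$. More to the point, if $\mf^d S'\subseteq \overline{JS'}$, we want to descend this to $\mf^d\subseteq\bar J$. For that I would use the fact that integral closure commutes with faithfully flat extensions of this kind (polynomial extension of base field), as in \cite{huneke_integral}. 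Alternatively, I would avoid descent altogether and give a field-independent argument, described next.

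For the field-independent route, let $J$ be generated by forms $f_1,\dots,f_m$ with $\deg f_i = d_i\le d$, and set $g_i := f_i\cdot \mf^{d-d_i}$. The ideal $J' := (f_i\mf^{d-d_i} : i)$ is contained in $J$ and is generated in degree exactly $d$. It is also $\mf$-primary, since $\sqrt{J'}\supseteq \sqrt{J\cdot \mf^{d}}=\mf$. So it suffices to treat the case where $J$ is generated by forms of degree exactly $d$, and to show that $\mf^d\subseteq \overline{J}$.

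Now let $J$ be $\mf$-primary and generated by $d$-forms, so $J_d\subseteq \mf^d$ and $J=(J_d)$. I would use the valuative criterion for integral closure. A monomial $x^{\ab}$ of degree $d$ lies in $\bar J$ if and only if $v(x^{\ab})\ge v(J)$ for every (Rees/discrete) valuation $v$ centered in $S$ that is nonnegative on $S$. Since $J$ is $\mf$-primary, every such $v$ with $v(\mf)>0$ is centered at $\mf$. Valuations with $v(\mf)=0$ are trivial on the relevant elements: in that case $v(J)=0\le v(x^{\ab})$, so they impose no condition.

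So fix a valuation $v$ centered at $\mf$. We have $v(x^{\ab})\ge d\cdot v(\mf)$, and the claim reduces to showing $v(J)\le d\cdot v(\mf)$. Since $J$ is generated by $d$-forms, $v(J)\ge d\,v(\mf)$ automatically; the real content is the reverse inequality, which is the main obstacle.

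For the reverse inequality, pass to the residue field $\kappa_v$ and consider the initial forms. Choose a variable, say $x_1$, with $v(x_1)=v(\mf)=:c$, and look at the ring $S[\mf/x_1]$, which $v$ dominates. In this ring every $d$-form $f$ becomes $x_1^d\,\tilde f$ with $\tilde f = f(1,x_2/x_1,\dots,x_n/x_1)$. If $v(J)>dc$, then $v(\tilde f)>0$ for every generator, so all the dehomogenizations $\tilde f_i$ vanish at the center of $v$ on the chart $\Spec S[\mf/x_1]$. That center lies on the exceptional divisor of the blowup of $\mf$, which is $\mathrm{Proj}$ of the fiber cone, i.e. $\mathbb{P}^{n-1}$. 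The center therefore maps to a point $P$ of $\mathbb P^{n-1}_{L}$ (a scheme-theoretic point, so no algebraic closure is needed) at which all $f_i$ vanish.

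But $J$ being $\mf$-primary means $V_+(J)=\emptyset$ in $\mathbb P^{n-1}_L$, which is a contradiction. Hence $v(J)=dc$, giving $\mf^d\subseteq\bar J$.

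I expect the main care to be needed in justifying the valuative criterion and in identifying the center of $v$ on the blowup. As a cleaner alternative, I would note that the Rees algebra argument says exactly this: $\bar J=\overline{\mf^d}$ if and only if $V_+(J_d)=\emptyset$, i.e. $J\mf^{d(N)}$ and $\mf^{d(N+1)}$ agree in high degree. That can be checked directly: for $N\gg0$, $J\cdot \mf^{dN}\supseteq \mf^{d(N+1)}$ because $J$ contains all forms of sufficiently large degree in each graded piece. This gives the reduction equation $\mf^d\cdot(\mf^d)^N=J(\mf^d)^N$, hence $\mf^d\subseteq\bar J$ by the definition of reduction. I would actually present this last argument, since it is elementary and field-independent.
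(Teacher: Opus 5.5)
Your proposal is correct, and the argument you say you would actually present (the last one) takes a genuinely different and more elementary route than the paper.

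The paper works over an infinite field and takes forms $f_1,\dots,f_n\in J$ generating an $\mf$-primary ideal. It multiplies them by powers of general linear forms to get a $(d,\dots,d)$ complete intersection $J'\subseteq J\cap\mf^d$. It then concludes $\mf^d\subseteq\overline{J'}\subseteq\bar J$ from $e(J')=d^n=e(\mf^d)$ and Rees's theorem (\Cref{thm:rees}). Arbitrary $L$ is handled by passing to $\bar L$ and using $\overline{JS'}\cap S=\bar J$.

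You instead pad the generators to degree exactly $d$ via $\sum_i f_i\mf^{d-d_i}$, which is still $\mf$-primary because it contains $J\mf^d$. You then verify the reduction equation $J(\mf^d)^N=(\mf^d)^{N+1}$ directly. This needs no multiplicities, no genericity and no base change, so your opening paragraph about infinite fields and descent can simply be deleted. The paper's route additionally produces an $n$-generated reduction of $\mf^d$ inside $J$ and stays within the multiplicity toolkit used throughout the paper. Neither feature is needed for this statement.

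When you write it up, replace the vague justification ``$J$ contains all forms of sufficiently large degree in each graded piece'' with the actual check. If $\mf^M\subseteq J$ and $d(N+1)\ge M$, then each monomial $u$ of degree $d(N+1)$ lies in $J$. Keeping only the degree-$dN$ parts of the coefficients in $u=\sum g_if_i$, where the $f_i$ are the $d$-form generators, shows $u\in J\mf^{dN}$. The reverse inclusion is just $J\subseteq\mf^d$.

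The valuative detour is also valid but unnecessary. It reduces to the one-line estimate $Nc=v(x_1^N)\ge v(J)+(N-d)c$, which follows from $x_1^N\in J\mf^{N-d}$.
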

\begin{proof}
    We first prove the result in the case that $L$ is infinite. First, choose forms $f_1,\dots, f_n$ from among the generators of $J$ such that $(f_1,\dots, f_n)$ is $\mf$-primary. If $h_1,\dots, h_n$ are general linear forms, then \[J':= (h_1^{d-\deg(f_1)}f_1,\dots, h_n^{d-\deg(f_{n})}f_{n})\] is an $\mf$-primary $(d,\dots, d)$-complete intersection contained in $J$. As $J'\subseteq \mf^{d}$ and $e(J)' = d^n = e(\mf^{d})$, we have $\mf^{d} = \overline{\mf^{d}}\subseteq \bar{J'}\subseteq \bar J$ by \Cref{thm:rees}.

    Now, let $L$ be an arbitrary field, and set $S' = \bar{L}[x_1,\dots, x_n]$. We then have $\bar{J} = \bar{JS'}\cap S\supseteq \mf^dS'\cap S = \mf^d$ by \Cref{prop:int-closure} (vii) and the infinite field case. 
\end{proof}
\begin{lemma}[\cite{bivia-ausina_log_2016}, Theorem 4]\label{lemma:mixed-mult-limit}
    Let $k$ be an infinite field, $R = k[x_1,\dots, x_n]$, and $I$ an $\mf$-primary ideal. If $>$ denotes the monomial order $>_{\text{rlex}}$ and $\gamma\in \GL_n(k)$ is general, then for all $1\leq j\leq n$ we have 
    \begin{equation}\label{eqn:revlex-mm}
    \lim_{t\to\infty} \frac{e_j(\ini_>(\gamma^{-1}I^t))}{t^j} = e_j(I).
    \end{equation}
\end{lemma}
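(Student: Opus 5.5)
We prove \eqref{eqn:revlex-mm} by reducing both sides to multiplicities of ideals in fewer variables and then comparing those multiplicities through Hilbert functions.

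\textbf{Step 1: the left side as a projection.} Fix $t$ and set $J_t:=\ini_>(\gamma^{-1}I^t)$, a monomial $\mf$-primary ideal. For a monomial ideal, taking general linear forms in \Cref{prop:mixed-mult-properties}(ii) is wasteful. Instead I would use $x_{j+1},\dots,x_n$ directly, and aim to show
\[
e_j(J_t)=e\big(\pi_j(J_t)\big),
\]
where $\pi_j(J_t)\subseteq k[x_1,\dots,x_j]$ and $\pi_j$ is as in \Cref{lem:revlex-proj}. The reason coordinate hyperplanes should suffice is that $\gamma$ is general, so the initial ideals behave like generic initial ideals. In particular the relevant ones are Borel-fixed, or at least stable enough that $x_n,\dots,x_{j+1}$ form a ``generic enough'' sequence. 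If this cannot be justified directly, I would instead define $h_i=\gamma^{-1}$-images of general forms, so that the general hyperplanes become the coordinate ones after the change of coordinates.

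\textbf{Step 2: the right side as a projection.} By \Cref{prop:mixed-mult-properties}(ii), and since $\gamma$ is general, the last $n-j$ coordinates pulled back by $\gamma$ are general linear forms. Hence
\[
e_j(I)=e\big(\pi_j(\gamma^{-1}I)\big),
\qquad
e\big(\pi_j(\gamma^{-1}I)^t\big)=t^j\,e\big(\pi_j(\gamma^{-1}I)\big).
\]
By \Cref{lem:revlex-proj}, $\pi_j(J_t)=\ini_>\big(\pi_j(\gamma^{-1}I^t)\big)=\ini_>\big(\pi_j(\gamma^{-1}I)^t\big)$.

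\textbf{Step 3: comparing multiplicities.} Since $\pi_j(\gamma^{-1}I)$ is $\mf$-primary in $k[x_1,\dots,x_j]$, the remaining comparison is between $e(\ini_>(K))$ and $e(K)$ for $\mf$-primary $K=\pi_j(\gamma^{-1}I)^t$. Passing to an initial ideal preserves colength, but multiplicity is asymptotic, so equality $e(\ini(K^s))=e(K^s)$ is not automatic. I would handle this by a sandwich.

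For the lower bound, $\ini(K)^s\subseteq\ini(K^s)$ gives
\[
\lambda\big(R/\ini(K^s)\big)\le\lambda\big(R/\ini(K)^s\big).
\]
Since colengths are preserved, $\lambda(R/K^s)\le\lambda(R/\ini(K)^s)$, and therefore $e(K)\le e(\ini(K))$.

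For the upper bound, I would use $e(\ini(K))\le j!\,\lambda(R/\ini(K))=j!\,\lambda(R/K)$ together with the Hilbert–Samuel expansion $\lambda(R/K)=\lambda(R/\pi_j(\gamma^{-1}I)^t)=e\,t^j/j!+O(t^{j-1})$. Dividing by $t^j$ then gives $\limsup \le e_j(I)$. The inequality $e(\mathfrak a)\le j!\,\lambda(R/\mathfrak a)$ for monomial ideals holds because $e(\mathfrak a)=j!\,\mathrm{covol}$ of $\Gamma(\mathfrak a)$, and the covolume is at most the number of standard monomials.

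Combining the lower bound $e(K)\le e(\ini(K))$, i.e. $t^je_j(I)\le e(\pi_j(J_t))$, with this upper bound gives the limit.

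\textbf{Main obstacle.} The main obstacle is Step 1: showing that for the monomial ideal $J_t$ the coordinate sections $x_{j+1},\dots,x_n$ compute the mixed multiplicity $e_j$. The generic coordinates of revlex are what should make this work, and I would attempt it via the Newton-polytope description $e_j(J_t)=j!\,\vol$ of the projected complement.
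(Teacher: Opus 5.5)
Your Steps 2 and 3 are correct. Together they show $\lim_t e(\pi_j(J_t))/t^j=e_j(I)$. Your upper bound $e(\ini_>(K))\le j!\,\lambda(R/K)$, obtained by covering the complement of the Newton polyhedron with unit cubes at standard monomials, is a nice elementary substitute for the paper's appeal to Musta\c{t}\u{a}'s Corollary 1.13.

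The gap is Step 1, and the whole argument rests on it. Your Step 3 lower bound is a lower bound for $e(\pi_j(J_t))$, but $e(\pi_j(J_t))$ only bounds $e_j(J_t)$ from \emph{above}. So without the equality $e_j(J_t)=e(\pi_j(J_t))$ you have no lower bound for $e_j(J_t)$ at all. That equality fails for monomial ideals in general: $(x_1^2,x_2)\subseteq k[x_1,x_2]$ has $e_1=1$ but $e(\pi_1)=2$.

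Your Borel-fixedness heuristic is not available in this setting, for two reasons:
\begin{enumerate}
\item The Borel-fixedness theorem for generic initial ideals quoted in the paper requires a homogeneous ideal and a degree-compatible order. Here $I$ is only $\mf$-primary, and $>_{\text{rlex}}$ is not degree-compatible.
\item A single $\gamma$ making $\ini_>(\gamma^{-1}I^t)$ generic for every $t$ must avoid countably many proper closed sets. That is only guaranteed when $k$ is uncountable.
\end{enumerate}
For a genuinely Borel-fixed ideal the equality does hold: the unipotent part of the Borel group moves a general $(n-j)$-dimensional space of linear forms onto $\mathrm{span}(x_{j+1},\dots,x_n)$. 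But you would still have to prove Borel-fixedness. Your fallback of pulling general forms back along $\gamma$ does not help either. The genericity needed to compute $e_j(J_t)$ is relative to the monomial ideal $J_t$ itself, which changes with both $\gamma$ and $t$.

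The missing idea is that equality in Step 1 is unnecessary. Two one-sided semicontinuity statements suffice, and this is exactly the paper's argument.
\begin{itemize}
\item First, $e_j(J_t)\le e(\pi_j(J_t))$. The coordinate section is a specialization of a general section, and Hilbert--Samuel multiplicity is upper semicontinuous.
\item Second, $e_j(J_t)\ge e_j(\gamma^{-1}I^t)=t^je_j(I)$. The Gr\"obner degeneration $\gamma^{-1}I^t\rightsquigarrow J_t$ can only raise (mixed) multiplicities.
\end{itemize}
Combining these with your Step 3 upper bound gives
\[
t^je_j(I)\le e_j(J_t)\le e(\pi_j(J_t))\le t^je_j(I)+O(t^{j-1}),
\]
which proves the limit. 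Your Step 3 lower bound then becomes superfluous, and $\gamma$ only needs to satisfy the finitely many genericity conditions used in Step 2.
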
 
To reassure the reader that Bivi\`a-Ausina's arguments hold in arbitrary characteristic, we summarize the main argument below.
\begin{proof}[Proof of \Cref{lemma:mixed-mult-limit}]
By \Cref{prop:mixed-mult-properties} (ii), choose $\gamma\in \GL_n(k)$ so that $e_j(I) = e_j\left(\frac{I + \gamma(x_n,\dots, x_{n-j+1})}{\gamma(x_n,\dots, x_{n-j+1})}\right)$. Set $J = \gamma^{-1} I$. By \Cref{prop:mixed-mult-properties} (ii), let $\gamma_t\in \GL_n(k)$ such that $e_j(\ini_>(J^t)) = e(\pi_j(\mathfrak{\gamma_t^{-1}}\ini_>(J^t)))$.  
	By \cite[Theorem 3.4 (ii)]{smirnov_semicontinuity_2020}, Hilbert-Samuel multiplicity is upper semicontinuous. Applying this semicontinuity to the specializations $\pi_j(\gamma_t^{-1}\ini_>(J^t))\rightsquigarrow \pi_j(\ini_>(J^t))$ and $J^t\rightsquigarrow \ini_>(J^t)$ as in \cite[Proposition 2 and Theorem 4]{bivia-ausina_log_2016}, we deduce
\begin{equation}\label{eqn:double-semicont}
    e(\pi_j(\ini_>(J^t))) \geq  e(\pi_j(\gamma_t^{-1}\ini_>(J^t)) = e_j(\ini_>(J^t))\geq e_j(J^t).
\end{equation}
By \cite[Corollary 1.13]{mustata_multiplicities_2002}, \Cref{eqn:double-semicont}, and \Cref{lem:revlex-proj} we have
\begin{align}\label{align:cf-bivia}
    e_j(I)&= e_j(J) = e(\pi_j(J)) = \lim_{t\to\infty} \frac{e(\pi_j(J)^t)}{t^{j}}\nonumber \\
    &= \lim_{t\to\infty} \frac{e(\ini_>(\pi_j(J^t))}{t^{j}} = \lim_{t\to\infty} \frac{e(\pi_j(\ini_>(J^t)))}{t^j}\nonumber
    \\&\geq\lim_{t\to\infty} \frac{e_j(\ini_>(J^t))}{t^j}\geq \lim_{t\to\infty}\frac{e_j(J^t)}{t^j} = e_j(J).
\end{align}
It follows that the inequalities in \Cref{align:cf-bivia} must be sharp, proving the claim.
\end{proof}
\begin{remark}\label{rmk:cov-necessary}
    The generic change of coordinates in \Cref{lemma:mixed-mult-limit} is necessary when $\dim(R) \geq 3$. Consider $R = \Q[x,y,z]$ and $I = (xy, x^2 + z^2, x^4, y^4, z^4)$. Let $\succ$ denote the graded reverse lexicographic order with $x \succ y \succ z$. Using the ReesAlgebra package in Macaulay2 \cite{M2,ReesAlgebraSource}, we compute $e(\ini_\succ(I^5)) = 2000$. Separately, we have $2000 = 5^3\cdot e(I) = e(I^5)$. For all $t > 0$, we have $\ini_\succ(I^{5t})\subseteq \ini_\succ(I^5)^t$, hence
    \[
    t^3e(I^5) = e(I^{5t})\leq e(\ini_\succ(I^{5t})) \leq e(\ini_\succ(I^5)^t) = t^3e(I^5),
    \]
    Set $\af = \ini_\succ(I^5)$. By \Cref{thm:rees}, we deduce that $\overline{\ini_\succ(I^{5t})} = \overline{\af^t}$ for all $t > 0$. It follows from \Cref{prop:dp-rees} that
    \[
    \lim_{t\to\infty} \frac{e_2(\ini_\succ(I^t))}{t^2} = \lim_{t\to\infty} \frac{e_2(\ini_\succ(I^{5t}))}{(5t)^2} = \frac{e_2(\af^t)}{(5t)^2} = \frac{e_2(\af)}{5^2}.
    \]

    To compute $e_2(\af)$, we use \Cref{prop:mixed-mult-properties} (ii). Suppose $h = a x + b y + c z$ is such that $e_2(\af) = e\left(\frac{\af + (h)}{(h)}\right)$. Let $\gamma\in \GL_n(k)$ denote the map $(x,y,z)\mapsto (ax, by, cz)$. As $\af$ is a monomial ideal, we have $\gamma^{-1}\af = \af$, hence \begin{equation}\label{eqn:general-enough}
    e\left(\frac{\af + \gamma(x+y+z)}{\gamma(x+y+z)}\right) = e\left(\frac{\af + (x+y+z)}{(x+y+z)}\right).
    \end{equation}
    The quantity in \Cref{eqn:general-enough} can be computed directly using the ReesAlgebra package, giving us
    \[
    \lim_{t\to\infty} \frac{e_2(\ini_\succ(I^t))}{t^2} = \frac{e_2(\ini_\succ(I^5))}{5^2} = \frac{22}{5}.
    \]
    Using \Cref{lemma:same-closure-by-maxpower,lemma:mm-hyperplane-section}, we compute $e_2(I) = 4 < \frac{22}{5}$.
\end{remark}

\begin{defn}
    Let $k$ be a field, $R = k[x_1,\dots, x_n], \mf = (x_1,\dots, x_n)$, and let $\af_\bullet$ be a graded system of $\mf$-primary ideals. We define:
    \begin{itemize}
        \item The asymptotic mixed multiplicities: $e_j(\af_\bullet) = \liminf_m \frac{e_j(\af_m)}{m^j}$.
        \item The asymptotic Demailly-Pham invariant: $E_n(\af_\bullet) = \frac{1}{e_1(\af_\bullet)} + \dots + \frac{e_{n-1}(\af_\bullet)}{e_n(\af_\bullet)}$
        \item The asymptotic singularity threshold: $c(\af_\bullet) = \liminf_m mc(\af_m)$.
    \end{itemize}
\end{defn}

Before we prove \Cref{thm:bound} and our asymptotic version of \cite[Theorem 13]{bivia-ausina_log_2016}, we require the following standard facts.
\begin{lemma}\label{lemma:same-closure-by-maxpower}
    Let $L$ be a field and $S = L[x_1,\dots, x_n]$. Let $I$ be a homogeneous ideal of $S$ and $J\subseteq I$ denote the ideal of $S$ generated by the homogeneous forms in $I$ of degree $\leq d$. If $\codim(J) = n$, then $\overline{J} = \overline{I}$.
\end{lemma}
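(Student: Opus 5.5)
Since $J\subseteq I$, we have $\overline{J}\subseteq \overline{I}$, and it suffices to show $I\subseteq \overline{J}$. The natural tool is \Cref{lemma:maxpower-subset}. The ideal $J$ is homogeneous and generated by forms of degree $\leq d$. The hypothesis $\codim(J)=n$ means $\sqrt{J}=\mf$, because the only homogeneous prime of height $n$ in $S$ is $\mf$. So $J$ is $\mf$-primary, and \Cref{lemma:maxpower-subset} gives $\mf^d\subseteq \overline{J}$.

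Next I would check generators of $I$. Since $I$ is homogeneous, it is generated by forms. Take a homogeneous form $f\in I$ of degree $e$.
\begin{itemize}
\item If $e\leq d$, then $f\in J$ by the definition of $J$.
\item If $e>d$, then $f\in \mf^e\subseteq \mf^d\subseteq \overline{J}$.
\end{itemize}
Hence every homogeneous generator of $I$ lies in $\overline{J}$. Because $\overline{J}$ is an ideal (\Cref{prop:int-closure}(i)), this gives $I\subseteq \overline{J}$. Then \Cref{prop:int-closure}(ii) gives $\overline{I}\subseteq\overline{\overline{J}}=\overline{J}$, which yields equality.

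The only point needing care is the passage from $\codim(J)=n$ to $J$ being $\mf$-primary, so that \Cref{lemma:maxpower-subset} applies. This holds because the minimal primes of a homogeneous ideal are homogeneous. A homogeneous prime of height $n$ in an $n$-dimensional polynomial ring is contained in $\mf$, since it is homogeneous and proper, and it has the same height as $\mf$, so it equals $\mf$. The rest of the argument is routine, and \Cref{lemma:maxpower-subset} already handles the case of a finite field.
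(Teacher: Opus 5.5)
Your proof is correct and follows essentially the same route as the paper, which observes $I\subseteq J+\mf^{d+1}$ and uses \Cref{lemma:maxpower-subset} to get $\overline{I}\subseteq\overline{J+\mf^{d}}\subseteq\overline{J}$; this is your generator-by-generator check in compressed form. Your justification that $\codim(J)=n$ forces $J$ to be $\mf$-primary, which is needed to invoke \Cref{lemma:maxpower-subset}, spells out a step the paper leaves implicit.
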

\begin{proof}
    It is clear that $\overline{J}\subseteq \overline{I}$. Let $\mf:= (x_1,\dots, x_n)$. For the reverse containment, note that $I\subseteq J + \mf^{d+1}$. By \Cref{lemma:maxpower-subset} we have
    \[\overline{I} \subseteq \overline{J + \mf^{d+1}}\subseteq \overline{J + \mf^d} \subseteq \overline{J}.\]
\end{proof}
\begin{lemma}\label{lemma:mm-hyperplane-section}
    Let $L$ be a field and $S = L[x_1,\dots, x_n]$. Let $J = (f_1,\dots, f_n)$ be a complete intersection where $\deg f_i = d_i$ and $d_1 \leq \dots \leq d_n$. Then we have the following:
    \begin{enumerate}[(i)]
        \item If $L$ is infinite, then for a general hyperplane section $H\subseteq \Spec S$, we have $e(J|_H) = d_1\cdots d_{n-1}$.
        \item Let $1\leq j\leq n$. With no assumption on $|L|$, we have $e_j(J) = d_1\dots d_j$, and hence $E_j(J) = \frac{1}{d_1} + \cdots + \frac{1}{d_j}$.
    \end{enumerate}
\end{lemma}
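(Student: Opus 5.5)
We first reduce to an infinite field. By \Cref{lemma:sigma_extension}, $e_j(J)$ is unchanged under the extension $S\to \bar{L}[x_1,\dots,x_n]$. The property of being a homogeneous complete intersection of degrees $d_1,\dots,d_n$ is also preserved. So for (ii) we may assume $L$ is infinite, and then deduce (ii) from (i) by induction on $n$.

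For (i), let $H=V(h)$ with $h$ a general linear form, and set $S_H=S/(h)\cong L[x_1,\dots,x_{n-1}]$. The images $\bar f_1,\dots,\bar f_{n-1}$ form a regular sequence in $S_H$ for general $h$. Indeed, $(f_1,\dots,f_{n-1})$ defines a cone of dimension $1$ in $\A^n$, and a general hyperplane through the origin meets this cone only at the origin. Hence $J':=(\bar f_1,\dots,\bar f_{n-1})$ is an $\mf$-primary homogeneous complete intersection in $S_H$, and $e(J')=d_1\cdots d_{n-1}$ by the standard formula for the multiplicity of a parameter ideal generated by forms, namely $\lambda(S_H/J')=\prod d_i$ together with Cohen–Macaulayness.

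It remains to show $e(J|_H)=e(J')$. Note that $J|_H=J'+(\bar f_n)$ with $\deg \bar f_n=d_n\geq d_i$ for all $i$. Since $J'$ is $\mf$-primary and generated in degrees $\leq d_{n-1}\leq d_n$, \Cref{lemma:maxpower-subset} gives $\mf^{d_{n-1}}\subseteq \overline{J'}$. Therefore $\bar f_n\in \mf^{d_n}\subseteq \overline{J'}$, so $\overline{J|_H}=\overline{J'}$. \Cref{thm:rees} in the graded setting then gives $e(J|_H)=e(J')=d_1\cdots d_{n-1}$.

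For (ii), use \Cref{prop:mixed-mult-properties}(ii), valid here by \Cref{prop:sigma-poly-ring}(a). Since $J$ is homogeneous of height $n$, it is $\mf$-primary, and
\[
e_j(J)=e\left(\frac{J+(h_1,\dots,h_{n-j})}{(h_1,\dots,h_{n-j})}\right)
\]
for general linear $h_i$. We cut by one general hyperplane at a time. After the first cut, the argument of (i) shows that $\overline{J|_H}=\overline{(\bar f_1,\dots,\bar f_{n-1})}$, and the latter is a complete intersection of degrees $d_1\leq\dots\leq d_{n-1}$ in $n-1$ variables. Repeating this $n-j$ times, each time discarding the top-degree generator, yields an integral closure equal to that of a complete intersection of degrees $d_1,\dots,d_j$ in $j$ variables. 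Its multiplicity is $d_1\cdots d_j$. The generic choices must be made compatibly, since finitely many nonempty open conditions can be met simultaneously over an infinite field. Integral closure preserves multiplicity, and multiplicity may be computed on $\overline{J|_H}$ at each stage, by \Cref{thm:rees}.

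Thus $e_j(J)=\sigma_j(J)=d_1\cdots d_j$, where the equality with $\sigma_j$ holds by \Cref{prop:sigma-properties}(i). Then $E_j(J)=\sum_{i=1}^{j}\frac{d_1\cdots d_{i-1}}{d_1\cdots d_i}=\sum_{i=1}^j\frac1{d_i}$.

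The main obstacle is the genericity claim that restricting to a general hyperplane keeps $\bar f_1,\dots,\bar f_{n-1}$ a regular sequence. It also requires some care that the hyperplanes used in \Cref{prop:mixed-mult-properties}(ii) can be taken general enough for this dimension count.
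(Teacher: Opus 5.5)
Your proposal is correct and follows essentially the same route as the paper: a general hyperplane keeps $f_1,\dots,f_{n-1}$ a regular sequence, the top-degree generator is absorbed into the integral closure via \Cref{lemma:maxpower-subset} (the paper packages this as \Cref{lemma:same-closure-by-maxpower}), B\'ezout gives $d_1\cdots d_{n-1}$, and (ii) follows by passing to $\bar{L}$ via \Cref{lemma:sigma_extension} and iterating general hyperplane sections through \Cref{prop:mixed-mult-properties}(ii). One step in your iteration deserves a one-line justification: that $\overline{J|_H}=\overline{J'}$ survives further hyperplane cuts, which follows from \Cref{prop:int-closure}(iii); alternatively, you can skip the iteration by applying \Cref{lemma:maxpower-subset} once to the $\mf$-primary ideal $(f_1,\dots,f_j,h_1,\dots,h_{n-j})$, which is generated in degrees at most $d_j$.
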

\begin{proof}
    As the $f_1,\dots, f_n$ form a homogeneous system of parameters for $S$, we have
\begin{equation}\label{eqn:Bezout}
        e(J) = d_1\dots d_n.
    \end{equation}
    Choosing $H$ generally so that $\codim(f_1,\dots, f_{n-1})|_H = n-1$, 
    \Cref{lemma:same-closure-by-maxpower} gives $e(J|_H) = e((f_1|_H,\dots, f_{n-1}|_H))$, so (i) follows from \Cref{eqn:Bezout}. For (ii), \Cref{lemma:sigma_extension} shows that $e_j(J)$ is invariant under extension of the base field, so it suffices to consider the case of an infinite field. The result follows from (i) and \Cref{prop:mixed-mult-properties} (ii).
\end{proof}
\begin{theorem}\label{thm:bound}
	Let $n$ be a positive integer and $1\leq l\leq n$. Suppose either:
	\begin{enumerate}
		\item $(R,\mf)$ is a regular local ring of dimension $n$, excellent if $\char R = 0$, and that $I\subseteq R$ is an ideal of height at least $l$;
		\item $k$ is a field, $R = k[x_1,\dots, x_n], \mf = (x_1,\dots, x_n)$, and $I\subseteq R$ is an ideal which is $\mf$-primary or homogeneous of height at least $l$.
	\end{enumerate}
	Then $E_l(I)\leq c(I)$.
\end{theorem}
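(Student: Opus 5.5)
The plan is to reduce to an $\mf$-primary ideal in a polynomial ring of dimension $l$ over an infinite field, and then to degenerate to monomial ideals. For monomial ideals both sides of the inequality are combinatorial and independent of the characteristic.

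\emph{Step 1: reductions.}
\begin{enumerate}[(a)]
\item \emph{Infinite residue field.} In the local case replace $R$ by $R(X)=R[X]_{\mf R[X]}$; in the graded case replace $k$ by $\bar k$. Both $c$ and the $\sigma_j$ are unchanged, by \Cref{prop:basic-properties-of-c}(vi) and \Cref{lemma:sigma_extension}.
\item \emph{Cut down to dimension $l$.} Choose elements $h_{l+1},\dots,h_n$ whose images in $\mf/\mf^2$ are sufficiently general, homogeneous in the graded case, and set $R'=R/(h_{l+1},\dots,h_n)$. Then $R'$ is regular, $IR'$ is primary to the maximal ideal, and $E_l(I)=E_l(IR')$ by \Cref{prop:sigma-properties}(iii) and \Cref{prop:sigma-poly-ring}. \Cref{prop:basic-properties-of-c}(iv) gives $c(I)\ge c(IR')$. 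So it suffices to treat $l=n$ with $I$ primary to the maximal ideal.
\item \emph{Local to polynomial.} In the local case, pass to the completion (again \Cref{prop:basic-properties-of-c}(vi) and \Cref{lemma:sigma_extension}). By Cohen's structure theorem, in equal characteristic this is $K[[x_1,\dots,x_n]]$. Since $\mf^t\subseteq I$ for some $t$, we have $I=JK[[x]]$ for an $\mf$-primary ideal $J\subseteq K[x_1,\dots,x_n]$. Lengths of the $\mf$-primary ideals involved agree, and $c$ is preserved by the same extension property. This reduces everything to $R=k[x_1,\dots,x_n]$ with $k$ infinite and $I$ $\mf$-primary.
\end{enumerate}

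\emph{Step 2: degeneration.} Fix a general $\gamma\in\GL_n(k)$ as in \Cref{lemma:mixed-mult-limit}, and set $\af_t=\ini_{>_{\text{rlex}}}(\gamma^{-1}I^t)$. Since $\ini(f)\ini(g)=\ini(fg)$, the $\af_t$ form a graded system of $\mf$-primary monomial ideals. By \Cref{prop:semicontinuity} and \Cref{prop:basic-properties-of-c}(ii),
\[
c(I)=t\,c(\gamma^{-1}I^t)\ge t\,c(\af_t),
\]
where $c(\gamma^{-1}I^t)=c(I^t)$ because $c$ is invariant under linear changes of coordinates. By \Cref{lemma:mixed-mult-limit}, $e_j(\af_t)/t^j\to e_j(I)$ for every $j$. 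Hence $E_n(\af_t^{\,1/t})\to E_n(I)$, where $E_n(\af_t^{\,1/t})$ is the normalized invariant, so $E_n$ is formally scaled by $t$. It therefore suffices to prove $E_n(\af)\le c(\af)$ for a single $\mf$-primary monomial ideal $\af$. Applying this to $\af_t$ gives $t\,E_n(\af_t)\le t\,c(\af_t)\le c(I)$, and letting $t\to\infty$ finishes the proof.

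\emph{Step 3: the monomial case.} I will argue this is characteristic-free and reduce it to Demailly–Pham \Cref{eqn:demailly-pham-lb} over $\C$.
\begin{itemize}
\item By \Cref{prop:monomial-threshold}, $c(\af)=1/\mu$ depends only on $\Gamma(\af)$, by the same formula in every characteristic.
\item The lengths $\lambda(R/\af^r\mf^s)$ count the monomials outside $\af^r\mf^s$. This count is independent of $k$, so the mixed multiplicities $e_j(\af)$ are determined by the exponent set and do not depend on $k$.
\end{itemize}
Thus the inequality for $\af\subseteq k[x]$ is the same numerical statement as for the ideal with the same generators in $\C[x]$, where it is \Cref{eqn:demailly-pham-lb}.

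\emph{Expected obstacle.} The delicate point is the limit in Step 2. One must check that a single general $\gamma$ works simultaneously for all $t$ and all $j$; \Cref{lemma:mixed-mult-limit} provides exactly this, and \Cref{rmk:cov-necessary} shows it cannot be dropped. One must also check that no $e_j$ degenerates to $0$ in the limit, which holds because $e_j(I)\ge 1$. If citing Demailly–Pham via transfer to $\C$ is unsatisfactory, the fallback is to prove the monomial inequality directly by induction on $n$. This would use \Cref{lem:revlex-proj} to project to $k[x_1,\dots,x_{n-1}]$, together with a convexity estimate on $\Gamma(\af)$ comparing $\mu$ with the ratios $e_{j-1}/e_j$.
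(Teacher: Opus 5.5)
Your proof is correct. Steps 1 and 2 match the paper's reductions and its use of \Cref{lemma:mixed-mult-limit} and \Cref{prop:semicontinuity}; the real difference is in how you settle the monomial case.

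\textbf{The paper's route.} It never proves $E_n(\af)\le c(\af)$ for an individual monomial ideal. Instead it passes to the limiting polyhedron $\Gamma(\af_\bullet)$ and takes a supporting hyperplane $H$ at the point $\mu\mathbf{1}$ where the diagonal meets its boundary. It then compares $\af_\bullet$ with the graded system $\af'_\bullet$ of monomials lying in $mH^+$. Three facts finish the argument:
\begin{itemize}
\item for $\af'_\bullet$ one computes directly that $E_n(\af'_\bullet)=\sum 1/a_i=c(\af'_\bullet)$;
\item monotonicity (\Cref{prop:dp-rees}) gives $E_n(\af_\bullet)\le E_n(\af'_\bullet)$;
\item $c(\af_\bullet)=c(\af'_\bullet)$, because both depend only on $\mu$ (\Cref{prop:monomial-threshold}).
\end{itemize}

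\textbf{Your route.} You apply the inequality to each $\af_t$ separately and rescale by $t$. You observe that for monomial ideals both sides are determined by the exponent set: lengths count standard monomials, and $c=1/\mu$ in every characteristic. So the inequality can be imported from the Demailly--Pham theorem over $\C$ (after the routine comparison of algebraic and analytic invariants at the origin). This is shorter and sidesteps the asymptotic invariants $e_j(\af_\bullet)$ and $c(\af_\bullet)$.

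\textbf{What each buys.} Your argument uses the analytic Demailly--Pham theorem as a black box. The paper's supporting-hyperplane argument is in effect a direct convex-geometric proof of exactly the monomial inequality you import. It is therefore self-contained and characteristic-free, and in characteristic zero it also gives an algebraic proof. More importantly for the rest of the paper, the explicit comparison $\af_\bullet\subseteq\af'_\bullet$ is what \Cref{cor:asymptotic-monomial} reuses in the equality case. There it forces $\Gamma(\af_\bullet)$ to be the complement of a simplex, which is the starting point for Theorem B. A transfer-to-$\C$ argument cannot supply that structural information.
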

\begin{proof}

We reduce to the case of an $\mf$-primary ideal in a polynomial ring over an infinite field. In the case (2), \Cref{prop:basic-properties-of-c} (vi) and \Cref{lemma:sigma_extension} show that $E_l, c$ are invariant under field extension. If $J$ is an ideal generated by $n-l$ general linear forms, then $E_l(\frac{I+J}{J}) = E_l(I)$ and $c(\frac{I+J}{J}) \leq c(I)$ by \Cref{prop:basic-properties-of-c} (iv) and \Cref{prop:sigma-poly-ring}, so we may assume $l = n$.

	Suppose we are in the case (1).
	By \Cref{prop:basic-properties-of-c} (vi) and \Cref{lemma:sigma_extension}, the quantities $c, E_l$ are invariant under completion and field extension, so we may without loss of generality assume $R$ is complete local with infinite residue field $k$. As $c(I)\geq c((I+J)/J)$ for any ideal $J\subseteq R$, we let $J$ be an ideal generated by $n-l$ elements of $\mf$ whose images in $\mf/\mf^2$ are sufficiently general; it suffices to show that $c((I+J)/J)\geq E_l((I+J)/J)$, where $(I+J)/J$ is an ideal of height $l$ in a complete regular local ring of dimension $l$. Therefore, without loss of generality, we may assume $l = n$, and hence $I$ is $\mf$-primary. 
	Set $R' = k[x_1,\dots, x_n], \mf' = (x_1,\dots, x_n)$. Since $I$ is $\mf$-primary, there exists $I'\subseteq R'$ with $I'R = I$. Another application of \Cref{prop:basic-properties-of-c} (vi) and \Cref{lemma:sigma_extension} gives $c(I) = c(I'R'_{\mf'})$ and $E_n(I) = E_n(I'R'_{\mf'})$, so by \Cref{notation:singularity_threshold} and \Cref{defn:mm-dp-poly-ring} we have $c(I) = c(I'), E_n(I) = E_n(I')$. 
	
	Let $\gamma\in \GL_n(k)$ such that \Cref{eqn:revlex-mm} holds. Set $\af_m = \ini_>(\gamma^{-1}I^m)$ for $m > 0$. By \Cref{lemma:mixed-mult-limit} we have $E_n(\af_\bullet) = E_n(I)$ and by \Cref{prop:semicontinuity} we have $c(\af_\bullet) \leq c(I)$. Let $\mu = \inf \{t: t\mathbf{1} \in \Gamma\}$. Since $\Gamma$ is convex and $\mu\mathbf{1}\in \del\Gamma$, by \cite[Corollary 11.6.1]{Rockafellar_1970} there exists a half-space $H^+\subseteq \R^n$ such that $\Gamma\subseteq H^+$ and that $\mu\mathbf{1}\in \del H^+$. Since $\Gamma$ is closed under translation by elements of $R_{\geq 0}^n$ and the complement of $\Gamma$ in $\R_{\geq 0}^n$ is bounded, the same is true for $H^+$. The closure of the complement of $H^+$ in $\R_{\geq 0}^n$ is therefore a simplex, which we denote by $\conv(\mathbf{0}, (a_1,0,\dots, 0),\dots, (0,\dots, 0, a_n))$. See \Cref{fig:H_with_Gamma_a,fig:H_with_a'} for a depiction of this step.
	\begin{figure}
		\begin{minipage}{0.49\textwidth}
			\centering

\tikzset{every picture/.style={line width=0.75pt}} 

\begin{tikzpicture}[x=0.75pt,y=0.75pt,yscale=-1,xscale=1]

\draw    (190,122) -- (190,310) ;
\draw [shift={(190,120)}, rotate = 90] [color={rgb, 255:red, 0; green, 0; blue, 0 }  ][line width=0.75]    (10.93,-3.29) .. controls (6.95,-1.4) and (3.31,-0.3) .. (0,0) .. controls (3.31,0.3) and (6.95,1.4) .. (10.93,3.29)   ;
\draw    (338,310) -- (190,310) ;
\draw [shift={(340,310)}, rotate = 180] [color={rgb, 255:red, 0; green, 0; blue, 0 }  ][line width=0.75]    (10.93,-3.29) .. controls (6.95,-1.4) and (3.31,-0.3) .. (0,0) .. controls (3.31,0.3) and (6.95,1.4) .. (10.93,3.29)   ;
\draw    (190,150) -- (234.32,245.51) ;
\draw  [draw opacity=0][fill={rgb, 255:red, 0; green, 0; blue, 255}  ,fill opacity=0.25 ] (249.54,258.13) .. controls (243.15,255.75) and (237.8,251.26) .. (234.32,245.51) -- (260,230) -- cycle ; \draw   (249.54,258.13) .. controls (243.15,255.75) and (237.8,251.26) .. (234.32,245.51) ;  
\draw    (249.54,258.13) -- (310,270) ;
\draw    (310,270) -- (338.01,272.8) ;
\draw [shift={(340,273)}, rotate = 185.71] [color={rgb, 255:red, 0; green, 0; blue, 0 }  ][line width=0.75]    (10.93,-3.29) .. controls (6.95,-1.4) and (3.31,-0.3) .. (0,0) .. controls (3.31,0.3) and (6.95,1.4) .. (10.93,3.29)   ;
\draw  [draw opacity=0][fill={rgb, 255:red, 0; green, 0; blue, 255}  ,fill opacity=0.25 ] (190,120) -- (340,120) -- (340,273) -- (310,270) -- (249.54,258.13) -- (260,230) -- (234.32,245.51) -- (190,150) -- cycle ;
\draw [color={rgb, 255:red, 208; green, 2; blue, 27 }  ,draw opacity=1 ]   (190,310) -- (244,256) ;
\draw [color={rgb, 255:red, 0; green, 0; blue, 0 }  ,draw opacity=1 ] [dash pattern={on 3.75pt off 3pt on 7.5pt off 1.5pt}]  (190,218.73) -- (247,258) ;
\draw    (338,310) -- (190,310) ;
\draw [shift={(340,310)}, rotate = 180] [color={rgb, 255:red, 0; green, 0; blue, 0 }  ][line width=0.75]    (10.93,-3.29) .. controls (6.95,-1.4) and (3.31,-0.3) .. (0,0) .. controls (3.31,0.3) and (6.95,1.4) .. (10.93,3.29)   ;
\draw    (190,122) -- (190,310) ;
\draw [shift={(190,120)}, rotate = 90] [color={rgb, 255:red, 0; green, 0; blue, 0 }  ][line width=0.75]    (10.93,-3.29) .. controls (6.95,-1.4) and (3.31,-0.3) .. (0,0) .. controls (3.31,0.3) and (6.95,1.4) .. (10.93,3.29)   ;
\draw [color={rgb, 255:red, 0; green, 0; blue, 0 }  ,draw opacity=1 ] [dash pattern={on 3.75pt off 3pt on 7.5pt off 1.5pt}]  (247,258) -- (304,297.27) ;
\draw [color={rgb, 255:red, 0; green, 0; blue, 0 }  ,draw opacity=1 ] [dash pattern={on 3.75pt off 3pt on 7.5pt off 1.5pt}]  (265.2,270.55) -- (322.2,309.82) ;

\draw (280,180) node [anchor=north west][inner sep=0.75pt]    {$\Gamma (\mathfrak{a}_{\bullet })$};
\draw (220.67,276.07) node [anchor=north west][inner sep=0.75pt]  [color={rgb, 255:red, 208; green, 2; blue, 27 }  ,opacity=1 ]  {$\mu \mathbf{1}$};
\draw (194.67,236.07) node [anchor=north west][inner sep=0.75pt]    {$H$};

\end{tikzpicture}
\caption{$\Gamma(\af_\bullet)$, together with $\mu\mathbf{1}$ and $H$.}
\label{fig:H_with_Gamma_a}
		\end{minipage}
		\begin{minipage}{0.49\textwidth}
	\centering

\tikzset{every picture/.style={line width=0.75pt}} 

\begin{tikzpicture}[x=0.75pt,y=0.75pt,yscale=-1,xscale=1]

\draw    (190,122) -- (190,310) ;
\draw [shift={(190,120)}, rotate = 90] [color={rgb, 255:red, 0; green, 0; blue, 0 }  ][line width=0.75]    (10.93,-3.29) .. controls (6.95,-1.4) and (3.31,-0.3) .. (0,0) .. controls (3.31,0.3) and (6.95,1.4) .. (10.93,3.29)   ;
\draw    (338,310) -- (190,310) ;
\draw [shift={(340,310)}, rotate = 180] [color={rgb, 255:red, 0; green, 0; blue, 0 }  ][line width=0.75]    (10.93,-3.29) .. controls (6.95,-1.4) and (3.31,-0.3) .. (0,0) .. controls (3.31,0.3) and (6.95,1.4) .. (10.93,3.29)   ;
\draw  [draw opacity=0][fill={rgb, 255:red, 0; green, 0; blue, 255}  ,fill opacity=0.25 ] (190,120) -- (340,120) -- (340,273) -- (340,310) -- (322.2,309.82) -- (244,256) -- (190,218.73) -- (190,150) -- cycle ;
\draw [color={rgb, 255:red, 208; green, 2; blue, 27 }  ,draw opacity=1 ]   (190,310) -- (244,256) ;
\draw [color={rgb, 255:red, 0; green, 0; blue, 0 }  ,draw opacity=1 ] [dash pattern={on 3.75pt off 3pt on 7.5pt off 1.5pt}]  (190,218.73) -- (247,258) ;
\draw    (338,310) -- (190,310) ;
\draw [shift={(340,310)}, rotate = 180] [color={rgb, 255:red, 0; green, 0; blue, 0 }  ][line width=0.75]    (10.93,-3.29) .. controls (6.95,-1.4) and (3.31,-0.3) .. (0,0) .. controls (3.31,0.3) and (6.95,1.4) .. (10.93,3.29)   ;
\draw    (190,122) -- (190,310) ;
\draw [shift={(190,120)}, rotate = 90] [color={rgb, 255:red, 0; green, 0; blue, 0 }  ][line width=0.75]    (10.93,-3.29) .. controls (6.95,-1.4) and (3.31,-0.3) .. (0,0) .. controls (3.31,0.3) and (6.95,1.4) .. (10.93,3.29)   ;
\draw [color={rgb, 255:red, 0; green, 0; blue, 0 }  ,draw opacity=1 ] [dash pattern={on 3.75pt off 3pt on 7.5pt off 1.5pt}]  (247,258) -- (304,297.27) ;
\draw [color={rgb, 255:red, 0; green, 0; blue, 0 }  ,draw opacity=1 ] [dash pattern={on 3.75pt off 3pt on 7.5pt off 1.5pt}]  (265.2,270.55) -- (322.2,309.82) ;

\draw (200,180) node [anchor=north west][inner sep=0.75pt]    {$\overline{\R_{\geq }^{n}\setminus H^+}=\Gamma (\mathfrak{a} '_{\bullet })$};
\draw (220.67,276.07) node [anchor=north west][inner sep=0.75pt]  [color={rgb, 255:red, 208; green, 2; blue, 27 }  ,opacity=1 ]  {$\mu \mathbf{1}$};
\draw (194.67,236.07) node [anchor=north west][inner sep=0.75pt]    {$H$};

\end{tikzpicture}
\caption{$\Gamma(\af'_\bullet)$ in terms of $H$.}
\label{fig:H_with_a'}
	\end{minipage}	
	\end{figure}
	
	Define a graded system of monomial ideals $\af_\bullet$ by $\af'_m = \{\x^\ab: \ab\in mH^+\}$. By assumption that $\Gamma\subseteq H^+$, we have $\af_m\subseteq \af'_m$ for all $m$. Consequently, we have $E_n(\af_\bullet)\leq E_n(\af'_\bullet)$ by \Cref{prop:dp-rees}. By \Cref{prop:monomial-threshold}, we also have $c(\af_\bullet) = c(\af'_\bullet)$. We can compute $E_n(\af'_\bullet)$ from the numbers $a_1,\dots, a_n$: by observing that 
	    \[
    \overline{\left(x_1^{\ceil{ma_1}},\dots, x_n^{\ceil{ma_n}}\right)}\subseteq \af'_m \subseteq \overline{\left(x_1^{\floor{ma_1}},\dots, x_n^{\floor{ma_n}}\right)},
    \]
	it follows from \Cref{lemma:mm-hyperplane-section} that $E_n(\af'_\bullet) = \frac{1}{a_1}+\dots + \frac{1}{a_n}$. All together, we have
	    \[
    E_n(I) = E_n(\af_\bullet) \leq E_n(\af'_\bullet) = \frac{1}{a_1}+\dots+\frac{1}{a_n} = c(\af'_\bullet) = c(\af_\bullet) \leq c(I).
    \]
\end{proof}
\begin{remark}
    The insight depicted in \Cref{fig:H_with_Gamma_a,fig:H_with_a'} appears earlier than the work of Demailly, Pham and Bivi\`a-Ausina, in the proof \cite[Theorem 1.1]{de_fernex_multiplicities_2004}. 
\end{remark}
\begin{cor}\label{cor:asymptotic-monomial}
    Let $k$ be an uncountably infinite field, $R = k[x_1,\dots, x_n]$, and $I$ an $\mf$-primary homogeneous ideal. Let $>$ denote the graded reverse lexicographic order and let $\af_m:= \gin(I^m)$. Lastly, let $\bb_1,\dots, \bb_n$ denote the standard unit vectors in $\R^n$. If $E_n(I) = c(I)$, then
    \begin{equation}\label{eq:simplex-complement}
        \overline{\R^n_{\geq 0}\setminus \Gamma(\af_\bullet)} = \conv\left(\mathbf{0}, e_1(I)\bb_1, \frac{e_2(I)}{e_1(I)}\bb_2,\dots,\frac{e_{n}(I)}{e_{n-1}(I)}\bb_n\right).
    \end{equation}
\end{cor}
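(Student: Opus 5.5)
Rerun the chain of inequalities from the proof of \Cref{thm:bound}, now with $\af_m=\gin(I^m)$ in place of $\ini_>(\gamma^{-1}I^m)$. Under the hypothesis $E_n(I)=c(I)$ every inequality in that chain becomes an equality, and the equality cases pin down the limiting polytope $\Gamma:=\Gamma(\af_\bullet)$.

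First I justify the substitution. Since $k$ is uncountable, I can choose a single $\gamma\in\GL_n(k)$ lying in the intersection of the countably many nonempty open sets $U_m$ on which $\ini_>(\gamma I^m)=\gin(I^m)$. Intersecting also with the general locus required by \Cref{lemma:mixed-mult-limit} gives $\af_m=\ini_>(\gamma^{-1}I^m)$ for all $m$ and $e_j(\af_\bullet)=e_j(I)$ for all $j$. The grevlex order agrees with rlex on homogeneous pieces, and $I^m$ is homogeneous, so \Cref{lemma:mixed-mult-limit} applies.

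The chain reads
\[E_n(I)=E_n(\af_\bullet)\le E_n(\af'_\bullet)=\sum_i \tfrac1{a_i}=c(\af'_\bullet)=c(\af_\bullet)\le c(I).\]
With $E_n(I)=c(I)$, this forces $E_n(\af_\bullet)=E_n(\af'_\bullet)$ for the half-space $H^+$ supporting $\Gamma$ at $\mu\mathbf 1$. I then need an asymptotic version of \Cref{prop:dp-rees} for graded systems of monomial ideals: if $\Gamma\subseteq\Gamma'$ and the asymptotic invariants $E_n$ agree, then $\Gamma=\Gamma'$. To prove it, I first apply \Cref{prop:dp-rees} at each finite level $m$ and pass to limits. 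I express $e_j(\af_\bullet)$ as mixed volumes of the co-convex bodies $\R^n_{\ge0}\setminus\Gamma$ (Teissier/Kaveh--Khovanskii). The key point is that each $e_j$ is monotone under inclusion, so $E_n$ is a sum of terms each of which moves weakly in one direction under $\Gamma\subseteq\Gamma'$. Strict inclusion of co-convex bodies then strictly decreases the covolume $e_n$ while the other terms move weakly the favorable way, so $E_n(\af_\bullet)<E_n(\af'_\bullet)$ unless $\Gamma=\Gamma'$. This asymptotic equality step is the main obstacle. If the strictness is delicate, I will instead argue directly with the Minkowski inequalities of \Cref{prop:mixed-mult-properties} (iii), whose equality case forces homothety.

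Granting $\Gamma=\overline{H^+}$, the complement is the simplex $\conv(\mathbf 0,a_1\bb_1,\dots,a_n\bb_n)$. It remains to identify the $a_i$. For such a simplex, $e_j=a_{\sigma(1)}\cdots a_{\sigma(j)}$ where the product runs over the $j$ smallest $a_i$ (\Cref{lemma:mm-hyperplane-section}). I must also show $a_1\le a_2\le\dots\le a_n$, so that $a_j=e_j(I)/e_{j-1}(I)$ in the stated order. This should come from Borel-fixedness of $\gin$ in characteristic zero, or from the rlex projection \Cref{lem:revlex-proj}: by that lemma and the choice of $\gamma$, restricting $\Gamma$ to the first $j$ coordinates computes $e_j(I)$. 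That projection computation gives $a_1\cdots a_j=e_j(I)$ for each $j$ directly, and hence $a_j=e_j(I)/e_{j-1}(I)$ without needing the ordering separately, which yields \eqref{eq:simplex-complement}.
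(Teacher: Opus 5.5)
Your setup matches the paper's: a very general $\gamma$, available because $k$ is uncountable, identifies $\gin(I^m)$ with the ideals $\ini_>(\gamma^{-1}I^m)$ of \Cref{thm:bound}, and $E_n(I)=c(I)$ collapses the chain to $E_n(\af_\bullet)=E_n(\af'_\bullet)$. The gap is the step you flag as the main obstacle, which rests on a false claim. Under $\af_m\subseteq\af'_m$ each $e_j$ decreases, but only the summand $1/e_1$ of $E_n$ is therefore monotone. In $e_{j-1}/e_j$ the numerator and denominator both drop, and the ratio can move the wrong way. For instance, passing from $(x^2,xy^3,y^{10})$ to the larger ideal $(x,y^{10})$ sends $e_1/e_2$ from $2/16$ down to $1/10$. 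So ``strict inclusion strictly lowers $e_n$ while the other terms move favorably'' does not give $E_n(\af_\bullet)<E_n(\af'_\bullet)$. Your fallback does not work as described either. The equality $E_n(\af_\bullet)=E_n(\af'_\bullet)$ forces no Minkowski inequality to be an equality; for a simplex with distinct intercepts they are all strict. So there is no homothety to extract.

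The missing ingredient is the numerical lemma the paper invokes, \cite[Proposition 10]{bivia-ausina_log_2016}. Suppose $1=e_0,\dots,e_n$ and $1=e'_0,\dots,e'_n$ both satisfy $e_j^2\le e_{j-1}e_{j+1}$, and $e'_j\le e_j$ for all $j$. Then $\sum_j e_{j-1}/e_j\le\sum_j e'_{j-1}/e'_j$, with equality only if $e_j=e'_j$ for every $j$. To see this, write $r_j=e_j/e_{j-1}$, which is nondecreasing in $j$ by Minkowski. The hypothesis bounds the partial sums of $\log r'_j$ by those of $\log r_j$, and an Abel-summation (Karamata-type) argument with the strictly convex decreasing function $t\mapsto\exp(-t)$ finishes. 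Apply this to the asymptotic invariants; monotonicity and the Minkowski inequalities pass to the limit. It gives $e_j(I)=e_j(\af_\bullet)=e_j(\af'_\bullet)$ for all $j$. Equality at $j=n$ is equality of covolumes by \cite[Theorem 2.12]{mustata_multiplicities_2002}. Together with $\Gamma(\af_\bullet)\subseteq\Gamma(\af'_\bullet)$, this forces $\Gamma(\af_\bullet)=\Gamma(\af'_\bullet)$.

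Knowing all $e_j(I)=e_j(\af'_\bullet)$ also repairs your last step. Your projection argument as stated needs a limit interchange: $\Gamma(\pi_j(\af_\bullet))$ is a priori only contained in $\Gamma(\af_\bullet)\cap(\R^j\times\{0\})$. So the chain in the proof of \Cref{lemma:mixed-mult-limit} yields only $e_j(I)\ge a_1\cdots a_j$. Combined with $e_j(I)=e_j(\af'_\bullet)$, which is the product of the $j$ smallest $a_i$, this gives $e_j(I)=a_1\cdots a_j$. The paper instead gets $a_1\le\dots\le a_n$ from Borel-fixedness of $\gin$, which holds in every characteristic, not only characteristic zero.
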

\begin{proof}
    By construction, $>$ agrees with the reverse lexicographic order on homogeneous ideals. Consequently, we may choose $\gamma\in \GL_n(k)$ very generally so that \Cref{eqn:revlex-mm} holds and $\ini_>(\gamma^{-1}I^m) = \gin_>(I^m)$ for all $m > 0$. With this choice of $\gamma$, let $\af_\bullet, \af'_\bullet$ be as in \Cref{thm:bound}. 

    Suppose $E_n(I) = c(I)$. Then we also have $E_n(\af_\bullet) = E_n(\af'_{\bullet})$. By \cite[Proposition 10]{bivia-ausina_log_2016}, we further have that $e_j(I) = e_j(\af_{\bullet}) = e_j(\af'_\bullet)$ for all $1\leq j\leq n$. In particular, $e_n(\af_\bullet) = e_n(\af'_\bullet)$, so by \cite[Theorem 2.12 and Lemma 2.13]{mustata_multiplicities_2002}, we have $\vol(\R_{\geq 0}^n\setminus \Gamma(\af_\bullet)) = \vol(\R_{\geq 0}^n\setminus \Gamma(\af'_\bullet))$. Since $\Gamma(\af_\bullet), \Gamma(\af'_\bullet)$ are closed and convex with positive volume, it follows that $\Gamma(\af_\bullet) = \Gamma(\af'_\bullet).$ Since the generic initial ideal is Borel-fixed, we have $a_1\leq \dots \leq a_n$, hence $e_j(\af'_\bullet) = a_1\dots a_j$ for all $1\leq j\leq n$, proving the claim.
\end{proof}
\begin{remark}
   \Cref{eq:simplex-complement} is a necessary condition to have $c(I)=E_n(I)$, but not a sufficient condition. By \cite{mayes_limiting_2014} in characteristic zero, \Cref{eq:simplex-complement} holds for any homogeneous complete intersection $J = (f_1,\dots, f_n)$. 
\end{remark}
In fact, \Cref{eq:simplex-complement} holds for any homogeneous complete intersection in positive characteristic, though a small modification to Mayes's argument in \cite{mayes_limiting_2014} must be made. In particular, Lemma 3.6 in op. cit. does not hold in positive characteristic: in the ring $\overline{\F_p}[x,y]$ we have $\gin(x^p, y^p) = (x^p, y^p)$. Instead, one appeals to \Cref{thm:Briancon-Skoda}. The details will appear in the author's thesis.

\begin{example}[\cite{dF_counter_2026}]\label{ex:pinched-polytope}
Here we give a counterexample to \cite[Question 1.3]{kim_teissier_2021}, which also functions as a counterexample to the analogous statement in positive characteristic. Let $k$ be a field and set $R = k[x,y,z]$. The idea is to start with an ideal $\cf\subseteq R$ and a general hyperplane $H$ for which the equality $c(\cf) - c(\cf|_H) = \frac{e_2(\cf)}{e_3(\cf)}$ holds and to produce an ideal $\af\supseteq \cf$ for which $c(\af) = c(\cf), c(\af|_H) = c(\cf|_H), e_2(\af) = e_2(\cf)$, and $e(\af) < e(\cf)$. 

We'll define a family of monomial ideals satisfying the desired conditions, which we depict in \Cref{fig:gamma_c,fig:gamma_a}. Let $3\leq a < b$. We define the auxiliary ideal $\bfr = (x^a, xy, y^a)$. Set $\cf =  (\bfr, z^b)$ and $\af = (\cf, zx^{a-1})$. As $a \geq 3$, we have $\af\subseteq \overline{(x^2, y^2, z^b)}$. Using \Cref{prop:monomial-threshold}, we compute
\[
1 + \frac{1}{b} = c((xy, z^b)) \leq c(\cf) \leq c(\af) \leq c(\overline{(x^2,y^2,z^b)}) = 1 + \frac{1}{b}.
\]
As in \Cref{rmk:cov-necessary}, the linear form $h = x-y-z$ is sufficiently general to compute $e_2(\af)$ and $\max_\ell c(\af|_{\ell=0})$. Identifying $R/hR$ with $k[x,y]$ and letting $\af',\bfr',\cf'$ denote the images of $\af,\bfr,\cf$ in $R/hR$, we have
\[
\bfr' \subseteq \cf' \subseteq \af' = (\bfr',(x+y)x^{a-1}, (x+y)^b) \subseteq \overline{\bfr'},
\]
so we have $\overline{\af'} = \overline{\bfr'} = \overline{\cf'}$, which implies that $e_2(\af) = e_2(\cf)$ and $c(\af') = c(\cf')$. 

 By \Cref{lemma:maxpower-subset} and \Cref{lemma:mm-hyperplane-section} respectively, we have $\bfr' = \overline{(xy, x^a + y^a)}$ and $e(\bfr') = 2a$. A similar computation yields $\cf = \overline{(xy, x^a+y^a, z^b)}$, so $e(\cf) = 2ab$. On the other hand, we claim that the containment $\overline{\cf}\subseteq \overline{\af}$ is strict --- in particular, we claim that $zx^{a-1}\notin \overline{\cf}$. To see this, we note that the monomials in $k[x,z]\cap \overline{\cf}$ are precisely those $x^iz^j$ for which $i/a + j/b \geq 1$, and we have $(a-1)/a + 1/b < 1$. By \Cref{thm:rees}, it follows that $e(\af) < e(\cf)$. We deduce that
 \[
 c(\af) = 1 + \frac{1}{b} = c(\cf) + \frac{e_2(\cf)}{e(\cf)} < c(\af|_{h=0}) + \frac{e_2(\af)}{e(\af)},
 \]
 violating the conjectured inequality.

\begin{figure}
    \centering
    \begin{minipage}{0.49\textwidth}\label{fig:ctrex}
        \centering
\tdplotsetmaincoords{70}{120}
\begin{tikzpicture}[tdplot_main_coords]
  \def\laxis{5}
  \def\ltriangle{3}
  \def\ltick{.2}
  \draw [->] (0,0,0) -- (\laxis,0,0) node [below] {$x$};
  \draw [->] (0,0,0) -- (0,\laxis,0) node [right] {$y$};
  \draw [->] (0,0,0) -- (0,0,\laxis) node [left] {$z$};
  \pgfmathtruncatemacro{\nticks}{floor(\laxis)-1}
  \begin{scope}[
    help lines,
    every node/.style={inner sep=1pt,text=black}
    ]
    \draw (3, \ltick, 0) -- ++(0,-\ltick,0) -- ++(0,0,\ltick)
      node [pos=1,left] {$a$};
   	\draw (\ltick, 3, 0) -- ++(-\ltick,0,0) -- ++(0,0,\ltick)
      node [pos=1,right] {$a$};
    \draw (\ltick,0,4) -- ++(-\ltick,0,0) -- ++(0,\ltick,0)
      node [at start,above right] {$b$};
  \end{scope}
  \filldraw [opacity=.25,blue] (3,0,0) -- (1,1,0) -- (0,3,0)
  -- (0,0,4) -- cycle;
  \draw [opacity=1,black,dashed] (1,1,0)--(0,0,4) -- (0,3,0) -- (1,1,0)--(3,0,0) -- (0,0,4);
  \draw[opacity=1,red,thick] (0,0)--(0.8,0.8,0.8);
\end{tikzpicture}
        \caption{$\Gamma(\cf)$, with $\mu\mathbf{1}$ computing $c(\cf)$ in red.}
        \label{fig:gamma_c}
    \end{minipage}\hfill
    \begin{minipage}{0.49\textwidth}
        \centering
 \tdplotsetmaincoords{70}{120}
\begin{tikzpicture}[tdplot_main_coords]
  \def\laxis{5}
  \def\ltriangle{3}
  \def\ltick{.2}
  \draw [->] (0,0,0) -- (\laxis,0,0) node [below] {$x$};
  \draw [->] (0,0,0) -- (0,\laxis,0) node [right] {$y$};
  \draw [->] (0,0,0) -- (0,0,\laxis) node [left] {$z$};
  \pgfmathtruncatemacro{\nticks}{floor(\laxis)-1}
  \begin{scope}[
    help lines,
    every node/.style={inner sep=1pt,text=black}
    ]
    \draw (3, \ltick, 0) -- ++(0,-\ltick,0) -- ++(0,0,\ltick)
      node [pos=1,left] {$a$};
   	\draw (\ltick, 3, 0) -- ++(-\ltick,0,0) -- ++(0,0,\ltick)
      node [pos=1,right] {$a$};
    \draw (\ltick,0,4) -- ++(-\ltick,0,0) -- ++(0,\ltick,0)
      node [at start,above right] {$b$};
  \end{scope}
  \filldraw [opacity=.25,green] (0,0,4)  -- (1,1,0) -- (2,0,1) -- cycle;
  \filldraw [opacity=.25,blue] (0,3,0) -- (0,0,4) -- (1,1,0) -- cycle;
  \filldraw [opacity=.25,yellow] (3,0,0) -- (2,0,1) -- (1,1,0) -- cycle;
  \draw[opacity=1,red, thick] (0,0)--(0.8,0.8,0.8);
\draw [opacity=1,black,dashed] (0,0,4)--(2,0,1) -- (3,0,0) -- (1,1,0) --(0,3,0) -- cycle;
\draw [opacity=1,black,dashed] (1,1,0)--(2,0,1);
\draw [opacity=1,black,dashed] (0,0,4)-- (1,1,0);
\end{tikzpicture}\caption{$\Gamma(\af)$, with $\mu\mathbf{1}$ computing $c(\af)$ in red.}
\label{fig:gamma_a}
    \end{minipage}
\end{figure}
\end{example}
In light of \cite[Proposition 2.4 (i)]{kim_teissier_2021}, we might instead consider the weaker bound $c(I) - c(I|_H) \geq \frac{1}{\Lc_0(I)}$, where $\Lc_0(I)$ is the \L{}ojasiewicz exponent (\Cref{defn:lojasiewicz-exponent}). This does  hold; see \Cref{lemma:lojasiewicz-bound}.
\section{Theorem B in the Complete Intersection Case}
\subsection{Behavior of the Singularity Threshold Under Modification}
Fix the following notation throughout this subsection. 
\begin{assumption}\label{ass:mod-setup}
    Let $k$ be a field, $R = k[x_1,\dots, x_n]$, and let $\mf$ denote the homogeneous maximal ideal. Let $I\subseteq R$ be an $\mf$-primary homogeneous ideal. Write $I = I_1 + \dots + I_r$, where $I_j$ is generated by forms of degree $d_j$ and $d_1 < \dots  < d_r$.
\end{assumption}

\begin{lemma}[\cite{bhatt_f_2015}, Lemma 3.2]\label{lem:colon}
    Let $R = k[x_1,\dots, x_n]$ and let $\mf$ denote the homogeneous maximal ideal. For any $e,t\in \Z^+$, we have
\[
(\mf^{[p^e]}: \mf^t) = \begin{cases}
    R & t \geq np^e - n + 1\\
    \mf^{[p^e]} + \mf^{np^e-n+1-t} & t < np^e-n+1
\end{cases}
\]
\end{lemma}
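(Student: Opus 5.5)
This is a statement about the polynomial ring $R = k[x_1,\dots,x_n]$, so the plan is to use its multigrading and work one variable at a time. Set $q = p^e$. Since $\mf^{[q]} = (x_1^q,\dots,x_n^q)$ is a monomial ideal, the colon $(\mf^{[q]}:\mf^t)$ is also a monomial ideal. Because $\mf^t$ is generated by the monomials of degree $t$, a monomial $\x^\ab$ lies in the colon if and only if $\x^{\ab}\x^{\bb}\in\mf^{[q]}$ for every $\bb$ with $|\bb| = t$. That condition says: for every such $\bb$ there is some index $i$ with $a_i+b_i\geq q$.

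Fix $\ab$. If some $a_i \geq q$, then $\x^\ab \in \mf^{[q]}$ and it certainly lies in the colon. So assume $a_i \leq q-1$ for all $i$. Then $\x^\ab$ fails to be in the colon exactly when we can choose $\bb\geq 0$ with $|\bb| = t$ and $a_i + b_i \leq q-1$ for all $i$. The largest total degree available is $\sum_i (q-1-a_i) = n(q-1) - |\ab|$, and every total degree below it is also attainable. Hence such a $\bb$ exists if and only if $t \leq nq - n - |\ab|$.

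It follows that, among monomials with all exponents below $q$, $\x^\ab$ lies in the colon if and only if $|\ab| \geq nq-n+1-t$. The colon is therefore generated by $\mf^{[q]}$ together with all monomials of degree at least $\max(0, nq-n+1-t)$:
\begin{itemize}
\item If $t \geq nq-n+1$, this bound is nonpositive, so $1$ is in the colon and the colon is $R$.
\item Otherwise the colon is $\mf^{[q]} + \mf^{nq-n+1-t}$.
\end{itemize}
In both cases, monomials with some exponent $\geq q$ are already accounted for by $\mf^{[q]}$.

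The only step needing care is the existence claim for $\bb$. It holds because the achievable values of $|\bb|$, for $\bb$ ranging over the box $0\leq b_i\leq q-1-a_i$, form the full interval $[0, n(q-1)-|\ab|]$. This follows by increasing one coordinate at a time. The argument works over any field $k$, and it never uses that $q$ is a power of $p$.
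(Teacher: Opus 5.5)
Your proof is correct and follows the monomial-by-monomial argument the paper has in mind. The paper cites \cite{bhatt_f_2015} for this lemma, and its proof of \Cref{lem:valuation-colon} (``the argument is the same'') tests each monomial $\x^\ab \notin \mf^{[q]}$ via the complementary monomial $(x_1\cdots x_n)^{q-1}/\x^\ab$, which has degree $n(q-1)-|\ab|$; because $\mf^t$ is an ideal, this single monomial settles membership, so your step showing that every degree in $[0, n(q-1)-|\ab|]$ is attainable is correct but not needed.
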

More generally, we have the following.
\begin{lemma}\label{lem:valuation-colon}
    Let $R = k[x_1,\dots, x_n]$. Let $v$ be a monomial valuation on $R$ with $v(x_i)\geq 0$ for all $1\leq i\leq n$. For $\l\in \R^+$, let $\af_\l$ denote the ideal $\{f\in R: v(f)\geq \l\}$ and $\af_\l^+ = \{f\in R: v(f) > \l\}$. Let $q\in \Z^+, \l\in \R^+$. Then we have
    \begin{equation}\label{eq:valuation-colon}
        ((x_1^q,\dots, x_n^q): \af_\l) = (x_1^q,\dots, x_n^q) + \af^+_{(q-1)v(x_1\dots x_n) - \l}.
    \end{equation}
\end{lemma}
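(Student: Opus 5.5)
Both sides of \Cref{eq:valuation-colon} are monomial ideals, so the plan is to reduce to a check on monomials. The ideal $\af_\l$ is monomial, since $v(f)$ is the minimum of $v$ over $\supp(f)$; it is generated by the monomials $\x^\bb$ with $v(\x^\bb)\geq \l$. The same holds for $\af^+_{\mu}$. The ideal $\mathfrak{q}:=(x_1^q,\dots,x_n^q)$ is monomial, and a colon of monomial ideals is monomial. It therefore suffices to show that a monomial $\x^\ab$ satisfies $\x^\ab\af_\l\subseteq \mathfrak q$ if and only if $\x^\ab\in \mathfrak q$ or $v(\x^\ab) > (q-1)v(x_1\cdots x_n)-\l$. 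Testing $\x^\ab\af_\l\subseteq\mathfrak q$ only requires checking $\x^{\ab+\bb}\in\mathfrak q$ for every monomial $\x^\bb\in\af_\l$. A monomial $\x^\cb$ lies outside $\mathfrak q$ exactly when $c_i\leq q-1$ for all $i$.

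For the containment $\supseteq$, take $\x^\ab$ with $v(\x^\ab)>(q-1)v(x_1\cdots x_n)-\l$, and let $\x^\bb\in\af_\l$ be a monomial. Then
\[v(\x^{\ab+\bb})>(q-1)v(x_1\cdots x_n).\]
If $\x^{\ab+\bb}\notin\mathfrak q$, every exponent is at most $q-1$. Since $v(x_i)\geq 0$, this gives $v(\x^{\ab+\bb})\leq (q-1)v(x_1\cdots x_n)$, a contradiction. Hence $\x^{\ab+\bb}\in\mathfrak q$. The case $\x^\ab\in\mathfrak q$ is trivial.

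For the containment $\subseteq$, suppose $\x^\ab\notin\mathfrak q$, so $a_i\leq q-1$ for all $i$, and suppose $v(\x^\ab)\leq (q-1)v(x_1\cdots x_n)-\l$. I would produce a witness $\x^\bb\in\af_\l$ with $\x^{\ab+\bb}\notin\mathfrak q$. The natural choice is the complementary monomial $b_i=q-1-a_i$. Then $\x^{\ab+\bb}=(x_1\cdots x_n)^{q-1}\notin\mathfrak q$. Moreover
\[v(\x^\bb)=(q-1)v(x_1\cdots x_n)-v(\x^\ab)\geq \l,\]
so $\x^\bb\in\af_\l$. Hence $\x^\ab\notin(\mathfrak q:\af_\l)$, which proves the contrapositive.

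The only step needing care is the reduction to monomials: one must verify that $\af_\l$ and $\af_\l^+$ are monomial ideals. This holds because $v$ of a polynomial is the minimum of $v$ over its support, so each monomial in the support of an element of $\af_\l$ (respectively $\af^+_\l$) lies in it too. One also needs that the colon of monomial ideals is monomial. This follows from the torus action when $k$ is infinite, or directly by comparing supports in general. The remaining steps are routine, and \Cref{lem:colon} is recovered as the special case $v(x_i)=1$.
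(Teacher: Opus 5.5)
Your proof is correct and takes essentially the same approach as the paper: reduce to monomials, since every ideal involved is monomial, and use the complementary monomial $(x_1\cdots x_n)^{q-1}/m$ of a monomial $m\notin(x_1^q,\dots,x_n^q)$ as the decisive test element of $\af_\l$. Your write-up is slightly more careful than the paper's, whose final criterion has the inequality reversed: it writes $\leq \l$ where $\geq \l$ is meant.
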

\begin{proof}
    The argument is the same as \Cref{lem:colon}. Let $m\notin (x_1^q,\dots, x_n^q)$ be a monomial. Then $m\mid (x_1\cdots x_n)^{q-1}$, so $\af_\l m\not\subseteq (x_1^q,\dots, x_n^q)$ if and only if $\frac{(x_1\dots x_n)^{q-1}}{m}\in \af_\l$, which holds if and only if $v((x_1\cdots x_n)^{q-1}) - v(m) \leq \l$. We've shown that the two sides of \Cref{eq:valuation-colon} contain the same monomials; both sides are monomial ideals, so the result follows.
\end{proof}
\begin{lemma}[\cite{baily_equigenerated_2025}, Lemma 4.2]\label{lemma:nu-I-H}
    Let $k$ be a field of characteristic $p>0$, let $R = k[x_1,\dots, x_n]$, and $I\subseteq R$ a homogeneous ideal. For a hyperplane $H$ cut out by a linear form $\ell$, we let $I|_H$ denote the image of $I$ in $R/\ell R$. In this case, we have
    \[
    \nu_{I|_H}(p^e) \leq \max \{r: I^r\not\subseteq \mf^{[p^e]} + \mf^{(n-1)(p^e-1)+1}\},
    \]
\end{lemma}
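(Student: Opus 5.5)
The plan is to reduce to the case $\ell = x_n$ and compare the two containments degree by degree. I will use that all ideals involved are homogeneous. Throughout, write $q = p^e$. I read $\nu_{I|_H}(p^e)$ as $\nu^{\mf'}_{I|_H}(p^e)$, where $\mf'$ is the homogeneous maximal ideal of $R/\ell R$.

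\textbf{Coordinates.} First I apply a linear change of coordinates sending $\ell$ to $x_n$. This is harmless on the right-hand side. Indeed, $\mf^{(n-1)(q-1)+1}$ is clearly preserved. Also $\mf^{[q]} = (\ell_1^q,\dots,\ell_n^q)$ for any basis $\ell_1,\dots,\ell_n$ of linear forms, since Frobenius is additive: if $\ell' = \sum c_i x_i$, then $\ell'^q = \sum c_i^q x_i^q$. So we may assume $R/\ell R \cong S := k[x_1,\dots,x_{n-1}]$ with $\mf' = (x_1,\dots,x_{n-1})$, and the map $\pi\colon R\to S$ sets $x_n=0$.

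\textbf{Choosing a witness.} Set $r = \nu_{I|_H}(q)$, so $(I|_H)^r = \pi(I^r)S \not\subseteq \mf'^{[q]}$. Since $I^r$ is generated by homogeneous forms, so is $\pi(I^r)S$. Hence some homogeneous $f\in I^r$, say of degree $D$, satisfies $\pi(f)\notin \mf'^{[q]}$. It then suffices to show that
\[
f\notin \mf^{[q]} + \mf^{(n-1)(q-1)+1},
\]
because this gives $I^r \not\subseteq \mf^{[q]} + \mf^{(n-1)(q-1)+1}$, which is the claimed inequality.

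\textbf{Bounding the degree.} Next I bound $D$ by pigeonhole. A monomial in $n-1$ variables of degree at least $(n-1)(q-1)+1$ has some exponent at least $q$. Therefore $\mf'^{(n-1)(q-1)+1}\subseteq \mf'^{[q]}$. Since $\pi(f)$ is a nonzero form of degree $D$ outside $\mf'^{[q]}$, we get $D \leq (n-1)(q-1)$.

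\textbf{Conclusion.} The ideal $\mf^{[q]} + \mf^{(n-1)(q-1)+1}$ is homogeneous, and its degree-$D$ component equals the degree-$D$ component of $\mf^{[q]}$, because $D < (n-1)(q-1)+1$. So $f$ lies in the sum if and only if $f\in\mf^{[q]}$. But $f\in \mf^{[q]}$ would give $\pi(f)\in \pi(\mf^{[q]})S = \mf'^{[q]}$, a contradiction.

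The only delicate point is the reduction to homogeneous witnesses, which allows the degree truncation. This is where the homogeneity hypothesis on $I$ is used, and otherwise the argument is formal.
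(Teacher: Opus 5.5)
Your proof is correct. After moving $\ell$ to $x_n$, the homogeneous witness $f\in I^r$ with $\pi(f)\notin\mf'^{[q]}$ has degree at most $(n-1)(q-1)$ by your pigeonhole count. In that degree the summand $\mf^{(n-1)(q-1)+1}$ contributes nothing, so $f\notin\mf^{[q]}$ is all you need. Each step checks out, including the invariance of $\mf^{[q]}$ under linear coordinate changes.

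The paper does not prove this lemma; it is quoted from \cite{baily_equigenerated_2025}. However, the way it is combined with \Cref{lem:colon} in the proof of \Cref{lemma:lojasiewicz-bound} suggests a colon-ideal route. Taking $t=q-1$ in \Cref{lem:colon} gives $(\mf^{[q]}:\mf^{q-1})=\mf^{[q]}+\mf^{(n-1)(q-1)+1}$, so the right-hand side equals $\max\{r:\mf^{q-1}I^r\not\subseteq\mf^{[q]}\}$. Moreover $\pi^{-1}(\mf'^{[q]})=\mf^{[q]}+(x_n)=(\mf^{[q]}:x_n^{q-1})$. Hence any $f\in I^r$ with $\pi(f)\notin\mf'^{[q]}$ gives $x_n^{q-1}f\in\mf^{q-1}I^r\setminus\mf^{[q]}$.

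That version delivers the inequality directly in the colon form used downstream. Yours is a self-contained degree count that avoids the colon computation altogether.

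Neither version actually needs $I$ to be homogeneous. In your argument, replace the homogeneous witness by a monomial in the support of $\pi(f)$ lying outside $\mf'^{[q]}$. That monomial also occurs in $f$. It has all exponents below $q$ and involves only $x_1,\dots,x_{n-1}$, so its degree is at most $(n-1)(q-1)$. It therefore lies outside the monomial ideal $\mf^{[q]}+\mf^{(n-1)(q-1)+1}$, and so does $f$.
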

\begin{defn}\label{defn:lojasiewicz-exponent}
	Let $R = k[x_1,\dots, x_n]$ and let $I\subseteq R$ be an ideal. The \textbf{\L{}ojasiewicz exponent} of $I$, denoted $\Lc_0(I)$, is equal to the least integer $t$ such that $\mf^t\subseteq \overline{I}$, or else $\infty$ in the case that $I$ is not $\mf$-primary. 
\end{defn}
\begin{lemma}\label{lemma:lojasiewicz-bound}
	Let $R = k[x_1,\dots, x_n]$ and let $I\subseteq R$ be an ideal. If $H\subseteq \Spec R$ is a hyperplane, then we have $c(I) - c(I|_{H})\geq \frac{1}{\Lc_0(I)}$.
\end{lemma}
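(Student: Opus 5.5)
We argue first in characteristic $p>0$ and then reduce characteristic zero to that case. Set $t=\Lc_0(I)$. If $t=\infty$ the claim is $c(I|_H)\leq c(I)$, which is \Cref{prop:basic-properties-of-c} (iv). So assume $t<\infty$, i.e.\ $\mf^t\subseteq \overline{I}$. We work with $I$ homogeneous and $H$ a linear hyperplane through the origin, which is the setting of \Cref{lemma:nu-I-H}. If $I$ is not homogeneous, the first thing to try is passing to an initial ideal for a degree-compatible order. We expect this reduction to be the main obstacle, since \Cref{prop:semicontinuity} moves $c(I)$ and $c(I|_H)$ in the same direction, and that is not obviously compatible with a difference.

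Write $q=p^e$ and $N=(n-1)(q-1)+1$. The starting observation is that $n(q-1)+1-(q-1)=N$, so \Cref{lem:colon} gives
\[
\mf^{[q]}+\mf^{N}=(\mf^{[q]}:\mf^{q-1}).
\]
Let $r=\nu_{I|_H}(q)$. By \Cref{lemma:nu-I-H} we may take $r$ with $I^r\not\subseteq \mf^{[q]}+\mf^N$. Choose $f\in I^r$ outside this ideal; then $f\,\mf^{q-1}\not\subseteq \mf^{[q]}$.

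Next we absorb $\mf^{q-1}$ into a power of $I$. Put $s=\floor{(q-1)/t}$. Since $ts\leq q-1$, we have
\[
\mf^{q-1}\subseteq \mf^{ts}\subseteq \overline{I}^{\,s}\subseteq \overline{I^s}.
\]
By \Cref{thm:Briancon-Skoda}, applied after localizing at $\mf$ and contracting (all ideals involved are homogeneous and $\mf$-primary or contain $\mf^{[q]}$, so no information is lost), we get $\overline{I^{s}}\subseteq I^{s-n}$ for $s>n$. Hence $I^{r+s-n}\supseteq f\,\mf^{q-1}$, so $I^{r+s-n}\not\subseteq\mf^{[q]}$. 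This gives
\[
\nu_I(q)\geq \nu_{I|_H}(q)+\floor{(q-1)/t}-n.
\]
Dividing by $q$ and letting $e\to\infty$ yields $c(I)\geq c(I|_H)+1/t$.

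In characteristic zero we use \Cref{prop:spread_out}. We spread out $I$, the linear form defining $H$, and a witness to $\mf^t\subseteq\overline{I}$ (an equation of integral dependence for each monomial of degree $t$) over a finitely generated $\Z$-algebra $A$. For all closed points with $|A/\mu|$ large, the reductions $I_\mu$ still satisfy $\Lc_0(I_\mu)\leq t$. Applying the positive-characteristic inequality to $I_\mu$ and $I_\mu|_{H_\mu}$ and taking limits gives $\lct(I)-\lct(I|_H)\geq 1/t$. To see that the limit of the fpt's of the restrictions is $\lct(I|_H)$, we apply \Cref{prop:spread_out} to $I|_H$ as well, viewed in $k[x_1,\dots,x_{n-1}]$ after a linear change of coordinates.
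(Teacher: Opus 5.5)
Your proof is correct and follows the paper's route. \Cref{lem:colon} together with \Cref{lemma:nu-I-H} bounds $\nu_{I|_H}(p^e)$ by $\max\{r : \mf^{p^e-1}I^r\not\subseteq \mf^{[p^e]}\}$, the factor $\mf^{p^e-1}$ is absorbed into roughly $I^{\lfloor p^e/\Lc_0(I)\rfloor}$, and characteristic zero follows by spreading out via \Cref{prop:spread_out}. Your Brian\c{c}on--Skoda step (\Cref{thm:Briancon-Skoda}) supplies a justification the paper leaves implicit, namely how to pass from $\mf^{t}\subseteq\overline{I}$ to honest powers of $I$. Your restriction to homogeneous $I$ is the same restriction the paper's proof inherits from \Cref{lemma:nu-I-H}, so the paper does not handle the non-homogeneous reduction you flag either.
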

\begin{proof}
	If $\Lc_0(I) = \infty$ then there is nothing to prove, so suppose $\Lc_0(I) = d < \infty$.
	
	We first prove the claim in characteristic $p>0$. Combining \Cref{lem:colon} and \Cref{lemma:nu-I-H}, we have
    \[
    \nu_{I|_H}(p^e) \leq \max\{s: \mf^{p^e}I^s\not\subseteq \mf^{[p^e]}\}.
    \]
    Consequently, we have $\max\{s: \mf^{p^e}I^s\not\subseteq \mf^{[p^e]}\} \leq \nu_{I}(p^e) - \floor{\frac{p^e}{d}}$, which implies that $\nu_{I|_H}(p^e) \leq \nu_{I}(p^e) - \floor{\frac{p^e}{d}}$. Dividing by $p^e$ and taking the limit as $e\to\infty$ gives the result. 
    
    We now prove the claim in characteristic 0. The result follows from \Cref{prop:spread_out}, the details of which we spell out explicitly in this case (though moving forward, we will be more terse). Let $A\subseteq k$ be a finitely-generated $\Z$-algebra and $J\subseteq A[x_1,\dots, x_n]$ an ideal such that $JR = I$. Such a subring $A$ can always be constructed by adjoining to $\Z$ the field coefficients appearing in a generating set for $I$; we'll also choose $A$ to contain the coefficients of the linear form $\ell$. If $\mu$ is a maximal ideal of $A$, we let $I_\mu$ denote the image of $J$ in $(A/\mu)[x_1,\dots, x_n]$. Applying \Cref{prop:spread_out} to both $I$ and $I|_H$, we obtain
	\[
    c(I) - c(I|_H) = \lim_{\substack{\mu\in\Spec A\\\text{char} A/\mu\to\infty}} c(I|_\mu) - c(I_\mu|_{H_\mu}) \geq 1/d.
    \]
\end{proof}
\begin{cor}\label{cor:m-primary-restriction}
    Assume the setup of \Cref{ass:mod-setup} and let $H\subseteq \Spec R$ be a hyperplane cut out by a linear form $\ell$. Then $c(I) - c(I|_H)\geq 1/d_r$.
\end{cor}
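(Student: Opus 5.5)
This is a direct application of \Cref{lemma:lojasiewicz-bound}, so the plan is to show that $\Lc_0(I)\leq d_r$. Then the lemma gives
\[
c(I) - c(I|_H) \geq \frac{1}{\Lc_0(I)} \geq \frac{1}{d_r}.
\]
By \Cref{defn:lojasiewicz-exponent}, the claim $\Lc_0(I)\leq d_r$ means that $\mf^{d_r}\subseteq \overline{I}$.

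Under \Cref{ass:mod-setup}, $I$ is an $\mf$-primary homogeneous ideal. It is generated by the forms generating $I_1,\dots,I_r$, all of which have degree at most $d_r$.

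\Cref{lemma:maxpower-subset} applies verbatim with $L=k$, $S=R$, $J=I$ and $d=d_r$, and yields $\mf^{d_r}\subseteq \overline{I}$. That lemma already handles arbitrary (possibly finite) fields by passing to $\bar{k}$, so no separate field reduction is needed here. Hence $\Lc_0(I)\leq d_r<\infty$, and substituting into \Cref{lemma:lojasiewicz-bound} gives the inequality.

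There is no real obstacle; the only points to check are bookkeeping. First, $\Lc_0(I)$ is by definition the least $t$ with $\mf^t\subseteq\overline{I}$, so the containment at $t=d_r$ bounds it from above. Second, \Cref{lemma:lojasiewicz-bound} holds in both characteristics, so the corollary holds uniformly for $c$ as in \Cref{notation:singularity_threshold}. If one wanted to avoid \Cref{lemma:lojasiewicz-bound}, the positive characteristic case could be redone directly by combining \Cref{lem:colon} and \Cref{lemma:nu-I-H} with $\mf^{p^e}\subseteq \overline{I^{\lfloor p^e/d_r\rfloor}}$, but this is unnecessary.
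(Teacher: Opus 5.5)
Your proposal is correct and is essentially the paper's own proof. \Cref{lemma:maxpower-subset} gives $\mf^{d_r}\subseteq\overline{I}$, so $\Lc_0(I)\leq d_r$, and \Cref{lemma:lojasiewicz-bound} then yields $c(I)-c(I|_H)\geq 1/\Lc_0(I)\geq 1/d_r$.
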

\begin{proof}
    By \Cref{lemma:maxpower-subset}, we have $\mf^{d_r}\subseteq \bar{I}$, so $\Lc_0(I) \leq d_r$. The result follows from \Cref{lemma:lojasiewicz-bound}.
\end{proof}
\begin{lemma}\label{lemma:two-degree-formula}
    Assume the setup of \Cref{ass:mod-setup}. Suppose $r=2$. Then we have $c(I) = \frac{n}{d_2} + c(I_1)\frac{d_2-d_1}{d_2}$.
    In particular, for any $0\leq s\leq n$, we have $c(I) = \frac{s}{d_1} + \frac{n-s}{d_2}$ if and only if $c(I_1) = \frac{s}{d_1}$.
\end{lemma}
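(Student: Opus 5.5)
The plan is to replace $I$ by the simpler ideal $J := I_1 + \mf^{d_2}$ and compute $\nu_J^{\mf}(p^e)$ exactly in terms of $\nu_{I_1}^{\mf}(p^e)$. Since $I$ is $\mf$-primary and generated in degrees $\le d_2$, \Cref{lemma:maxpower-subset} gives $\mf^{d_2}\subseteq \bar I$. Also $I\subseteq J$, because $I_2\subseteq \mf^{d_2}$. Hence $\bar I=\bar J$, and by \Cref{prop:basic-properties-of-c}(iii) we have $c(I)=c(J)$. The last sentence of the lemma is pure algebra once the formula is known: $\frac{n}{d_2}+c(I_1)\frac{d_2-d_1}{d_2} = \frac{s}{d_1}+\frac{n-s}{d_2}$ rearranges to $c(I_1)\frac{d_2-d_1}{d_2}=s\frac{d_2-d_1}{d_1d_2}$, and $d_1<d_2$. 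So everything reduces to the displayed formula.

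\emph{Positive characteristic.} Write $q=p^e$. We have $J^N=\sum_{a+b=N} I_1^a\mf^{bd_2}$, so $J^N\not\subseteq \mf^{[q]}$ if and only if $I_1^a\mf^{bd_2}\not\subseteq\mf^{[q]}$ for some $a+b=N$. By \Cref{lem:colon}, this last condition is equivalent to
\[
I_1^a\not\subseteq \mf^{[q]}+\mf^{n(q-1)+1-bd_2},
\]
with the convention that the right side is $R$ when the exponent is $\le 0$.

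The key observation is that $I_1^a$ is generated by forms of degree $ad_1$, and both $\mf^{[q]}$ and the power of $\mf$ are monomial and homogeneous. Therefore the non-containment holds exactly when two conditions hold:
\begin{enumerate}[(a)]
\item $I_1^a\not\subseteq\mf^{[q]}$, i.e.\ $a\le \nu_{I_1}(q)$;
\item $ad_1+bd_2\le n(q-1)$, i.e.\ $ad_1<n(q-1)+1-bd_2$.
\end{enumerate}
For the justification: a form of degree $ad_1$ lies in $\mf^{[q]}+\mf^t$ iff it lies in $\mf^{[q]}$ or $ad_1\ge t$, by comparing degree-$ad_1$ components. So
\[
\nu_J(q)=\max\{a+b : a\le \nu_{I_1}(q),\ ad_1+bd_2\le n(q-1)\}.
\]
For fixed $a$ the best choice is $b=\floor{(n(q-1)-ad_1)/d_2}$. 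The resulting value $a+b$ is (up to bounded error) $a\frac{d_2-d_1}{d_2}+\frac{n(q-1)}{d_2}$, which is increasing in $a$. So the maximum is attained at $a=\nu_{I_1}(q)$. This choice is admissible, i.e.\ $b\ge 0$: a nonzero element of $I_1^a$ outside $\mf^{[q]}$ has a monomial dividing $(x_1\cdots x_n)^{q-1}$, so $\nu_{I_1}(q)d_1\le n(q-1)$. Dividing by $q$ and letting $e\to\infty$, the bounded errors vanish and we get $c(J)=\frac{n}{d_2}+c(I_1)\frac{d_2-d_1}{d_2}$.

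\emph{Characteristic zero.} I would spread out $I_1$ and $I_2$ over a finitely generated $\Z$-algebra $A$ as in the proof of \Cref{lemma:lojasiewicz-bound}. For a maximal ideal $\mu$ of $A$, the reductions $(I_1)_\mu$ and $(I_2)_\mu$ should remain generated in degrees $d_1,d_2$ with $I_\mu$ still $\mf$-primary on a dense open set of maximal ideals. The positive-characteristic formula then holds for each such reduction, and \Cref{prop:spread_out}, applied to both $I$ and $I_1$, gives the formula in the limit.

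I expect the main obstacle to be the exact bookkeeping in the colon step. One must handle the boundary case where $n(q-1)+1-bd_2\le 0$, and check that the homogeneity argument really splits the non-containment into the two independent conditions (a) and (b). A secondary point is ensuring that degrees and $\mf$-primariness are preserved under reduction mod $p$ for generic $\mu$.
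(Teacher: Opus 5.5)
Your proposal is correct and follows the paper's proof essentially step for step. Both arguments replace $I$ by $I_1+\mf^{d_2}$ using \Cref{lemma:maxpower-subset}, expand its powers, and apply \Cref{lem:colon}; both then use that $I_1^a$ is generated in degree $ad_1$ to split the non-containment, maximize at $a=\nu_{I_1}(p^e)$, and spread out to characteristic zero. Your handling of the floor in $b$ and of the admissibility bound $\nu_{I_1}(q)d_1\le n(q-1)$ is slightly more careful than the paper's, which simply drops the floor.
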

\begin{proof}
    We prove the claim first in positive characteristic. By \Cref{lemma:maxpower-subset}, we have $\mf^{d_2}\subseteq \overline{I}$, so ${I} \subseteq I_1+ \mf^{d_2}\subseteq \overline{I}$, so $c(I) = c(I_{1} + \mf^{d_2})$. Consequently, we have
    \[
    \nu_{\bar{I}}(p^e) = \max\left\{a+b: I_{1}^a\mf_\mu^{bd_n}\not\subseteq \mf^{[p^e]}\right\} = \max\{a+b: I_1^a \not\subseteq (\mf_p^{[p^e]}:\mf_p^{bd_n})\}.
    \]
    By \Cref{lem:colon}, we obtain
    \begin{equation}\label{eqn:application_of_colon_lemma}
    \nu_{\bar{I}}(p^e) = \max\{a+b: I_{1}^a \not\subseteq \mf^{[p^e]} + \mf^{np^e - n + 1 - bd_2} \}.
    \end{equation}
    The ideal $I_1^a$ is generated in degree $ad_1$, so $I_1^a\not\subseteq \mf^{[p^e]}+\mf^t$ if and only $I_1^a\not\subseteq \mf^{[p^e]}$ and $ad_1 < t$. With this insight, we refine \Cref{eqn:application_of_colon_lemma}:
    
   \begin{equation}\label{eqn:nu-in-terms-of-a}
     \nu_{I}(p^e) = \max_{0\leq a\leq \nu_{I_{1}}(p^e)} a + \frac{np^e-n - ad_1}{d_2}.
    \end{equation}
    The quantity being maximized in \Cref{eqn:nu-in-terms-of-a} is an increasing function of $a$, so the maximum occurs at $a = \nu_{I_1}(p^e)$ and
    \[
    \nu_{\bar{I}}(p^e) = \frac{np^e-n}{d_2} + \nu_{I_{1}}(p^e)\frac{d_2-d_1}{d_2}.
    \]
    Dividing by $p^e$ and letting $e\to \infty$, we obtain
    \[
    c(I) = \frac{n}{d_2} + c(I_{1})\frac{d_2-d_1}{d_2}.
    \]
    For characteristic zero, we can spread out to positive characteristic as in \Cref{cor:m-primary-restriction} and compute
    \[
c(I) = \lim_\mu c(I_\mu) = \lim_\mu \left(\frac{n}{d_2} + c(I_{1,\mu})\frac{d_2-d_1}{d_2}\right) = \frac{n}{d_2} + c(I_1)\frac{d_2-d_1}{d_2}.
\]
\end{proof}

\subsection{Induction Setup}
In this subsection, we begin the proof of \Cref{thm:min-char} in the case that $I$ is a complete intersection and $k = \bar{k}$.
\begin{assumption}\label{ass:mod-setup-2}
    We assume a setup similar to \Cref{ass:mod-setup}. Let $k$ be an algebraically closed field. Let $a_1,\dots, a_r, d_1,\dots, d_r\in \Z^+$. For $1\leq i\leq r$, let $\x_i$ denote the tuple of variables $x_{i,1},\dots, x_{i,a_i}$, and let $R = k[\x_1,\dots, \x_r]$. Let $n = a_1+\dots + a_r = \dim R$. Let $I\subseteq R$ be a complete intersection of the form $(f_{1,1},\dots, f_{r,a_r})$ such that $f_{i,j}$ is a $d_i$-form. For $1\leq j\leq r$, write $I_j = (f_{j,1},\dots, f_{j,a_j})$. Let $w_d$ denote the monomial valuation with $w_d(x_{i,j}) = 1/d_i$. We let $\Df$ denote the ideal $\overline{(\x_1^{d_1},\dots, \x_r^{d_r})}$, which coincides with the set of elements $z$ with $w_d(z)\geq 1$. 
\end{assumption}
\begin{assumption}
        Assume the setup of \Cref{ass:mod-setup-2}. We define the following condition on the ideal $I$:
    \begin{equation}\label{eqn:aligned}
            I_1 \text{ is extended from } k[\x_1] \text{ and } I\subseteq \Df + (\x_1)\tag{$\dagger$}
    \end{equation}
\end{assumption}
\begin{defn}
    For $r\in \Z^+$, we define the statements $A_r, B_r$.
\begin{equation}\tag{$A_r$}
    \begin{split}
         &\text{For all $I$ as in \Cref{ass:mod-setup-2}, if $E_n(I) = c(I)$ then}\\
         &\text{there exists $\gamma\in \GL_n(k)$such that $\gamma I\subseteq \Df$.}
    \end{split}
\end{equation}
\begin{equation}\tag{$B_r$}
    \begin{split}
         &\text{For all $I$ as in \Cref{ass:mod-setup-2}, if $E_n(I) = c(I)$ then }\\
         &\text{there exists $\gamma\in \GL_n(k)$ such that $\gamma I$ satisfies \Cref{eqn:aligned}.}
    \end{split}
\end{equation}
\end{defn}
The goal of this section is to prove $A_r$ for all $r$. We accomplish this via the following steps:
\begin{enumerate}[(1)]
    \item $A_1,A_2$ hold;
    \item For $r \geq 3$, $A_{r-1}$ combined with $A_2$ implies $B_r$;
    \item For $r\geq 3$, $B_r$ implies $A_r$.
\end{enumerate}
The proof of step (1) is short, but steps (2) and (3) will each have a dedicated subsection. 
\begin{proof}[Proof of $A_1$]
    If $I$ is an $\mf$-primary complete intersection generated by forms of degree $d$ for some $d$, by \Cref{lemma:maxpower-subset} we have $\overline{I} = \mf^d$. 
\end{proof}
\begin{remark}
   When $r = 1$, the above argument shows that $c(I) = E_n(I)$ is satisfied for all choices of $I$. Unfortunately, this means that the proof of our induction step $A_2,A_{r-1}\implies A_r$ can't be modified to show $A_1\implies A_2$. We instead expand the scope of our base case.
\end{remark}
To prove $A_2$, we introduce the notion of \textit{essential dimension}.
\begin{defn}[Essential Dimension]\label{defn:ess-dim}
        Let $J\subseteq R = k[x_1,\dots, x_d]$ be a homogeneous ideal. The essential dimension $\ef(J)$ is equal to the minimal $r$ for which there exist linear forms $\ell_1,\dots, \ell_r$ such that $J$ is extended from $I\subseteq k[\ell_1,\dots, \ell_r]$. 
    \end{defn}
    We have the following result.
   \begin{prop}[\cite{baily_equigenerated_2025}, Proposition 3.3]\label{prop:essential-Bertini}
       Let $k$ be an algebraically closed field, $R = k[x_0,\dots, x_n]$, and $J \subseteq R$ a homogeneous ideal. Set $r = \codim(J)$. Let $L = (\ell_{r+1},\dots, \ell_n)$, where the $\ell_i$ are chosen generally. For $r\leq t\leq n$, set $L_t = (\ell_{t+1}, \dots, \ell_n)$ and $J_t= \frac{J+L_t}{L_t}$. Then for all $r\leq t\leq n$, we have $\ef(J_t) = \max(t+1, \ef(J))$.
   \end{prop}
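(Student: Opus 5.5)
The plan is to compare $J_t$ with $J$ through the linear-algebraic description of essential dimension and then show that a general linear section cannot lose more than it must. First I recast \Cref{defn:ess-dim} intrinsically. Let $V = R_1$ be the space of linear forms. For a homogeneous $J$, let $W_J \subseteq V^*$ be the space of constant-coefficient derivations $\partial$ with $\partial(J)\subseteq J$; equivalently, the largest subspace $U\subseteq V^*$ such that $J$ is stable under translation by $U$, a condition we may test on a finite generating set of $J$. Then $J$ is extended from $k[\ell_1,\dots,\ell_s]$ exactly when $\mathrm{span}(\ell_i)\supseteq W_J^{\perp}$, so $\ef(J)=\dim W_J^\perp$. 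In characteristic $p$ one must use divided-power (Hasse) derivations or translation invariance instead of ordinary derivations; I would phrase everything via translation-invariance to stay characteristic-free. The same recipe computes $\ef(J_t)$ inside $R/L_t$, whose linear forms are $V/L_t$, a space of dimension $t+1$.

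The first step is the easy comparison. Since $R/L_t$ has $t+1$ variables, $\ef(J_t)$ is at most $t+1$. Writing $J$ as extended from $k[\ell_1,\dots,\ell_{\ef(J)}]$ and reducing modulo $L_t$ shows that $J_t$ is extended from the images of these $\ell_i$. Together these give one side of the comparison between $\ef(J_t)$ and the pair $t+1$, $\ef(J)$.

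The second step, which I expect to be the main obstacle, is the reverse inequality for general $\ell_{t+1},\dots,\ell_n$. The aim is to show that $W_{J_t}$ is no larger than forced, namely $W_{J_t}^\perp = (W_J^\perp + L_t)/L_t$, so that $\ef(J_t)$ is determined by the dimension count matching the statement. My approach is to let $E=W_J^\perp$ and write $J=J'R$ with $J'\subseteq k[E]$ having no translation symmetry. Since $L_t$ is general, it meets $E$ in the expected dimension. Suppose $J_t$ had an extra translation symmetry $u$. Lifting $u$ and using that $J$ is extended from $k[E]$, I would derive a symmetry of $J'$ restricted to a general linear section of $\Spec k[E]$ of codimension $\dim(E\cap L_t)$. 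I would then induct one hyperplane at a time, which reduces to the case $t=n-1$ with a single general $\ell$. In that case the bad hyperplanes, those for which a new invariance direction appears, form a proper closed subset of $\mathbb{P}(V)$. This is the Bertini-type point: I would show that it is closed by writing invariance as a rank condition on a matrix of coefficients depending polynomially on $\ell$. I would show that it is proper using $t\ge r=\codim(J)$, which guarantees that the section is still nontrivial; the hypothesis that $k$ is algebraically closed is used here to pick $\ell$ outside this closed set.

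Finally, I would assemble the two inequalities and iterate over $t$ from $n$ down to $r$, using that a general flag restricts to general sections at each stage. This yields the formula for $\ef(J_t)$ in the form stated in \Cref{prop:essential-Bertini}.
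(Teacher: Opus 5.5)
The paper gives no proof of this proposition; it imports it from \cite{baily_equigenerated_2025}. So I am judging your argument on its own terms.

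\textbf{What works, and the typo in the statement.} Your preliminary steps are sound:
\begin{itemize}
\item $\ef(J)$ equals the codimension of the translation stabilizer of $J$ (using honest translations, or Hasse derivatives, in characteristic $p$);
\item the bound $\ef(J_t)\le\min(t+1,\ef(J))$ is immediate;
\item one can reduce to an ideal $J'\subseteq k[E]$ with trivial stabilizer, cut by general hyperplanes one at a time.
\end{itemize}
Your own upper bound shows the displayed formula cannot be meant literally. With $\max$, the case $t=n$ (where $J_n=J$) would give $\ef(J)=n+1$ for every $J$. The intended statement is $\ef(J_t)=\min(t+1,\ef(J))$. That is what $\dim (W_J^\perp+L_t)/L_t$ equals for general $L_t$, and it is how the result is used in \Cref{lemma:lifting-I1}. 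You should say so explicitly rather than appeal to ``the dimension count matching the statement.''

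\textbf{The gap.} The one non-formal step is asserted, not proved. After your reductions, the entire content of the proposition is this: if $J'$ has trivial translation stabilizer and a general hyperplane section of $J'$ is not $\mf$-primary, then $J'+(\ell)$ still has trivial stabilizer for general $\ell$.

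Closedness of the bad locus is routine, but it gives nothing without a reason the locus is proper. The only reason you offer is that ``$t\ge r$ keeps the section nontrivial,'' and that points the wrong way. When the section is $\mf$-primary (exactly the case $t<r$), it has full essential dimension for free, because an ideal extended from fewer variables has smaller height. So $t\ge r$ is precisely the case where something must be shown.

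You need an actual mechanism. For instance, a nonzero $u$ stabilizing $J'+(\ell)$ must lie in $V(J')\cap V(\ell)$. You could then show that, for $u\notin\operatorname{Stab}(J')$, the hyperplanes through $u$ that $u$ stabilizes form a family too small for the incidence $\{(\ell,u)\}$ to dominate the space of hyperplanes.

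This must be done for the ideal, not its saturation. Take the double line $J=(x_0,x_1)^2+(x_0x_2-x_1x_3)\subseteq k[x_0,\dots,x_3]$:
\begin{itemize}
\item its stabilizer is $0$, so $\ef(J)=4$;
\item a general plane section is $(x_0,x_1)^2+(x_2(x_0-\gamma x_1))$, which has essential dimension $3$;
\item yet the saturation of that section, $(x_0-\gamma x_1,x_1^2)$, is a cone.
\end{itemize}
So a Bertini or dual-variety argument about the scheme $V(J)\cap H$ cannot close the gap. The non-saturated structure, and in characteristic $p$ the higher Hasse derivatives, must be controlled.
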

\begin{proof}[Proof of $A_2$]
    By \Cref{lemma:two-degree-formula}, if $c(I) = E_n(I)$ then $c(I_1) = a_1/d_1$. By \cite[Theorem 3.5]{de_fernex_bounds_2003} in characteristic zero and \cite[Theorem 3.17]{baily_equigenerated_2025} in positive characteristic, it follows that $\ef(I_1) = a_1$. By \cite[Lemma 3.18]{baily_equigenerated_2025}, there exists $\gamma\in\GL_n(k)$ such that $\overline{\gamma(I_1)} = (\x_1)^{d_1}$, hence $\gamma I\subseteq (\x_1)^{d_1} + \mf^{d_2}\subseteq \Df$.
\end{proof}

\subsection{Step (2): \texorpdfstring{For $r \geq 3$, $A_{r-1}$ combined with $A_2$ implies $B_r$}{For r>=3, Ar-1 combined with A2 implies Br}.}
\begin{lemma}\label{lemma:lifting-I1}
    Assume the setup of \Cref{ass:mod-setup-2} and suppose $r\geq 3, c(I) = E_n(I)$. If $A_{r-1}$ holds, there exists $\gamma\in \GL_n(k)$ such that $\gamma \overline{I_1} = (\x_1)^{d_1}$.
\end{lemma}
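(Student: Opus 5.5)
The plan is to reduce to an ideal with $r-1$ distinct degrees, apply $A_{r-1}$, and read off the shape of $I_1$. Concretely, I would merge the top two degree blocks. Let $f'_{r,j} := h_{j}^{d_r-d_{r-1}} f_{r-1,j}$ for general linear forms $h_j$, and consider
\[
I' := I_1+\dots+I_{r-2} + (h_1^{d_r-d_{r-1}}f_{r-1,1},\dots) + I_r .
\]
This is not quite right, since it changes $I$. A cleaner choice is to lower the top degree instead: set $J := I_1 + \dots + I_{r-1} + (g_{1},\dots, g_{a_r})$, where the $g_j$ are general forms of degree $d_{r-1}$. Then $J$ is a complete intersection with $r-1$ distinct degrees $d_1<\dots<d_{r-1}$, where block $r-1$ has $a_{r-1}+a_r$ generators. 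By \Cref{lemma:mm-hyperplane-section}(ii),
\[
E_n(J) = \sum_{i\le r-1} \frac{a_i}{d_i} + \frac{a_r}{d_{r-1}}.
\]
The goal is to prove $c(J)=E_n(J)$, apply $A_{r-1}$ to obtain $\gamma J \subseteq \overline{(\x_1^{d_1},\dots)}$, and conclude that $\gamma I_1 \subseteq (\x_1)^{d_1}+\mf^{d_1+1}$, hence $\gamma I_1\subseteq (\x_1)^{d_1}$ because $I_1$ is generated in degree $d_1$. Since $\codim I_1=a_1$ and $I_1$ is generated by $a_1$ forms of degree $d_1$ in $k[\x_1]$ up to degree-$d_1$ terms, \Cref{lemma:maxpower-subset} applied in $k[\x_1]$ then gives $\overline{\gamma I_1} = (\x_1)^{d_1}$.

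The key step is transferring the equality $c(I)=E_n(I)$ to $c(J)=E_n(J)$. I would use the two-block formula in \Cref{lemma:two-degree-formula} as a model. In characteristic $p$, write $I \subseteq \overline{K + \mf^{d_r}}$ with $K=I_1+\dots+I_{r-1}$, and compute $\nu_I(p^e)$ through \Cref{lem:colon}. This gives
\[
c(I)\le \frac{n}{d_r}+\left(1-\frac{1}{d_r}\cdot(\text{degree weighting})\right)\cdots,
\]
or more usefully an inequality of the form $c(I)\le \frac{a_r}{d_r}+c(K|_{L})$ after cutting by $a_r$ general hyperplanes. Iterating \Cref{cor:m-primary-restriction}-type estimates gives this. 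Because $E_n(I)=\frac{a_r}{d_r}+E_{n-a_r}(K|_L)$ and $E_{n-a_r}(K|_L)\le c(K|_L)$ by \Cref{thm:bound}, equality for $I$ forces $c(K|_L)=E_{n-a_r}(K|_L)$. Here $K|_L$ is a complete intersection in $n-a_r$ variables with $r-1$ degrees. I would then apply $A_{r-1}$ directly to $K|_L$ rather than to $J$, which avoids $J$ altogether: there is a coordinate change making $\overline{I_1|_L}=(\x_1)^{d_1}$ in the restricted ring.

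The main obstacle is lifting the conclusion from $L$ back to $R$. From $A_{r-1}$ on $K|_L$ we get $\ef(I_1|_L)=a_1$, with $I_1|_L$ equal to the $d_1$-th power of an $a_1$-dimensional space of linear forms up to integral closure. To lift, I would invoke \Cref{prop:essential-Bertini}: since $L$ is general and $n-a_r \ge a_1+1$ (because $r\ge3$), it gives $\ef(I_1|_L)=\max(n-a_r,\ef(I_1))$ unless $\ef(I_1)$ is small. Comparing this with $a_1$ forces $\ef(I_1)=a_1$. The inequality $n-a_r>a_1$ is exactly where $r\ge 3$ is needed, and the indexing in \Cref{prop:essential-Bertini}, where the cut is taken relative to $\codim(I_1)=a_1$, needs care. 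Once $\ef(I_1)=a_1$, \cite[Lemma 3.18]{baily_equigenerated_2025} (as in the proof of $A_2$), together with $c(I_1)=a_1/d_1$ obtained from the restricted equality via \Cref{prop:basic-properties-of-c}(iv), produces $\gamma$ with $\gamma\overline{I_1}=(\x_1)^{d_1}$.
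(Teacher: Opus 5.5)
Your final route is sound: cut only the top block by $a_r$ general linear forms, apply $A_{r-1}$ to $K|_L$ with $K=I_1+\dots+I_{r-1}$, and lift with \Cref{prop:essential-Bertini}. However, the inequality at its crux is written backwards, and a few supporting claims need correcting.

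\textbf{The crux.} Iterating \Cref{cor:m-primary-restriction} gives $c(I)\geq c(I|_L)+\frac{a_r}{d_r}$, not $\leq$. Moreover $c(I|_L)=c(K|_L)$: for general $L$, $K|_L$ is $\mf$-primary (because $K$ is a complete intersection of height $n-a_r$), so $I_r|_L\subseteq \mf^{d_r}\subseteq\overline{K|_L}$ by \Cref{lemma:maxpower-subset}. Only with $\geq$ does the chain $E_n(I)=c(I)\geq \frac{a_r}{d_r}+c(K|_L)\geq \frac{a_r}{d_r}+E_{n-a_r}(K|_L)=E_n(I)$ force $c(K|_L)=E_{n-a_r}(K|_L)$. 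As you wrote it, you would only have two lower bounds for $c(K|_L)$ and no conclusion. The opening constructions should simply be deleted: the ideal $I'$, the lowered-degree $J$ (for which $c(J)=E_n(J)$ is never justified), and the garbled display.

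\textbf{Three smaller fixes.}
\begin{enumerate}[(1)]
\item $A_{r-1}$ only gives $\gamma' K|_L\subseteq \Df'$, where $\Df'$ is the analogue of $\Df$ in $R/L$ with blocks $\y_1,\dots,\y_{r-1}$. To extract $I_1|_L$ you need the degree observation from your first paragraph. A degree-$d_1$ monomial $y$ with $w_d(y)\geq 1$ lies in $k[\y_1]$ because $d_1<d_2<\cdots$. Hence $\gamma' I_1|_L\subseteq (\y_1)^{d_1}$, so $\ef(I_1|_L)\leq a_1=\codim(I_1|_L)$, which forces equality.
\item \Cref{prop:essential-Bertini} must be read with $\min$ in place of $\max$, since the essential dimension of $J_t$ cannot exceed its number $t+1$ of variables. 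Then $a_1=\ef(I_1|_L)=\min(n-a_r,\ef(I_1))$ together with $a_1<n-a_r$ gives $\ef(I_1)=a_1$, with no ``unless'' clause. The hypothesis $t\geq \codim(I_1)$ in that proposition is exactly $n-a_r\geq a_1+1$, i.e.\ $r\geq 3$.
\item \Cref{prop:basic-properties-of-c}(iv) yields only $c(I_1)\geq c(I_1|_L)=a_1/d_1$, not equality. Nothing of the sort is needed, though. \cite[Lemma 3.18]{baily_equigenerated_2025} is applied with $\ef(I_1)=a_1$ as its only input. Alternatively, after a linear change of coordinates the $f_{1,j}$ lie in $k[\x_1]$, and \Cref{lemma:rees-subalgebra} gives $\overline{I_1}=(\x_1)^{d_1}$ directly.
\end{enumerate}

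\textbf{Comparison with the paper.} The paper instead cuts by $a_3+\dots+a_r$ general linear forms, landing on the two-degree ideal $(I_1+I_2+L)/L$. It then reruns the proof of $A_2$ there, using \Cref{lemma:two-degree-formula} together with the threshold characterization of \cite[Theorem 3.5]{de_fernex_bounds_2003} and \cite[Theorem 3.17]{baily_equigenerated_2025}, to get $\ef((I_1+L)/L)=a_1$. From there it lifts exactly as you do.

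So the paper never actually invokes $A_{r-1}$, only the unconditional base case; this shows the hypothesis is superfluous. Your version uses the induction hypothesis as the statement intends, and replaces the threshold characterization at this step with an elementary degree count. Both routes are valid.
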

\begin{proof}
    Note that $E_n(I) = a_1/d_1+\dots +a_r/d_r$ by \Cref{lemma:mm-hyperplane-section}. Let $L$ be an ideal of $R$ generated by $a_3+\dots + a_d$ general linear forms. Since $I$ is a complete intersection, $I_1 + I_2 + L$ is $\frac{\mf}{L}$-primary, so by \Cref{lemma:same-closure-by-maxpower} we have $c(R/L, \frac{I+L}{L}) = c(R/L, \frac{I_1+I_2+L}{L})$. Consequently, by repeated application of \Cref{cor:m-primary-restriction}, we have
    \begin{equation}\label{eqn:slicebound}
        c(I) \geq c\left(R/L, \frac{I_1+I_2+L}{L}\right) + \frac{a_3}{d_3} + \dots + \frac{a_r}{d_r}.
    \end{equation}
   Assuming $c(I)=E_n(I)$, we have 
    \[
    \frac{a_1}{d_1} + \frac{a_2}{d_2}\leq c\left(R/L, \frac{I_1+I_2+L}{L}\right)\leq \frac{a_1}{d_1} + \frac{a_2}{d_2},\] 
    where the left-hand side is by \Cref{thm:bound} and the right-hand side is by \Cref{eqn:slicebound}. Both inequalities are therefore equalities, so by the proof of $A_2$, we have
     \(\ef\left(\frac{I_1+L}{L}\right) = a_1\).
      By \Cref{prop:essential-Bertini}, it follows that $\ef(I_1) = a_1$. The result then follows from \cite[Lemma 3.18]{baily_equigenerated_2025}.
\end{proof}

\begin{lemma}\label{lemma:change-of-vars}
    Assume the setup of \Cref{ass:mod-setup-2}. Suppose $c(I) = E_n(I)$. Then there exists $\gamma\in \GL_n(k)$ such that $\gamma I$ satisfies \Cref{eqn:aligned}.
\end{lemma}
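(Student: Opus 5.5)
The plan is to work in the setting of Step (2), with $r\geq 3$ and $A_{r-1}$, $A_2$ available. First I normalize $I_1$ using \Cref{lemma:lifting-I1}. Then I restrict to the hyperplane section $x_1=0$ (more precisely, I pass to $R/(\x_1)$) and apply $A_{r-1}$ there. The main work is transporting the equality $c=E$ to that quotient.

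\textbf{Step 1: normalizing $I_1$.} By \Cref{lemma:lifting-I1} there is $\gamma\in\GL_n(k)$ with $\gamma\overline{I_1}=(\x_1)^{d_1}$; replace $I$ by $\gamma I$. Each generator $f_{1,j}$ is a $d_1$-form lying in $(\x_1)^{d_1}$. The degree-$d_1$ part of $(\x_1)^{d_1}$ is spanned by monomials in $\x_1$ alone, so $f_{1,j}\in k[\x_1]$. Hence $I_1$ is extended from $k[\x_1]$, and $(f_{1,1},\dots,f_{1,a_1})$ is primary to $(\x_1)$ in $k[\x_1]$. Moreover \Cref{prop:monomial-threshold} gives $c(I_1)=c((\x_1)^{d_1})=a_1/d_1$.

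\textbf{Step 2: passing to the quotient.} Set $R'=R/(\x_1)\cong k[\x_2,\dots,\x_r]$. Let $g_{i,j}$ be the image of $f_{i,j}$ for $i\geq 2$, and set $I'=(g_{i,j})\subseteq R'$. Since $\sqrt{I+(\x_1)}=\sqrt{(\x_1)+I'}$ is the maximal ideal, $I'$ is an $\mf'$-primary complete intersection of the shape in \Cref{ass:mod-setup-2}, with $r-1$ degree blocks. By \Cref{lemma:mm-hyperplane-section}, $E(I')=\sum_{i\geq 2}a_i/d_i$, and \Cref{thm:bound} gives $c(I')\geq E(I')$. It therefore suffices to prove
\[
c(I_1)+c(I'R)\leq c(I),
\]
since the left side equals $a_1/d_1+c(I')$ and the right side equals $E_n(I)=\sum_i a_i/d_i$.

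\textbf{Step 3: the key inequality, which is the main obstacle.} I would prove it by degeneration.
\begin{itemize}
\item Take a weight (or monomial) order that gives the variables $\x_1$ very large weight and selects the terms of lowest $\x_1$-degree, i.e.\ the limit of $I$ under $\x_1\mapsto s\x_1$ as $s\to0$. Realize it as a monomial order compatible with \Cref{prop:semicontinuity}, which gives $c(\ini(I))\leq c(I)$.
\item The initial ideal contains the forms $f_{1,j}$, which are homogeneous in $\x_1$, and the $\x_1$-free parts $g_{i,j}$. Hence $\ini(I)\supseteq I_1+I'R$, a sum of ideals in disjoint sets of variables. Some care is needed to phrase this with a genuine monomial order, or to refine the weight order by grlex.
\item Superadditivity over disjoint variables then gives $c(I_1+I'R)\geq c(I_1)+c(I'R)$. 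In characteristic $p$: if $u\in I_1^{a}\setminus (\x_1)^{[p^e]}$ and $v\in I'^{b}\setminus(\x_2,\dots,\x_r)^{[p^e]}$, then $uv\notin\mf^{[p^e]}$ because the variables are disjoint, so $\nu_{I_1+I'}(p^e)\geq\nu_{I_1}(p^e)+\nu_{I'}(p^e)$. In characteristic zero, either reduce mod $p$ via \Cref{prop:spread_out} or use the Thom--Sebastiani-type additivity of lct.
\end{itemize}
Combining with monotonicity (\Cref{prop:basic-properties-of-c} (i)) yields the inequality, and hence $c(I')=E(I')$.

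\textbf{Step 4: applying $A_{r-1}$.} By $A_{r-1}$ there is $\gamma'\in\GL(R'_1)$ with $\gamma' I'\subseteq\Df'=\overline{(\x_2^{d_2},\dots,\x_r^{d_r})}$. Extend $\gamma'$ to $\gamma\in\GL_n(k)$ by letting it fix $\x_1$. Then $\gamma I_1=I_1$ is still extended from $k[\x_1]$. Each $\gamma f_{i,j}$ with $i\geq 2$ is congruent modulo $(\x_1)$ to an element of $\Df'R\subseteq\Df$. Therefore $\gamma I\subseteq \Df+(\x_1)$, which is condition \Cref{eqn:aligned}.
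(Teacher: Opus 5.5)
Your proposal is correct and follows essentially the same route as the paper. The paper also normalizes $I_1$ via \Cref{lemma:lifting-I1}, then degenerates with the weight $w(x_{1,i})=0$, $w(x_{i,j})=1$ for $i\geq 2$; your images $g_{i,j}$ in $R/(\x_1)$ are exactly these initial forms. It then uses \Cref{prop:semicontinuity} and additivity over disjoint variables to force $c(J)=E_{n-a_1}(J)$ for $J=(g_{2,1},\dots,g_{r,a_r})$, applies $A_{r-1}$, and extends $\gamma$ block-diagonally.

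The only real difference is that the paper quotes Tao's full Thom--Sebastiani equality for $c(I_1+J)$ and applies \Cref{thm:bound} to $I_1+J$. You need only the elementary superadditive inequality together with \Cref{thm:bound} for $J$ itself. Note also that you must stay with the weight (partial) order, which \Cref{prop:semicontinuity} covers: refining it by grlex would make the initial ideal monomial and destroy the containment $I_1+JR\subseteq \ini(I)$.
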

\begin{proof}
    By \Cref{lemma:lifting-I1}, we may assume $I_1$ is extended from $k[\x_1]$.
    Let $\succ$ denote the monomial partial order induced by the monomial valuation $w(x_{1,i}) = 0$ and $w(x_{i,j})=1$ for $i\geq 2$. 
      For $2\leq i\leq r, 1\leq j\leq a_i$, let $g_{i,j}:= \ini_\succ(f_{i,j})$. Since $I$ is a complete intersection, we have $f_{i,j}\notin \sqrt{I_1} = (\x_1)$, hence $g_{i,j}\notin (\x_1)$ and moreover $f_{i,j} - g_{i,j}\in (\x_1)$. Observe that
    \begin{equation}\label{eqn:separation-from-x_1}
    \ini_\succ(I) \supseteq I_1 + \ini_\succ(I_2 + \dots + I_r)\supseteq I_1 + (g_{2,1},\dots, g_{r,a_r}).
    \end{equation}
    Let $I'$ denote the right-hand side of \Cref{eqn:separation-from-x_1}. Because $g_{i,j}$ and $f_{i,j}$ have the same image modulo $(\x_1) = \sqrt{I_1}$, the ideal $I'$ is a complete intersection. In particular, $I'$ is a complete intersection of type $(\underbrace{d_1,\dots, d_1}_{a_1},\dots, \underbrace{d_r,\dots, d_r}_{a_r})$. By \Cref{lemma:mm-hyperplane-section} and \Cref{prop:semicontinuity}, we have    
    \begin{equation}\label{eqn:precursor-to-thom-sebastiani}
    E_n(I) = E_n(I') \leq c(I') \leq c(\ini_\succ(I))\leq c(I) = E_n(I).
\end{equation}
    As $\overline{I_1} = (\x_1)^{d_1}$, we have $c(I_1) = a_1/d_1$. Since $I_1$ and $J:= (g_{2,1},\dots, g_{r,a_r})$ are defined in terms of disjoint sets of variables, we have by \cite[Theorem 2.4 (1)]{tao_bernstein-sato_2024} that 
    \begin{equation}\label{eqn:thom-sebastiani}
        c(R, I') = c(k[\x_1], I_1) + c(k[\x_2,\dots, \x_r], J)= \frac{a_1}{d_1}+c(k[\x_2,\dots, \x_r], J).
    \end{equation}
    
    It follows from \Cref{eqn:precursor-to-thom-sebastiani,eqn:thom-sebastiani} that
    $J$, which is a complete intersection in $k[\x_2,\dots, \x_r]$, satisfies $E_{a_2+\dots+a_r}(J) = c(J)$. By $A_{r-1}$, there exists $\gamma\in \GL_{n-a_1}(k)$ 
 such that $\gamma J \subseteq \overline{(\x_2^{d_2},\dots, \x_r^{d_r})}$. 
 
 We define $\gamma':= \begin{bmatrix}
    \id_{a_1} & 0\\
    0 & \gamma
 \end{bmatrix}$ and we claim that $\gamma'I$ satisfies \Cref{eqn:aligned}. By construction, $\gamma'I'$ satisfies \Cref{eqn:aligned}. Since $g_{i,j} - f_{i,j}\in (\x_1)$ for all $2\leq i\leq r, 1\leq j\leq a_i$, we have \[\gamma'(I_2+\dots+I_r) \subseteq \gamma'(g_{1,1},\dots, g_{r,a_r}) + (\x_1),\] which proves that $\gamma'I$ also satisfies \Cref{eqn:aligned}. 
\end{proof}
\begin{remark}
	In the proof of \Cref{lemma:change-of-vars}, we used the assumption $A_{r-1}$ to show that $I|_H \subseteq \Df|_H$. This condition is necessary, but not sufficient, to have $I\subseteq \Df$ --- even if $I$ also satisfies \Cref{eqn:aligned}. For example, consider the ideal $I = (x^2, y^3 + xyz, z^7)$ and $\Df = \overline{(x^2, y^3, z^7)}$.
\end{remark}
\subsection{Step (3): \texorpdfstring{$B_r$ implies $A_r$}{Br implies Ar}.}
\subsubsection{Proof Sketch}
By $B_r$, any ideal $I$ as in \Cref{ass:mod-setup-2} with $c(I) = E_n(I)$ can be written in the form
\[
I = (f_{1,1},\dots, f_{1,a_1}) + ((f'_{2,1}+f''_{2,1}) + \dots + (f'_{r,a_r} + f''_{r,a_r}))
\]
where $f'_{i,j}\in k[\x_2,\dots, \x_r]$ and $f''_{i,j}\in (\x_1)$, and $f_{1,1},\dots, f_{1,a_1},f'_{2,1},\dots,f'_{r,a_r}$ are all contained in $\Df$ and form a regular sequence. If $f''_{i,j}\in \Df$ for all $i,j$, then $I\subseteq \Df$ and \Cref{prop:dp-rees} implies that $\overline{I} = \Df$ as desired.

To handle the case that $I\not\subseteq \Df$, we suppose that $I$ is as close to $\Df$ as possible without having $I\subseteq \Df$. Specifically, we consider an ideal $I$ with a unique index $2\leq m\leq r-1$ such that $f''_{i,j} = 0$ for all $i\neq m$ and $f'_{i,j} = x_{i,j}^{d_i}$. We also suppose that for this fixed index $m$, the $f''_{m,j}$ are not all zero and no monomial summand of $f''_{m,j}$ is contained in $\Df$. Because of the (relatively) uncomplicated structure of $I$, we are able to  use positive-characteristic arguments to directly show $c(I) > E_n(I)$ (\Cref{lemma:simple-sufficient-strictness-condition}). 

In the general case (\Cref{lem:application-of-weight-order}), we prove that any complete intersection satisfying \Cref{eqn:aligned} but with $I\not\subseteq \Df$ can be degenerated to an ideal $I'$ which is as described in the previous paragraph: close to $\Df$ while satisfying $I'\not\subseteq \Df$. By \Cref{prop:semicontinuity}, we have $c(I) \geq c(I') > E_n(I') = E_n(I)$.
\subsubsection{Constructing the  Degeneration Order}
\begin{notation}
    Let $\bb_1,\dots, \bb_n$ denote the standard basis vectors for $\Z^n$. For $\ub = (u_1,\dots, u_n)\in (\Z^+)^n$, set $|\ub| = u_1+\dots + u_n$. For $\db = (d_1,\dots, d_n)\in \Z^n$, we also define $\nu_\db: \Z^n\to \Q$ by $\nu_\db(\ub) = \frac{u_1}{d_1} + \dots + \frac{u_n}{d_n}$. 
\end{notation}
\begin{lemma}\label{lemma:final-weight-partial-order}
Let $0<d_1<\dots<d_n\in (\Z^+)^n$ and set $\db = (d_1,\dots, d_n)$. Let $S\subseteq (\Z_{\geq 0})^n$. Write $S = S_2 \sqcup \dots \sqcup S_{n-1}$. Assume that $S$ satisfies the following:
\begin{enumerate}
    \item For all $\ub\in S_i$, we have $|\ub| = d_i$.
    \item For all $\ub\in S$, either $u_1 \geq 1$ or $\nu_\db(\ub)\geq 1$.
    \item For all $2\leq i\leq n-1$, we have $d_i\bb_i\notin S_i$.
    \item There exists $\ub\in S$ with $\nu_\db(\ub) < 1$ (and therefore $u_1\geq 1$). 
\end{enumerate}
Then there exists a linear map $\lambda: \Z^n\to \Z$ and a unique index $2\leq m\leq n-1$ such that
\begin{enumerate}[label=\textbf{(A.\roman*)}]
    \item For all $2\leq i\leq n-1, i\neq m, \ub\in S_i$ we have $\lambda(\ub) < \lambda(d_m\bb_m)$.
    \item We have
    \begin{equation}\label{eq:lambda-max}
    \max_{\ub\in S_m} \lambda(\ub) = \lambda(d_m\bb_m).
    \end{equation}
    \item For any $\ub$ achieving the maximum in \Cref{eq:lambda-max} we have $\nu_\db(\ub) < 1$ and $u_1\geq 1$.
\end{enumerate}
\end{lemma}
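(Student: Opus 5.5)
The plan is to build $\lambda$ as a small perturbation of $-\nu_\db$, since $-\nu_\db$ already takes the same value $-1$ on every point $d_i\bb_i$. Fix a weight $\tau<0$ on the first coordinate and set $\lambda_0(\ub)=-\nu_\db(\ub)+\tau u_1$. Then for $\ub\in S_i$ we get
\[
\lambda_0(\ub)-\lambda_0(d_m\bb_m)=1-\nu_\db(\ub)+\tau u_1 ,
\]
and this quantity does not depend on $m$. Choose
\[
\tau^*=\min_{\ub\in S,\ u_1\geq 1}\frac{\nu_\db(\ub)-1}{u_1}.
\]
Hypothesis (4) shows that $\tau^*<0$, and the minimum exists because $S$ is finite by hypothesis (1). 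With $\tau=\tau^*$, the difference above is $\leq 0$ for every $\ub\in S$, by the choice of $\tau^*$ when $u_1\geq 1$ and by hypothesis (2) when $u_1=0$. Equality holds in only two ways: on the set $M$ of minimizers, all of which satisfy $u_1\geq1$ and $\nu_\db<1$, or on points with $u_1=0$ and $\nu_\db(\ub)=1$. So property (A.iii) already holds for the maximizers coming from $M$, and what remains is to break two kinds of ties.

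First I kill the ties with $u_1=0$ and $\nu_\db(\ub)=1$. Such a $\ub\in S_i$ has $\ub\neq d_i\bb_i$ by hypothesis (3). Fix the index $m$ first: take the largest $i$ such that $M\cap S_i\neq\emptyset$. Choose a linear $\mu$ with $\mu(\bb_1)=0$ such that the values $\mu(d_j\bb_j)$ for $j\geq2$ have a unique maximum at $j=m$. Put $\mu'(\ub)=\mu(\ub)-\mu(d_m\bb_m)\,\nu_\db(\ub)$, which vanishes at $d_m\bb_m$. If $u_1=0$ and $\nu_\db(\ub)=1$, then $\ub$ is the convex combination $\sum_j (u_j/d_j)\,d_j\bb_j$, so $\mu(\ub)\leq\mu(d_m\bb_m)$, with equality only when $\ub=d_m\bb_m$. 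Since $|\ub|=d_i$, that equality would force $i=m$, and then $\ub=d_m\bb_m\in S_m$ is excluded by hypothesis (3). Hence $\mu'(\ub)<0$ on all these tie points.

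Second, I handle the ties inside $M$, which may lie in several $S_i$. Add a further linear tie-breaker $\kappa(|\ub|-d_m\nu_\db(\ub))$, which vanishes at $d_m\bb_m$. On $M$, with $\kappa>0$ small, it favours larger $|\ub|$, so only the elements of $M\cap S_m$ survive as maximizers; this works because $m$ was taken maximal. Set
\[
\lambda=\lambda_0+\varepsilon\mu'+\varepsilon\kappa\bigl(|\ub|-d_m\nu_\db(\ub)\bigr)
\]
with $0<\kappa\ll1$ and $0<\varepsilon\ll1$, all rational. If the perturbation makes some elements of $M\cap S_m$ exceed $\lambda(d_m\bb_m)$, re-adjust $\tau$ slightly (lower it) to the new critical value. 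That new critical value can only be attained on a subset of $M\cap S_m$.

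Because $S$ is finite, every strict inequality established above survives small enough $\varepsilon,\kappa$ and a small change of $\tau$. This gives (A.i) and (A.ii), and gives (A.iii) because all maximizers lie in $M\cap S_m$. Uniqueness of $m$ is automatic, since maximizers are attained only in $S_m$. Finally, clearing denominators gives an integer-valued $\lambda$, and scaling by a positive integer does not affect any of the conditions. I expect the main obstacle to be the tie-breaking bookkeeping: making sure the same $d_m\bb_m$ remains the reference point while the perturbations separate the $S_i$ from one another. The convexity argument with $\mu'$ is the crucial step there.
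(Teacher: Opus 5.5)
Your first step is exactly the paper's $\lambda_0$ (your $\tau^*$ is its $t_0$). Your analysis of the tie set $M\cup T$ is correct, and so is the convexity argument giving $\mu'<0$ on $T$. The gap is in the second tie-break.

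The term $\varepsilon\mu'$ does not vanish on $M$. Since $\kappa\ll 1$, it dominates $\varepsilon\kappa(|\ub|-d_m\nu_\db(\ub))$ there. So which points of $M$ end up on top is decided by $\mu'$, and you never control $\mu'$ on $M$. (After re-optimizing $\tau$, the relevant quantity is the perturbation divided by $u_1$.) Your claim that only $M\cap S_m$ survives is in fact false.

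Take $n=4$, $\db=(2,3,5,10)$, $S_2=\{(1,0,2,0)\}$, $S_3=\{(1,0,0,4)\}$. The hypotheses hold. Both points have $u_1=1$ and $\nu_\db=9/10$, so $\tau^*=-1/10$, $M=S$, $T=\emptyset$, and your rule gives $m=3$.
\begin{itemize}
\item Admissibility of $\mu$ forces $5\mu(\bb_3)>10\mu(\bb_4)$, so $\mu'(1,0,0,4)-\mu'(1,0,2,0)=2(2\mu(\bb_4)-\mu(\bb_3))<0$.
\item The $\kappa$-terms of the two points differ by $2$.
\item Hence for $\kappa<\mu(\bb_3)-2\mu(\bb_4)$, the point of $S_2$ gets strictly larger $\lambda$ than the only point of $S_3$.
\item Both points have $u_1=1$, so no value of $\tau$ changes this, and (A.i) and (A.ii) cannot both hold for $m=3$.
\end{itemize}
Concretely, with $\mu(\ub)=u_3$ and $\tau=\tau^*$, both points fall below $\lambda(d_3\bb_3)=-1$. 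Your one-sided ``lower $\tau$'' step does not cover this case. Raising $\tau$ until $(1,0,0,4)$ ties lifts $(1,0,2,0)$ above $-1$.

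You also cannot simply swap the two scales. The $\kappa$-term alone does pick out $M\cap S_m$ after re-optimizing $\tau$: on $M\cap S_i$ its ratio to $u_1$ is $(d_i-d_m)/u_1-d_m\tau^*$. But on $T\cap S_i$ with $i>m$ it equals $\kappa(d_i-d_m)>0$, which pushes those points above $d_m\bb_m$. The two tie-breakers pull against each other, so superposing them at one small scale does not work.

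The paper avoids this by refining lexicographically, so each new weight only has to act on the ties left by the previous ones:
\begin{itemize}
\item $\lambda_1=t_1u_1-\sum_{j\ge 2}d_n^ju_j$ separates the $u_1=0$ ties, which must carry mass above their own level. The extremal choice of $t_1$ keeps some point with $u_1\ge 1$ tied.
\item $\lambda_{k+1}=-t_{k+1}u_1+u_{i_k}$ then discards the lowest surviving level, one level at a time.
\item Finally, \Cref{lemma:weight-order-from-partial-order} packages $\lambda_0,\dots,\lambda_s$ into a single integer weight on the relevant finite set.
\end{itemize}
Re-choosing the $u_1$-coefficient as an extremal ratio at every stage is the correct form of your ``re-adjust $\tau$'', and it removes all smallness bookkeeping.

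One further point concerns what is actually being proved. The paper's proof establishes, and \Cref{lem:application-of-weight-order} uses (to get $\ini_\lambda(f_{i,j})=f'_{i,j}$ for $i\ne m$), (A.i) in the form $\lambda(\ub)<\lambda(d_i\bb_i)$ for $\ub\in S_i$. The $d_m$ in the displayed statement appears to be a typo for $d_i$. Your perturbations are normalized to vanish at $d_m\bb_m$ but not at $d_i\bb_i$, so even a repaired version would not give that comparison. For $\ub\in T\cap S_i$ with $i\ne m$, your $\lambda$ gives $\lambda(\ub)-\lambda(d_i\bb_i)=\varepsilon(\mu(\ub)-\mu(d_i\bb_i))$. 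With $\mu(\ub)=u_m$ this is positive whenever $u_m>0$.
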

To prove \Cref{lemma:final-weight-partial-order}, we construct a sequence of auxiliary linear maps $\lambda_0,\dots, \lambda_s: \Z^n\to \Q$.
\begin{lemma}\label{lemma:zeroth-weight-partial-order}
    Assume the setup of \Cref{lemma:final-weight-partial-order}. There exists a linear map $\lambda_0: \Z^n\to \Q$ such that:
    \begin{enumerate}[label=\textbf{(B.\roman*)}]
        \item For all $2\leq i\leq n-1, \ub\in S_i$ we have $\lambda_0(\ub) \leq \lambda_0(d_i\bb_i)$.
        \item For $\ub \in S_i$, if $\lambda_0(\ub) = \lambda_0(d_i\bb_i)$ then $\nu_\db(\ub) \leq 1$. 
        \item For $\ub\in S_i$, if $\lambda_0(\ub) = \lambda_0(d_i\bb_i)$ \textit{and} $u_1\geq 1$, then $\nu_\db(\ub) < 1$. 
        \item There exists an index $2\leq j\leq n-1$ and some $\ub\in S_j$ such that $u_1\geq 1$ and $\lambda_0(\ub) = d_j\lambda_0(\bb_j)$.
    \end{enumerate}
\end{lemma}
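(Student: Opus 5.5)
The plan is to take $\lambda_0$ from an explicit one-parameter family of linear functionals built from $\nu_\db$ and the first coordinate. For $t\in\Q_{\geq 0}$ define
\[
\lambda^{(t)}(\ub) = -\nu_\db(\ub) - t\,u_1,
\]
which is linear $\Z^n\to\Q$. For $\ub\in S_i$ we have $\lambda^{(t)}(d_i\bb_i) = -1$, since $i\geq 2$ gives $(d_i\bb_i)_1 = 0$. Hence
\[
\lambda^{(t)}(d_i\bb_i) - \lambda^{(t)}(\ub) = \nu_\db(\ub) - 1 + t\,u_1 .
\]
Each $S_i$ is finite, because its elements are lattice points in $\Z_{\geq 0}^n$ with $|\ub| = d_i$. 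So all relevant quantities are maxima over finitely many points.

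Choose $t$ as small as possible subject to (B.i). Points $\ub\in S$ with $u_1 = 0$ satisfy $\nu_\db(\ub)\geq 1$ by hypothesis (2), so they satisfy (B.i) for every $t\geq 0$. For points with $u_1\geq 1$, the inequality in (B.i) reads $t\geq (1-\nu_\db(\ub))/u_1$. Set
\[
t^* = \max_{\ub\in S,\ u_1\geq 1} \frac{1-\nu_\db(\ub)}{u_1}.
\]
By hypothesis (4), some $\ub\in S$ has $u_1\geq 1$ and $\nu_\db(\ub) < 1$, so $t^* > 0$ and $t^*\in\Q$. Put $\lambda_0 = \lambda^{(t^*)}$. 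Then (B.i) holds by the choice of $t^*$.

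The remaining properties come from analyzing the equality cases. Suppose $\lambda_0(\ub) = \lambda_0(d_i\bb_i)$, which means $\nu_\db(\ub) = 1 - t^*u_1$. If $u_1 = 0$, this gives $\nu_\db(\ub) = 1$. If $u_1\geq 1$, it gives $\nu_\db(\ub) = 1 - t^*u_1 < 1$ because $t^* > 0$. This proves (B.ii) and (B.iii) together. For (B.iv), take $\ub$ attaining the maximum defining $t^*$, and let $j$ be the index with $\ub\in S_j$. Then $u_1\geq 1$, and $\lambda_0(\ub) = \lambda_0(d_j\bb_j) = d_j\lambda_0(\bb_j)$ by linearity.

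The only point needing care is the sign bookkeeping. The vectors $d_i\bb_i$ all have $\nu_\db = 1$ and vanish in the first coordinate, since $i\geq 2$. This is what makes the comparison uniform across all $i$ and lets a single parameter $t$ work simultaneously. Hypothesis (3) is not needed for this lemma; it is presumably used later, when $\lambda_0$ is refined through $\lambda_1,\dots,\lambda_s$ to the integral $\lambda$ of \Cref{lemma:final-weight-partial-order}.
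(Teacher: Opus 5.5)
Your proposal is correct and is essentially the paper's proof: your $t^*$ is exactly $-t_0$ for the paper's $t_0=\min_{\ub\in S,\,u_1\geq 1}\frac{\nu_\db(\ub)-1}{u_1}$, so $\lambda^{(t^*)}$ coincides with the paper's $\lambda_0(\ub)=t_0u_1-\nu_\db(\ub)$. Your verification of (B.i)--(B.iv) is the same case analysis, and it also handles the signs correctly at the points where the paper's written computations contain sign typos.
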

\begin{proof}
    To start, we set
    \begin{equation}\label{eq:def-t-0}
    t_0:= \min_{\ub\in S \text{ with } u_1 \geq 1} \frac{\nu_\db(\ub) - 1}{u_1}.
    \end{equation}
    We define $\lambda_0: \Z^n\to \Q$ by
    \[
    \lambda_0(\ub) = t_0u_1 - \nu_\db(\ub).
    \]
    We claim that $\lambda_0$ satisfies (B.i)-(B.iv).
    \begin{enumerate}[label=\textbf{(B.\roman*)}]
        \item Let $2\leq i\leq n-1, \ub\in S_i$. Our proof that $\lambda_0(\ub)\leq \lambda_0(d_i\bb_i)$ splits into two cases.
        \begin{itemize}
            \item[$u_1 = 0$:] By assumption (2) of \Cref{lemma:final-weight-partial-order} we have $\lambda_0(\ub) = -\nu_\db(\ub) \leq -1$.
            \item[$u_1 \geq 1$:] We have $\lambda_0(\ub) \leq \left(\frac{\nu_\db(\ub) - 1}{u_1}\right)u_1 - \nu_\db(\ub) = -1$. The property (B.i) then follows from the fact that $\lambda_0(d_i\bb_i) = \nu_\db(d_i\bb_i) = -1$ for all $2\leq i\leq n-1$.
        \end{itemize}
        \item First, we observe that if $\lambda_0(\ub) = \lambda_0(d_i\bb_i) = -1$, then
        \begin{equation}\label{eq:t-0-negative}
            \nu_\db(\ub) = t_0u_1 - \lambda_0(\ub) = 1 + t_0u_1.
        \end{equation}
        By assumption (4) of \Cref{lemma:final-weight-partial-order}, there exists $\ub\in S$ with $u_1\geq 1$ and $\nu_\db(\ub) < 1$. It follows that the minimum in \Cref{eq:def-t-0} is negative, proving (B.ii).
        \item The claim (B.iii) follows from \Cref{eq:t-0-negative} together with the fact that $t_0u_1 < 0$.
        \item If $\ub\in S_j$ realizes the minimum in \Cref{eq:def-t-0}, then 
        \[
        \lambda_0(\ub) = \left(\frac{\nu_\db(\ub) - 1}{u_1}\right)u_1 + \nu_\db(\ub) = -1 = \lambda_0(d_j\bb_j).
        \]
    \end{enumerate}
    
\end{proof}
\begin{notation}\label{notation:S-0}
Assume the notation of \Cref{lemma:final-weight-partial-order}, and let $\lambda_0$ be as in \Cref{lemma:zeroth-weight-partial-order}. For $2\leq i\leq n-1$, we define 
\[
S_i^{(0)} = \{\ub\in S_i: \lambda_0(\ub) = \lambda_0(d_i\bb_i)\}
\]
and we set $S^{(0)} = S_2^{(0)}\sqcup \dots \sqcup S_{n-1}^{(0)}$. 
\end{notation}
By \Cref{lemma:zeroth-weight-partial-order}, we have some control over $S^{(0)}$. 
\begin{itemize}
    \item By definition, we have $S^{(0)}\subseteq S$.
    \item By (B.ii) and (B.iii), for every $\ub\in S^{(0)}$ we have $\nu_\db(\ub)\leq 1$ with equality only if $u_1 = 0$.
    \item By (B.iv), $S^{(0)}$ is nonempty. In particular, $S^{(0)}$ contains an element $\ub$ with $u_1\geq 1$.
\end{itemize}
\begin{lemma}\label{lemma:first-weight-partial-order}
Assume the notation of \Cref{notation:S-0}. There exists a linear map $\lambda_1: \Z^n\to \Q$ such that:
\begin{enumerate}[label=\textbf{(C.\roman*)}]
    \item For all $2\leq i\leq n-1, \ub\in S_i^{(0)}$, if $u_1 = 0$ then $\lambda_1(\ub) < \lambda_1(d_i\bb_i)$.
    \item For all $2\leq i\leq n-1, \ub\in S_i^{(0)}$, if $u_1\geq 1$ then we have $\lambda_1(\ub)\leq \lambda_1(d_i\bb_i)$.
    \item There exists an index $2\leq j\leq n-1$ and some $\ub\in S_j^{(0)}$ such that $u_1\geq 1$ and $\lambda_1(\ub) = \lambda_1(d_j\bb_j)$.
\end{enumerate}
\end{lemma}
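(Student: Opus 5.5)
Since $S^{(0)}$ is a finite set, I will build $\lambda_1$ by the same mechanism as in \Cref{lemma:zeroth-weight-partial-order}: single out the $u_1$-coordinate and choose a slope for it extremally. The natural candidate is
\[
\lambda_1(\ub) = t_1 u_1 + \psi(\ub),
\]
where $\psi$ is a fixed linear functional with $\psi(\ub) < \psi(d_i\bb_i)$ for every $\ub\in S_i^{(0)}$ with $u_1 = 0$ and $\ub\neq d_i\bb_i$. Both sides of (C.i)--(C.iii) then compare $\lambda_1$ of an element of $S_i$ with $\lambda_1(d_i\bb_i)$. Since $d_i\bb_i$ has $u_1=0$, each comparison is affine in $t_1$, and $t_1$ should be chosen as a minimum over the elements with $u_1\geq 1$, as in \Cref{eq:def-t-0}.

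\textbf{Choosing $\psi$.} For $\ub\in S_i^{(0)}$ with $u_1=0$, assumption (3) of \Cref{lemma:final-weight-partial-order} gives $\ub\neq d_i\bb_i$, and (B.ii) gives $\nu_\db(\ub)\leq 1$. Also $|\ub| = d_i$. I would take $\psi(\ub) = \sum_k \psi_k u_k$ with the coefficients $\psi_2,\dots,\psi_{n}$ steeply ordered, so that the vector $d_i\bb_i$ is the unique $\psi$-maximizer among vectors $\ub$ with $u_1=0$, $|\ub|=d_i$, $\nu_\db(\ub)\leq 1$ and $\ub\neq d_i\bb_i$.

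The key observation is that such $\ub$ cannot be supported only on coordinates $k\geq i$ unless $\ub = d_i\bb_i$. Indeed, if $u_k>0$ for some $k>i$, then $\nu_\db(\ub) = \sum_k u_k/d_k < |\ub|/d_i = 1$. That alone is not a contradiction, so the placement of mass below index $i$ needs more care. Since $d_k < d_i$ for $k<i$, putting mass on a coordinate $k<i$ raises $\nu_\db$ above $1$ unless it is compensated by mass on coordinates $>i$. A clean choice is $\psi_k = -|\,k-i\,|$-type weights, but these depend on $i$. To get a single $\psi$, I would instead use the freedom that (C.i) only involves finitely many $\ub$: take $\psi = -\nu_\db + \varepsilon\,\phi$, where $\phi$ is a small generic perturbation. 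Because $\lambda_0$ already equals $-\nu_\db$ on $u_1=0$ and every such $\ub$ in $S^{(0)}$ satisfies $\lambda_0(\ub) = \lambda_0(d_i\bb_i)$, this ties at first order, so the tie must be broken by $\phi$. I would take $\phi(\ub) = \sum_k c_k u_k$ with $c_k$ chosen so that $\phi(\ub - d_i\bb_i) < 0$ for every $\ub\in S_i^{(0)}$ with $u_1 = 0$. One option is $c_k$ strictly convex in $1/d_k$, exploiting that $\ub - d_i\bb_i$ has $\nu_\db$-value $0$ and total degree $0$ while being nonzero. This is a Jensen-type strictness argument, and it is the main obstacle I foresee: showing that one linear $\phi$ works simultaneously for all $i$, using $d_1<\dots<d_n$ and the two linear constraints $|\ub| = d_i$ and $\nu_\db(\ub)=1$. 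Here $\nu_\db(\ub)=1$ is forced, since $\lambda_0(\ub) = -\nu_\db(\ub) = -1$. I would take $c_k = g(1/d_k)$ with $g$ strictly concave. Then, since $\ub/d_i$ is a probability vector on the points $1/d_k$ with mean $1/d_i$, Jensen gives $\phi(\ub)/d_i < g(1/d_i) = \phi(d_i\bb_i)/d_i$ unless $\ub$ is concentrated at $1/d_i$, i.e.\ $\ub = d_i\bb_i$. This yields (C.i) at $t_1 = 0$ level.

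\textbf{Choosing $t_1$.} For $\ub\in S_i^{(0)}$ with $u_1\geq 1$, set
\[
t_1 := \min_{\ub\in S^{(0)},\,u_1\geq 1} \frac{\psi(d_i\bb_i)-\psi(\ub)}{u_1},
\]
where $i$ is the index with $\ub\in S_i^{(0)}$. This set is nonempty by (B.iv). With this $t_1$, (C.ii) holds for every such $\ub$ by definition of the minimum, and (C.iii) holds for the minimizer. Property (C.i) is unaffected, since the $t_1u_1$ term vanishes when $u_1=0$. Finally, I would rescale to make the values rational if needed; they already are, since all data are rational.
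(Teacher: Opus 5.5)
Your proof is correct, but it secures (C.i) by a different argument from the paper's. The skeleton is the same as the paper's. You set $\lambda_1(\ub)=t_1u_1+\psi(\ub)$, with $t_1$ the minimum of $(\psi(d_i\bb_i)-\psi(\ub))/u_1$ over $\ub\in S^{(0)}$ with $u_1\geq 1$; this set is nonempty by (B.iv). So (C.ii) and (C.iii) are automatic, and everything rests on choosing $\psi$ for (C.i).

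\textbf{The paper's route.} The paper takes $\psi(\ub)=-\sum_{k\geq 2}d_n^k u_k$ and argues combinatorially. For $\ub\in S_i^{(0)}$ with $u_1=0$, the chain $1\geq\nu_\db(\ub)\geq\sum_{k\leq i}u_k/d_k\geq\sum_{k\leq i}u_k/d_i$, together with $|\ub|=d_i$ and $\ub\neq d_i\bb_i$, forces $u_j\geq 1$ for some $j>i$. Hence $\psi(\ub)\leq -d_n^j<-d_id_n^i=\psi(d_i\bb_i)$. This is exactly the ``steeply ordered'' idea you abandoned. Your own remark that mass below index $i$ must be compensated by mass above $i$ is the key step. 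The weights need not depend on $i$, because $d_n^j\geq d_n\cdot d_n^i>d_id_n^i$ for every $j>i$.

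\textbf{Your route.} Your Jensen argument uses the exact equality $\nu_\db(\ub)=1$, which follows from assumption (2) and (B.ii), alongside $|\ub|=d_i$. You read $\ub/d_i$ as a probability vector on the distinct points $1/d_k$ with mean $1/d_i$. This yields one natural functional with small coefficients that works for all $i$ at once. The paper's version needs only $\nu_\db(\ub)\leq 1$ and no convexity.

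\textbf{Two small repairs.}
\begin{enumerate}
\item The $-\nu_\db$ summand in $\psi$ is superfluous. It cancels in every (C.i) comparison, since $\nu_\db(\ub)=\nu_\db(d_i\bb_i)=1$ there, and elsewhere it is absorbed into $t_1$.
\item You must fix a concrete $g$ taking rational values at rational points, because the statement asks for a $\Q$-valued map. Your claim that ``all data are rational'' fails for, say, $g=\log$, and no rescaling would rationalize $\log(1/d_k)$. Take $g(x)=-x^2$, i.e.\ $\psi(\ub)=-\sum_k u_k/d_k^2$ with $\psi(d_i\bb_i)=-1/d_i$. Then the Jensen step is just the variance identity $\sum_k\frac{u_k}{d_i}\bigl(\frac{1}{d_k}-\frac{1}{d_i}\bigr)^2=\sum_k\frac{u_k}{d_id_k^2}-\frac{1}{d_i^2}\geq 0$, with equality only for $\ub=d_i\bb_i$.
\end{enumerate}
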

\begin{proof}
    We set
    \begin{equation}\label{eq:def-t-1}
        t_1:= \min_{2\leq i\leq n-1}\min_{\substack{\ub\in S_i^{(0)}\\ \text{with }u_1 \neq 0}}\frac{d_n^2u_2 + \dots + d_n^nu_n-d_id_n^i}{u_1}, \quad \lambda_1(\ub) := t_1u_1 - d_n^2u_2 - \dots - d_n^nu_n.
    \end{equation}
    We claim that $\lambda_1$ satisfies properties (C.i)-(C.iii).
    \begin{enumerate}[label=\textbf{(C.\roman*)}]
    \item By assumption (2) and (B.ii), we have $\nu_\db(\ub)\leq 1$. Consequently, we deduce
    \begin{equation}\label{eq:partial-nu-d}
        1\geq \nu_\db(\ub) = \frac{u_1}{d_1} + \dots + \frac{u_i}{d_i} \geq \frac{u_1+\dots+u_i}{d_i}.
    \end{equation}
    If $u_1+\dots + u_i = d_i$, then each inequality in \Cref{eq:partial-nu-d} is an equality and hence $\ub = d_i\bb_i$. This would violate assumption (3), so there must instead be some index $i < j \leq n$ such that $u_j\geq 1$. We then have 
    \[
        \lambda_1(\ub) = -d_n^2u_2 - \dots - d_n^nu_n \leq -d_n^j < -d_id_n^i = \lambda_1(d_i\bb_i).  
    \]
    \item We compute
    \begin{equation}\label{eq:lambda-1-bound}
    \lambda_1(\ub) \leq \left(\frac{d_n^2u_2 + \dots + d_n^nu_n - d_id_n^i}{u_i}\right)u_i - d_n^2u_2 - \dots - d_n^nu_n = -d_id_n^i = \lambda_1(d_i\bb_i).
    \end{equation}
    \item By (B.iv), the set $\{\ub \in S^{(0)}: u_1\geq 1\}$ is nonempty. If $\ub$ achieves the minimum in \Cref{eq:def-t-1}, then the inequality in \Cref{eq:lambda-1-bound} is an equality.
\end{enumerate}
\end{proof}

\begin{notation}[Subsuming \Cref{notation:S-0}]\label{notation:S-ell}
    Let $k\geq 0$. Suppose that $\lambda_0,\dots, \lambda_k$ have been defined. For $1\leq \ell\leq k, 2\leq i\leq n-1$, we define
    \[
    S^{(\ell)}_i = \{\ub\in S_i^{\ell-1}: \lambda_\ell(\ub) = d_i\lambda_\ell(\bb_i)\}
    \]
    and set $S^{(\ell)} = S^{(\ell)}_2\sqcup \dots \sqcup S^{(\ell)}_{n-1}$. Finally, we set $\Lambda_\ell = \{2\leq i\leq n-1: S_i^{(\ell)}\neq \emptyset\}$.
\end{notation}
Similarly to the case of $S^{(0)}$, \Cref{lemma:first-weight-partial-order,lemma:kth-weight-partial-order} give us control over $S^{(\ell)}$ for $\ell\geq 1$. Assume we have constructed $\lambda_{k+1}$ and proven \Cref{lemma:kth-weight-partial-order} for $k\leq \ell-1$.
\begin{itemize}
    \item By definition, we have $S^{(\ell)}\subseteq S^{(\ell-1)}$.
    \item By (B.iii), (C.i), and (C.ii), for all $\ub\in S^{(\ell)}$ we have $u_1\geq 1$ and $\nu_\db(\ub) < 1$.
    \item By (C.iii) if $\ell = 1$ and (D.iii) if $\ell \geq 2$, there exists an element $\ub\in S^{(\ell)}$ with $u_1\geq 1$.
\end{itemize}
\begin{lemma}\label{lemma:kth-weight-partial-order}
    Let $k\geq 0$ and suppose that $\lambda_0,\dots, \lambda_k$ have been defined. Assume the notation of \Cref{notation:S-ell} and suppose that $|\Lambda_k| \geq 2$. Set $i_k = \min \Lambda_k$. There exists $\lambda_{k+1}: \Z^n\to \Q$ such that:
    \begin{enumerate}[label=\textbf{(D.\roman*)}]
    \item For $\ub\in S_{i_k}^{(k)}$, we have $\lambda_{k+1}(\ub) <  \lambda_{k+1}(d_{i_k}\bb_{i_k})$.
    \item For $j > i_k, \ub\in S_j^{(k)}$, we have $\lambda_{k+1}(\ub)\leq \lambda_{k+1}(d_j\bb_j)$.
    \item There exists some index $j > i_k$ and some $\ub\in S_j^{(k)}$ such that $\lambda_{k+1}(\ub)= \lambda_{k+1}(d_j\bb_j)$.
    \end{enumerate}
\end{lemma}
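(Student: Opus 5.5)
The plan is to imitate the construction of $\lambda_1$ in \Cref{lemma:first-weight-partial-order}: a ``tilt'' in which the coefficient of $u_1$ is a free parameter, chosen as a minimum so that it is attained. All elements of $S^{(k)}$ have $u_1\geq 1$ (by the bullet points after \Cref{notation:S-ell}), so dividing by $u_1$ is legitimate. The difficulty is the competition between the smallest index $i_k$ and the larger indices $j\in\Lambda_k\setminus\{i_k\}$. We must push index $i_k$ strictly below its target $\lambda_{k+1}(d_{i_k}\bb_{i_k})$ while keeping every larger index at or below its target, and some larger index exactly at it.

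First I would fix a base linear form $\beta(\ub) = -\sum_{i\geq 2} c_iu_i$ in the variables $u_2,\dots,u_n$ that penalizes index $i_k$ heavily relative to all indices $j>i_k$. For instance, take $c_i = 0$ for $i<i_k$, $c_{i_k} = M$ with $M$ large, and $c_i = 0$ for $i>i_k$; equivalently, use weights that make $\lambda(d_{i_k}\bb_{i_k})$ very small. The goal is to shrink the target of index $i_k$ relative to the vectors in $S^{(k)}_{i_k}$. I then define
\[
t_{k+1} := \min_{j\in\Lambda_k,\ j>i_k}\ \min_{\ub\in S_j^{(k)}}\frac{\beta(d_j\bb_j)-\beta(\ub)}{u_1}\cdot(-1)
\]
with signs arranged as in \Cref{eq:def-t-1}, and set $\lambda_{k+1}(\ub) = t_{k+1}u_1 + \beta(\ub)$. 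The minimum runs only over $j>i_k$, which is nonempty because $|\Lambda_k|\geq 2$. By the same computation as \Cref{eq:lambda-1-bound}, this gives (D.ii) for all $j>i_k$ and equality at a minimizer, hence (D.iii).

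The remaining step, which I expect to be the main obstacle, is (D.i): for $\ub\in S^{(k)}_{i_k}$ we need $t_{k+1}u_1+\beta(\ub) < \beta(d_{i_k}\bb_{i_k})$. Using $|\ub| = d_{i_k}$ and $u_1\geq 1$, any such $\ub$ has $u_{i_k}\leq d_{i_k}-1$. So with $\beta = -Mu_{i_k}$ we get $\beta(\ub)-\beta(d_{i_k}\bb_{i_k}) = M(d_{i_k}-u_{i_k})\geq M$. That is the wrong direction, so the sign must be flipped: take $\beta = +Mu_{i_k}$ plus a small term. Then $\beta(\ub)-\beta(d_{i_k}\bb_{i_k})\leq -M$. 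For $j>i_k$ the term $Mu_{i_k}$ appears in both $\beta(\ub)$ and $\beta(d_j\bb_j)=0$, but since $\nu_\db(\ub)<1$ and $|\ub|=d_j$, one must bound $u_{i_k}$. To avoid this interference I would instead let $\beta$ depend on $u_j$ for $j>i_k$ via weights as in \Cref{eq:def-t-1}, and use the fact that $t_{k+1}$ is bounded independently of $M$ on the indices $j>i_k$. More precisely, I would choose $\beta(\ub) = -\sum_{i> i_k} d_n^i u_i + M u_{i_k}$. For $j>i_k$, a vector in $S_j^{(k)}$ with $u_{i_k}>0$ makes its ratio large, but the minimizer can be taken among whatever vectors exist. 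Then I would check that $t_{k+1}$ is bounded above by a quantity independent of $M$ up to $O(M\cdot\text{stuff})$, so that for $M$ large the strict inequality for index $i_k$ holds.

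If this bookkeeping fails, the fallback is a lexicographic argument. The finitely many vectors in $S^{(k)}$ give finitely many linear constraints, and the desired $\lambda_{k+1}$ exists by perturbing $\lambda_k$ in a direction that strictly decreases the $i_k$-block. Existence then follows from a Farkas-type separation, using that $d_{i_k}\bb_{i_k}\notin S_{i_k}$ by assumption (3).
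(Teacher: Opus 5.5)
Your final construction works and is essentially the paper's argument: the paper just takes the intermediate choice you considered, $\beta(\ub)=u_{i_k}$ with no $d_n^i$ terms, i.e.\ $\lambda_{k+1}(\ub)=u_{i_k}-t_{k+1}u_1$ with $t_{k+1}=\max_{j>i_k,\,\ub\in S_j^{(k)}}u_{i_k}/u_1\geq 0$, so the ``interference'' from $u_{i_k}$ at indices $j>i_k$ is absorbed entirely by the choice of $t_{k+1}$, and (D.i) is immediate from $\lambda_{k+1}(\ub)\leq u_{i_k}<d_{i_k}=\lambda_{k+1}(d_{i_k}\bb_{i_k})$. In your heavier version, drop the stray factor $(-1)$ (you want $t_{k+1}=\min(\beta(d_j\bb_j)-\beta(\ub))/u_1$) and state the bookkeeping as ``$t_{k+1}$ is non-increasing in $M$, hence at most its value $T_0$ at $M=0$,'' which yields (D.i) once $M>d_{i_k}\max(T_0,0)$.
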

\begin{proof}
     We define
    \begin{equation}\label{eq:def-t-k}
    t_{k+1} := \max_{j>i_k, \ub\in S^{(k)}_j}\frac{u_{i_k}}{u_1}, \quad \lambda_{k+1}(\ub):= -t_{k+1}u_1 + u_{i_k}.
    \end{equation}
    We claim that $\lambda_{k+1}$ satisfies (D.i)-(D.iii).
    \begin{enumerate}[label=\textbf{(D.\roman*)}]
    \item For $\ub\in S_{i_k}^{(k)}$, since $\ub\neq d_{i_k}\bb_{i_k}$, we have
    \[
    \lambda_{k+1}(\ub) \leq u_{i_k} < d_{i_k} = d_{i_k}\lambda_{k+1}(\bb_{i_k}).
    \] 
    \item For $j > k, \ub \in S_j^{(k)}$ we have
    \begin{equation}\label{eq:lambda-k-1-bound}
    \lambda_{k+1}(\ub) \leq -\left(\frac{u_{i_k}}{u_1}\right)u_1 + u_{i_k} = 0 = \lambda_{k+1}(d_j\bb_j).
    \end{equation}
    \item If $\ub$ attains the maximum in \Cref{eq:def-t-k}, then the inequality in \Cref{eq:lambda-k-1-bound} is sharp. 
    \end{enumerate}
\end{proof}

\begin{lemma}[c.f. \cite{eisenbud_commutative_1995}, Exercise 15.12]\label{lemma:weight-order-from-partial-order}
    Let $\mu_0,\dots, \mu_s: \Z^n\to \Q$ be linear maps and $U\subseteq \Z^n$ a finite set. There exists a map $\mu: \Z^n\to \Z$ such that for all $\ub,\mathbf{v} \in U$, we have $\mu(\ub) < \mu(\mathbf{v})$ if and only if
    \begin{equation}\label{eq:mu-leq-condition}
    \begin{split}
    	&\text{there exists $0\leq k\leq s$ such that }\mu_k(\ub) < \mu_k(\mathbf{v})  \\
    	&\text{and }\mu_\ell(\ub) = \mu_\ell(\mathbf{v}) \text{ for all } 0\leq \ell \leq k-1 
    \end{split}
    \end{equation}
    \end{lemma}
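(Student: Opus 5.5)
The plan is to take $\mu$ as a lexicographic combination of the $\mu_k$ with rapidly decreasing weights:
\[
\mu = N^{s}\mu_0 + N^{s-1}\mu_1 + \dots + N\mu_{s-1} + \mu_s,
\]
after clearing denominators, with $N$ a sufficiently large positive integer. First choose a common positive integer $D$ such that each $D\mu_k$ takes integer values on $\Z^n$; this is possible because each $\mu_k$ is determined by its values on the standard basis, which are finitely many rationals. Replacing $\mu_k$ by $D\mu_k$ does not change any of the comparisons in \Cref{eq:mu-leq-condition}. So we may assume every $\mu_k$ is $\Z$-valued, and then $\mu$ is a linear map $\Z^n\to\Z$.

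Next, since $U$ is finite, the set of differences $\{\mathbf{v}-\ub : \ub,\mathbf{v}\in U\}$ is finite. Let $M$ be an integer exceeding $|\mu_k(\mathbf{v}-\ub)|$ for all $k$ and all $\ub,\mathbf{v}\in U$, and set $N = 2M+1$. Fix $\ub,\mathbf{v}\in U$ and write $c_k = \mu_k(\mathbf{v}) - \mu_k(\ub)$, an integer with $|c_k| < M$. Then
\[
\mu(\mathbf{v}) - \mu(\ub) = \sum_{k=0}^{s} c_k N^{s-k}.
\]

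Now compare this sum with condition \Cref{eq:mu-leq-condition}.
\begin{itemize}
\item If all $c_k$ vanish, then $\mu(\ub)=\mu(\mathbf{v})$, and \Cref{eq:mu-leq-condition} fails in both directions.
\item Otherwise, let $k$ be the first index with $c_k\neq 0$. The tail satisfies
\[
\Bigl|\sum_{\ell>k} c_\ell N^{s-\ell}\Bigr| \le (M-1)\frac{N^{s-k}-1}{N-1} < N^{s-k},
\]
so the sign of $\mu(\mathbf{v})-\mu(\ub)$ is the sign of $c_k$.
\end{itemize}
Hence $\mu(\ub)<\mu(\mathbf{v})$ if and only if $c_k>0$ for the first nonzero $c_k$, which is exactly \Cref{eq:mu-leq-condition}.

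The only delicate point is the tail estimate, a standard base-$N$ digit argument. It is routine once $N > 2M$, so there is no real obstacle. The finiteness of $U$ is essential, since it is what makes $M$ finite.
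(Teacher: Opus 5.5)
Your argument is correct and complete. Clearing denominators, taking $\mu=\sum_{k} N^{s-k}\mu_k$ with $N$ large relative to the finitely many integers $\mu_k(\mathbf{v})-\mu_k(\ub)$, and your tail estimate show that the sign of $\mu(\mathbf{v})-\mu(\ub)$ is the sign of the first nonzero $c_k$; your $\mu$ is also linear, which is what the paper needs when it uses $\lambda$ as a linear weight in \Cref{lemma:final-weight-partial-order}. The paper gives no proof of its own and instead cites Eisenbud's Exercise 15.12; your base-$N$ lexicographic weighting is the standard argument behind that citation.
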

    
    We may now prove \Cref{lemma:final-weight-partial-order}.
\begin{proof}
    First, we construct the auxiliary maps $\lambda_0,\dots, \lambda_s$. Apply \Cref{lemma:zeroth-weight-partial-order,lemma:first-weight-partial-order} to $S$ to produce linear maps $\lambda_0,\lambda_1$. By (B.iv) and (C.iii), we have $|\Lambda_1|\geq 1$. We continue inductively: suppose we have constructed $\lambda_0,\dots, \lambda_k$. If $|\Lambda_k| = 1$, we set $s = k$ and proceed to the start of the following paragraph for the next step of the argument. Otherwise, apply \Cref{lemma:kth-weight-partial-order} to produce a map $\lambda_{k+1}$. By (D.i) we have $\Lambda_{k+1}\subseteq \Lambda_k\setminus \{i_k\}$ and by (D.iii) we have $|\Lambda_{k+1}|\geq 1$. As $n-2 \geq |\Lambda_1| > |\Lambda_2| > \dots \geq 1$, we eventually arrive at an index $s$ such that $|\Lambda_s| = 1$.

    Apply \Cref{lemma:weight-order-from-partial-order} to the sequence $\lambda_0,\dots, \lambda_s$ to produce a map $\lambda: \Z^n\to \Z$. We show that $\lambda$ satisfies (A.i) and (A.ii), where the index $m$ is the unique element of $\Lambda_s$. By \Cref{lemma:zeroth-weight-partial-order,lemma:first-weight-partial-order,lemma:kth-weight-partial-order}, for all $2\leq i\leq n-1,\ub\in S_i, 0\leq k\leq s$ we have $\lambda_k(\ub) \leq \lambda_k(d_i\bb_i)$. It follows that $\lambda(\ub) \leq \lambda(d_i\bb_i)$. If $\lambda(\ub) = \lambda(d_i\bb_i)$, then we must have $\lambda_k(\ub) = \lambda_k(d_i\bb_i)$ for all $0\leq k\leq s$. Each successive equality $\lambda_k(\ub) = \lambda_k(d_i\bb_i)$ creates further restrictions on $\ub$:
    \begin{enumerate}[(a)]
        \item As $\lambda_0(\ub) = \lambda_0(d_i\bb_i)$, by \Cref{lemma:zeroth-weight-partial-order} (B.ii) and (B.iii) we have either $u_1 = 0$ and $\nu_\db(\ub) \leq 1$ or $u_1\geq 1$ and $\nu_\db(\ub) < 1$. 
        \item As $\lambda_1(\ub) = \lambda_1(d_i\bb_i)$, by \Cref{lemma:first-weight-partial-order} (C.i), we must in fact have $u_1\geq 1$ and $\nu_\db(\ub) < 1$. 
        \item As $\lambda_k(\ub) = \lambda_k(d_i\bb_i)$ for all $1\leq i\leq s$, we have $i\in \Lambda_1\cap \dots \cap \Lambda_s = \{m\}$. 
    \end{enumerate}
    Point (c) in the above list implies (A.i) and points (a), (b) imply (A.iii). For (A.ii), we have by construction that $\Lambda_s = \{m\}$ is nonempty, so there exists $\ub^*\in S_m^{(s)}\subseteq S_m$. By \Cref{notation:S-ell}, the set $S_m^{(s)}$ is inductively defined as the set of $\ub\in S_m$ such that $\lambda_k(\ub) = \lambda_k(d_m \bb_m)$ for all $0\leq k\leq s$, so we have $\lambda(\ub^*) = \lambda(d_m\bb_m)$, hence \Cref{eq:lambda-max} holds.
\end{proof}
\subsubsection{Applying the Degeneration Order}
\begin{lemma}\label{lemma:rees-subalgebra}
    Let $R$ be as in \Cref{ass:mod-setup-2}. Suppose that $f_1,\dots, f_{a_i}$ are $d_i$-forms comprising a regular sequence in $R$, and suppose that $f_j\in k[\x_i]$ for all $1\leq j\leq a_i$. Then the integral closure $J$ of $(f_1,\dots, f_{a_i})$ in $R$ is equal to $(\x_i)^{d_i}$. 
\end{lemma}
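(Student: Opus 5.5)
The plan is to prove the two containments $(\x_i)^{d_i}\subseteq J$ and $J\subseteq (\x_i)^{d_i}$ separately. The first comes from \Cref{lemma:maxpower-subset} applied in the smaller ring $S:=k[\x_i]$, transported to $R$. The second is a monomial observation in $R$.

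\textbf{Step 1: the ideal is primary in $S$.} Set $S = k[\x_i]$ and $\nf = (\x_i)\subseteq S$, so $\dim S = a_i$. Let $J_0 = (f_1,\dots,f_{a_i})S$. The $f_j$ form a regular sequence in $R$, and $R$ is a polynomial ring over $S$, hence faithfully flat. So $\codim(J_0 R) = a_i$, and therefore $\codim(J_0) = a_i$ in $S$. Since $J_0$ is homogeneous and has full height in $S$, it is $\nf$-primary.

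\textbf{Step 2: the containment $(\x_i)^{d_i}\subseteq J$.} The ideal $J_0$ is generated by $d_i$-forms, so \Cref{lemma:maxpower-subset} applied in $S$ gives $\nf^{d_i}\subseteq \overline{J_0}$. Extending to $R$ and using \Cref{prop:int-closure} (iii), we get
\[
(\x_i)^{d_i}R = \nf^{d_i}R\subseteq \overline{J_0}\,R\subseteq \overline{J_0R} = J.
\]

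\textbf{Step 3: the containment $J\subseteq (\x_i)^{d_i}$.} Each $f_j$ is a $d_i$-form in the variables $\x_i$ alone, so $(f_1,\dots,f_{a_i})R\subseteq (\x_i)^{d_i}R$. It remains to see that the monomial ideal $\af := (\x_i)^{d_i}R$ is integrally closed. By \Cref{prop:int-closure} (vi), $\overline{\af}$ is generated by the monomials $\x^{\ab}$ with $\ab\in\Gamma(\af)$. Now $\Gamma(\af)$ is the set of points of $\R^n_{\geq 0}$ whose coordinates indexed by $\x_i$ sum to at least $d_i$. A lattice point in this set is the exponent of a monomial whose total degree in $\x_i$ is at least $d_i$, and such a monomial lies in $\af$. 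Hence $\overline{\af}=\af$, and so
\[
J\subseteq \overline{(\x_i)^{d_i}R} = (\x_i)^{d_i}R.
\]

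Combining Steps 2 and 3 gives $J = (\x_i)^{d_i}$. I do not expect a serious obstacle here. The only point needing care is Step 1: we must make sure \Cref{lemma:maxpower-subset} is applied in $S$, where the ideal is primary to the homogeneous maximal ideal, rather than in $R$, where it is not.
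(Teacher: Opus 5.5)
Your proof is correct and follows the same route as the paper. Both identify the integral closure of $(f_1,\dots,f_{a_i})$ in $k[\x_i]$ as $(\x_i)^{d_i}$, extend this to $R$, and finish by noting that $(\x_i)^{d_i}R$ is integrally closed. The only differences are the citations: you invoke \Cref{lemma:maxpower-subset} (itself proved via \Cref{thm:rees}) and the Newton polytope description \Cref{prop:int-closure} (vi), where the paper cites \Cref{thm:rees} and Huneke--Swanson directly; and \Cref{prop:int-closure} (iii), which you use, is the appropriate property for the extension step.
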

\begin{proof}
    Since $k[\x_i]\to R$ is faithfully flat, $f_1,\dots, f_{a_i}$ form a regular sequence in $k[\x_i]$. By \Cref{thm:rees}, the integral closure of $(f_1,\dots, f_{a_i})$ in $k[\x_i]$ is $(\x_i)^{d_i}$. By \Cref{prop:int-closure} (vii), we have $(\x_i)^{d_i}\subseteq J$. On the other hand, we have $(f_1,\dots, f_{a_i})\subseteq (\x_i)^{d_i}$. By \cite[Proposition 1.4.6]{huneke_integral}, $(\x_i)^{d_i}$ is integrally closed in $R$, so $J = (\x_i)^{d_i}$. 
\end{proof}
\begin{lemma}\label{lemma:nonempty-supp-element}
    Assume the setting of \Cref{ass:mod-setup-2}. Suppose $I$ satisfies \Cref{eqn:aligned}. Then we have the following:
    \begin{enumerate}[(a)]
        \item For all $1\leq i\leq r,1\leq j\leq a_i$, there exists $f'_{i,j}\in k[\x_i]$ and $f''_{i,j}\in (\x_1,\dots, \x_{i-1})$ such that $f_{i,j} = f'_{i,j} + f''_{i,j}$.
        \item For all $1\leq i\leq r$, we have $\sqrt{I_1 + \dots + I_i} = (\x_1,\dots, \x_i)$.
        \item For all $1\leq i\leq r$, the elements $f'_{i,1},\dots, f'_{i,a_i}$ form a regular sequence.
    \end{enumerate}
\end{lemma}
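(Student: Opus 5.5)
The plan is to prove (a) directly by a monomial-by-monomial analysis using the valuation $w_d$. Then (b) and (c) are proved together by induction on $i$, using that $I$ is a complete intersection to control heights.

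\emph{Step (a).} For $i=1$ there is nothing to do: $I_1$ is extended from $k[\x_1]$, so after choosing homogeneous generators in $k[\x_1]$ we may take $f'_{1,j}=f_{1,j}$ and $f''_{1,j}=0$. Now fix $i\geq 2$ and a $d_i$-form $f=f_{i,j}$.

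1. Since $\Df+(\x_1)$ is a monomial ideal and $f\in I\subseteq \Df+(\x_1)$, every monomial $\x^\ub\in\supp(f)$ either is divisible by some variable of $\x_1$ or satisfies $w_d(\x^\ub)\geq 1$.
2. Suppose $\x^\ub$ involves no variable from $\x_1,\dots,\x_{i-1}$. Then only variables $x_{l,\cdot}$ with $l\geq i$ appear. Since $d_l\geq d_i$ for these $l$, we get
\[
w_d(\x^\ub)=\sum_{l\geq i}\frac{|\ub_l|}{d_l}\leq \frac{\deg \x^\ub}{d_i}=1 .
\]
3. Equality holds only if every variable appearing lies in $\x_i$. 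Combined with step 1, every monomial of $f$ lies either in $(\x_1,\dots,\x_{i-1})$ or in $k[\x_i]$.
4. Grouping the monomials of $f$ accordingly gives $f_{i,j}=f'_{i,j}+f''_{i,j}$ with $f'_{i,j}\in k[\x_i]$ and $f''_{i,j}\in(\x_1,\dots,\x_{i-1})$. Both pieces are $d_i$-forms, or zero.

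\emph{Steps (b) and (c), by induction on $i$.}

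1. By (a), $I_1+\dots+I_i\subseteq P_i:=(\x_1,\dots,\x_{i-1})+(f'_{i,1},\dots,f'_{i,a_i})\subseteq(\x_1,\dots,\x_i)$.
2. Since $I$ is a homogeneous complete intersection, the subsequence $f_{1,1},\dots,f_{i,a_i}$ is a regular sequence, so $\codim(I_1+\dots+I_i)=a_1+\dots+a_i$. Hence $\codim P_i\geq a_1+\dots+a_i$.
3. Modulo $(\x_1,\dots,\x_{i-1})$, the ring becomes $k[\x_i,\dots,\x_r]$, and $P_i$ becomes the expansion of the ideal $(f'_{i,\cdot})\subseteq k[\x_i]$. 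Flatness of $k[\x_i]\to k[\x_i,\dots,\x_r]$ preserves heights, so $(f'_{i,1},\dots,f'_{i,a_i})$ has height $a_i$ in $k[\x_i]$.
4. That ideal is generated by $a_i$ homogeneous elements in a polynomial ring of dimension $a_i$ and has height $a_i$. So it is primary to $(\x_i)$, and its generators form a homogeneous system of parameters, hence a regular sequence. This proves (c).
5. It also gives $\sqrt{P_i}=(\x_1,\dots,\x_i)$. For (b), the induction hypothesis $\sqrt{I_1+\dots+I_{i-1}}=(\x_1,\dots,\x_{i-1})$ (trivial when $i=1$, where $I_1$ is $(\x_1)$-primary by the same argument) yields
\[
\sqrt{I_1+\dots+I_i}=\sqrt{(\x_1,\dots,\x_{i-1})+I_i}=\sqrt{P_i}=(\x_1,\dots,\x_i).
\]
The middle equality holds because $f_{i,j}\equiv f'_{i,j}$ modulo $(\x_1,\dots,\x_{i-1})$.

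The only delicate point is the inequality in step (a). It needs $d_1<\dots<d_r$ so that variables of later blocks carry strictly smaller weight; if two blocks had equal degree, the monomial analysis would break down. The rest is bookkeeping with heights.
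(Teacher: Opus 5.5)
Your proposal is correct and follows essentially the paper's argument. Part (a) is the same monomial-by-monomial $w_d$ analysis, and (b)--(c) rest on the same height comparison; the only reorganization is that you get (c) from the containment $I_1+\dots+I_i\subseteq P_i$ without using the inductive hypothesis, which you need only for (b). One small point: for $i=1$ no re-choice of generators is needed, because $I_1$ is extended from $k[\x_1]$ and generated in degree $d_1$, so each given $f_{1,j}$ already lies in $[I_1]_{d_1}\subseteq k[\x_1]$.
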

\begin{proof}
        The key to this proof is the following simple observation. Let $g = x_{1,1}^{e_{1,1}}\dots x_{r,a_r}^{e_{r,a_r}}$ be a monomial of degree $d_i$ such that $w_d(g)\geq 1$. If $g\notin (\x_1,\dots, \x_{i-1})$, then we have
        \begin{align}\label{eq:nu-d-g-bound}
            1 \leq w_d(g)  &\leq \frac{e_{i,1} + \dots + e_{i,a_i}}{d_i} + \frac{e_{i+1,1},\dots + e_{r,a_r}}{d_{i+1}} \\\notag &= \frac{e_{i,1},\dots + e_{r,a_r}}{d_i} + (d_{i+1}-d_i)\left(\frac{e_{i+1,1},\dots + e_{r,a_r}}{d_id_{i+1}}\right)
        \end{align}
        As $e_{i,1} + \dots + e_{r,a_r} = d_i$ and $\deg(g) = d_i$ we have $e_{k,\ell} = 0$ for all $k < i$. At the same time, \Cref{eq:nu-d-g-bound} implies that $e_{i+1,1}+\dots+ e_{r,a_r} = 0$, so $g\in k[\x_i]$. 

        For any $1\leq i\leq r,1\leq j\leq a_i$, by \Cref{eqn:aligned} we have $\supp(f_{i,j})\subseteq \Df\sqcup (\x_1)$. By the previous paragraph, we have $\supp(f_{i,j})\cap \Df\subseteq k[\x_i]\sqcup (\x_1,\dots, \x_{i-1})$. Writing $f_{i,j} = \sum_{y\in \supp(f_{i,j})}\beta_y^{(i,j)}y$, we may therefore define
        \[
        f'_{i,j}:= \sum_{y\in \supp(f_{i,j})\cap k[\x_i]}\beta^{(i,j)}_yy, \qquad f''_{i,j} = \sum_{y\in \supp(f_{i,j})\cap (\x_1,\dots, \x_{i-1})}\beta^{(i,j)}_yy,
        \]
        which proves (a). 

        We prove (b) and (c) simultaneously by induction. By \Cref{eqn:aligned}, $I_1$ is a homogeneous ideal extended from $k[\x_1]$ and $\codim(I_1) = a_1$, so (b) holds for $i = 1$. As $f_{1,j} = f'_{1,j}$ for $1\leq j\leq a_1$, (c) also holds for $i = 1$. Suppose that (b), (c) hold for $i - 1$. Then $\sqrt{I_1 + \dots + I_{i-1}} = (\x_1,\dots, \x_{i-1})$, so we have
        \begin{align}\label{eqn:radical-comparison}
        \sqrt{I_1 + \dots + I_i} & = \sqrt{(\x_1,\dots, \x_{i-1}) + (f_{i,1},\dots, f_{i,a_i})} \\\notag &= \sqrt{(\x_1,\dots, \x_{i-1}) + (f'_{i,1},\dots, f'_{i,a_i})}.
        \end{align}
        Taking the heights of all terms in \Cref{eqn:radical-comparison}, we observe that the elements $f_{1,1},\dots, f_{i-1,a_{i-1}},f'_{i,1},\dots, f'_{i,a_i}$ form a regular sequence. In particular, $f'_{i,1},\dots, f'_{i,a_i}$ is a regular sequence, proving claim (c). As $\codim((f'_{i,1},\dots, f'_{i,a_i})) = a_i$ and $(f'_{i,1},\dots, f'_{i,a_i})$ is a homogeneous ideal extended from $k[\x_i]$, we have $\sqrt{(f'_{i,1},\dots, f'_{i,a_i})} = (\x_i)$. Applying \Cref{eqn:radical-comparison} again, we deduce claim (b). 
\end{proof}

\begin{lemma}\label{lem:application-of-weight-order}
    Assume the setting of \Cref{ass:mod-setup-2}. Suppose $I$ satisfies \Cref{eqn:aligned} and $I\not\subseteq \Df$. Then there exists an integer $2\leq m\leq r-1$ and $d_m$-forms $h_{m,1},\dots, h_{m,a_m}$ such that, setting 
    \[
    J := (\x_1)^{d_1} + \dots + (\x_{m-1})^{d_{m-1}} + (h_{m,1},\dots, h_{m,a_m}) + (\x_{m+1})^{d_{m+1}} + \dots + (\x_r)^{d_r},
    \]
    we have
    \begin{enumerate}[(i)]
        \item For all $1\leq j\leq a_m$, we have $h_{m,j} = h'_{m,j} + h''_{m,j}$, where $h'_{m,j}\in k[\x_m], h''_{m,j}\in (\x_1)$, and $w_d(y) < 1$ for all $y\in \supp(h''_{m,j})$;
        \item There exists some $j$ such that $h''_{m,j}\neq 0$;
        \item $J$ is $\mf$-primary with $E_n(J) = E_n(I)$;
        \item $c(J)\leq c(I)$.
    \end{enumerate}
\end{lemma}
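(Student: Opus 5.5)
The plan is to degenerate $I$ along a weight order produced by \Cref{lemma:final-weight-partial-order}, applied to block-degree vectors, and then to replace the "good" generators by their integral closures.

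\emph{The set $S$.} Write $f_{i,j}=f'_{i,j}+f''_{i,j}$ as in \Cref{lemma:nonempty-supp-element}(a). For a monomial $y\in R$, let $\ub(y)\in\Z_{\ge0}^r$ record its block degrees, so $u_i$ is the total degree of $y$ in $\x_i$. With $n$ replaced by $r$ in that lemma, set $S_i=\{\ub(y): y\in\supp(f''_{i,j}) \text{ for some } j\}$ for $2\le i\le r-1$. I check the four hypotheses as follows.
\begin{enumerate}[(1)]
\item Each $f_{i,j}$ is a $d_i$-form, so $|\ub|=d_i$.
\item By \Cref{eqn:aligned}, every monomial of $f_{i,j}$ lies in $\Df$ or in $(\x_1)$. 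For a monomial $y$ we have $w_d(y)=\nu_\db(\ub(y))$, so either $\nu_\db(\ub)\ge 1$ or $u_1\ge1$.
\item We have $f''_{i,j}\in(\x_1,\dots,\x_{i-1})$, so $d_i\bb_i\notin S_i$.
\item Since $f'_{i,j}\in(\x_i)^{d_i}\subseteq\Df$ and $I\not\subseteq\Df$, some monomial of some $f''_{i,j}$ has $w_d<1$. Such an $i$ is not $1$, because $f_{1,j}=f'_{1,j}$. It is not $r$ either, since every $d_r$-form has $w_d\ge1$: each variable has weight at least $1/d_r$.
\end{enumerate}
So $S_r$ and $S_1$ are unnecessary, and \Cref{lemma:final-weight-partial-order} yields $\lambda$ and $m$.

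\emph{The degeneration.} Define a monomial weight $\omega$ on $R$ by $\omega(y)=\lambda(\ub(y))+N\deg y$ with $N\gg0$, so that all weights are nonnegative. Since $I$ is homogeneous, adding $N\deg$ does not change initial forms. Let $\succ$ be the induced order and set $h_{m,j}:=\ini_\succ(f_{m,j})$.

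I need a sharper form of (A.i) than the statement gives. The proof of \Cref{lemma:final-weight-partial-order} actually shows $\lambda(\ub)\le\lambda(d_i\bb_i)$ for all $\ub\in S_i$, with equality only when $i\in\Lambda_s=\{m\}$; I will use this form. Then for $i\neq m$ the initial form of $f_{i,j}$ is exactly $f'_{i,j}$, because all monomials of $f'_{i,j}$ lie in $k[\x_i]$ and have weight $\lambda(d_i\bb_i)$. For $i=m$, the initial form is $h_{m,j}=f'_{m,j}+h''_{m,j}$, where $h''_{m,j}$ collects the monomials of $f''_{m,j}$ attaining the maximum. By (A.iii) these have $w_d<1$ and lie in $(\x_1)$, which gives (i). By (A.ii), some $\ub\in S_m$ attains $\lambda(d_m\bb_m)$, so some $h''_{m,j}\ne0$, which gives (ii).

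\emph{The ideals $I'$ and $J$, and the conclusions.} Let $I'$ be generated by the initial forms $f'_{i,j}$ ($i\ne m$) and $h_{m,j}$. Then $I'\subseteq\ini_\succ(I)$. Modulo $(\x_1)$, the ideal $I'$ has the same generators as $(f'_{i,j})$, and $f'_{1,\bullet}=f_{1,\bullet}$ cuts out $(\x_1)$. Arguing as in \Cref{lemma:nonempty-supp-element}, $I'$ is therefore an $\mf$-primary complete intersection of the same type as $I$. By \Cref{prop:semicontinuity} and monotonicity,
\[
c(I')\le c(\ini_\succ I)\le c(I).
\]
By \Cref{lemma:rees-subalgebra}, $\overline{(f'_{i,\bullet})}=(\x_i)^{d_i}$. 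Since the $f'_{i,j}$ lie in $(\x_i)^{d_i}$, we get $I'\subseteq J\subseteq\overline{I'}$, so $\overline J=\overline{I'}$. Hence $c(J)=c(I')\le c(I)$, which is (iv), and $J$ is $\mf$-primary. Then \Cref{prop:dp-rees} together with \Cref{lemma:mm-hyperplane-section} gives
\[
E_n(J)=E_n(I')=\sum_i a_i/d_i=E_n(I),
\]
which is (iii).

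\emph{Main obstacle.} The delicate step is the one flagged above. The statement of (A.i) compares $\lambda$ on $S_i$ only with $\lambda(d_m\bb_m)$, whereas the argument needs the comparison with $\lambda(d_i\bb_i)$ for every $i$. This requires extracting the stronger inequalities from the lexicographic construction of $\lambda$ out of $\lambda_0,\dots,\lambda_s$. Verifying that $I'$ remains a complete intersection is routine by comparison.
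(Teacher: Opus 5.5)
Your proposal follows the paper's proof essentially step for step. It uses the same set $S=S_2\sqcup\dots\sqcup S_{r-1}$ of block-degree vectors, applies \Cref{lemma:final-weight-partial-order} in the same way, sets $h_{m,j}=\ini(f_{m,j})$, and makes the same comparison of $J$ with the ideal $I'$ generated by initial forms. You are also right that (A.i) as stated is too weak. The paper's proof of that lemma actually yields $\lambda(\ub)\le\lambda(d_i\bb_i)$ on $S_i$, with equality only when $i=m$, and the paper's proof of the present lemma tacitly uses exactly this stronger form.

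One step is not justified as written; the paper shares this omission. You dismiss $S_r$ as unnecessary. That is fine for checking the hypotheses of \Cref{lemma:final-weight-partial-order}, but your degeneration step then asserts $\ini(f_{r,j})=f'_{r,j}$. Nothing you have established controls $\lambda$ on the monomials of $f''_{r,j}$, and this matters. If some such monomial outweighed $d_r\bb_r$, then $\ini(f_{r,j})$ would lie in $(\x_1,\dots,\x_{r-1})$. In that case $I'$ would not be $\mf$-primary, and $(\x_r)^{d_r}\subseteq\overline{I'}$, hence $J\subseteq\overline{I'}$, would fail.

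The repair is short, because $\lambda_0$ already separates $S_r$ strictly.
\begin{itemize}
\item For $\ub\in S_r$ we have $|\ub|=d_r$ and $\ub\neq d_r\bb_r$. Since $d_1<\dots<d_r$, this forces $\nu_\db(\ub)>1$.
\item Since $t_0<0$, we get $\lambda_0(\ub)=t_0u_1-\nu_\db(\ub)\le-\nu_\db(\ub)<-1=\lambda_0(d_r\bb_r)$.
\item So include $S_r\cup\{d_r\bb_r\}$ in the finite set $U$ when you invoke \Cref{lemma:weight-order-from-partial-order}. The resulting integer weight then satisfies $\lambda(\ub)<\lambda(d_r\bb_r)$ on $S_r$, which gives $\ini(f_{r,j})=f'_{r,j}$.
\end{itemize}
Enlarging $U$ does not affect (A.i)--(A.iii).

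Alternatively, you can first replace $I$ by $I_1+\dots+I_{r-1}+\mf^{d_r}$, which has the same integral closure by \Cref{lemma:maxpower-subset}. With this choice, $\mf^{d_r}$ is its own initial ideal. You would then need the paper's direct radical computation to see that $J$ is $\mf$-primary.
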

\begin{proof}
Let $f'_{i,j}, f''_{i,j}$ be as in \Cref{lemma:nonempty-supp-element}. Define a semigroup homomorphism $\rho: \Mon(\x_1,\dots, \x_r)\to \Z_{\geq 0}^r$ by $\rho(x_{i,j}) = \bb_i$. For $2\leq i\leq r$, we define
\[
S_i:= \bigcup_{j=1}^{a_i} \rho\left(\supp(f''_{i,j})\right).
\]
Set $S = S_2\sqcup \dots \sqcup S_{r-1}$. We claim that $S$ satisfies the hypotheses (1)-(4) of \Cref{lemma:final-weight-partial-order}.
\begin{enumerate}[(1)]
    \item The claim follows from the fact that $f''_{i,j}$ is homogeneous of degree $d_i$ and $\deg(y) = |\rho(y)|$ for any monomial $y\in R$.
    \item Let $y\in \supp(f''_{i,j})$ be a monomial. If $y\in (\x_1)$, then $\rho(y)_1\geq 1$. Otherwise, as $I$ satisfies \Cref{eqn:aligned}, we have $1\leq w_d(y) = \nu_\db(\rho(y))$.
    \item By construction, if $y\in \supp(f''_{i,j})$ then $y\in (\x_1,\dots, \x_{i-1})$, so $\rho(y)_1 + \dots + \rho(y)_{i-1} \geq 1$, hence $\rho(y)\neq d_i\bb_i$.
    \item By assumption, we have $I\not\subseteq \Df$, thus we have $f_{i,j}\notin \Df$ for some $1\leq i\leq r,1\leq j\leq a_i$. By \Cref{eqn:aligned}, $I_1$ is extended from $k[\x_1]$, hence $I_1\subseteq (\x_1)^{d_1}\subseteq \Df$. Moreover, by \Cref{lemma:maxpower-subset} we have $\mf^{d_r}\subseteq \Df$, hence $I_r\subseteq \Df$. We conclude that there exists some $2\leq i\leq r-1$ and some $1\leq j\leq a_i$ such that $f_{i,j}\notin \Df$. As $f'_{i,j}\in \mf^{d_i}\cap k[\x_i]\subseteq \Df$ and $f'_{i,j} + f''_{i,j} = f_{i,j}\notin \Df$, it must be the case that $f''_{i,j}\notin \Df$. In particular, there exists a monomial $y$ in the support of $f''_{i,j}$ such that $y\notin \Df$. Recalling that $\Df$ consists precisely of the elements $f\in R$ with $w_d(f)\geq 1$, for this monomial $y$ we have $\rho(y)\in S$ and $\nu_\db(\rho(y)) = w_d(y) < 1$.
\end{enumerate}
We now apply \Cref{lemma:final-weight-partial-order} to the set $S$ to produce a map $\lambda: \Z^r\to \Z$ and an index $2\leq m\leq r-1$ such that 
\begin{enumerate}[label=\textbf{(A.\roman*)}]
    \item For all $2\leq i\leq r-1, i\neq m, \ub\in S_i$ we have $\lambda(\ub) < \lambda(d_m\bb_m)$.
    \item We have
    \begin{equation}
    \max_{\ub\in S_m} \lambda(\ub) = \lambda(d_m\bb_m).\tag{$21$}
    \end{equation}
    \item For any $\ub$ achieving the maximum in \Cref{eq:lambda-max} we have $\nu_\db(\ub) < 1$ and $u_1\geq 1$.
\end{enumerate}
\vspace{2ex}
Let $\ini_\lambda$ denote the monomial partial order given by $y <_\lambda z$ if $\lambda(\rho(y)) < \lambda(\rho(z))$. We consider the leading terms $\ini_{\lambda}(f_{i,j})$ for $2\leq i\leq r-1, 1\leq j\leq a_i$. For any $y\in \supp(f'_{i,j})$, we have $\rho(y) = d_i\bb_i$, and in particular $\lambda(\rho(y))\leq \lambda(d_i\bb_i)$. By (A.i) and (A.ii), we have $\lambda(\rho(y)) \leq \lambda(d_i\bb_i)$ for all $y\in \supp(f''_{i,j})$.  It follows that 
\[\max_{y\in \supp(f_{i,j})} \lambda(\rho(y)) \leq \lambda(d_i\bb_i).\] 
 By \Cref{lemma:nonempty-supp-element}, the elements $f'_{i,j}$ form a regular sequence. In particular, $f'_{i,j}\neq 0$, so $\supp(f_{i,j})\cap k[\x_i] = \supp(f'_{i,j})\neq \emptyset$. If $y_{i,j}\in \supp(f'_{i,j})$, then we have 
\[\max_{y\in \supp(f_{i,j})} \lambda(\rho(y)) \geq \lambda(\rho(y_{i,j})) = \lambda(d_i\bb_i).\] 
We now have an expression for the leading term $\ini_\lambda(f_{i,j})$. If we write $f_{i,j} = \sum_{y\in \supp(f_{i,j})} \beta_y^{(i,j)}y$, then $\ini_{\lambda}(f_{i,j})$ is the sum over all $\gamma_y^{(i,j)}y$ such that $\lambda(\rho(y))$ is as large as possible. To be precise, we have
\begin{equation}\label{eqn:leading-term-expression}
    \ini_{\lambda}(f_{i,j}) = \sum_{y\in \supp(f_{i,j}): \lambda(\rho(y)) = \lambda(d_i\bb_i)}\beta_y^{(i,j)}y = f'_{i,j} + \sum_{y\in \supp(f''_{i,j}): \lambda(\rho(y)) = \lambda(d_i\bb_i)}\beta_y^{(i,j)}y.
\end{equation}
For $1\leq i\leq a_m$, set $h_{m,j}:= \ini_\lambda(f_{m,j})$. With this choice of $h_{m,1},\dots, h_{m,a_m}$, we verify that $J$ satisfies conclusions (i)-(iv) of the lemma.
\begin{enumerate}[(i)]
    \item We set $h'_{m,j} := f'_{m,j}$ and $h''_{m,j} :=\sum_{y\in \supp(f''_{i,j}): \lambda(\rho(y)) = \lambda(d_i\bb_i)}\gamma_y^{(i,j)}y$. By \Cref{eqn:leading-term-expression}, we have $h'_{m,j} + h''_{m,j}=\ini_\lambda(f_{m,j}) = h_{m,j}$. By the definition of $f'_{m,j}$ in \Cref{lemma:nonempty-supp-element}, we have $\supp(h'_{m,j})\subseteq k[\x_m]$. For any $y\in \supp h''_{m,j}$, we have $y\in \supp(f''_{m,j})$, so $\rho(y)\in S_m$. We also have $\lambda(\rho(y)) = \lambda(d_m\bb_m)$, hence by (A.iii) we have $\rho(y)_1\geq 1$ and $\nu_\db(\rho(y)) < 1$, hence $y\in (\x_1)$ and $w_d(y) < 1$. 
    \item By (A.ii), there exists $\ub\in S_m$ such that $\lambda(\ub) = \lambda(d_m \bb_m)$. By the definition of $S_m$, there exists some index $1\leq j\leq a_m$ and some $y\in \supp(f''_{m,j})$ such that $\rho(y) = \ub$. By \Cref{eqn:leading-term-expression}, we have $y\in \supp(h''_{m,j})$.
    \item We argue similarly to \Cref{eqn:radical-comparison}. By \Cref{lemma:rees-subalgebra}, we have $\sqrt{(f'_{m,1},\dots, f'_{m,a_m})} = (\x_m)$. By \Cref{eqn:leading-term-expression}, we have $h_{m,j}\equiv f'_{m,j}\mod (\x_1)$, so it follows that
    \begin{align*}
    \sqrt{J} &= \sqrt{(\x_1) + \dots + (\x_{m-1}) + (h_{m,1},\dots, h_{m,a_m}) + (\x_{m+1}) + \dots + (\x_r)} \\&= \sqrt{(\x_1) + \dots + (\x_{m-1}) + (f'_{m,1},\dots, f'_{m,a_m}) + (\x_{m+1}) + \dots + (\x_r)} = \mf.
    \end{align*}
    To see that $E_n(J) = E_n(I)$, we note that $J$ has the same integral closure as the ideal
    \[
    J' = (x_{1,1}^{d_1},\dots, x_{m-1,a_{m-1}}^{d_{m-1}},h_{m,1},\dots, h_{m,a_m},x_{m+1,1}^{d_{m+1}},\dots, x_{r,a_r}^{d_r}).
    \]
    As $J'$ is a complete intersection, by \Cref{lemma:mm-hyperplane-section} we have \[E_n(J) = E_n(J') = \frac{a_1}{d_1} + \dots + \frac{a_r}{d_r} = E_n(I).\]
    \item Let $I' = (\ini_\lambda(f_{1,1}),\dots, \ini_\lambda(f_{r,a_r}))$. We claim that $J\subseteq \overline{I'}$. By \Cref{eqn:leading-term-expression} and (A.iii), we have $\ini_\lambda(f_{i,j}) = f'_{i,j}$ for $i\neq m$. For $i\neq m$, it follows from \Cref{lemma:nonempty-supp-element,lemma:rees-subalgebra} that $(\x_i)^{d_i} = \overline{(f'_{i,1},\dots, f'_{i,a_i})} \subseteq \overline{I'}$. The claim that $J\subseteq \overline{I'}$ follows once we note that $h_{m,j} = \ini_\lambda(f_{i,j})\in I'$. To see that $c(J) \leq c(I)$, by \Cref{prop:basic-properties-of-c,prop:semicontinuity} we have
    \[
    c(J) = c(J') \leq c(\overline{I'}) = c(I') \leq c(\ini_\lambda(I)) \leq c(I).
    \]
\end{enumerate}

\end{proof}
\begin{lemma}\label{lemma:simple-sufficient-strictness-condition}
    Assume the setting of \Cref{ass:mod-setup-2}. Let $J\subseteq R$ be an ideal satisfying conditions (i)-(iii) of \Cref{lem:application-of-weight-order}. Then $c(J) > E_n(J)$. 
\end{lemma}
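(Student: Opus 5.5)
The plan is to show that $J$ is ``at least as good'' as a monomial ideal $M$ for which $c(M)>E_n(J)$ is a convex-geometry computation. We first reduce to characteristic $p>0$: in characteristic zero we spread $J$ out over a finitely generated $\Z$-algebra and apply \Cref{prop:spread_out}, exactly as in \Cref{cor:m-primary-restriction}. For this to work, the strict inequality must be obtained with a gap $\delta>0$ that does not depend on $p$; we will make sure the $\delta$ below depends only on exponent data. From now on $c=c^{\mf}$, and $E_n(J)=\sum_i a_i/d_i$ by (iii) of \Cref{lem:application-of-weight-order}.

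\emph{The comparison monomial ideal.} By (ii) and (i), fix $j_0$ and a monomial $y\in\supp(h''_{m,j_0})$ with $y\in(\x_1)$, $\deg y=d_m$ and $w_d(y)<1$. Put
\[
M=(\x_1)^{d_1}+\dots+(\x_r)^{d_r}+(y),
\]
so that $\overline{(\x_1)^{d_1}+\dots+(\x_r)^{d_r}}=\Df$. The point $\mathbf{1}/E_n(J)$ lies on the face $\{w_d=1\}$ of $\Gamma(\Df)$, and it lies in the relative interior of that face because all its coordinates are positive. The point $\log y$ lies strictly below the hyperplane $w_d=1$. Hence $\Gamma(M)=\conv(\Gamma(\Df)\cup(\log y+\R^n_{\geq0}))$ contains points of the diagonal strictly below $\mathbf{1}/E_n(J)$, and \Cref{prop:monomial-threshold} gives $c(M)=E_n(J)+\delta$ with $\delta>0$. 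Concretely, $\delta$ comes from an explicit monomial: for suitable rationals $\epsilon>0$ and $\theta_{i,k}$,
\[
\mu_q=y^{\lfloor\epsilon q\rfloor}\prod_{i,k}x_{i,k}^{d_i\lfloor\theta_{i,k}q\rfloor},\qquad \mu_q\mid(x_1\cdots x_n)^{q-1},
\]
and $\mu_q$ uses about $(E_n(J)+\delta)q$ generators of $M$.

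\emph{Transfer to $J$.} The goal is to show $\nu_J(q)\geq(E_n(J)+\delta)q-O(1)$. We replace each generator appearing in $\mu_q$ by a generator of $J$:
\begin{itemize}
\item for $i\neq m$, the monomial $x_{i,k}^{d_i}$ is already in $J$ and is kept;
\item for $i=m$, the generators $x_{m,k}^{d_m}$ and the copies of $y$ are replaced by suitable products of the forms $h_{m,j}$.
\end{itemize}
This gives an element $F_q$ of $J^{N}$, where $N$ is the number of generators used. We must show $F_q\notin\mf^{[q]}$. Since $\mf^{[q]}$ is a monomial ideal, it suffices to exhibit one monomial dividing $(x_1\cdots x_n)^{q-1}$ that appears in $F_q$ with nonzero coefficient.

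\emph{Main obstacle: non-cancellation.} The hard step is controlling the coefficient of the target monomial in $F_q$. My first attempt is to choose a monomial order $\succ$ refining $w_d$ in which $y$ is the $\succ$-leading monomial of $h_{m,j_0}$; this is possible because $w_d(y)$ is minimal among the relevant terms, using the structure from (i). The element $F_q$ should then be a product of powers of the individual generators, so that its leading monomial is the product of their leading monomials and no cancellation can occur.

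The difficulty is that the $\x_m$-directions must still be covered, and $y$ alone does not supply them. So the remaining powers should come from the $h'_{m,j}$-parts, which form a regular sequence in $k[\x_m]$ by \Cref{lemma:nonempty-supp-element}. For this part I would use the characteristic-$p$ fact that $\bigl(h'_{m,1}\cdots h'_{m,a_m}\bigr)^{q'-1}$ has a nonzero monomial not in $(\x_m)^{[q']}$, because $\fpt\bigl((\x_m)^{d_m}\bigr)=a_m/d_m$ is attained by a complete intersection. Lucas' theorem should then guarantee that the cross-term $y^{\lfloor\epsilon q\rfloor}\cdot(\text{that monomial})$ has nonzero coefficient.

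If this leading-term bookkeeping fails, the fallback is to degenerate once more along a weight that isolates $y$ inside $h_{m,j_0}$. After that, we would argue via \Cref{lem:valuation-colon} with the valuation whose supporting hyperplane cuts $\Gamma(M)$ at its diagonal point, as in the proof of \Cref{lemma:two-degree-formula}.
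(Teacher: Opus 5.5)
Your reduction to characteristic $p$ and the computation $c(M)>E_n(J)$ are fine. The transfer step, however, is a genuine gap. The inequality it aims at, $\nu_J(q)\geq c(M)q-O(1)$ (that is, $c(J)\geq c(M)$), is false in general, because $J$ can be Newton-degenerate.

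Here is a counterexample. Take $\x_1=(x)$, $\x_2=(y)$, $\x_3=(z)$, $(d_1,d_2,d_3)=(3,4,7)$, $m=2$, and $h_{2,1}=(y^2+xz)^2=y^4+2xy^2z+x^2z^2$, so $J=(x^3,h_{2,1},z^7)$.
\begin{itemize}
\item Conditions (i)--(iii) hold, since $w_d(xy^2z)=41/42$ and $w_d(x^2z^2)=20/21$. Moreover $E_n(J)=61/84$.
\item Your leading-term argument requires $y$ to be the $w_d$-minimal term, namely $x^2z^2$. So $M=(x^3,y^4,z^7,x^2z^2)$ and $c(M)=3/4$: indeed $\tfrac43\mathbf{1}=\tfrac13(0,4,0)+\tfrac23(2,0,2)$, and the weight $(\tfrac13,\tfrac14,\tfrac16)$ is at least $1$ on $\Gamma(M)$.
\item Now compute $\nu_J(q)$ directly. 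The generators $x^{3i}h_{2,1}^jz^{7s}$ of $J^N$ have distinct monomials $x^{3i+\ell}y^{2k}z^{\ell+7s}$ with $k+\ell=2j$, and the coefficient of each is $\binom{k+\ell}{k}$.
\item Suppose all exponents are at most $q-1$ and $\binom{k+\ell}{k}\not\equiv 0 \pmod p$. Then adding $k$ and $\ell$ in base $p$ has no carry, so $k+\ell\leq q-1$.
\item Combining this with $3i+\ell\leq q-1$, $\ell+7s\leq q-1$ and $2k\leq q-1$ gives $i+j+s\leq \tfrac{10}{21}(q-1)+\tfrac{k+\ell}{42}+\tfrac{20k}{42}\leq\tfrac{31}{42}(q-1)$.
\end{itemize}
Hence $c(J)\leq 31/42<3/4=c(M)$ in every characteristic $p>0$, and $\lct(J)\leq 31/42$ by \Cref{prop:spread_out}.

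This is exactly where your Lucas step breaks: reaching the diagonal point of $\Gamma(M)$ needs $k\approx q/2$ and $\ell\approx q$, which always produces a carry. (Separately, $(h'_{m,1}\cdots h'_{m,a_m})^{q'-1}$ has degree $a_md_m(q'-1)$, beyond the socle degree $a_m(q'-1)$ of $k[\x_m]/(\x_m)^{[q']}$. So it always lies in $(\x_m)^{[q']}$ once $d_m\geq 2$.)

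The fallback does not help either. Degenerating only gives $c(\ini_w J)\leq c(J)$, and making one $h''$-monomial initial can push the threshold below $E_n$. For example, for $(x^2,y^3+xz^2,z^7)$ with $xz^2$ made initial, the initial ideal is $(x^2,xz^2,z^7,xy^3,y^3z^5,y^6)$, whose threshold is $11/12<41/42$.

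What is missing is a source of strictness that treats $J_m=(h_{m,1},\dots,h_{m,a_m})$ as an ideal, not through its supports. The paper proceeds as follows.
\begin{itemize}
\item It first shows $\ef(J_m)>a_m$. Otherwise $J_m\subseteq(\ell_1,\dots,\ell_{a_m})^{d_m}$, and then $(\boldsymbol{\ell})=\sqrt{J_m}\subseteq(\x_m)+(\x_1)\cap(\x_2,\dots,\x_r)$ forces $(\boldsymbol{\ell})=(\x_m)$, contradicting (ii).
\item Theorem 3.17 of \cite{baily_equigenerated_2025} then gives $c(J_m)>a_m/d_m$.
\item It tracks one actual element rather than a target monomial. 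Take a generator $f$ of $J_m^{\nu_{J_m}(p^e)}$ with $f\notin\mf^{[p^e]}$, and let $y$ be the revlex leading monomial of the part of $f$ outside $\mf^{[p^e]}$.
\item This $y$ has large $\x_m$-order, because $(\x_m)^{a_md_m}\subseteq\ini_\succ(J_m)$ by Brian\c{c}on--Skoda. It also satisfies $w_d(y)\leq t_e$ with $t_e/p^e\to(1-\sigma)a_m/d_m+\sigma c(J_m)$, where $\sigma<1$ is the largest $w_d$ of an $h''$-term.
\item \Cref{lem:valuation-colon} and Brian\c{c}on--Skoda then supply a monomial $z'$ in the other blocks with $z'\in J^{\lfloor u_e-t_e\rfloor-a_m-n}$ and $z'f\notin\mf^{[p^e]}$.
\end{itemize}
This yields $c(J)\geq E_n(J)+(1-\sigma)\bigl(c(J_m)-a_m/d_m\bigr)$ with no non-cancellation claim. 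In the example above it gives $c(J)\geq 61/84+\tfrac{1}{168}$, consistent with $c(J)\leq 31/42$.
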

  \begin{proof}
    Set $J_m = (h_{m,1},\dots, h_{m,a_m})$. We first prove this result in characteristic $p>0$. 
    
  We claim that the assumptions on $J_m$ imply that $\ef(J_m) > a_m$. To see this, suppose on the contrary that $\ef(J_m) \leq a_m$. As $\codim(J_m) = a_m$, it must be the case that $\ef(J_m) = a_m$. By \cite[Lemma 3.14]{baily_equigenerated_2025} we have $J_m\subseteq (\ell_1,\dots, \ell_{a_m})^d$ for some linearly independent 1-forms $\boldsymbol{\ell} = \ell_1,\dots, \ell_{a_m}\in R$. In particular, $\sqrt{J_m} = (\ell_1,\dots, \ell_{a_m})$. 
  
  We can compute $\sqrt{J_m}$ from a different perspective. By assumption, for all $y\in \supp(h''_{m,j})$ we have $y\in (\x_1)\setminus \Df$. In particular, $y\notin (\x_1)^{d_m}$, so we have instead $y\in (\x_1)\cap (\x_2,\dots, \x_r)$. It follows that 
\[
  (\boldsymbol{\ell}) = \sqrt{J_m} \subseteq \sqrt{(\mathbf{h}'_m) + (\mathbf{h}''_{m})}
  \subseteq (\x_m) + (\x_1)\cap (\x_2,\dots, \x_r)
  \]
  which implies that $(\boldsymbol{\ell}) = (\x_m)$. This contradicts the assumption that $h''_{m,j} \neq 0$ for some index $1\leq j\leq a_m$, so we conclude that $\ef(J_m) > a_m$. By \cite[Theorem 3.17]{baily_equigenerated_2025} we have $c(J_m) > \frac{a_m}{d_m}$.
    
Let $f= h_{1,m}^{t_1}\dots h_{m,a_m}^{t_{a_m}}$ be a homogeneous generator of $(J_m)^{\nu_{J_m}(p^e)}$ such that $f\notin \mf^{[p^e]}$. Write
    \[
    f = \sum_{y\in \supp(f)} \b_y y,\quad f':= \sum_{y\in \supp(f)\setminus \mf^{[p^e]}} \b_y y.
    \]
    As $f \equiv f'\mod \mf^{[p^e]}$, we have $f'J_m\subseteq \mf^{[p^e]}$. 

    Applying \Cref{thm:Briancon-Skoda} and \Cref{lemma:rees-subalgebra} to the ideal $\frac{J_m + (\x_1)}{(\x_1)}$, we have \[\frac{(\x_m)^{a_md_m} + (\x_1)}{(\x_1)}\subseteq \frac{J_m + (\x_1)}{(\x_1)}.\] Let $\mu_1,\dots, \mu_M$ be a minimal set of monomial generators for $(\x_m)^{a_md_m}$. Subsequently, we let $\widetilde{\mu_1},\dots, \widetilde{\mu_M}\in J_m$ be homogeneous elements such that $\mu_i - \widetilde{\mu_i}\in (\x_1)$ for all $1\leq i\leq M$. Let $\succ$ denote the reverse lexicographic order after putting the variables in the order $\x_1 \prec \dots \prec \x_r$. By definition of the reverse lexicographic order, we have $y \prec z$ for any $y\in (\x_1), z\in k[\x_2,\dots, \x_r]$. In particular, we have $\ini_\succ(\widetilde{\mu_i}) = \ini_\succ(\mu_i + (\widetilde{\mu_i}-\mu_i)) = \mu_i$. It follows that $(\x_m)^{a_md_m}\subseteq \ini_\succ(J_m)$.

    By construction, we have $f\notin \mf^{[p^e]}$ and $fJ_m\subseteq J_m^{\nu_{J_m}(p^e) + 1} \subseteq \mf^{[p^e]}$.
    As $f'\equiv f\mod \mf^{[p^e]}$, we have $f'\notin \mf^{[p^e]}$ and $f'J_m\subseteq \mf^{[p^e]} + J_m^{\nu_{J_m}(p^e) + 1} \subseteq \mf^{[p^e]}$. Let $y:= \ini_\succ(f')$. As no element of $\supp(f')$ is in $\mf^{[p^e]}$, we have $y\notin \mf^{[p^e]}$. Additionally, we have
    \[
    \ini_\succ(f')\ini_\succ(J_m)\subseteq \ini_\succ(f'J_m)\subseteq \ini_\succ(\mf^{[p^e]}) = \mf^{[p^e]}.
    \]
    By \Cref{lem:valuation-colon}, we have 
    \[
    (\mf^{[p^e]}:J_m) \subseteq (\mf^{[p^e]}:(\x_m)^{a_md_m}) = \mf^{[p^e]} + (\x_m)^{a_m(p^e-1) - a_md_m + 1}.
    \]
    As $y\notin \mf^{[p^e]}$, we have
    \begin{equation}\label{eq:ord_x_m(y)}
        \text{ord}_{(\x_m)}(y)\geq a_m(p^e-1) - a_md_m + 1.
    \end{equation}
    Additionally, we write
    \[
    f = \prod_{j=1}^{a_m} (h'_{m,j} + h''_{m,j})^{t_j} = \prod_{j=1}^{a_m} \sum_{\substack{0\leq t'_j, t''_j\leq t_j\\ t'_j + t''_j = t_j}} {t_j \choose t'_j}(h'_{m,j})^{t_j}(h''_{m,j})^{t''_j}.
    \]
    As $\supp(f')\subseteq \supp(f)$, there exist $(t'_1,t''_1),\dots, (t'_{a_m}, t''_{a_m})$ such that $t'_j + t''_j = t_j$ for all $1\leq j\leq a_m$ and 
    \[
    y\in \supp\left((h'_{m,1})^{t'_1}(h''_{m,1})^{t''_1}\dots (h'_{m,a_m})^{t'_{a_m}}(h''_{m,a_m})^{t''_{a_m}}\right).
    \]
    Set $J'_m = (h'_{m,1},\dots, h'_{m,a_m})$. As $J'_m$ is extended from $k[\x_m]$, by \Cref{lemma:rees-subalgebra} we have $J'_m\subseteq (\x_m)^{d_m}$, so 
    \[t'_1 + \dots + t'_{a_m}\leq \nu_{J'_m}(p^e)\leq \nu_{(\x_m)^{d_m}}(p^e)\leq \floor{\frac{a_m(p^e-1)}{d_m}}.\]
      By assumption, $J_m\not\subseteq \Df$. Define 
    \[\sigma  := \max_{\substack{1\leq j\leq a_m\\y\in \supp(h_{m,j})\setminus (\x_m)^{d_m}}}w_d(y),\] which satisfies $\sigma<1$ by conditions (ii) and (iv).  
    Consequently, we have 
    \begin{align}
    w_d(y) \leq  \sum_{j=1}^{a_m}t'_jw_d(h'_{m,j}) + \sum_{j=1}^{a_m}t''_j\left(\max_{z\in \supp(h''_{m,j})}w_d(z)\right)\notag\\
    = \sum_{j=1}^{a_m}t'_j + \sum_{j=1}^{a_m}t''_j\left(\max_{z\in \supp(h''_{m,j})}w_d(z)\right)\leq \sum_{j=1}^{a_m}t'_j + \sigma\sum_{j=1}^{a_m}t''_j \notag\\\label{eq:upper_bound_for_w(y)}
    \leq \floor{\frac{a_m(p^e-1)}{d_m}} + \sigma\left(\nu_{J_m}(p^e) - \floor{\frac{a_m(p^e-1)}{d_m}}\right).
  \end{align}
    As in \Cref{lem:valuation-colon}, let $\af_\b, \af_\b^+$ denote the ideals $\{f\in R: w_d(f) \geq \b\}, \{f\in R: w_d(f) > \b\}$ respectively. Let $t_e$ denote the quantity in \Cref{eq:upper_bound_for_w(y)}; as $w_d(y) \leq t_e$ we have $y\notin \af^+_{t_e}$. By assumption, $y\notin \mf^{[p^e]}$, and as $y$ is a monomial we have $y\notin \mf^{[p^e]} + \af_{t_e}^+$. Set $u_e:= (p^e-1)(\frac{a_1}{d_1}+\cdots+\frac{a_r}{d_r})$. It follows from \Cref{lem:valuation-colon} that
    \[
    y\notin \mf^{[p^e]} + \af^+_{t_e} = (\mf^{[p^e]}: \af_{u_e - t_e}).
    \]
    Let $z\in \af_{u_e-t_e}$ such that $yz\notin \mf^{[p^e]}$. Write $z = z'z''$ where $z''\in k[\x_m]$ and $z'\in k[\x_1,\dots, \widehat{\x_m},\dots, \x_r]$. As $yz\notin \mf^{[p^e]}$, by \Cref{eq:ord_x_m(y)} we have 
    \[\text{ord}_{(\x_m)}(z'') \leq (p^e-1)a_m - \text{ord}_{(\x_m)}(y) \leq a_md_m,\] hence $
    w_d(z') = w_d(z) - w_d(z'')\geq u_e - t_e - a_m$. As $\Df = \overline{(\x_1)^{d_1} + \dots + (\x_r)^{d_r}}$, by \Cref{thm:Briancon-Skoda} we have
    \[
    z' \in \Df^{\floor{u_e - t_e}-a_m} \subseteq ((\x_1)^{d_1} + \dots + (\x_r)^{d_r})^{\floor{u_e - t_e}-a_m-n}.
    \]
    Since $z'\notin (\x_m)$, we in fact have 
    \[z'\in ((\x_1)^{d_1} + \dots + (\x_{m-1})^{d_{m-1}} + (\x_{m+1})^{d_{m+1}} (\x_r)^{d_r})^{\floor{u_e - t_e}-a_m-n}\subseteq (J)^{\floor{u_e - t_e}-a_m-n}.\]
    As $z'f\notin \mf^{[p^e]}$, it follows that $\nu_{J}(p^e) \geq \nu_{J_m}(p^e) + \floor{u_e-t_e}-a_m-n$. Dividing by $p^e$ and letting $e\to\infty$, we obtain
    \begin{align*}
        c(J) & \geq c(J_m) + \lim_{e\to\infty}\frac{u_e}{p^e} - \lim_{e\to\infty}\frac{t_e}{p^e}\\
        & = c(J_m) + \left(\frac{a_1}{d_1}+\dots+\frac{a_r}{d_r}\right) - \left(\frac{a_m}{d_m}(1-\sigma) + \sigma c(J_m)\right)\\
        & = (1-\sigma)\left(c(J_m) - \frac{a_m}{d_m}\right) + \left(\frac{a_1}{d_1}+\dots+\frac{a_r}{d_r}\right)\\
    \end{align*}
    Since $\sigma < 1$ and $c(J_m) > \frac{a_m}{d_m}$, it follows that the above quantity exceeds $E_n(J)$.

    In characteristic zero, one notes that for any ideal $J$ satisfying conditions (i)-(iii), the reduction of the pair $(R, J)$ to characteristic $p\gg 0$ satisfies conditions (i)-(iii). Moreover, the quantity $\sigma$ is constant for $p\gg 0$. Assuming the reduction notation of \Cref{ass:mod-setup}, we have
    \begin{align*}
    c(J) = \lim_{\substack{
    \mu\in \Spec A\\\text{char}A/\mu\to\infty}} c(J_\mu) &\geq (1-\sigma)\left(\lim_{\substack{
        \mu\in \Spec A\\\text{char}A/\mu\to\infty}}c(J_{m,\mu}) - \frac{a_m}{d_m}\right) +E_n(J) \\&= (1-\sigma)\left(c(J_m)-\frac{a_m}{d_m}\right) + E_n(J) > E_n(J).
    \end{align*}
  \end{proof}
  \Cref{lemma:change-of-vars,lem:application-of-weight-order,lemma:simple-sufficient-strictness-condition} combine to give us a proof of \Cref{thm:min-char} in the case of an $\mf$-primary complete intersection.
\begin{prop}\label{prop:CI-min-char}
    Assume the setup of \Cref{ass:mod-setup-2} and suppose $c(I) = E_n(I)$. Then there exists $\gamma\in \GL_n(k)$  such that $\overline{\gamma I} = \Df$.
\end{prop}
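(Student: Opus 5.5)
We prove $A_r$ for all $r$ by induction on $r$. The cases $r=1,2$ are the statements $A_1, A_2$ proved above. So fix $r\geq 3$ and assume $A_1,\dots,A_{r-1}$. Then $A_{r-1}$ and $A_2$ are available, and \Cref{lemma:change-of-vars} gives $B_r$: there is $\gamma_1\in\GL_n(k)$ such that $\gamma_1 I$ satisfies \Cref{eqn:aligned}. Replacing $I$ by $\gamma_1 I$ changes neither $c(I)$ nor $E_n(I)$. The first invariance holds because a linear change of coordinates is a graded automorphism fixing $\mf$. The second holds because the lengths $\lambda(R/(I+\mf^t)^a\mf^b)$ are preserved. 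So we may assume $I$ itself satisfies \Cref{eqn:aligned}.

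Next we show $I\subseteq\Df$ by contradiction. Suppose $I\not\subseteq\Df$. \Cref{lem:application-of-weight-order} then produces an index $2\leq m\leq r-1$ and an ideal $J$ satisfying (i)--(iv) of that lemma. Applying \Cref{lemma:simple-sufficient-strictness-condition} to $J$ and combining with (iii) and (iv) gives
\[
E_n(I) = E_n(J) < c(J)\leq c(I),
\]
which contradicts $c(I)=E_n(I)$. Hence $I\subseteq \Df$, and in particular $\gamma_1 I\subseteq \Df$ for the original ideal. This proves $A_r$.

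It remains to upgrade the containment $\gamma I\subseteq\Df$ to the equality $\overline{\gamma I}=\Df$. Both ideals are $\mf$-primary, and $\Df$ is integrally closed, so $\overline{\gamma I}\subseteq \Df$. By \Cref{prop:dp-rees}, it suffices to show $E_n(\gamma I)=E_n(\Df)$. Since $\gamma I$ is a complete intersection of the given degree type, \Cref{lemma:mm-hyperplane-section} gives $E_n(\gamma I)=\sum_i a_i/d_i$. The ideal $\Df$ is the integral closure of the complete intersection $(x_{1,1}^{d_1},\dots,x_{r,a_r}^{d_r})$. Integral closure preserves the $\sigma_j$ (via \Cref{thm:rees} applied after cutting by general linear forms, i.e.\ the equality case of \Cref{prop:dp-rees}), so $E_n(\Df)=\sum_i a_i/d_i$ as well. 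Equality of the two values of $E_n$ then yields $\overline{\gamma I}=\Df$.

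The main obstacle has already been isolated in \Cref{lem:application-of-weight-order} and \Cref{lemma:simple-sufficient-strictness-condition}, so the proposition itself is mostly assembly. Two points need care. First, the induction is structured correctly: $B_r$ uses $A_{r-1}$, which is the induction hypothesis, while the base cases $A_1$ and $A_2$ are handled separately. Second, the invariance of $c$ and $E_n$ under $\GL_n(k)$ must be justified; it follows from \Cref{lemma:sigma_extension}-type length arguments.
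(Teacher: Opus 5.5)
Your proposal is correct and follows the paper's proof. You first use \Cref{lemma:change-of-vars}, which implicitly relies on the inductive hypothesis $A_{r-1}$, to put $\gamma I$ in the aligned form \eqref{eqn:aligned}. Then \Cref{lem:application-of-weight-order} and \Cref{lemma:simple-sufficient-strictness-condition} force $\gamma I\subseteq \Df$, and \Cref{prop:dp-rees} upgrades this containment to $\overline{\gamma I}=\Df$. You simply make explicit the induction on $r$, the invariance of $c$ and $E_n$ under $\GL_n(k)$, and the final Rees-type step, all of which the paper's two-line proof leaves implicit.
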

\begin{proof}
    Using \Cref{lemma:change-of-vars}, we produce $\gamma\in \GL_n(k)$ such that $\gamma I$ satisfies \Cref{eqn:aligned}. By \Cref{lem:application-of-weight-order,lemma:simple-sufficient-strictness-condition}, we have $\overline{\gamma I} = \Df$.
\end{proof}
\section{Proof of Theorem B}
In this section, we prove the main result of this article, which we restate below.
\begin{theorem}\label{thm:min-char}
    Let $k$ be a perfect field. Let $R = k[x_1,\dots, x_n]$ and let $I\subseteq R$ be a homogeneous ideal with $\codim(I)\geq l$. If $E_l(I) = c(I)$, then there exist integers $d_1,\dots, d_l$ and $\gamma\in \GL_n(k)$ such that 
    \[
    \gamma\overline{I} = \overline{\left(x_1^{d_1},\dots, x_l^{d_l}\right)}.
    \]
\end{theorem}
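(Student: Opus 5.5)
We follow the outline given in the introduction. First we reduce to the case $k=\bar k$ and $l=n$. By \Cref{prop:basic-properties-of-c} (vi) and \Cref{lemma:sigma_extension}, both $c$ and $E_l$ are unchanged by base change to $\bar k$. If we find $\gamma\in\GL_n(\bar k)$ with $\gamma\overline{I\bar k[\x]}=\overline{(x_1^{d_1},\dots,x_l^{d_l})}$, we must then descend to $k$. Perfectness enters here. The radical of the resulting ideal is a linear subspace $(\ell_1,\dots,\ell_l)$, and by a filtration by degree the flag of such subspaces is canonically attached to $\overline{I}$, so it is Galois-stable. Because $k$ is perfect, $\bar k/k$ is Galois, and Galois descent shows that these linear subspaces are defined over $k$. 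Integral closure commutes with the faithfully flat extension by \Cref{prop:int-closure} (vii), so the equality descends as well.

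To reduce to $l=n$, we cut by $n-l$ general linear forms $L$. Then $E_l(I)=E_l((I+L)/L)\le c((I+L)/L)\le c(I)$, so equality holds for $(I+L)/L$. By the $l=n$ case in $n'=l$ variables, $\overline{(I+L)/L}$ has the diagonal form. We lift this to $I$ using essential dimension, as in \Cref{lemma:lifting-I1}. Applying \Cref{prop:essential-Bertini} degree by degree, each graded piece $I_{\le d_j}$ has essential dimension equal to its height, so $I$ is extended from $k[\ell_1,\dots,\ell_l]$ up to integral closure. This lifting step is plausible but delicate.

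In the main case $I$ is $\mf$-primary, $k=\bar k$, and $I=I_1+\dots+I_r$ with degrees $d_1<\dots<d_r$. Set $h_j=\codim(I_1+\dots+I_j)$. We first use \Cref{cor:asymptotic-monomial}, passing to an uncountable field if necessary. Equality forces the limiting polytope of $\gin(I^m)$ to be the complement of the simplex with intercepts $e_j/e_{j-1}$. Comparing with the Borel-fixed structure in low degrees, we aim to show that each $e_j(I)/e_{j-1}(I)$ equals some $d_i$, with $d_i$ occurring exactly $h_i-h_{i-1}$ times. Concretely, the ratios are nondecreasing by Minkowski (\Cref{prop:mixed-mult-properties} (iii)), and the generators of degree $d_i$ first cut out height $h_i$. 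This gives $E_n(I)=\sum_i (h_i-h_{i-1})/d_i$.

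Next we choose general forms $g_{i,1},\dots,g_{i,h_i-h_{i-1}}\in I_i$ so that all the $g$'s together form a homogeneous complete intersection $I^{ci}\subseteq I$. By \Cref{lemma:mm-hyperplane-section}, $E_n(I^{ci})=\sum(h_i-h_{i-1})/d_i=E_n(I)$. Monotonicity then gives $E_n(I^{ci})\le c(I^{ci})\le c(I)=E_n(I)$, so equality holds for $I^{ci}$. By \Cref{prop:CI-min-char}, in suitable coordinates $\overline{I^{ci}}=\Df$. Finally $I^{ci}\subseteq I$ with $E_n(I^{ci})=E_n(I)$, so \Cref{prop:dp-rees} gives $\overline I=\overline{I^{ci}}=\Df$, which is the diagonal form with $d_i$ repeated $h_i-h_{i-1}$ times.

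\textbf{Main obstacle.} The hard step is the mixed multiplicity formula $e_j(I)=\prod$ of the first $j$ entries of the sequence $(d_1^{h_1},d_2^{h_2-h_1},\dots)$, which is what gives $E_n(I)=E_n(I^{ci})$. We would first try to read it off the equality case. Equality pins $\Gamma(\gin(I^m))$ to a simplex. The Borel-fixed property and \Cref{lemma:same-closure-by-maxpower}, applied to the general hyperplane sections $I|_{L}$, bound each ratio $e_j/e_{j-1}$ between consecutive degrees. The reverse inequality should come from $I^{ci}\subseteq I$.
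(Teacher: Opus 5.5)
Your descent step (a Galois-stable flag of radicals, which the paper handles via separability and \Cref{lemma:fixed-flag-fixed-ideal}) matches the paper, and so does your endgame in the $\mf$-primary case (a complete intersection $I^{ci}\subseteq I$ with the same $E_n$, then \Cref{prop:CI-min-char} and \Cref{prop:dp-rees}). However, the two steps that carry the real content are not supplied.

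The first gap is the identity $e_j(I)/e_{j-1}(I)=d_j(I)$, i.e.\ $E_n(I)=\sum_i (h_i-h_{i-1})/d_i=E_n(I^{ci})$. It does not follow from Minkowski plus the heights $h_i$: for $I=(x^2,xy,y^3)$ one has $h_1=1$ and $h_2=2$, but $e(I)=5\neq 2\cdot 3$. So it must use the equality hypothesis. Moreover, the inequality you hope to get from $I^{ci}\subseteq I$ is the one you already have. That containment gives $E_n(I^{ci})\le E_n(I)$ and $e_j(I)\le e_j(I^{ci})$, and like \Cref{lemma:maxpower-subset} combined with \Cref{thm:Briancon-Skoda} it only bounds the intercepts of $\Gamma(\gin(I^\bullet))$ from above. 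What is missing is the lower bound $p_n(j)\ge d_j(I)$ on the $x_j$-intercept, and you give no mechanism for it. The paper proves it in two steps.
\begin{itemize}
\item \Cref{lem:p-i-reduction} shows that $p_n(j)$ equals the $x_j$-intercept of the projected system $\ini_>(\pi_j(\gamma I^m))\subseteq k[x_1,\dots,x_j]$. This uses \Cref{lem:revlex-proj} and Borel-fixedness of the integral closures $\overline{\gin_>(I^m)}$.
\item \Cref{lemma:p_j-computes-d_j} chooses $\gamma$ so that the forms of degree $<d_j(I)$ in $\pi_j(\gamma I)$ lie in a height-$(j-1)$ linear prime $\qf$ with $x_j\notin\qf$. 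Since $\ini_>(\qf^m)=(x_1,\dots,x_{j-1})^m$ (\Cref{lem:gin-coheight-1}), a degree count shows that every monomial $\x^{\ab}$ of degree $e$ in $\ini_>(\pi_j(\gamma I^m))$ has $a_1+\dots+a_{j-1}\ge m-e/d_j(I)$. Hence monomials approaching the $x_j$-axis have degree at least roughly $m\,d_j(I)$.
\end{itemize}

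The second gap is your lift from the section $(I+L)/L$ back to $I$ when $l<n$, which fails as stated. It is not true that each $([I]_{\le d_j})$ has essential dimension equal to its height, even when equality holds. Take $I=(x_1^2,\,x_2^3+x_1x_2x_3,\,x_3^5)\subseteq k[x_1,\dots,x_4]$. It satisfies $\overline I=\overline{(x_1^2,x_2^3,x_3^5)}$ (by \Cref{prop:dp-rees} in $k[x_1,x_2,x_3]$), so $c(I)=E_3(I)$. But $([I]_{\le 3})$ has radical $(x_1,x_2)$, and neither it nor its integral closure is extended from two linear forms: those forms would have to span $(x_1,x_2)$, and then the $x_3$-coefficient $x_1x_2$ would lie in $\overline{(x_1^2,x_2^3,x_3^5)}$, which it does not.

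Nor can \Cref{prop:essential-Bertini} extract anything from the $\mf$-primary section, since that section has full essential dimension automatically. The proposition only relates sections in at least $\codim(I)+1$ variables, so you would already need the $(l+1)$-variable section to be extended from $l$ forms.

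The paper avoids essential dimension here. Take $f_j\in[I]_{d_j(I)}$ general, $J=(f_1,\dots,f_l)$, $I_t=I+(\ell_{l+1}^t,\dots,\ell_n^t)$ and $J_t=J+(\ell_{l+1}^t,\dots,\ell_n^t)$. The chain $E_l(J)+\frac{n-l}{t}=E_n(J_t)\le E_n(I_t)\le c(I_t)\le c(I)+\frac{n-l}{t}$, together with $E_l(J)=E_l(I)=c(I)$ from the first point, makes each $I_t$ an $\mf$-primary equality case. So $\gamma_t\overline{I_t}=\Df_t$. Then \Cref{lem:integral-krull-intersection} and the finite-dimensionality of forms of bounded degree let one pass to $t\gg 0$ and conclude $\gamma_t\overline{I}=\Df$.
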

\subsection{Algebraically closed fields} Throughout this subsection, fix the following assumption.
\begin{assumption}
\label{ass:mod-setup-3}
     Let $L$ be an uncountable, algebraically closed field. Let $S = L[x_1,\dots, x_n]$ and let $I\subseteq S$ be a homogeneous.  As in \Cref{lem:revlex-proj}, for $1\leq j\leq n$, let $S_j := L[x_1,\dots, x_j], \pi_j:S\to S_j$ the natural projection map and $\i_j:S_j\to S$ the natural embedding. Let $>$ denote the graded reverse lexicographic order $>_{\text{grlex}}$.
\end{assumption}
\begin{defn}
    Let $k$ be a field, $R = k[x_1,\dots, x_n]$, $I\subseteq R$ a homogeneous ideal, and $t\in \Z^+$. We let $[I]_t$ denote the vector space of $t$-forms in $I$ and we let $[I]_{\leq t}$ denote the direct sum of the $[I]_s$ for $s\leq t$.
\end{defn}
\begin{defn}
	Let $k$ be a field, $R = k[x_1,\dots, x_n]$, and $I\subseteq R$ a homogeneous ideal of height $l>0$. For $1\leq i\leq l,$ let $d_i(I) = \min\{j: \codim [I]_{\leq j}R \geq i\}$.
\end{defn}

\begin{lemma}[c.f. \cite{mayes_limiting_2014}]\label{lem:p-i-reduction}
    Assume the setting of \Cref{ass:mod-setup-3}. Let $\gamma\in \GL_n(L)$ be very general: for now, we impose the condition that for all $m > 0$, we have $\ini_>(\gamma I^m) = \gin_>(I^m)$; we will impose countably many additional conditions in \Cref{lemma:p_j-computes-d_j}. For $1\leq j\leq n, m>0$, set $\af_{j,m}:= \ini_>(\pi_j(\gamma I^m))\subseteq S_j$. For $j > 0, 1\leq i\leq j$, let $\bb_i^{(j)}$ denote the $i$th unit vector of $\R^j$. For $j\geq i$, let $p_j(i):= \inf\{t: t\bb_i^{(j)}\in \Gamma(\af_{j,\bullet})\}$. Then for all $j$, we have $p_j(j) = p_n(j)$. 
\end{lemma}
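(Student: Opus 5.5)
The plan is to identify $\af_{j,m}$ with the image of the full generic initial ideal and then compare Newton polytopes along the $j$th coordinate axis. Since $\gamma I^m$ is homogeneous, $>_{\text{grlex}}$ agrees with $>_{\text{rlex}}$ on it. By \Cref{lem:revlex-proj} we then have
\[
\af_{j,m}=\ini_>(\pi_j(\gamma I^m))=\pi_j(\ini_>(\gamma I^m))=\pi_j(\gin_>(I^m)).
\]
The map $\pi_j$ sends a monomial ideal $\af\subseteq S$ to the monomial ideal of $S_j$ generated by the monomials of $\af$ that do not involve $x_{j+1},\dots,x_n$. So $\log(\pi_j(\af))=\log(\af)\cap(\Z_{\geq 0}^j\times\{0\})$.

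The first step is the finite-level statement $\Gamma(\pi_j\af)=\Gamma(\af)\cap(\R^j\times\{0\})$. The inclusion $\subseteq$ is clear. For $\supseteq$, write a point of $\Gamma(\af)$ as a convex combination of exponent vectors of $\af$ plus a vector of $\R^n_{\geq 0}$. All coordinates involved are nonnegative, so if the last $n-j$ coordinates vanish, every vector used in the combination has vanishing last $n-j$ coordinates. Hence all those exponent vectors lie in $\log(\pi_j\af)$. Scaling by $1/2^m$ and taking the ascending union over $m$ gives $\bigcup_m \tfrac{1}{2^m}\Gamma(\af_{j,2^m})=\big(\bigcup_m\tfrac1{2^m}\Gamma(\gin(I^{2^m}))\big)\cap\R^j$.

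The inequality $p_n(j)\leq p_j(j)$ follows at once, since $\Gamma(\af_{j,\bullet})\subseteq\Gamma(\gin(I^\bullet))$ under the inclusion $\R^j\hookrightarrow\R^n$.

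The main obstacle is the reverse inequality, because closure does not commute with intersecting with a coordinate subspace. A point $t\bb_j^{(n)}$ of $\Gamma(\gin(I^\bullet))$ is only a limit of points $\ub/2^m$ with $\ub\in\log\gin(I^{2^m})$. Such $\ub$ may have small but nonzero coordinates $u_{j+1},\dots,u_n$. To remove them, I would use that $\gin(I^{2^m})$ is Borel-fixed, which lets us replace variables $x_i$ with $i>j$ by $x_j$ while staying inside the ideal. In characteristic $0$ this is the strong exchange property. In characteristic $p$ only $p$-Borel moves are available, and I would absorb the resulting bounded error by replacing $I^{2^m}$ with a slightly larger power; the error vanishes after dividing by $2^m$. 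The resulting monomial lies in $\pi_j(\gin(I^{2^m}))$, and its exponent vector differs from $\ub$ by moving mass of size $o(2^m)$ onto the $x_j$-coordinate.

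This still leaves small positive coordinates $u_1,\dots,u_{j-1}$. Here I would use the extra ``very general'' conditions on $\gamma$ promised in the statement (via \Cref{lemma:p_j-computes-d_j}). The alternative is to choose the approximating sequence so that $t\bb_j$ is approached from inside the face of $\Gamma(\gin(I^\bullet))$ containing it. In that case convexity of $\Gamma(\af_{j,\bullet})$, together with boundedness of its complement (the ideal $\pi_j(\gamma I)$ is $\mf$-primary in $S_j$), gives $(p_n(j)+\epsilon)\bb_j^{(j)}\in\Gamma(\af_{j,\bullet})$ for every $\epsilon>0$. Letting $\epsilon\to0$ yields $p_j(j)\leq p_n(j)$.
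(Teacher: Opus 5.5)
Your plan is the paper's: identify $\af_{j,m}=\pi_j(\gin_>(I^m))$ via \Cref{lem:revlex-proj}, get $p_n(j)\le p_j(j)$ from the inclusion, and for the reverse inequality use Borel-fixedness of $\af_{n,m}=\gin_>(I^m)$ to move the $x_{j+1},\dots,x_n$-exponents onto $x_j$. However, your last two paragraphs leave the reverse inequality unproved. You name two obstacles that are not real obstacles, and the patches you offer are not arguments. These patches are: absorbing a ``bounded error'' by passing to a larger power, invoking the genericity conditions of \Cref{lemma:p_j-computes-d_j}, and approaching ``from inside the face.'' In particular, the conditions in \Cref{lemma:p_j-computes-d_j} concern $x_j\notin\sqrt{\pi_j(\gamma[I]_{\le d_j-1}R)}$, which is irrelevant here, and that lemma itself relies on the present one.

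Both obstacles disappear as follows. First, the full exchange is available in every characteristic. If $\af$ is a Borel-fixed monomial ideal, $\x^{\ub}\in\af$, and $i>j$, apply the Borel element $x_i\mapsto x_i+cx_j$ with $c\neq 0$. The monomial $\x^{\ub}(x_j/x_i)^{u_i}$ appears in the image with coefficient $c^{u_i}\neq 0$, so it lies in $\af$. In $p$-Borel language, moving the whole power $s=t$ is always allowed, since $\binom{t}{t}=1$. Hence the linear map $T(\ub)=(u_1,\dots,u_{j-1},u_j+\dots+u_n,0,\dots,0)$ sends $\log\af$ into $\log\af$, and by linearity $T(\Gamma(\af))\subseteq\Gamma(\af)$. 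So there is no error term. You also no longer need your unproved claim that $t\bb_j^{(n)}$ is a limit of actual exponent vectors of $\gin(I^{2^m})$; the paper instead rounds up to lattice points of $\overline{\af_{n,2^m}}$, which is Borel-fixed.

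Second, the leftover coordinates $u_1,\dots,u_{j-1}$ are harmless. With $t=p_n(j)$, choose $\ub_m\in\Gamma(\af_{n,2^m})$ with $2^{-m}\ub_m\to t\bb_j^{(n)}$. By your own finite-level identity, $T(\ub_m)\in\Gamma(\af_{n,2^m})\cap(\R^j\times\{0\})=\Gamma(\af_{j,2^m})$. Moreover $2^{-m}T(\ub_m)\to t\bb_j^{(j)}$, because the first $j-1$ coordinates tend to $0$ and the last tends to $t$. Since $\Gamma(\af_{j,\bullet})$ is closed by definition, $t\bb_j^{(j)}\in\Gamma(\af_{j,\bullet})$, i.e.\ $p_j(j)\le p_n(j)$. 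No genericity of $\gamma$ beyond $\ini_>(\gamma I^m)=\gin_>(I^m)$ is needed.
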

\begin{proof}
    By \Cref{lem:revlex-proj} we have $\i_j(\af_{j,\bullet})\subseteq \af_{n,\bullet}$ for all $1\leq j\leq n$, so  $p_n(j)\leq p_j(j)$. For the reverse direction, set $t = p_n(j)$. Since $t\bb_{j}^{(n)}\in \overline{\bigcup_{m>0}\frac{1}{2^m}\Gamma(\af_{n,2^m})}$, there exists a sequence $\{\ab_m = (a_{m,1},\dots,a_{m,n})\}_{m>0}$ such that $\ab_m\in \Gamma(\af_{n,2^m})$ for all $m$ and $\lim_{m\to\infty} 2^{-m}\ab_m = t\bb_{j}^{(n)}$. For any choice of $\{\ab_m\}_{m>0}$, we also have $(\ceil{a_{m,1}},\dots,\ceil{a_{m,n}})\in \Gamma(\af_{n,2^m})$ and  $\lim_{m\to\infty} \frac{(\ceil{a_{m,1}},\dots, \ceil{a_{m,n}})}{2^m} = t\bb_{j}^{(n)}$. We may therefore assume without loss of generality that $\ab_m\in \Z_{\geq 0}^n$ for all $m>0, 1\leq i\leq n$, hence for all $m > 0$, we have $\x^{\ab_m}\in \overline{\af_{n,2^m}}$.

    By \cite[Theorem 2.1]{guo_monomial_2015}, $\overline{\af_{n,2^m}}$ is Borel-fixed, so \[x_1^{a_{m,1}}\cdots x_{j-1}^{a_{m,j-1}}x_j^{a_{m,j}+\dots+a_{m,n}}\in \overline{\af_{n,2^m}}.\] We further note that $\af_{j,m} = \pi_j(\af_{n,m})$ by \Cref{lem:revlex-proj}. By \Cref{prop:int-closure}(iii), we conclude 
    \[
        \pi_j(x_1^{a_{m,1}}\cdots x_{j-1}^{a_{m,j-1}}x_j^{a_{m,j}+\dots+a_{m,n}})\in\pi_j(\overline{\af_{n,2^m}}) \subseteq \overline{\pi_j(\af_{n,2^m})} = \overline{\af_{j,2^m}}.\]
    It follows that 
    \[t\bb_j^{(j)} = \lim_{m\to\infty} \frac{(a_{m,1},\dots, a_{j-1},a_j + \dots + a_n)}{m}\in \Gamma(\af_{j,\bullet}),\]
    which proves $p_j(j) \leq t = p_n(j)$.
\end{proof}

\begin{remark}\label{rmk:real-sequence}
Assume the setup of \Cref{lem:p-i-reduction}. A priori, there exists a sequence \[\{\ab_m = (a_{m,1},\dots, a_{m,j})\}_{m>0}\subseteq \R^j_{\geq 0}\] such that for all $m > 0$ we have $\lim_{m\to\infty}2^{-m}\ab_m = t\bb_j^{(j)}$ and $\ab_m\in \Gamma(\af_{2^m})$. \Cref{lem:removal-of-integral-closure} is a minor refinement, allowing us to assume that $\x^{\ab_m}\in \af_{2^m}$.
\end{remark}
\begin{lemma}\label{lem:removal-of-integral-closure}
    Assume the setup of \Cref{lem:p-i-reduction}. There exists a sequence $\{\ab'_m\}_{m>0}\subseteq \Z^j$ such that  $\x^{\ab'_m}\in \af_{j,2^m}$ for all $m>0$ and $\lim_{m\to\infty}2^{-m}\ab'_m = p_j(j)\bb_j^{(j)}$. 
\end{lemma}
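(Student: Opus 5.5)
The idea is to pass from the integral closure $\overline{\af_{j,2^m}}$, which is where the proof of \Cref{lem:p-i-reduction} placed the monomials $\x^{\ab_m}$, back to $\af_{j,2^m}$ itself. We pay for this with a bounded factor in the exponent, which disappears after rescaling by $2^{-m}$. The tool is \Cref{thm:Briancon-Skoda}, applied in the polynomial ring $S_j$ localized at its homogeneous maximal ideal. Monomial ideals are homogeneous, so containments between them can be checked after localization.

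First, I would start from the sequence of \Cref{lem:p-i-reduction} (or \Cref{rmk:real-sequence}), rounded up to integers as in that proof. This gives $\ab_m\in\Z_{\geq 0}^j$ with $\x^{\ab_m}\in\overline{\af_{j,2^m}}$ and $2^{-m}\ab_m\to t\bb_j^{(j)}$, where $t=p_j(j)$.

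Second, I would fix an integer $c$ with $c\ge j+1$ and shift the index by $c$. Since $\af_{j,\bullet}$ is a graded system, we have $\af_{j,r}^s\subseteq\af_{j,rs}$. From $\x^{\ab_m}\in\overline{\af_{j,2^m}}$ we get $\x^{2^c\ab_m}\in\overline{\af_{j,2^m}}^{\,2^c}$. Writing $2^c=s+j$ with $s=2^c-j\geq 1$, \Cref{thm:Briancon-Skoda} gives
\[
\overline{\af_{j,2^m}}^{\,2^c}\subseteq\af_{j,2^m}^{\,2^c-j+1}.
\]
To land in $\af_{j,2^{m+c}}$, I would not use powers directly. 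Instead I would add a fixed monomial $x_j^{N\cdot 2^m}$ to make up the missing factors. Here $N$ is chosen with $x_j^N\in\af_{j,1}$, which is possible because $\pi_j(\gamma I)$ is $\mf$-primary for general $\gamma$ (as $\codim I\geq l$ after the reduction to $l=n$). Then $x_j^{(j-1)N2^m}\in\af_{j,2^m}^{\,j-1}$, and so
\[
\x^{2^c\ab_m}\,x_j^{(j-1)N2^m}\in\af_{j,2^m}^{\,2^c}\subseteq\af_{j,2^{m+c}}.
\]
I would set $\ab'_{m+c}=2^c\ab_m+(j-1)N2^m\bb_j^{(j)}$. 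Then
\[
2^{-(m+c)}\ab'_{m+c}\to t\bb_j^{(j)}+(j-1)N2^{-c}\bb_j^{(j)}.
\]

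Finally, the error term $(j-1)N2^{-c}$ does not vanish for fixed $c$, and this is the main obstacle. I would handle it with a diagonal argument: let $c=c(m)\to\infty$ slowly, and define $\ab'$ along the indices $m+c(m)$. For indices that are skipped, I would fill in using the monotonicity $\af_{j,r}\supseteq\af_{j,r+1}$ (so multiplying by $x_j^N$ at each step) or simply reuse the previous index. Each step only shifts the rescaled point by at most $O(2^{-c})$ along $\bb_j$, so the limit is exactly $t\bb_j^{(j)}=p_j(j)\bb_j^{(j)}$. Alternatively, Brian\c{c}on--Skoda could be replaced by the Rees containment $\overline{\af}^{\,u}\af^{u'}\subseteq\af^{u+1}$ with bounded $u'$, which avoids the diagonal argument altogether. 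I would try the diagonal version first, since it uses only results already stated in the paper.
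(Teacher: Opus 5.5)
Your main line is correct and is essentially the paper's argument. You apply \Cref{thm:Briancon-Skoda} (localizing at the homogeneous maximal ideal, as you note) and let both the base index and the number of factors tend to infinity.

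The paper dispenses with your padding monomial. It sets $\ab'_M=(2^{M-\floor{M/2}}+j-1)\ab_{\floor{M/2}}$, so that
\[
\x^{\ab'_M}\in\overline{\af_{j,2^{\floor{M/2}}}}^{\,2^{M-\floor{M/2}}+j-1}\subseteq\af_{j,2^{\floor{M/2}}}^{\,2^{M-\floor{M/2}}}\subseteq\af_{j,2^M}.
\]
Then $2^{-M}\ab'_M=\bigl(1+(j-1)2^{-(M-\floor{M/2})}\bigr)2^{-\floor{M/2}}\ab_{\floor{M/2}}\to p_j(j)\bb_j^{(j)}$. Raising to the power $2^c+j-1$ rather than $2^c$ has two advantages. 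It removes the need for a pure power $x_j^N\in\af_{j,1}$; that holds in the application, where $\pi_j(\gamma I)$ is primary to the maximal ideal, but it is not part of the setup of \Cref{lem:p-i-reduction}. And indexing directly by the target $M$ means no index is ever skipped.

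Two points in your write-up are wrong as stated.

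First, the fill-in for skipped indices does not work as described. The containment $\af_{j,r}\supseteq\af_{j,r+1}$ points the wrong way for this purpose. Multiplying by $x_j^N$ only takes you from $\af_{j,2^M}$ into $\af_{j,2^M+1}$. To reach $\af_{j,2^{M+1}}$ you would have to multiply by something like $x_j^{N2^M}$, which moves the rescaled point to about $\tfrac12(t+N)\bb_j^{(j)}$, not by $O(2^{-c})$. Literally reusing $\ab'_M$ also fails, since $\af_{j,2^{M+1}}\subseteq\af_{j,2^M}$. The correct fill-in is $\ab'_{M+1}=2\ab'_M$. This is valid because $\af_{j,2^M}^2\subseteq\af_{j,2^{M+1}}$, and harmless because it leaves the rescaled point unchanged. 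Alternatively, parametrize by $M$ as the paper does.

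Second, the Rees-type alternative does not avoid letting the exponent grow. A reduction-number argument gives a bound on $u'$ that a priori depends on the ideal $\af_{j,2^m}$, hence on $m$. Even with the uniform bound $u'=j$ that Brian\c{c}on--Skoda supplies, after rescaling the padding contributes an error of order $u'/u$. So $u$ must still tend to infinity with $m$, which is the same diagonal choice.
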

\begin{proof}
     Let $\{\ab_m\}_m$ be as in \Cref{rmk:real-sequence} and set 
     $\ab'_m = (2^{m - \floor{m/2}} + j - 1)\ab_{\floor{m/2}}$. By \Cref{thm:Briancon-Skoda}, we have 
     \[
     \x^{\ab'_m} = (\x^{\ab_{\floor{m/2}}})^{2^{m-\floor{m/2}} + j - 1} \in \overline{\af_{2^{\floor{m/2}}}^{2^{m-\floor{m/2}} + j - 1}}\subseteq \af_{2^{\floor{m/2}}}^{2^{m-\floor{m/2}}} \subseteq \af_{2^m}.
     \]
     The result follows once we note that
     \[
     \lim_{m\to\infty} 2^{-m}\ab'_m = \lim_{m\to\infty} \frac{2^{m - \floor{m/2}}+j-1}{2^{m-\floor{m/2}}}2^{-\floor{m/2}}\ab_{\floor{m/2}} = p_j(j)\bb_j^{(j)}.
     \]
\end{proof}

\begin{lemma}\label{lem:gin-coheight-1}
    Let $k$ be an algebraically closed field and $R = k[x_1,\dots, x_j]$. Let $\qf$ be a homogeneous prime ideal of height $j-1$ with $x_j\notin \qf$. If $>$ denotes the reverse lexicographic order, then for all $m>0$ we have $\ini_>(\qf^m) = (x_1,\dots, x_{j-1})^m$. 
\end{lemma}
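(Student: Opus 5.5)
The plan is to make the ideal explicit and then compare Hilbert functions. First, since $k$ is algebraically closed and $\qf$ is a homogeneous prime of height $j-1$ in $k[x_1,\dots,x_j]$, the quotient $R/\qf$ is a one-dimensional graded domain. Hence $V(\qf)\subseteq \mathbb{P}^{j-1}$ is a single closed point $P=[p_1:\dots:p_j]$, and $\qf$ is the ideal of all forms vanishing at $P$. The hypothesis $x_j\notin\qf$ says exactly that $p_j\neq 0$. After scaling we may take $p_j=1$ and set $c_i=p_i$. Then
\[
\qf=(y_1,\dots,y_{j-1}),\qquad y_i:=x_i-c_ix_j ,
\]
since the right-hand side is a prime of height $j-1$ whose zero locus is $P$. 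Consequently $\qf^m$ is generated by the monomials $\mathbf{y}^{\ab}$ in $y_1,\dots,y_{j-1}$ with $|\ab|=m$.

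Next comes the containment $(x_1,\dots,x_{j-1})^m\subseteq \ini_>(\qf^m)$. In the reverse lexicographic order, $x_j$ is the ``cheapest'' variable: among monomials of the same degree, a monomial with smaller $x_j$-exponent is larger. Expanding
\[
\mathbf{y}^{\ab}=\prod_i (x_i-c_ix_j)^{a_i},
\]
we see that every monomial other than $\x^{\ab}$ has positive $x_j$-exponent, while $\x^{\ab}$ has $x_j$-exponent $0$. Hence $\ini_>(\mathbf{y}^{\ab})=\x^{\ab}$, and every generator of $(x_1,\dots,x_{j-1})^m$ lies in $\ini_>(\qf^m)$.

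For the reverse containment I would use Hilbert functions rather than computing a Gröbner basis. Let $\phi\in \GL_j(k)$ be the graded automorphism with $x_i\mapsto y_i$ for $i<j$ and $x_j\mapsto x_j$. It carries $(x_1,\dots,x_{j-1})^m$ onto $\qf^m$, so these two ideals have the same Hilbert function. By the standard fact \cite[Theorem 15.3]{eisenbud_commutative_1995}, $\ini_>(\qf^m)$ has the same Hilbert function as $\qf^m$. We then have a containment $(x_1,\dots,x_{j-1})^m\subseteq\ini_>(\qf^m)$ of homogeneous ideals with equal Hilbert functions, which forces equality in every degree.

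The only step needing care is the first: identifying $\qf$ with the ideal of a point. This uses algebraic closedness through the Nullstellensatz, namely that the unique point of $V(\qf)$ is $k$-rational and that $\qf$ is the full ideal of that point. It also uses $x_j\notin\qf$ to guarantee the affine normalization $p_j=1$ and hence the shape $y_i=x_i-c_ix_j$. The remaining steps are routine.
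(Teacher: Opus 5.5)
Your proof is correct and follows essentially the paper's route: use algebraic closedness to write $\qf$ as generated by $j-1$ linear forms, obtain the containment $(x_1,\dots,x_{j-1})^m\subseteq\ini_>(\qf^m)$, and conclude from the equality of Hilbert functions of $\qf^m$, $\ini_>(\qf^m)$ and $(x_1,\dots,x_{j-1})^m$. The only cosmetic difference is in the containment step: the paper first shows $\ini_>(\qf)=(x_1,\dots,x_{j-1})$ (Hilbert series plus $x_j\notin\qf$) and then uses $\ini_>(\qf)^m\subseteq\ini_>(\qf^m)$, whereas you normalize the generators to $x_i-c_ix_j$ and read off the leading term of each product directly.
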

\begin{proof}
    Since $k$ is algebraically closed, there exist linear forms $\ell_1,\dots, \ell_{j-1}\in R_1$ such that $q = (\ell_1,\dots, \ell_{j-1})$. By \cite[Theorem 15.17]{eisenbud_commutative_1995}, $\qf$ and $\ini_>(\qf)$ have the same Hilbert series. Moreover, $\ini_>(\qf)$ is a monomial ideal not containing $x_j$, so we must have $\ini_>(\qf) = (x_1,\dots, x_{j-1})$. For $m > 1$, we have the standard containment $(x_1,\dots, x_{j-1})^m = \ini_>(\qf)^m\subseteq \ini_>(\qf^m)$. As $(x_1,\dots, x_{j-1})^m$ has the same Hilbert series as $\qf^m$, the result follows.
\end{proof}
\begin{lemma}\label{lemma:uth-graded-piece}
    Let $k$ be a field, $R = k[x_1,\dots, x_j],$ and $\nf = (x_1,\dots, x_j)$. If $I\subseteq R$ is a homogeneous ideal, then
    \[
    [I\nf^t]_u = [[I]_{u-t}R]_u.
    \]
\end{lemma}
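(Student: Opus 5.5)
We prove the equality $[I\nf^t]_u = [[I]_{u-t}R]_u$ by double containment of $k$-vector spaces inside $R_u$, working one homogeneous piece at a time.

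For $\supseteq$, note that $[I]_{u-t}R$ is generated by forms of degree $u-t$. Its degree-$u$ piece is spanned by products $fg$ with $f\in[I]_{u-t}$ and $g\in R_t$. Since $R_t = [\nf^t]_t$, each such product lies in $I\nf^t$ and has degree $u$. Hence the whole piece lies in $[I\nf^t]_u$. If $u<t$, both sides are $0$, so we may assume $u\ge t$.

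For $\subseteq$, recall that $I$ is homogeneous and $\nf^t$ is generated by the monomials of degree exactly $t$. So $I\nf^t$ is spanned, as a $k$-vector space, by products $f\mu h$, where $f$ is a homogeneous element of $I$, $\mu$ is a monomial of degree $t$, and $h$ is a monomial. Projecting to degree $u$, the piece $[I\nf^t]_u$ is spanned by such products with $\deg f + t + \deg h = u$. Regroup each as $(fh)\mu$. Here $fh\in I$ is homogeneous of degree $u-t$, so $fh\in[I]_{u-t}$, and therefore $(fh)\mu\in [[I]_{u-t}R]_u$.

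Nothing here is a real obstacle. The only care needed is the reduction to homogeneous generators and the regrouping step: any extra factor of degree above $t$ gets absorbed into the element of $I$, which is exactly why the right-hand side only needs $[I]_{u-t}$ rather than all of $[I]_{\le u-t}$.
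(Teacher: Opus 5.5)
Your proof is correct, but it takes a slightly different route from the paper's.

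The paper works generator by generator. It first proves the single-form identity $[f\nf^t]_u=[fR]_u$ for a form $f$ of degree $u-t$: a homogeneous multiplier $g$ with $fg$ of degree $u$ has degree $t$, hence lies in $\nf^t$. It then takes a basis $f_1,\dots,f_r$ of $[I]_{u-t}$ together with further generators $g_1,\dots,g_s$ of $I$ of degree $>u-t$. The $g_i\nf^t$ contribute nothing in degree $u$, so the result follows by summing the single-form identities.

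You instead use the vector-space spanning set $\{f\mu h\}$ of $I\nf^t$ and the regrouping $f\mu h=(fh)\mu$. The advantage of your route is that it handles homogeneous elements of $I$ of every degree $\le u-t$ uniformly. The paper's decomposition tacitly assumes $I$ has no nonzero forms of degree $<u-t$. Otherwise no generating set of the stated shape exists; for instance, take $I=(x_1)$ with $u-t=2$.

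That case does arise in the later application to $[\qf^a\mf^{bd_j}]_{e_m}$ when $a+bd_j<e_m$. Repairing the paper's argument means first replacing $I$ by its truncation $\bigoplus_{s\ge u-t}[I]_s$. Justifying that this leaves $[I\nf^t]_u$ unchanged is exactly your regrouping step.

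Conversely, the paper's single-form identity is just a repackaging of your observation $R_t=[\nf^t]_t$ in the $\supseteq$ direction, so nothing is lost by your approach.
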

\begin{proof}
    First, we verify the following: if $f$ is homogeneous of degree $u-t$, then $[f\nf^t]_u = [fR]_u$. The containment $\subseteq$ is clear. For $\supseteq$, if $g$ is homogeneous and $gf\in [fR]_u$, then $g\in\nf^t$, so $gf\in [f\nf^t]_u$.

    Now let $f_1,\dots, f_r$ be homogeneous elements spanning $[I]_{u-t}$, and let $g_1,\dots, g_s$ be homogeneous elements such that $f_1,\dots, f_r,g_1,\dots, g_s$ generate $I$ and $\deg(g_i) > u-t$. By the first paragraph, we compute 
     \[[I\nf^t]_u = [(f_1,\dots, f_r)\nf^t]_u + [(g_1,\dots, g_s)\nf^t]_u = \sum_{i=1}^r [f_i\nf^t]_u = \sum_{i=1}^r[f_i]_u = [[I_{u-t}]R]_u.\] 
\end{proof}
\begin{lemma}\label{lemma:p_j-computes-d_j}
    Assume the setup of \Cref{lem:p-i-reduction,lem:removal-of-integral-closure}. Then for all $1\leq j\leq n$, we have $p_n(j) = d_j(I)$. 
\end{lemma}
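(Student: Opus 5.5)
We plan to prove the two inequalities $p_n(j)\le d_j(I)$ and $p_n(j)\ge d_j(I)$ separately. By \Cref{lem:p-i-reduction} we have $p_n(j)=p_j(j)$, so in both directions we may work with the graded system $\af_{j,\bullet}$ in $S_j$. Throughout, set $d:=d_j(I)$ and $\nf_j=(x_1,\dots,x_j)\subseteq S_j$. The countably many extra genericity conditions on $\gamma$ are the two stated below, imposed for every $j$.

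\emph{Upper bound.} By definition of $d$, the ideal $[I]_{\le d}S$ has height at least $j$. For general $\gamma$, the image $\pi_j(\gamma [I]_{\le d}S)$ is therefore $\nf_j$-primary in $S_j$; this is the first genericity condition, a general linear section. It is generated in degrees $\le d$, so \Cref{lemma:maxpower-subset} gives $\nf_j^{d}\subseteq\overline{\pi_j(\gamma I)}$. Hence, by \Cref{thm:Briancon-Skoda} (applied after localizing, using homogeneity to return to $S_j$),
\[
\nf_j^{d(m+j)}\subseteq \overline{\pi_j(\gamma I)^{m+j}}\subseteq \pi_j(\gamma I)^{m+1}=\pi_j(\gamma I^{m+1}).
\]
So $x_j^{d(m+j)}\in \pi_j(\gamma I^{m+1})$. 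Since a monomial is its own initial term, we get $x_j^{d(m+j)}\in\af_{j,m+1}$. Dividing by $m+1$ and letting $m\to\infty$ gives $p_j(j)\le d$.

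\emph{Lower bound.} Let $J=[I]_{\le d-1}S$. By definition of $d$, $\codim J\le j-1$. For general $\gamma$, $\pi_j(\gamma J)$ has height $\le j-1$ in $S_j$, so its cone $V(\pi_j(\gamma J))$ has positive dimension. Choose a line $\qf$ (a homogeneous prime of height $j-1$) contained in this cone and not contained in $\{x_j=0\}$; arranging $x_j\notin\qf$ is the second genericity condition on $\gamma$. Then $\pi_j(\gamma J)\subseteq\qf$, and \Cref{lem:gin-coheight-1} gives $\ini_>(\qf^t)=(x_1,\dots,x_{j-1})^t$ for all $t$.

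Now take a homogeneous $f\in\pi_j(\gamma I^m)$ of degree $u$. The ideal $I^m$ is generated by products of $m$ generators of $I$; a product using $s$ generators of degree $\ge d$ and $m-s$ generators from $J$ has degree $\ge sd$ and, after applying $\pi_j\gamma$, lies in $\qf^{m-s}$. Taking degree-$u$ components, only products with $s\le u/d$ contribute, so $f\in \qf^{\,m-\lfloor u/d\rfloor}$. Consequently $\ini_>(f)\in (x_1,\dots,x_{j-1})^{m-\lfloor u/d\rfloor}$.

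\emph{Combining with the limit.} By \Cref{lem:removal-of-integral-closure}, there are monomials $\x^{\ab'_m}\in\af_{j,2^m}$ with $2^{-m}\ab'_m\to p_j(j)\bb_j^{(j)}$. Write $\x^{\ab'_m}=\ini_>(f_m)$ with $f_m$ homogeneous of degree $u_m=|\ab'_m|$. Set $\epsilon_m:=a'_{m,1}+\dots+a'_{m,j-1}$, the $(x_1,\dots,x_{j-1})$-degree of $\x^{\ab'_m}$. The previous paragraph gives
\[
\epsilon_m\ \ge\ 2^m-\frac{u_m}{d}.
\]
Divide by $2^m$. Since $\epsilon_m/2^m\to 0$ and $u_m/2^m\to p_j(j)$, we obtain $0\ge 1-p_j(j)/d$, that is, $p_j(j)\ge d$.

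The main obstacle is the lower bound. First, one must choose $\qf$ with $x_j\notin\qf$ generically and check that \Cref{lem:gin-coheight-1} applies to it in the coordinates produced by $\gamma$. Second, the membership $f\in\qf^{m-\lfloor u/d\rfloor}$ requires care, because elements of $I^m$ are $S_j$-linear combinations of products of generators; multiplying by ring elements only raises degree, which keeps the bound $s\le u/d$ valid.
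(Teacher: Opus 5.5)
Your proposal is correct and follows the paper's proof essentially step for step. The upper bound comes from \Cref{lemma:maxpower-subset} together with \Cref{thm:Briancon-Skoda} applied to $\pi_j(\gamma I)$. The lower bound uses a height-$(j-1)$ homogeneous prime $\qf$ with $x_j\notin\qf$ that contains $\pi_j(\gamma[I]_{\le d-1})$, then \Cref{lem:gin-coheight-1}, then the same degree count applied to the monomials supplied by \Cref{lem:removal-of-integral-closure}. One small point in your favor: you apply \Cref{lemma:maxpower-subset} to the $\nf_j$-primary ideal generated by $\pi_j(\gamma[I]_{\le d}S)$, which is slightly more careful than the paper, where it is applied directly to $\pi_j(\gamma I)$ even though that ideal need not be generated in degrees $\le d$.
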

\begin{proof}
     Set $d_j:= d_j(I)$. Before we begin the proof, we first state the additional generality conditions on $\gamma$. For all $m>0$, assume that $\ini_>(\pi_j(\gamma I^m)) = \gin_>(\pi_j(\gamma I^m))$. Since $\codim \pi_j(\gamma[I]_{\leq d_j-1}R) \leq \codim \gamma\left[I]_{\leq d_j-1}R\right) < j$, there is some $1\leq i\leq j$ such that $x_i\notin \sqrt{\pi_j(\gamma[I]_{\leq d_j-1})R}$; we choose $\gamma$ such that $x_j\notin \sqrt{\pi_j(\gamma[I]_{\leq d_j-1}R)}$. Each of these conditions is satisfied by a general choice of $\gamma$, so they may be realized simultaneously by a very general choice of $\gamma$.
    
    Set $J = \pi_j(\gamma I)$. Write $J_1 = [J]_{\leq d_j-1}R$, and let $J_2$ be the ideal generated by the minimal homogeneous generators of $J$ with degree at least $d_j$ so that $J = J_1 + J_2$ and $J_2\subseteq \mf^{d_j}$. By construction of $\gamma$, in the language of \Cref{lem:p-i-reduction} we have $\af_{j,m} = \ini_>(J^m)$. Also by construction, we have $x_j\notin \sqrt{[J]_{\leq d_j-1}}$, so we may choose a minimal prime $\pf$ over $J_1$ such that $x_j\notin \pf$. As $\codim \pf\leq j-1$, we may choose a homogeneous prime ideal $\qf\supseteq \pf$ such that $\codim \qf = j-1$ and $x_j\notin \qf$. By \Cref{lem:gin-coheight-1}, we have $\ini_>(\qf^m) = (x_1,\dots, x_{j-1})^m$ for all $m > 0$. 

    By \Cref{lem:removal-of-integral-closure}, choose a sequence $\{\ab_m\}_{m>0}$ such that $\x^{\ab_m}\in \af_{2^m}$ for all $m > 0$ and $\lim_{m\to\infty}2^{-m}\ab_m = p_j(j)\bb_j^{(j)}$. Let $e_m:= a_{m,1} + \dots + a_{m,j}$. For all $m > 0$, we have
    \begin{equation}\label{eqn:J_1-expansion}
        [J^{2^m}]_{e_m} = \left[\sum_{\substack{a+b = 2^m\\a+bd_j\leq e_m}} J_1+J_2 \right]_{e_m}
    \end{equation}
    We have shown that $J_1\subseteq \qf$ and $J_2\subseteq \mf^{d_j}$. By \Cref{eqn:J_1-expansion} and \Cref{lemma:uth-graded-piece}, we have
    \begin{align*}
       \x^{\ab_m} \in [J^{2^m}]_{e_m}  \subseteq \left[\sum_{\substack{a+b = 2^m\\a+bd_j\leq e_m}} \qf^a\mf^{bd_j}\right]_{e_m} &= \sum_{\substack{a + b = 2^m\\a+b d_j\leq e_m}}\left[\qf^a\mf^{bd_j}\right]_{e_m} \\&= \sum_{\substack{a+b=2^m\\a+bd_j\leq e_m}} = \left[\qf^{2^m - \floor{\frac{e_m}{d_j}}}\right]_{e_m}.
    \end{align*}
    Taking initial terms of both sides, we have $\x^{\ab_m}\in (x_1,\dots, x_{j-1})^{2^m - \floor{\frac{e_m}{d_{j}}}}$. Consequently, we have $a_{m,1} + \dots + a_{m,j-1} \geq 2^m - \floor{\frac{e_m}{d_{j}}}$. As $\lim_{m\to\infty} 2^{-m}(a_{m,1}+\dots+a_{m,j-1}) = 0$, this yields
    \[
    0  \geq \liminf_{m\to\infty} 2^{-m}\left(2^m - \floor{\frac{e_m}{d_{j}}}\right) = 1 - \frac{1}{d_j}\limsup_{m\to\infty} \frac{a_{m,j}}{2^m} = 1 - \frac{p_j(j)}{d_j}.
    \]
    From the above equation, we have $p_j(j) \geq d_j$. For the reverse containment, we have by \Cref{lemma:maxpower-subset} that $\mf^{d_j}\subseteq\overline{J}$. It follows from \Cref{thm:Briancon-Skoda} that $x_j^{(m+j-1)d_j}\in J^m$ for all $m > 0$, hence $p_j(j)\leq d_j$.
\end{proof}

\begin{lemma}\label{lem:integral-krull-intersection}
    Let $R = k[x_1,\dots, x_n]$ and let $\mf = (x_1,\dots, x_n)$. Let $I\subseteq R$ be a homogeneous ideal and $J\subseteq \mf$ any ideal. Then we have
    \[
    \bigcap_{t>0} \overline{I + J^t} = \overline{I}.
    \]
\end{lemma}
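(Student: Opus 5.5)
The containment $\overline{I}\subseteq \bigcap_{t>0}\overline{I+J^t}$ is immediate, because integral closure is monotone. The substance is the reverse containment. My plan is to reduce it to a statement about homogeneous ideals and then use the Krull intersection theorem in the localization $R_\mf$.

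\emph{Step 1: replace $J$ by a power of $\mf$.} Since $J\subseteq \mf$, we have $J^t\subseteq \mf^t$, so
\[\overline{I+J^t}\subseteq \overline{I+\mf^t}.\]
It therefore suffices to prove $\bigcap_{t}\overline{I+\mf^t}=\overline{I}$. This case is better because $I+\mf^t$ is homogeneous. By standard facts (for instance via a graded valuative criterion, or because the Rees algebra of a homogeneous ideal is bigraded), $\overline{I+\mf^t}$ is then a homogeneous ideal.

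\emph{Step 2: pass to the local ring.} Let $f\in\bigcap_t\overline{I+\mf^t}$. Each ideal in the intersection is homogeneous, so every homogeneous component of $f$ also lies in the intersection. We may therefore assume $f$ is homogeneous. For homogeneous ideals $K$, membership in $\overline{K}$ can be tested after localizing at $\mf$: by \Cref{prop:int-closure}(v) we have $\overline{K}R_\mf=\overline{KR_\mf}$, and a homogeneous ideal is contracted from $R_\mf$. So it suffices to show that $f\in\overline{IR_\mf}$.

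\emph{Step 3: the key step in the local ring.} This is where I expect the main obstacle. I would try \Cref{thm:Briancon-Skoda} first. Set $d=n$. For every $t$ we have $f\in\overline{(I+\mf^t)R_\mf}$, and hence
\[f^{n}\in\overline{(I+\mf^t)^{n}}\subseteq I+\mf^t\]
in $R_\mf$; here we apply \Cref{thm:Briancon-Skoda} with its $t$ equal to $0$. Contracting to $R$ preserves this, since $I+\mf^t$ is $\mf$-primary up to the $I$ part and is homogeneous. Taking the intersection over $t$ and using Krull's intersection theorem on the local ring $R_\mf/IR_\mf$ gives $f^n\in IR_\mf$, and so $f^n\in I$ by homogeneity.

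This only shows $f^n\in I$, which is weaker than $f\in\overline{I}$. To fix it, I would apply the same argument to the powers $f^s$. For every $s$ and $t$ we have $f^s\in\overline{(I+\mf^t)^s}$, and therefore
\[f^{s+n-1}\in\overline{(I+\mf^t)^{s+n-1}}\subseteq (I+\mf^t)^{s}\subseteq I^s+\mf^t.\]
Intersecting over $t$, again by Krull in $R_\mf$ and then contracting, gives $f^{s+n-1}\in I^s$ for all $s$. By the valuative criterion (or the asymptotic characterization of integral closure) this implies $f\in\overline{I}$, because $v(f)\ge \frac{s}{s+n-1}v(I)\to v(I)$ for every valuation $v$.

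The only delicate points are the homogeneity reductions, which justify moving between $R$ and $R_\mf$, and the precise indexing in \Cref{thm:Briancon-Skoda}.
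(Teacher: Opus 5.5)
Your proof is correct, but it handles the key local step differently from the paper.

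The paper makes the same reduction from $J$ to $\mf$. It then cites Huneke--Swanson (Corollary 6.8.5) for $\bigcap_t\overline{IR_\mf+\mf^tR_\mf}=\overline{IR_\mf}$. That result comes from the valuative criterion and holds in any Noetherian local ring. The paper then contracts back to $R$ using $\overline{I}R_\mf=\overline{IR_\mf}$ and the homogeneity of $\overline{I}$.

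You instead prove the local statement directly. The Lipman--Sathaye bound $\overline{K^{s+n-1}}\subseteq K^s$ has a shift $n-1$ that does not depend on $K$. Applying it to $K=(I+\mf^t)R_\mf$ for every $t$ gives the uniform membership $f^{s+n-1}\in I^s+\mf^t$. Krull's intersection theorem then yields $f^{s+n-1}\in I^s$. Finally, the criterion ``$cf^s\in I^s$ for all $s$'' with $c=f^{n-1}\neq 0$ gives $f\in\overline{I}$.

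What your route buys is a self-contained argument using only \Cref{thm:Briancon-Skoda}, which the paper already relies on elsewhere. The cost is that you need regularity of $R_\mf$, or at least a uniform Brian\c{c}on--Skoda theorem, whereas the cited result needs neither.

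Some minor points:
\begin{enumerate}
\item The homogeneity of $\overline{I+\mf^t}$ and the reduction to homogeneous $f$ are never actually used. Persistence already gives $f\in\overline{(I+\mf^t)R_\mf}$, and the only contraction you need is $I^sR_\mf\cap R=I^s$.
\item Your first attempt uses the case $t=0$ of \Cref{thm:Briancon-Skoda}, which is outside its stated range. The corrected argument only needs $s\geq 2$, which is within range.
\item If you phrase the last step through the valuative criterion, restrict to discrete rank-one valuations. The limit $\frac{s}{s+n-1}v(I)\to v(I)$ is not meaningful for higher-rank valuations. The $cf^s\in I^s$ criterion avoids this issue altogether.
\end{enumerate}
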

\begin{proof}
    By \cite[Corollary 6.8.5]{huneke_integral}, we have
    \[
        \overline{IR_\mf}\subseteq \bigcap_{t > 0}\overline{IR_\mf + J^tR_\mf} \subseteq \bigcap_{t>0} \overline{IR_\mf + \mf^tR_\mf} = \overline{IR_\mf}.
    \]
    By \Cref{prop:int-closure} (v), we have $\overline{I}R_\mf = \overline{IR_\mf}$. The ideal $\overline{I}$ is homogeneous by \cite[Corollary 5.2.3]{huneke_integral}, so $\overline{I}R_\mf\cap R = \overline{I}$. We have the following, from which the claim follows.
    \[
        \overline{I}\subseteq \bigcap_{t > 0}\left(\overline{IR_\mf + J^tR_\mf}\cap R\right) \subseteq \overline{IR_\mf}\cap R = \overline{I}.
    \]
\end{proof}

We are now able to prove \Cref{thm:min-char} in the case that $k$ is algebraically closed. 
\begin{proof}
    If $\codim(I) \geq l+1$, then $c(I) \geq E_{l+1}(I) > E_l(I),$ so we must have $\codim(I) = l$. Let $\ell_{l+1},\dots, \ell_n$ be general linear forms and set $L = V(\ell_{l+1},\dots, \ell_n)$. By \Cref{prop:sigma-poly-ring}, we have $E_l(I) = E_l(I|_L) \leq c(I|_L)\leq c(I)$, so we again have $E_l(I) = c(I|_L)$. We'll compute $d_1(I|_L),\dots, d_l(I|_L)$. Field extensions are faithfully flat, so $\codim([I]_{\leq t}|_L)$ is preserved under field extension, hence so are the quantities $d_1(I|_L),\dots, d_l(I|_L)$. We may without loss of generality assume $k$ is uncountable and algebraically closed. 
    
    Let $\gamma\in \GL_n(k)$ be as in \Cref{lem:p-i-reduction,lemma:p_j-computes-d_j} with respect to the ideal $I|_L$, and set $\af_m = \ini_>(\gamma (I|_L)^m) = \gin_>((I|_L)^m)$. As $c(I|_L) = E_l(I),$ by \Cref{cor:asymptotic-monomial} we have
    \begin{equation}\label{eqn:gamma-is-a-simplex}
    \overline{\R_{\geq 0}^l\setminus \Gamma(\af_\bullet)} =  \conv\left(\mathbf{0}, e_1(I|_L)\bb_1^{(l)}, \frac{e_2(I|_L)}{e_1(I|_L)}\bb_2^{(l)},\dots, \frac{e_{l}(I|_L)}{e_{l-1}(I|_L)}\bb_l^{(l)}\right).
    \end{equation}
    The quantities $p_n(1),\dots, p_l(l)$ can be read off from \Cref{eqn:gamma-is-a-simplex}, so we have $\frac{e_j(I|_L)}{e_{j-1}(I|_L)} = d_j(I|_L)$ for all $1\leq i\leq l$. 

	For $1\leq j\leq l$, let $f_i$ be an element of $[I]_{d_j}$ whose image mod $(\ell_{l+1},\dots, \ell_n)$ is general. As $\codim [I|_L]_{d_j} \geq j$, it follows that $f_1,\dots, f_l, \ell_{l+1},\dots, \ell_n$ is a regular sequence. Let $J = (f_1,\dots, f_l)$. By \Cref{lemma:mm-hyperplane-section} we have $E_l(J) = E_l(I)$. For $t > 0$, set $J_t = J + (\ell_{l+1}^t,\dots, \ell_n^t)$ and set $I_t = I + (\ell_{l+1}^t,\dots, \ell_n^t)$. Since $J_t\subseteq I_t$ is a complete intersection, \Cref{lemma:mm-hyperplane-section} and \Cref{prop:dp-rees,prop:basic-properties-of-c} give
	\begin{equation}\label{eqn:complete-subintersection}
		E_l(J) + \frac{n-l}{t} = E_l(J_t) \leq E_l(I_t).
	\end{equation}
	Combining \Cref{eqn:complete-subintersection} with \Cref{thm:bound} and \Cref{prop:basic-properties-of-c} (v) gives
\[
    E_l(J) + \frac{n-l}{t} \leq E_l(I_t) \leq c(I_t) \leq c(I) + c((\ell_{l+1}^t,\dots, \ell_n^t)) = E_l(I) + \frac{n-l}{t}.
    \]
    In particular, we have $E_l(I_t) = E_l(J_t) = c(J_t)$. Set 
    \[\Df_t := \overline{(x_1^{d_1(I)},\dots,x_l^{d_l(I)},x_{l+1}^t,\dots, x_l^t)}.\] By \Cref{prop:CI-min-char}, there exists $\gamma\in \GL_n(k)$ such that $\gamma_t \overline{J_t} = \Df_t$. Consequently, as $\Df_t\subseteq \gamma_t\overline{I_t}$ and $E_l(I_t) = E_l(\Df_t)$, by \Cref{prop:dp-rees} we have $\overline{\Df_t} = \overline{I_t}$.
    
    Set $\Df = \overline{(x_1^{d_1(I)},\dots, x_l^{d_l(I)})}$.  Let $d$ be the maximum degree of an irredundant generator of $\Df$ or $\overline{I}$. The $k$-vector space of forms of degree $\leq d$ in $R$ is finite-dimensional, so the infinite descending chain $[\Df_1]_{\leq d}\supseteq [\Df_2]_{\leq d} \supseteq \dots$ --- the limit of which is $[\Df]_{\leq d}$ by \Cref{lem:integral-krull-intersection} --- must stabilize; there exists $t_0\gg 0$ such that $[\Df_t]_{\leq d} = [\Df]_{\leq d}$ for all $t\geq t_0$. Similarly, there exists $t_1\gg 0$ such that $[\overline{I_t}]_{\leq d} = [\overline{I}]_{\leq d}$ for all $t\geq t_1$. For $t = \max(t_1,t_2)$, we have
    \[
    \gamma_t[\overline{I}]_{\leq d} = \gamma_t[\overline{I_t}]_{\leq d} = [\Df_t]_{\leq d} = [\Df]_{\leq d}.
    \]
    As $\gamma_t\overline{I}, \Df$ are generated in degree $\leq d$, it follows that $\gamma_t\overline{I} = \Df$. 
\end{proof}

\subsection{Perfect fields}
The following lemmas, together with the algebraically closed case of \Cref{thm:min-char}, allow us to generalize to the case of a perfect field.
\begin{lemma}\label{lemma:fixed-flag-fixed-ideal}
	Let $L$ be a field. For $1\leq i\leq r$, let $\x_i = x_{i,1},\dots, x_{i,a_i}$ be a tuple of variables over $L$. Set $S = L[\x_1,\dots, \x_r, y_1,\dots, y_s]$. Let $e_1 < \dots < e_r$ be positive integers and set 
	\[
	\Df = \overline{(x_{1,1}^{e_1},\dots, x_{r,a_r}^{e_r})}.
	\]
	If $\gamma\in \GL_{a_1+\dots + a_r + s}$ fixes the flag
	\[
	\bigg(0 \subseteq \text{span}_L(\x_1)\subseteq \dots \text{span}_L(\x_1,\dots, \x_r)\bigg),\
	\]
	then $\gamma \Df = \Df$.
\end{lemma}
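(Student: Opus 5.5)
The plan is to describe $\Df$ through the monomial valuation $w$ with $w(x_{i,j}) = 1/e_i$ and $w(y_k) = 0$. By \Cref{prop:int-closure} (vi), $\Df$ is the monomial ideal spanned by the monomials $\x^{\ab}\mathbf{y}^{\bb}$ with $w(\x^\ab\mathbf{y}^\bb)\geq 1$. Since $w(f)$ is the minimum of $w$ over $\supp(f)$, this gives
\[
\Df = \{f\in S : w(f)\geq 1\}.
\]
(The minimum description is valid for monomial valuations, and $w$ is a valuation because it is monomial with nonnegative weights.) It therefore suffices to show that $w(\gamma f)\geq w(f)$ for all $f$. Applying the same argument to $\gamma^{-1}$, which also fixes the flag, then gives $\gamma\Df\subseteq\Df$ and $\gamma^{-1}\Df\subseteq \Df$, hence equality.

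To prove $w(\gamma f)\geq w(f)$, I first treat variables. Since $\gamma$ fixes the flag, $\gamma(x_{i,j})\in \text{span}_L(\x_1,\dots,\x_i)$. Every variable $x_{i',j'}$ with $i'\leq i$ has $w$-value $1/e_{i'}\geq 1/e_i$, because $e_1<\dots<e_r$. Hence $w(\gamma x_{i,j})\geq 1/e_i = w(x_{i,j})$. For the $y_k$ there is nothing to check: $\gamma(y_k)$ is some linear form, and $w\geq 0$ on every element, so $w(\gamma y_k)\geq 0 = w(y_k)$.

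Next I treat monomials. Because $\gamma$ acts as a ring automorphism and $w$ is a valuation (so $w(gh) = w(g)+w(h)$), for a monomial $\x^\ab\mathbf{y}^\bb$ we get
\[
w(\gamma(\x^\ab\mathbf{y}^\bb)) = \sum a_{i,j}\,w(\gamma x_{i,j}) + \sum b_k\,w(\gamma y_k)\geq w(\x^\ab\mathbf{y}^\bb).
\]
For a general $f = \sum \beta_\ab \x^\ab\mathbf{y}^\bb$, the inequality $w(g+h)\geq\min(w(g),w(h))$ gives $w(\gamma f)\geq \min_{\supp f} w(\gamma(\text{monomial}))\geq w(f)$.

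The only step requiring care is confirming that the monomial valuation is multiplicative, i.e. $w(gh)=w(g)+w(h)$ for arbitrary polynomials. This is standard: take the initial forms of $g$ and $h$ with respect to the weight grading; their product is nonzero in the polynomial ring, so the minimum-weight part of $gh$ is the product of the minimum-weight parts. Only the inequality $w(gh)\geq w(g)+w(h)$ is actually needed above, and that direction is immediate.
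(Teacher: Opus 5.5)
Your proof is correct and is essentially the paper's argument. Both describe $\Df$ as $\{f : w(f)\geq 1\}$ for the monomial valuation with $w(x_{i,j})=1/e_i$ and $w(y_k)=0$, use the flag condition to bound $w$ on the images $\gamma(x_{i,j})$, and conclude by symmetry with $\gamma^{-1}$. The only difference is minor: the paper checks $w(\gamma x_{i,j}^{e_i})\geq 1$ on the generators and then uses $\gamma\Df=\overline{(\gamma x_{1,1}^{e_1},\dots)}\subseteq\overline{\Df}=\Df$, whereas you show $w\circ\gamma\geq w$ on all of $S$ and so avoid the integral-closure step.
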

\begin{proof}
	Let $w_\mathbf{e}$ denote the monomial valuation with $w_\mathbf{e}(\x_{i,j}) = \frac{1}{e_i}$ and $w_\mathbf{e}(y_i) = 0$. Then $\Df$ consists precisely of the elements $f\in S$ such that $w_{\mathbf{e}}(f)\geq 1$. By the assumption on $\gamma$, we have $\gamma(x_{i,j}^{e_i}) \in (\x_1,\dots, \x_i)^{e_i}$, hence $w_{\mathbf{e}}(\gamma x_{i,j}^{e_i}) \geq 1$. It follows that
	\[
	\gamma \Df = \overline{(\gamma x_{1,1}^{e_1},\dots, \gamma x_{r,a_r}^{e_r})} \subseteq \overline{\Df} = \Df.
	\] 
	By symmetry, we conclude that $\gamma \Df = \Df$.
\end{proof}
\begin{lemma}\label{lemma:sep-extension-coordinates}
	Let $k$ be a field and $L/k$ a separable field extension. Set $R= k[x_1,\dots, x_n]$ and $S = L[x_1,\dots, x_n]$. Let $d_1,\dots, d_l\in \Z^+$ and set $\Df = \overline{(x_1^{d_1},\dots, x_l^{d_l})}\subseteq R$. Suppose that $I\subseteq R$ is a homogeneous ideal and that there exists $\gamma_L\in GL_n(L)$ such that
	\begin{equation}\label{eqn:gamma_L}
	\gamma_L \overline{IS} = \Df S.
	\end{equation}
	Then there exists $\gamma_k\in \GL_n(k)$ such that
	\begin{equation}\label{eqn:gamma_k}
	\gamma_k\overline{I} = \Df.
	\end{equation}
\end{lemma}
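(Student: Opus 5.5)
The idea is to show that the coordinate flag attached to $\Df$ can be read off intrinsically from $\overline{I}$, so that it is defined over $k$. Then \Cref{lemma:fixed-flag-fixed-ideal} absorbs the discrepancy between $\gamma_L$ and a $k$-rational change of coordinates.

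\emph{Setup.} Let $e_1<\dots<e_r$ be the distinct values among $d_1,\dots,d_l$. For $1\leq i\leq r$, let $X_i=\{x_j: d_j\leq e_i\}$. The first claim is the identity
\[
\sqrt{[\Df]_{\leq e_i}R}=(X_i).
\]
Write $w_\db$ for the monomial valuation with $w_\db(x_j)=1/d_j$ for $j\leq l$ and $w_\db(x_j)=0$ for $j>l$, so that $\Df=\{f: w_\db(f)\geq 1\}$.

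\begin{itemize}
\item The containment $\supseteq$ holds because $x_j^{d_j}\in[\Df]_{\leq e_i}$ whenever $d_j\leq e_i$.
\item For $\subseteq$, take a monomial of degree $\leq e_i$ involving no variable of $X_i$. Every variable it involves has $w_\db$-value $<1/e_i$ (either $1/d_j$ with $d_j>e_i$, or $0$). Its $w_\db$-value is therefore $<1$, so it does not lie in $\Df$. Since $\Df$ is a monomial ideal, this gives $[\Df]_{\leq e_i}R\subseteq(X_i)$.
\end{itemize}

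The same identity holds over $L$ with $\Df S$ in place of $\Df$.

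\emph{Descending the flag.} Since $L/k$ is separable, I will use two compatibilities:
\begin{itemize}
\item $\overline{IS}=\overline{I}S$, because $R\to S$ is a flat extension with geometrically reduced (normal) fibres, so integral closure commutes with it; this is the Huneke--Swanson statement on integral closure under normal/separable base change.
\item Radicals commute with separable base change.
\end{itemize}
Also, taking graded pieces of bounded degree commutes with $-\otimes_k L$. Let $P_i=\sqrt{[\overline{I}]_{\leq e_i}R}\subseteq R$. Then
\[
P_iS=\sqrt{[\overline{IS}]_{\leq e_i}S}=\gamma_L^{-1}\sqrt{[\Df S]_{\leq e_i}S}=\gamma_L^{-1}(X_i)S .
\]
Hence $P_iS$ is generated by a $|X_i|$-dimensional space of linear forms. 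It follows that $P_i$ is generated by the $k$-subspace $V_i=[P_i]_1\subseteq R_1$, with $V_i\otimes_kL=\gamma_L^{-1}\operatorname{span}_L(X_i)$, and these subspaces form a flag $V_1\subseteq\dots\subseteq V_r$.

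\emph{Choosing $\gamma_k$ and concluding.} Pick $\gamma_k\in\GL_n(k)$ with $\gamma_k V_i=\operatorname{span}_k(X_i)$ for every $i$, using a basis of $R_1$ adapted to the flag. Then $\gamma_L\gamma_k^{-1}$ fixes the flag $\operatorname{span}_L(X_1)\subseteq\dots\subseteq\operatorname{span}_L(X_r)$. Regrouping the variables into blocks $\x_i=X_i\setminus X_{i-1}$ and letting the variables $x_{l+1},\dots,x_n$ play the role of the $y$'s, \Cref{lemma:fixed-flag-fixed-ideal} gives $\gamma_k\gamma_L^{-1}\Df S=\Df S$. Therefore
\[
\gamma_k\overline{I}S=\gamma_k\gamma_L^{-1}\gamma_L\overline{IS}=\gamma_k\gamma_L^{-1}\Df S=\Df S.
\]
Contracting to $R$ via faithful flatness, i.e.\ using $JS\cap R=J$ for any ideal $J\subseteq R$, yields $\gamma_k\overline{I}=\Df$.

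\emph{Main obstacle.} The delicate input is the equality $\overline{IS}=\overline{I}S$, together with the compatibility of radicals with the extension. This is exactly where separability enters, and it is what fails for inseparable extensions, as in \Cref{ex:must-be-perfect}. If a clean citation is unavailable, I would first reduce to $L/k$ finitely generated. Then I would argue via the Rees algebra: its normalization commutes with the geometrically reduced base change $-\otimes_kL$, since the base change of a normal $k$-algebra along a separable extension is still normal.
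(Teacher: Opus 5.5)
Your proposal is correct and follows the paper's proof essentially step for step: you use $\overline{I}S=\overline{IS}$ from the Huneke--Swanson base-change results, use separability to get $\sqrt{I_i}\,S=\sqrt{I_iS}$ for the ideals generated by forms of degree $\leq e_i$, descend the resulting flag of linear forms to $k$, and conclude with \Cref{lemma:fixed-flag-fixed-ideal} and faithful flatness. Your explicit check that $\sqrt{[\Df]_{\leq e_i}R}=(X_i)$, and your argument that $P_i$ is generated by $[P_i]_1$, fill in steps the paper states without justification.
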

\begin{proof}
	By \cite[Propositions 19.1.1,  19.1.2 and Corollary 19.5.2]{huneke_integral}, we have $\overline{I}S = \overline{IS}$, so we may replace $I$ by $\bar{I}$ and ignore the integral closure in \Cref{eqn:gamma_L,eqn:gamma_k}.
	
	Suppose that $d_1,\dots, d_l$ take the values $e_1,\dots, e_r$ with multiplicities $a_1,\dots, a_r$ respectively and that $e_1 < \dots < e_r$.  Re-index the variables $x_1,\dots, x_l, x_{l+1},\dots, x_n$ as $\x_1,\dots, \x_r, y_{l+1},\dots, y_n$ where $\x_i = x_{i,1},\dots, x_{i,a_i}$. For $1\leq i\leq r$, let $I_i$ denote the ideal generated by $[I]_{\leq e_i}$. Since $L/k$ is separable, the ring $S/\sqrt{I_i}S$ is reduced, so $\sqrt{I_i}S = \sqrt{I_iS}$. As $\sqrt{I_iS} = \gamma_L^{-1}(\x_1,\dots, \x_i)$, it follows that $\sqrt{I_i}$ is generated by $a_1+\dots+a_i$ linear forms. 

	Let $\gamma_k\in \GL_n(k)$ such that 
	\[
	\gamma_k\bigg(\left[\sqrt{I_1}\right]_1 \subseteq \dots \subseteq \left[\sqrt{I_r}\right]_1\bigg) = \bigg(\text{span}_k(\x_1)\subseteq \dots \subseteq \text{span}_k(\x_1,\dots, \x_r)\bigg)\]
	as flags in $\text{span}(\x_1,\dots, \x_r)$. By construction, $\gamma_k\gamma_L^{-1}\in \GL_n(L)$ fixes the flag
	\[
	\bigg(\text{span}_L(\x_1)\subseteq \dots \subseteq \text{span}_L(\x_1,\dots, \x_r)\bigg).
	\]
	By \Cref{lemma:fixed-flag-fixed-ideal}, we deduce that
	\[
	\gamma_k \overline{IS} = \gamma_k \gamma_L^{-1}\gamma_L \overline{IS} = \gamma_k\gamma_L^{-1}\Df S = \Df S.
	\]
By faithful flatness of $R\to S$, we conclude
	\[
	\gamma_k I = (\gamma_k I)S\cap R= (\Df S)\cap R = \Df.
	\]
	\end{proof}

We now deduce \Cref{thm:min-char}.
\begin{proof}[Proof of \Cref{thm:min-char}] 
Set $L = \bar{k}$ and let $S = L[x_1,\dots, x_n]$. By \Cref{prop:basic-properties-of-c} (vi) and \Cref{lemma:sigma_extension} we have $c(IS) = c(I) = E_l(I) = E_l(IS)$. By \Cref{thm:min-char} in the algebraically closed case, it follows that there exists $\gamma_L\in \GL_n(L), d_1,\dots, d_l\in \Z^+$ such that \Cref{eqn:gamma_L} holds. Because $k$ is perfect, the extension $L/k$ is separable, so \Cref{lemma:sep-extension-coordinates} implies that there exists $\gamma_k\in \GL_n(k)$ such that \Cref{eqn:gamma_k} holds, proving the theorem.
\end{proof}
To conclude this section, we note that the assumption on $k$ cannot be weakened.
\begin{example}\label{ex:must-be-perfect}
	Let $k$ be a field of characteristic $p > 0$ and $t\in k\setminus k^p$. Let $R = k[x,y]$ and set $I = (x^p + ty^p)$. Then $c(I) = \frac{1}{p} = E_1(I)$, but $I$ is not generated by a power of a linear form.
\end{example}

\section{Extensions of Theorem B in Characteristic Zero}
One can ask whether \Cref{thm:min-char} can be generalized to the local case. We note that the answer is ``no'' in positive characteristic, and we pose a conjecture generalizing \Cref{thm:min-char} over the complex numbers. 
\begin{example}\label{ex:pos-char-failure}
    Let $R = \overline{\F_p}\llbracket x, y\rrbracket$, and let $I = (x^p + y^{p+1})\subseteq R$. Then $E_1(I) = \frac{1}{p} =\fpt(I)$, but $I = \overline{I}$ and there are no coordinates for $R$ in which $I$ is a monomial ideal. 
\end{example}
For a complete description of when $E_1(I) = c(I)$ in positive characteristic, see \cite{baily_extremal_2026}. Whereas \Cref{thm:min-char} fails to generalize to the local case in char $p > 0$, we are optimistic that a stronger result is possible in characteristic zero. Our conjecture extends \cite[Remark 3.7]{bivia-ausina_Lojasiewicz} to the case of arbitrary codimension.
\begin{conj}\label{conj:ideal-theoretic}
    Let $R = \C\llbracket x_1,\dots, x_n\rrbracket$. Let $I\subseteq R$ be an ideal with $\codim(I)\geq l$. Then $\lct(I) = E_l(I)$ if and only if there exists a regular system of parameters $y_1,\dots, y_n$ for $R$ and positive integers $d_1,\dots, d_l$ such that $\overline{I} = \overline{(y_1^{d_1},\dots, y_l^{d_l})}$.
\end{conj}
We note that \Cref{conj:ideal-theoretic}, if proven, would be strong enough to give an alternate proof of \Cref{thm:min-char} over $\C$. 
\begin{proof}
    Let $R = \C[x_1,\dots, x_n]$. Let $I$ be a homogeneous ideal with $\codim(I)\geq l$ and $E_l(I) = \lct(I)$. Set $S = \C\llbracket x_1,\dots, x_n\rrbracket$. \Cref{notation:singularity_threshold,defn:mm-dp-poly-ring} imply that $E_l(IS) = E_l(I)$ and $\lct(IS) = \lct(I)$, so by \Cref{conj:ideal-theoretic} there exists a regular system of parameters $y_1,\dots, y_n$ for $S$ such that $\overline{IS} = \overline{(y_1^{d_1},\dots, y_l^{d_l})}$.

		Write each $y_i$ as a power series in $x_1,\dots, x_n$ and let $z_1,\dots, z_n$ be the homogeneous degree-1 terms of $y_1,\dots, y_n$. As $y_i\equiv z_i\mod \mf^2$, the forms $z_1,\dots, z_n$ in $R$ generate the homogeneous maximal ideal of $R$.
		
	As $I$ is homogeneous, so is $\bar{I}$, hence $\bar{I} = \bar{I}S\cap R$. In particular, we have $z_1^{d_1},\dots, z_l^{d_l}\in \overline{I}$. By considering the homogeneous ideals $\overline{I + (z_{l+1}^t,\dots, z_n^t)}$ for $t>0$ (which evidently satisfy $E_n = \lct$) and running an argument similar to \Cref{thm:min-char}, we deduce that $\overline{I} = \overline{(z_1^{d_1},\dots, z_l^{d_l})}$.\end{proof}

We propose a stronger conjecture in terms of valuations. Let $R = \C\llbracket x_1,\dots, x_n\rrbracket$. Let $v: R\to [0, \infty]$ be a valuation and let $A_R(v)$ be the log discrepancy as in \cite{jonsson_valuations_2012}. For each positive integer $n$, define $\af_n(v) = \{f\in R: v(f)\geq n$. By \cite[Corollary 6.9]{jonsson_valuations_2012} and \cite[Lemma 3.5]{blum_existence_2018}, we have 
\begin{equation}\label{eqn:lct_vs_logdisc}
	\lct(\af_\bullet(v))\leq A_R(v)
\end{equation}
If $v$ is centered on a prime ideal $\pf$ of height at least $l$, then for $m \gg 0$ we have $\sqrt{\af_m(v)} = \pf$ and so $\codim(\af_m(v)) \geq l$. Consequently, we have $E_l(\af_\bullet(v)) \leq A_R(v)$.
\begin{conj}\label{conj:valuation-theoretic}
	Let $R, v, \af_n(v)$ be as above and assume $v$ is centered on a prime ideal of height at least $l$. If $E_l(\af_\bullet(v))  = A_R(v)$ then $v$ is a \textit{monomial} valuation.
\end{conj}

We conclude this article by demonstrating that \Cref{conj:valuation-theoretic} implies \Cref{conj:ideal-theoretic}. First, a few preparatory results.
\begin{defn}
	Let $(R, \mf)$ be a Noetherian local ring and $I\subseteq R$ an ideal. The \textbf{fiber cone} of $I$ is the ring
	\[
	\mathcal{F}_I = \bigoplus_{n\geq 0}\frac{I^n}{\mf I^n}.
	\]
	The \textbf{analytic spread} of $I$, denoted $\ell(I)$, is the dimension of $\mathcal{F}_I$.
\end{defn}
\begin{theorem}[\cite{huneke_integral}, Theorem 8.3.7]\label{thm:spread}
	Let $(R, \mf)$ be a Noetherian local ring such that $R/\mf$ is infinite. If $I\subseteq R$ is a proper ideal, then $\ell(I)$ is equal to the least number of generators of an ideal $J\subseteq I$ such that $\overline{I} = \overline{J}$. 
\end{theorem}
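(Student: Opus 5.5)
The plan is to translate both sides of the equality into statements about the fiber cone $\mathcal{F}_I = \bigoplus_{n \geq 0} I^n/\mf I^n$. This is a standard graded $k$-algebra, where $k = R/\mf$, generated in degree $1$ by $I/\mf I$. Throughout, I use the equivalence from the Definition-Proposition on reductions: $\overline{J} = \overline{I}$ with $J \subseteq I$ holds if and only if $JI^n = I^{n+1}$ for some $n$. So it suffices to show that $\ell(I)$ is the least number of generators of a reduction $J \subseteq I$.

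\emph{Lower bound $\ell(I) \leq \mu(J)$.} Let $J = (b_1,\dots,b_s) \subseteq I$ with $JI^n = I^{n+1}$ for some $n$. Let $\beta_i$ be the image of $b_i$ in $[\mathcal{F}_I]_1 = I/\mf I$. Then $JI^m = I^{m+1}$ for all $m \geq n$. Reducing modulo $\mf I^{m+1}$, this says $[\mathcal{F}_I]_{m+1} = \sum_i \beta_i [\mathcal{F}_I]_m$ for $m \geq n$. Hence $\mathcal{F}_I$ is generated as a module over $k[\beta_1,\dots,\beta_s]$ by $\bigoplus_{m \leq n}[\mathcal{F}_I]_m$, which is a finite-dimensional $k$-space. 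Therefore $\mathcal{F}_I$ is module-finite over a quotient of a polynomial ring in $s$ variables, and $\ell(I) = \dim \mathcal{F}_I \leq s$.

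\emph{Upper bound: produce a reduction with $\ell(I)$ generators.} Since $k$ is infinite and $\mathcal{F}_I$ is a standard graded $k$-algebra of dimension $\ell = \ell(I)$, graded Noether normalization (the version with linear forms, which is where infiniteness of $k$ is used) gives elements $\beta_1,\dots,\beta_\ell \in [\mathcal{F}_I]_1$ such that $\mathcal{F}_I$ is finite over $k[\beta_1,\dots,\beta_\ell]$. Lift these to $b_1,\dots,b_\ell \in I$ and set $J = (b_1,\dots,b_\ell)$. Finiteness with generators in bounded degree gives, for $n \gg 0$, the equality $[\mathcal{F}_I]_{n+1} = \sum_i \beta_i [\mathcal{F}_I]_n$. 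Lifting to $R$, this reads $I^{n+1} = JI^n + \mf I^{n+1}$. Since $I^{n+1}$ is finitely generated, Nakayama's lemma yields $I^{n+1} = JI^n$. So $J$ is a reduction of $I$ with $\ell(I)$ generators, and hence $\overline{J} = \overline{I}$.

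Combining the two bounds gives the theorem. The only genuinely nontrivial input is the graded Noether normalization by degree-one elements over an infinite field; the rest is bookkeeping with Nakayama's lemma. The step needing the most care is passing from finiteness of $\mathcal{F}_I$ over $k[\beta]$ to the degree-wise equality $[\mathcal{F}_I]_{n+1} = \beta [\mathcal{F}_I]_n$ for large $n$. I would handle it by taking homogeneous module generators of degree at most $N$ and noting that every element of degree $n+1 > N$ is then a $k[\beta]$-combination whose coefficients have positive degree.
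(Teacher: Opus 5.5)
Your proof is correct and is essentially the standard fiber-cone argument behind the cited Huneke--Swanson result; the paper gives no proof of its own, only the citation. A reduction $J=(b_1,\dots,b_s)$ makes $\mathcal{F}_I$ module-finite over $k[\beta_1,\dots,\beta_s]$, which gives $\ell(I)\le \mu(J)$; conversely, a Noether normalization of $\mathcal{F}_I$ by linear forms (where $R/\mf$ infinite is used) lifts via Nakayama to a reduction with $\ell(I)$ generators, and the paper's Definition-Proposition on reductions converts this into $\overline{J}=\overline{I}$.
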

\begin{lemma}\label{lemma:lower-height-dp-rees}
	Let $(R, \mf)$ be a regular local ring of dimension $n$. Let $1\leq l\leq n$ and let $I\subseteq J$ be ideals of height $l$. If $\sigma_l(I) = \sigma_l(J)$ and the analytic spread of $J$ is equal to $l$, then $\overline{I} = \overline{J}$. 
\end{lemma}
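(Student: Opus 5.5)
We reduce to a Rees-type statement for $\mf$-primary ideals by cutting with general hyperplanes, and use the analytic spread hypothesis to control the integral closure of $J$ from a small reduction.

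First, reduce to an infinite residue field by passing to $R(X) = R[X]_{\mf R[X]}$. This is a faithfully flat extension with $\mf R(X)$ maximal. By \Cref{lemma:sigma_extension}, the invariants $\sigma_l$ are preserved. Analytic spread is preserved because the fiber cone base changes along the residue field extension. Integral closure descends by \Cref{prop:int-closure}(vii), so it suffices to show $\overline{IR(X)} = \overline{JR(X)}$.

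Now assume the residue field is infinite. By \Cref{thm:spread}, choose $g_1,\dots,g_l \in J$ with $\overline{(g_1,\dots,g_l)} = \overline{J}$. Replacing $J$ by $(g_1,\dots,g_l)$ does not change $\overline{J}$. It also does not change $\sigma_l(J)$: by \Cref{prop:sigma-properties}(iii), $\sigma_l$ is a multiplicity after adding general elements $h_1,\dots,h_{n-l}$ of $\mf$, and integrally equivalent ideals remain integrally equivalent after adding $(h)$. Since $\codim J = l$ and $J$ is generated up to integral closure by $l$ elements, $(g_1,\dots,g_l)$ is a complete intersection. Hence every associated prime of $\overline{J}$, indeed of $\overline{J^t}$ for all $t$, is minimal over $J$; I expect to cite an unmixedness result for integral closures of powers of ideals with $\ell(J) = \codim J$ in a regular (hence analytically unramified, quasi-unmixed) ring, from \cite{huneke_integral} (McAdam's theorem on asymptotic primes). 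Let $\pf_1,\dots,\pf_s$ be the minimal primes of $J$; each has height $l$.

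Next, localize at each $\pf_i$. Here $IR_{\pf_i} \subseteq JR_{\pf_i}$ are $\pf_i$-primary in the $l$-dimensional regular local ring $R_{\pf_i}$. The key step, and the main obstacle, is to show $e(IR_{\pf_i}) = e(JR_{\pf_i})$ for every $i$. My plan is to use the associativity formula. Take $H = (h_1,\dots,h_{n-l})$ with general $h_j$. Then
\[
\sigma_l(I) = e(I + H) \quad\text{and}\quad \sigma_l(J) = e(J + H).
\]
Because $I + H$ and $J + H$ are $\mf$-primary in $R/H$, these multiplicities are presumably
\[
\sum_{\pf \supseteq I,\ \codim\pf = l} e(IR_\pf)\, e(\mf(R/\pf)) \quad\text{and}\quad \sum_{\pf \supseteq J,\ \codim\pf = l} e(JR_\pf)\, e(\mf(R/\pf))
\]
respectively. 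This is a formula for $e(I + H)$ with $H$ general linear sections, via the associativity/degree formula for multiplicities of general hyperplane sections; I would try to extract it from \cite[Chapter 11 or 17]{huneke_integral}. The minimal primes of height $l$ of $I$ contain those of $J$, and for each such $\pf$ containing $J$ we have $e(IR_\pf) \ge e(JR_\pf)$. So equality of the sums forces $e(IR_{\pf_i}) = e(JR_{\pf_i})$ for every $i$, and also that $I$ has no height-$l$ minimal primes beyond those of $J$.

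Finally, \Cref{thm:rees} applied in $R_{\pf_i}$ gives $JR_{\pf_i} \subseteq \overline{IR_{\pf_i}}$. Take $x \in \overline{J}$; we want $x \in \overline{I}$. For each $t$, the element $x^t$ lies in $\overline{J^t}$, and $\overline{J^t}$ has associated primes only among the $\pf_i$. On the other side, we need $\overline{I^t} \cap R$ to be controlled at the $\pf_i$. A cleaner route: from the localization step, $J \subseteq \overline{IR_{\pf_i}} \cap R$ for each $i$. Use the valuative criterion. The Rees valuations of $J$ are centered on the $\pf_i$, because $\ell(J) = l$ means every Rees valuation of $J$ has center of height at most $\ell(J)$. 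For each such valuation $v$, the reverse inclusion $IR_{\pf_i} \subseteq JR_{\pf_i}$ together with equality of integral closures at $\pf_i$ gives $v(I) = v(J)$. Hence $v(x) \ge v(J) = v(I)$ for all Rees valuations $v$ of $J$. This does not yet check the Rees valuations of $I$, so I would instead argue that $\overline{I} \subseteq \overline{J}$ while $\overline{J}$ has no embedded components. Writing $\overline{J} = \bigcap_i (\overline{J}R_{\pf_i} \cap R)$, we get
\[
\overline{J} = \bigcap_i \bigl(\overline{I}R_{\pf_i} \cap R\bigr) \supseteq \overline{I}.
\]
Equality then requires showing that $\overline{I}$ likewise has no embedded primes. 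This is the delicate point. I would try to deduce it by applying the multiplicity equality to $I$ and a general $l$-generated reduction-like subideal of $I$, which would show $\ell(I) = l$ as well.
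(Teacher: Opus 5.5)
There is a genuine gap: your argument stops exactly where the key idea is needed. Knowing $\overline{IR_{\pf_i}}=\overline{JR_{\pf_i}}$ at the minimal primes, together with unmixedness of $\overline{J}$, only reproduces the trivial inclusion $\overline{I}\subseteq\overline{J}$. To recover $J\subseteq\overline{I}$ from local data at the $\pf_i$, the integral closure of the \emph{smaller} ideal must be determined by those localizations, and for an arbitrary $I$ it is not.

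Your suggested repair cannot work as phrased. You propose comparing $I$ with a general $l$-generated subideal $I'=(f_1,\dots,f_l)\subseteq I$ in order to show $\ell(I)=l$. But for general $f_i$ the equality $\sigma_l(I')=\sigma_l(I)$ holds for \emph{every} ideal $I$ of height $l$; this is how $\sigma_l$ is computed (\Cref{prop:sigma-properties}(ii)). So that equality carries no information. For instance, take $R=k[x,y,z]_{(x,y,z)}$ and $l=1$. Then $I=(xy,xz)$ and $I'=(x(ay+bz))$, with $a,b$ general, satisfy $\sigma_1(I)=\sigma_1(I')=2$, yet $I'$ is not a reduction of $I$.

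The same example shows that your associativity formula is false for $I$. Here $\sigma_1(I)=2$, whereas the only height-one prime containing $I$ is $(x)$, which contributes $e(IR_{(x)})\,e(R/(x))=1$. General elements $g_1,\dots,g_l$ of $I$ acquire residual minimal primes not containing $I$. So in general only $\sigma_l(I)\geq\sum_{\pf}e(IR_\pf)\,e(R/\pf)$ holds, with equality when $\ell(I)=l$. Fortunately, that is the direction your comparison uses.

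The missing idea is to replace $I$, not $J$, by an $l$-generated ideal, and then sandwich. Run your argument for the pair $I'\subseteq J$:
\begin{itemize}
\item Since $I'\subseteq I\subseteq J$, you have $\sigma_l(I')=\sigma_l(I)=\sigma_l(J)$.
\item Your formula is an equality for $I'$, because it is a complete intersection, and for $J$, because $\ell(J)=l$.
\item Comparing the sums gives $\operatorname{Min}(I')=\operatorname{Min}(J)$ and $e(I'R_\pf)=e(JR_\pf)$ for each such $\pf$. Hence $JR_\pf\subseteq\overline{I'R_\pf}$ by \Cref{thm:rees}.
\item Since $\ell(I')=l=\codim I'$, McAdam's theorem makes $\overline{I'}$ unmixed. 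Therefore $J\subseteq\bigcap_{\pf}(\overline{I'}R_\pf\cap R)=\overline{I'}$.
\item Finally, $I'\subseteq I\subseteq J$ yields $\overline{I}=\overline{J}$.
\end{itemize}
The unmixedness you need is that of $\overline{I'}$, not of $\overline{J}$. The hypothesis $\ell(J)=l$ enters only through the equality $\sigma_l(J)=\sum_{\pf}e(JR_\pf)\,e(R/\pf)$. Without that hypothesis, the pair $I'\subseteq I$ above is a counterexample to the lemma.

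The reduction to $I'$ is precisely the paper's first step. The paper then observes that $I'$ and $J$ have the same Achilles--Manaresi multiplicity sequence (only $c_{n-l}=\sigma_l$ is nonzero) and cites the Polini--Ulrich--Validashti criterion. The repaired version of your argument replaces that citation by associativity, Rees's theorem at the minimal primes, and McAdam's unmixedness theorem; in effect it is a form of B\"oger's theorem.
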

\begin{proof}
Replacing $R$ with $R[X]_{\mf R[X]}$ changes neither the hypotheses nor the conclusion by \cite[Lemma 8.4.2]{huneke_integral}; we may without loss of generality assume that $R/\mf$ is infinite. By \Cref{prop:sigma-properties} (ii) and (iii), choose $f_1,\dots, f_l\in I$ such that, with $I' = (f_1,\dots, f_l)$, we have $\sigma_l(I) = \sigma_l(I')$. Moreover, by \Cref{thm:spread} we have $\ell(I') = l$.

Let $c_0(I),\dots, c_n(I)$ denote the \textbf{multiplicity sequence} of Achilles and Manaresi \cite[Definition 2.2]{achilles_bigraded_1997}. By \cite[Corollaries 2.6 and 2.7]{achilles_mm_2019}, for $K\in \{I', J\}$ we have $c_{n-l}(K) = \sigma_l(K)$. Moreover, by \cite[Proposition 2.3 (i)]{achilles_bigraded_1997} we have $c_i(K) = 0$ for $i\neq n-d$, so $I', J$ have the same multiplicity sequence. It follows from \cite[Theorem 4.2]{polini_dependence_2020} that $\overline{I'} = \overline{J}$, which implies the claim.
\end{proof}
\begin{prop}
	If \Cref{conj:valuation-theoretic} holds in dimension $n$ and codimension $l$, then \Cref{conj:ideal-theoretic} holds in dimension $n$ and codimension $l$.
\end{prop}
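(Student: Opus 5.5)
The "if" direction of \Cref{conj:ideal-theoretic} does not use \Cref{conj:valuation-theoretic}. If $\overline{I} = \overline{(y_1^{d_1},\dots,y_l^{d_l})}$ in a regular system of parameters, the conclusion is a direct computation. Passing to the complete intersection $(y_1^{d_1},\dots,y_l^{d_l})$, which has the same integral closure as $I$, we get $\sigma_j(I)=d_1\cdots d_j$ (after ordering $d_1\le\dots\le d_l$), by the local analogue of \Cref{lemma:mm-hyperplane-section}. Hence $E_l(I)=\sum 1/d_i$. The monomial formula \Cref{prop:monomial-threshold} then gives $\lct(I)=\sum 1/d_i$ as well.

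For the "only if" direction, assume $\lct(I)=E_l(I)$. I would pick a valuation $v$ computing the lct. Since $R=\C\llbracket x_1,\dots,x_n\rrbracket$ is excellent, a log resolution exists, so there is a divisorial valuation $v=\ord_E$ with $\lct(I)=A_R(v)/v(I)$. Normalize so that $v(I)=1$; then $I\subseteq \af_1(v)$, and more generally $I^m\subseteq\af_m(v)$. The center of $v$ contains $V(I)$ locally, since $v(I)>0$. After shrinking $E$ to a component whose center lies in $V(I)$, the center has height at least $l$, so $E_l(\af_\bullet(v))$ is defined. Using monotonicity of $\sigma_j$ under inclusion, which comes from \Cref{prop:dp-rees} and its asymptotic version, together with \eqref{eqn:lct_vs_logdisc}, I get the chain
\[E_l(I)\le E_l(\af_\bullet(v))\le \lct(\af_\bullet(v))\le A_R(v)=\lct(I)=E_l(I).\]
Every inequality is therefore an equality. \Cref{conj:valuation-theoretic} then says $v$ is monomial in some regular system of parameters $y_1,\dots,y_n$, with weights $v(y_i)=w_i$.

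The next step extracts the shape of $\overline I$. For a monomial valuation, $\af_m(v)$ is a monomial ideal whose Newton polyhedron is the half-space $\{\sum w_iu_i\ge m\}$. The equality $\lct(\af_\bullet(v))=A_R(v)=\sum w_i$, combined with the fact that the center has height at least $l$, forces exactly $l$ of the weights to be positive. Say these are $y_1,\dots,y_l$, with $w_i = 1/d_i'$ for rationals $d_i'$. Equality $E_l(\af_\bullet(v))=\sum_{i\le l} w_i$ together with the rigidity of the Demailly--Pham bound for the ideals $\af_m(v)$ should make the $d_i'$ integers up to a common scaling. I would then argue that $I$ and $\af_m(v)$ asymptotically have the same $\sigma_l$. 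Concretely, $E_l(I)=E_l(\af_\bullet(v))$, together with termwise Minkowski inequalities as in \Cref{cor:asymptotic-monomial} (via \cite[Proposition 10]{bivia-ausina_log_2016}), gives $\sigma_j(I)=\sigma_j(\af_\bullet(v))$ for all $j\le l$.

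The last step, which I expect to be the main obstacle, is to turn the asymptotic equality $\sigma_l(I)=\sigma_l(\af_\bullet(v))$ into an equality of integral closures. My plan is to replace $\af_\bullet(v)$ by the single ideal $K=\overline{(y_1^{d_1},\dots,y_l^{d_l})}$, which satisfies $I\subseteq K$ after scaling. This $K$ has height $l$ and analytic spread $l$, as it is integral over an $l$-generated ideal (\Cref{thm:spread}), and $\sigma_l(I)=\sigma_l(K)$. \Cref{lemma:lower-height-dp-rees} then yields $\overline I=K$. The delicate points are integrality of the $d_i$ and the normalization ensuring $I\subseteq K$ rather than only $I^m\subseteq\af_m(v)$. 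The first thing I would try is to take $v$ with $v(I)$ equal to the generic value and use that the value $\min v$ on the generators is attained. If that fails, I would pass to $I^m$ for divisible $m$ and use $\overline{I^m}=K^m \Rightarrow \overline I = \overline{(y_i^{d_i})}$ via valuative criteria.
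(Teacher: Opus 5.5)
Your outline follows the paper's proof. The paper takes a divisorial valuation computing $\lct(I)$ and rescales it to $w(I)=1$. It runs the chain $E_l(I)\le E_l(\af_\bullet(w))\le\lct(\af_\bullet(w))\le A_R(w)=\lct(I)$, applies \Cref{conj:valuation-theoretic} to make $w$ monomial in coordinates $z_1,\dots,z_n$, and uses \cite[Proposition 10]{bivia-ausina_log_2016} to get $\sigma_j(I)=\sigma_j(\af_\bullet(w))$, so that $w(z_j)=\sigma_{j-1}(I)/\sigma_j(I)$ for $j\le l$. It finishes with \Cref{lemma:lower-height-dp-rees} applied to $I\subseteq\overline{(z_1^{d_1},\dots,z_l^{d_l})}$, which has analytic spread $l$. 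The paper only writes out the ``only if'' direction; your ``if'' computation is fine.

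Two corrections to your middle step:
\begin{itemize}
\item The equality $\lct(\af_\bullet(w))=A_R(w)$ holds for every monomial valuation, so it forces nothing. The vanishing of $w(z_{l+1}),\dots,w(z_n)$ comes from comparing $A_R(w)=\sum_{i=1}^n w(z_i)$ with $E_l(\af_\bullet(w))=\sum_{i\le l}w(z_i)$, with the weights in decreasing order.
\item There is no normalization issue and no scaling freedom. The condition $w(I)=1$ pins down $w$ and gives $I\subseteq\af_1(w)$ immediately.
\end{itemize}

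The genuine gap is that your proposal never establishes integrality of $d_i=1/w(z_i)$. The ratios $\sigma_{j-1}(I)/\sigma_j(I)$ need not be reciprocals of integers a priori; think of $\sigma_1=6$, $\sigma_2=45$, which give $d=(6,\tfrac{15}{2})$. You can close this with multiplicities. Let $H=\{u\in\R^l_{\ge0}:\sum_i u_i/d_i\ge 1\}$. Since $\af_1(w)$ is extended from $\C\llbracket z_1,\dots,z_l\rrbracket$ and $\Gamma(\af_1(w))\subseteq H$, you get
\[
\sigma_l(I)\ \ge\ \sigma_l(\af_1(w))\ =\ l!\,\vol\bigl(\R^l_{\ge0}\setminus\Gamma(\af_1(w))\bigr)\ \ge\ d_1\cdots d_l\ =\ \sigma_l(\af_\bullet(w))\ =\ \sigma_l(I).
\]
Equality throughout forces $\Gamma(\af_1(w))=H$, so the vertices $d_i\bb_i$ are lattice points. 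It also gives $\sigma_l(I)=\sigma_l(\af_1(w))$, which is exactly the input \Cref{lemma:lower-height-dp-rees} needs.

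The paper argues this step differently, via a strict drop $\lct(\af_1(w))<\lct(\af_\bullet(w))$. That drop is not automatic. For $d=(6,\tfrac{15}{2})$, the diagonal point $\tfrac{10}{3}(1,1)$ lies on the segment joining the lattice points $(2,5)$ and $(6,0)$ of $\Gamma(\af_1(w))$, so the lct does not change. The multiplicity does detect it: $e(\af_1(w))=46>45$.

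Your $I^m$ fallback also works once it is completed:
\begin{itemize}
\item Take $m$ with $md_i\in\Z$ for all $i$. The lemma then gives $\overline{I^m}=\af_m(w)$.
\item Hence $\overline I=\{f: f^m\in\overline{I^m}\}=\af_1(w)$, and $\overline{\af_1(w)^m}=\af_m(w)$.
\item Comparing Newton polyhedra forces $\Gamma(\af_1(w))=H$, so the $d_i$ are integers.
\end{itemize}
Integrality is an output of that step, so you cannot write $\overline{(y_i^{d_i})}$ before it.
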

\begin{proof}
	Let $I\subseteq R$ be an ideal of height at least $l$ such that $E_l(I) = \lct(I)$. By \Cref{defn:lct}, choose a divisorial valuation $v: R\to [0, \infty]$ such that $\lct(I) = \frac{A_R(I)}{v(I)}$. Letting $w = \frac{v}{v(I)}$, we have $w(I) = 1$ and $A_R(w) = \lct(I)$. As $w(I) = 1$, we have $I^n\subseteq \af_n(w)$ for all $n > 0$, so $\codim(w) \geq l$ and $E_l(\af_\bullet(w)) \geq E_l(I)$. 
	
	It follows from \Cref{thm:bound,eqn:lct_vs_logdisc} that
	\begin{equation}\label{eqn:lct<=logdisc<=lct}
	\lct(I) = E_l(I)\leq E_l(\af_\bullet(w)) \leq \lct(\af_\bullet(w))\leq A_R(w) = \lct(I),
	\end{equation}
	so in particular $E_l(\af_\bullet(w)) = A_R(w)$. By \Cref{conj:valuation-theoretic}, $w$ is a monomial valuation in some local coordinates $z_1,\dots, z_n$ for $R$ with $w(z_1)\geq w(z_2)\geq \dots \geq w(z_n)$.  Since $\sigma_j(I)\leq \sigma_j(\af_\bullet(w))$ for $1\leq j\leq l$ and $E_l(I) = E_l(\af_\bullet(w))$, we conclude $\sigma_j(I) = \sigma_j(\af_\bullet(w))$ by \cite[Proposition 10]{bivia-ausina_log_2016}. We can read off the invariants $\sigma_1(\af_\bullet(w)),\dots, \sigma_l(\af_\bullet(w))$ from the weights $w(z_i)$. In particular, we have $\sigma_j(\af_\bullet(w)) = \frac{1}{w(z_1)\dots w(z_j)}$, hence $w(z_j) = \frac{\sigma_{j-1}(I)}{\sigma_j(I)}$ for all $1\leq j\leq l$. Moreover, by definition of the log discrepancy for quasi-monomial valuations we have $A_R(w) = w(z_1) + \dots + w(z_n)$, so we must have $w(z_{l+1}) = \dots = w(z_n) = 0$. 
	
	By construction, $I\subseteq \af_1(w)$. If the numbers $\frac{1}{w(z_1)},\dots, \frac{1}{w(z_l)}$ are not all \textit{integers}, then $\Gamma(\af_1(w))\subsetneq \Gamma(\af_\bullet(w))$ -- but this would imply \[\lct(I) \leq \lct(\af_1(w)) < \lct(\af_\bullet(w)),\] which is impossible by \Cref{eqn:lct<=logdisc<=lct}. Write $d_i = \frac{1}{w(z_i)}$ for $1\leq i\leq l$. We are now in the situation where $I\subseteq \overline{(z_1^{d_1},\dots, z_l^{d_l})} =: \Df$ and $E_l(I) = E_l(\Df)$. Since $\Df$ is the integral closure of an ideal generated by $l$ elements, we have $\ell(\Df) = l$. By \Cref{lemma:lower-height-dp-rees} we deduce that $\overline{I} = \overline{\Df}$.
	\end{proof}
	The following argument is well-known to experts.
	\begin{prop}\label{prop:codimension-1}
		\Cref{conj:valuation-theoretic} holds in codimension $l = 1$. 
	\end{prop}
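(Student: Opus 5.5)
The strategy is to normalize $v$, turn the hypothesis $E_1(\af_\bullet(v)) = A_R(v)$ into an extremal Skoda-type inequality, and then show by a blow-up argument that an extremal $v$ must be a multiple of the order of vanishing along a smooth hypersurface. Such a valuation is monomial: weight $w(z_1)>0$ and $w(z_2)=\dots=w(z_n)=0$ in suitable coordinates.

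\emph{Step 1: rewrite the hypothesis.} Rescale so that $A_R(v)=1$. By \Cref{defn:dp-invariant}, $E_1(\af_\bullet(v)) = 1/\sigma_1(\af_\bullet(v))$, and $\sigma_1(\af_\bullet(v)) = \lim_m \ord_\mf(\af_m(v))/m$. This limit equals $1/t(v)$, where
\[ t(v) := \sup_{f\neq 0} \frac{v(f)}{\ord_\mf(f)}. \]
Skoda's bound $1/\ord_\mf(f)\le \lct(f)$ together with $\lct(f)\le A_R(v)/v(f)$ gives $v(f)\le A_R(v)\,\ord_\mf(f)$ for every $f$. Hence $t(v)\le A_R(v)$ always, and the hypothesis says precisely that $t(v)=A_R(v)=1$. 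In particular $v\le \ord_\mf$ on $R$, and there are elements $f_k$ with $v(f_k)/\ord_\mf(f_k)\to 1$.

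\emph{Step 2: exploit the extremal elements.} Chaining the inequalities along the $f_k$ gives
\[ \frac{1}{\ord_\mf(f_k)}\le \lct(f_k)\le \frac{1}{v(f_k)}, \]
so asymptotically $v$ computes $\lct(f_k)$ and $\lct(f_k)\,\ord_\mf(f_k)\to 1$. Passing to the graded system $\af_\bullet(v)$ gives the cleaner statement $\lct(\af_\bullet(v)) = 1/\sigma_1 = A_R(v)$. So $v$ computes the lct of a graded system whose lct is as small as Skoda allows.

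\emph{Step 3: blow up and induct.} Let $\pi:Y\to \Spec R$ be the blow-up of the center of $v$; first treat the case where the center is $\mf$, with exceptional divisor $E$. The standard formulas give
\[ A_R(v) = A_Y(v) + (n-1)\,v(\mf), \qquad v(f) = \ord_\mf(f)\,v(\mf) + v(\tilde f), \]
where $\tilde f$ is the strict transform. Since the center of $v$ on $Y$ lies in $E$, we also have $A_Y(v)\ge v(E)=v(\mf)$, hence $A_R(v)\ge n\,v(\mf)$. On the other hand, extremality forces $v(\tilde f_k)$ to be asymptotically at least $(1-v(\mf))\ord_\mf(f_k)$. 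Comparing these inequalities should show that if $n\ge 2$ and $v(\mf)>0$, then the pair $(Y,v)$ is again extremal in the analogous sense for the local ring at the center of $v$ on $Y$, with a strict loss unless $v$ is divisorial along a smooth hypersurface through the center. When the center is a non-maximal prime $\pf$, I would first localize at $\pf$. This replaces $\ord_\mf$ by $\ord_{\pf}$, which only increases $v(f)/\ord$, so the inequality chain is preserved. The conclusion I aim for is that $v = c\cdot\ord_{(z_1)}$ for a regular parameter $z_1$.

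The main obstacle is making Step 3 rigorous for an arbitrary (non-divisorial, non-quasi-monomial) valuation, where the blow-up sequence need not terminate. My first attempt would be to reduce to quasi-monomial $v$ by replacing $v$ with its retraction onto a suitable log-smooth model, as in \cite{jonsson_valuations_2012}. This retraction does not increase $A_R$ and does not decrease $\af_\bullet$. The extremal equality, combined with the strict inequality in \Cref{eqn:lct_vs_logdisc} away from the retraction, should force $v$ to equal its retraction. For a quasi-monomial $v$ with weights $w(z_i)$ one has $A_R(v)=\sum_i w(z_i)$ and $t(v)=\max_i w(z_i)$. Equality then forces all but one weight to vanish, and the only remaining check is that the relevant coordinate $z_1$ can be chosen as a regular parameter of $R$.
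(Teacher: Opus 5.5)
\textbf{There is a genuine gap.} Your Step 1 is correct, and so is the localization remark in Step 3. The argument stops, however, exactly where the content of the proposition lies. After localizing, what remains is to show that $t(v)<A_R(v)$ whenever the center of $v$ has height at least $2$. Step 3 only asserts that the blow-up comparison ``should show'' this, and the proposed repair via retraction does not work as stated.

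\begin{itemize}
\item \emph{Wrong direction.} The retraction $r_Y(v)$ of \cite{jonsson_valuations_2012} satisfies $r_Y(v)\le v$. Hence $\af_m(r_Y(v))\subseteq \af_m(v)$: the graded system shrinks rather than grows. All you obtain is $t(r_Y(v))\le t(v)$ and $A_R(r_Y(v))\le A_R(v)$.
\item \emph{No way to force $v=r_Y(v)$.} These inequalities do not make $r_Y(v)$ extremal. You only get approximate extremality as $Y$ is refined, and there is no strict inequality available to force $v=r_Y(v)$.
\item \emph{Formulas used out of scope.} The identities $A_R(v)=\sum_i w(z_i)$ and $t(v)=\max_i w(z_i)$ hold only when $v$ is monomial in a regular system of parameters of $R$ itself. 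For a valuation monomial on a log-smooth model $Y\to\Spec R$, the log discrepancy picks up the discrepancies of the exceptional divisors, and $t(v)$ is no longer the largest weight.
\end{itemize}

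The needed strict inequality is not a formality. In dimension $2$, normalizing $v(\mf)=1$, one has $t(v)=\alpha(v)$. Strictness then follows from the inequality $1+\alpha(v)\le A_R(v)$ of \cite{favre_tree_2004}, or from \Cref{thm:bound}. It cannot be waved through.

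\textbf{The missing idea} is to use \Cref{thm:bound} one codimension higher to pin down the center. Suppose $v$ is centered on a prime of height at least $2$. Then $\codim(\af_m(v))\ge 2$ for $m\gg 0$. Applying \Cref{thm:bound} with $l=2$ to each $\af_m(v)$, together with \Cref{eqn:lct_vs_logdisc}, gives
\[
A_R(v)\ \ge\ E_2(\af_\bullet(v)) = E_1(\af_\bullet(v)) + \frac{\sigma_1(\af_\bullet(v))}{\sigma_2(\af_\bullet(v))} > E_1(\af_\bullet(v)),
\]
so equality is impossible. The proof then finishes in three short steps.
\begin{itemize}
\item The center is a height-one prime of the UFD $R$, i.e.\ $fR$ with $f$ irreducible.
\item Since $R_{fR}$ is a DVR, $v=c\,\ord_f$ for some $c>0$.
\item Then $\af_m(v)=f^{\lceil m/c\rceil}R$, so $E_1(\af_\bullet(v))=c/\ord_\mf(f)$, while $A_R(v)=c$. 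Equality forces $\ord_\mf(f)=1$, i.e.\ $f$ is a regular parameter. The paper phrases this last step as $E_1(f)=\lct(f)$ and invokes \cite{guan_lelong_2015}.
\end{itemize}
Your monomial computation is a special case of this dichotomy: two positive weights means the center has height at least $2$. The $E_2$ bound handles arbitrary $v$ with no blow-ups or retractions.
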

	\begin{proof}
		If $v$ is centered on a prime ideal of height $2$ or more, then $\codim(\af_\bullet(v)) \geq 2$ and hence $A_R(v) \geq \lct(\af_\bullet(v)) \geq E_2(v) > E_1(v)$. Consequently, in order to have $E_1(v) = A_R(v)$, it must be the case that $v$ is centered on a principal prime ideal $fR$, hence $v = c\ord_f$ for some irreducible $f\in R$ and some $c\in \R^+$. We deduce that $E_1(f) =\lct(f)$, hence $f$ is a power of a local coordinate by \cite{guan_lelong_2015}.
			\end{proof}
\begin{prop}
	\Cref{conj:valuation-theoretic} holds in dimension $2$. 
\end{prop}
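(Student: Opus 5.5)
We need to prove \Cref{conj:valuation-theoretic} for $n=2$. Here $R=\C\llbracket x,y\rrbracket$ and $l\in\{1,2\}$. The case $l=1$ is \Cref{prop:codimension-1}, so we assume $l=2$: $v$ is centered on $\mf$ and $E_2(\af_\bullet(v)) = A_R(v)$. We must show that $v$ is monomial in suitable coordinates.

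The first step is to reduce to quasi-monomial valuations. If $v$ is not quasi-monomial, it is infinitely singular or a curve valuation. In that case one expects $A_R(v)=\infty$, or $\af_\bullet(v)$ fails to have the required height, since curve valuations are centered on a height-one prime. Either way equality is impossible, because $E_2(\af_\bullet(v))<\infty$ as soon as the height is $2$. We will use the valuative tree of Favre--Jonsson to organize this.

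For a quasi-monomial $v$ with $A_R(v)<\infty$, the chain in \Cref{eqn:lct_vs_logdisc} gives $E_2(\af_\bullet(v))\le \lct(\af_\bullet(v))\le A_R(v)$. Equality therefore forces $\lct(\af_\bullet(v)) = A_R(v)$, so $v$ computes the lct of its own graded system. It also forces $E_2(\af_\bullet) = \lct(\af_\bullet)$. Next we normalize $v(\mf)=1$. Choose local coordinates $x,y$ such that $y$ is a curve of maximal contact along the first segment of the geodesic from $\ord_\mf$ to $v$ in the valuative tree.

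Let $w$ be the monomial valuation with weights $w(x)=1$ and $w(y)=t$, where $t$ is the first branching point data along that segment. Then $v\ge w$ on $R$, with equality iff $v=w$. Moreover $A_R(v)\ge A_R(w)$, with strict increase past a branching point: Favre--Jonsson's thinness formula $A(v)=1+\int \text{(multiplicity)}\,d\alpha$ increases by at least a factor $m\ge 2$ beyond a free point. On the other hand, $e_1(\af_\bullet(v)) = 1/v(\mf)$ and $e_2(\af_\bullet(v)) = \lim e(\af_m(v))/m^2 = 1/\vol$, which equals $1/\alpha(v)$ in terms of skewness. The key formula is the two-dimensional identity $e(\af_\bullet(v)) = 1/\alpha(v)$ for normalized $v$ (Favre--Jonsson). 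This gives
\[E_2(\af_\bullet(v)) = \frac{1}{v(\mf)}\bigl(1+\alpha(v)\,v(\mf)^2\bigr)\]
after normalization, namely $1+\alpha(v)$.

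The final step compares this with $A_R(v)=1+\int_{\ord_\mf}^{v} m(\beta)\,d\alpha(\beta)$, where $m\ge1$ is the multiplicity along the geodesic. Equality $1+\alpha(v)=A(v)$ holds iff $m\equiv1$ on the whole segment, i.e.\ $v$ lies on a segment with no multiplicity jump. Such $v$ are exactly monomial valuations in some coordinates $(x,y)$ with $y$ smooth. This comparison is the main obstacle. Care is needed to match the paper's normalization of $E_2$ with skewness, and to handle the case where $v(\mf)$ is attained differently, so that it is the curve $y$ of multiplicity one that realizes $\alpha$. If the tree machinery is too heavy, a fallback is to use \Cref{thm:bound}'s half-space argument with a sequence of log resolutions (toric blowups). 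The equality case there pins down the Newton polygon of $\af_\bullet(v)$ in suitable coordinates as a simplex, which in turn forces $v$ to be monomial.
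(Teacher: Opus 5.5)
Your overall route is the paper's: normalize $v(\mf)=1$, show $E_2(\af_\bullet(v))=1+\alpha(v)$ using the two-dimensional volume formula, and then use the Favre--Jonsson comparison of thinness with skewness. Its equality case (multiplicity $m\equiv 1$ on $[\ord_\mf,v]$) is exactly the monomial valuations. The paper does this in three lines, citing Boucksom--Favre--Jonsson for $e_1=e_2=1/\alpha(v)$ and Favre--Jonsson, Prop.~3.48(i), for the comparison. However, two formulas you actually rely on are wrong, and the second breaks the equality step as written.

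\textbf{First error: $e_1$.} $e_1(\af_\bullet(v))$ is not $1/v(\mf)$. Since $e_1(\af_m)=\ord_\mf(\af_m)$, and $v(f)\le\alpha(v)\ord_\mf(f)$ with near-equality along powers of suitable $f$, one gets $e_1(\af_\bullet(v))=1/\alpha(v)$. For example, for $w(x)=1$, $w(y)=t\ge 1$ we have $\ord_\mf(\af_m)=\lceil m/t\rceil$, so $e_1=1/t$, not $1$. Thus $e_1=e_2=1/\alpha(v)$, and $e_2$ is $\vol(v)$, not $1/\vol$. Hence $E_2=\alpha(v)+1$; your value $1+\alpha(v)$ comes out right only because the two summands got swapped.

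\textbf{Second error: the thinness normalization.} Thinness satisfies $A(\ord_\mf)=2$ with $\alpha(\ord_\mf)=1$, so
\[A(v)=2+\int_{\ord_\mf}^{v} m\,d\alpha\ \ge\ 2+(\alpha(v)-1)=1+\alpha(v).\]
With your formula $1+\int m\,d\alpha$, the case $m\equiv 1$ gives $A(v)=\alpha(v)<1+\alpha(v)$. So the equality you want to characterize would never occur, and your conclusion would not follow.

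\textbf{Drop the reduction step.} Your preliminary reduction to quasi-monomial valuations is unjustified and should simply be dropped, together with the ``first branching point'' comparison $w\le v$, which plays no role. Infinitely singular valuations can have finite thinness, so ``one expects $A_R(v)=\infty$'' is false. They can also have infinite skewness; then $e_2(\af_\bullet(v))=0$ and $E_2=\infty$ even though every $\af_m(v)$ is $\mf$-primary, so ``$E_2<\infty$ as soon as the height is $2$'' is false too.

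None of this is needed. The inequality $A(v)\ge 1+\alpha(v)$ holds on the whole valuative tree. For any non-monomial $v$, including infinitely singular ones, $m\ge 2$ on a subsegment of $[\ord_\mf,v]$ of positive $\alpha$-length. So equality fails whenever $A(v)<\infty$. The only leftover case is $\alpha(v)=\infty$, where both sides are infinite. It must be excluded by assuming $A_R(v)<\infty$, which the paper's appeal to Favre--Jonsson also does tacitly.
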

\begin{proof}
	By \Cref{prop:codimension-1}, it remains to consider the case $l = 2$. Suppose $E_2(v) = A_R(v)$. Without loss of generality, normalize $v$ so that $v(\mf) = 1$. Let $\alpha(v) = \sup_{f\in \mf}v(f)/\ord_{\mf}(f)$ denote the \textit{skewness} of $\alpha$ as in \cite{favre_tree_2004}. By \cite[Remark 4.9]{boucksom_refinement_2014}, we have $e_1(\af_\bullet(v)) = e_2(\af_\bullet(v)) = \frac{1}{\alpha(v)}$, so $E_2(v) = 1 + \alpha(v)$.
		By \cite[Proposition 3.48 (i)]{favre_tree_2004}, we have $1 + \alpha(v)\leq A_R(v)$ with equality if and only if $v$ is a monomial valuation, proving the claim.
\end{proof}
	
\printbibliography
\end{document}